\documentclass[11pt]{amsart}

\usepackage[inner=2.4cm,outer=2.4cm,top=2.4cm,bottom=2.4cm]{geometry}
\usepackage[english]{babel}
\usepackage{amsmath} 
\usepackage{amsthm,todonotes}
\usepackage{amsfonts}
\usepackage{amssymb}
\usepackage{graphicx}
\usepackage{mathrsfs}

\usepackage{color,enumerate,mathtools}
\usepackage[dvipsnames, svgnames]{xcolor}
\usepackage[normalem]{ulem}
\usepackage{amsmath,epsfig}
\usepackage{pdfsync}
\usepackage{tikz-cd}
\numberwithin{equation}{section}
\usepackage[colorlinks,citecolor=green,linkcolor=red]{hyperref}

\newcommand{\N}{\mathbb{N}}

\newcommand{\R}{\mathbb{R}}

\newcommand{\PP}{\mathscr{P}}

\newcommand{\cF}{{\ensuremath{\mathcal F}}}

\newcommand{\cI}{{\ensuremath{\mathcal I}}}

\newcommand{\mm}{{\mbox{\boldmath$m$}}}

\newcommand{\sfd}{{\sf d}}
\newcommand{\sfe}{{\sf e}}
\newcommand{\restr}[1]{\lower3pt\hbox{$|_{#1}$}}

\newcommand{\eps}{\varepsilon}  
\newcommand{\nchi}{{\raise.3ex\hbox{$\chi$}}}
\newcommand{\weakto}{\rightharpoonup}

\newtheorem{theorem}{Theorem}[section]

\newtheorem{corollary}[theorem]{Corollary}
\newtheorem{lemma}[theorem]{Lemma}
\newtheorem{proposition}[theorem]{Proposition}
\newtheorem{definition}[theorem]{Definition}

\newtheorem{remark}[theorem]{Remark}

\newcommand{\Lip}{\mathrm{Lip}}
\newcommand{\lip}{\mathrm{lip}}

\newcommand{\diam}{\mathrm{diam}}

\newcommand{\fr}{\hfill$\blacksquare$}   
\newcommand{\res}{\mathop{\hbox{\vrule height 7pt width .5pt depth 0pt
\vrule height .5pt width 6pt depth 0pt}}\nolimits} 
\newcommand{\rcd}{{\sf RCD}}
\newcommand{\CD}{{\sf CD}}

\renewcommand{\mm}{\mathfrak m}

\renewcommand{\limsup}{\varlimsup}
\renewcommand{\liminf}{\varliminf}
\renewcommand{\d}{{\rm d}}

\newcommand{\Geo}{{\sf Geo}}

\newcommand{\X}{{\rm X}}

\newcommand{\Z}{{\rm Z}}

\newcommand{\e}{\sfe}

\newcommand{\rmKe}{{\rm Ke}}
\newcommand{\rmCh}{{\rm Ch}}
\newcommand{\OptGeo}{{\rm OptGeo}}
\newcommand{\Opt}{{\rm Opt}}

\newcommand{\Xdm}{(\X,\sfd,\mm)}
\newcommand{\Xdmx}{(\X,\sfd,\mm,x)}
\newcommand{\Xdmxinf}{(\X_\infty,\sfd_\infty,\mm_\infty,x_\infty)}
\newcommand{\Xdmxn}{(\X_n,\sfd_n,\mm_n,x_n)}
\newcommand{\Comp}{{\rm Comp}}

\newcommand{\supp}{{\rm supp}}
\newcommand{\Ent}{{\rm Ent}}

\usepackage{ esint }
\usepackage{tcolorbox}
\title[]{Stability of local Riemannian Ricci curvature lower bounds}

\author[]{Luigi Ambrosio} 
\address{Scuola Normale Superiore, Piazza dei Cavalieri 7, 56126 Pisa, Italy}
\email{\url{luigi.ambrosio@sns.it}}

\author[]{Francesco Nobili} 
\address{Università di Napoli Federico II, Dipartimento di Matematica e Applicazioni, Via Cintia, Monte S. Angelo, 80126 Napoli (NA), Italy}
\email{\url{francesco.nobili@unina.it}}

\author[]{Federico Renzi} 
\address{Scuola Normale Superiore, Piazza dei Cavalieri 7, 56126 Pisa, Italy}
\email{\url{federico.renzi@sns.it}}

\author[]{Federico Vitillaro} 
\address{Scuola Normale Superiore, Piazza dei Cavalieri 7, 56126 Pisa, Italy}
\email{\url{federico.vitillaro@sns.it}}

\begin{document}
\begin{abstract}
    We establish the stability of local Riemannian Ricci curvature lower bounds along Gromov-Hausdorff convergence. A central part of our analysis is devoted to showing the stability of the parallelogram identity for weak gradients, obtained implementing the Lagrangian approach developed in \cite{NobiliRenziVitillaro25} in the local setting. As an application, we deduce the almost everywhere existence of Euclidean weak tangents. An important ingredient, of independent interest, is an effective local Evolution Variational Inequality along the heat flow on sufficiently small balls, with a remainder term depending on the rate of decay of the flow. This has applications to strong displacement convexity of the Entropy functional along local Wasserstein interpolations and to local essential nonbranching properties.
\end{abstract}
\maketitle

\setcounter{tocdepth}{1}
\tableofcontents
\section{Introduction}
A central theme in analysis and geometry is the study of shapes and their degeneration under curvature bounds. A fundamental observation of Gromov (\cite{Gromov07}) is that the manifolds $(M^d,g)$ with uniform Ricci curvature lower bounds form a pre-compact class with respect to Gromov-Hausdorff (GH) convergence. Understanding the resulting limit structures (Ricci limits) is therefore a natural problem with important geometric and analytic consequences.

A systematic study of Ricci limits was initiated by Cheeger and Colding in \cite{Cheeger-Colding96,Cheeger-Colding97I,Cheeger-Colding97II,Cheeger-Colding97III}. Among their contributions are the almost splitting theorem, extending the classical splitting theorem of Cheeger--Gromoll (\cite{CheegerGromoll71}), and the continuity of Neumann eigenvalues along GH convergence, confirming a conjecture of Fukaya (\cite{Fukaya87}). The latter result reveals in particular the stability of the first-order differential structure of manifolds under uniform Ricci curvature bounds.

\medskip 

Motivated by the Cheeger-Colding theory, and by simultaneous developments in optimal transport \cite{JKO98,McCann97,CorderoMcCannSc01,SturmVonRenesse09}, Lott-Villani and Sturm introduced independently a synthetic notion of Ricci curvature lower bounds for metric measure spaces in \cite{Lott-Villani09,Sturm06I,Sturm06II}. A metric measure space is a triple $\Xdm$, where $(\X,\sfd)$ is a metric space endowed with a Borel measure $\mm$. The curvature-dimension condition ${\sf CD}(K,N)$ synthetically encodes Ricci curvature bounded below by $K$ and dimension bounded above by $N$ through convexity properties of entropy functionals along Wasserstein geodesics (see \cite{Villani2016,Sturm24_Survey}).

A crucial feature of this notion is that, thanks to its optimal transport-based definition, it is stable under appropriate notions of GH convergence. As a byproduct, this nonsmooth class contains that of Ricci limit spaces, but this inclusion is strict, as it contains many more non-Riemannian geometries. This led the first-named author, together with Gigli and Savar\'e, to formulate in \cite{AmbrosioGigliSavare11-2} a Riemannian curvature-dimension condition ${\sf RCD}(K,\infty)$ enforcing a Riemannian-like Ricci curvature lower bound (and no dimension upper bound) by looking at contractivity properties of the heat flow. Later, the finite-dimensional class of ${\sf RCD}(K,N)$ spaces was proposed in \cite{Gigli12} by coupling the ${\sf CD}(K,N)$ condition with infinitesimal Hilbertianity. The latter is defined by looking at the Sobolev calculus $H^{1,2}(\X)$ over a metric measure space (\cite{Shan00,Hajlasz96,Cheeger00,AmbrosioGigliSavare11-3}, see also \cite{AmbrosioIkonenLucicPasqualetto24} for a detailed discussion) and requiring $H^{1,2}(\X)$ to be Hilbert, equivalently that weak parallelogram identity holds
\begin{equation}
   |\nabla (f+g)|^2 + |\nabla (f-g)|^2 = 2|\nabla f|^2 +2|\nabla g|^2,\qquad \mm\text{-a.e.},\label{parallelogram intro}
\end{equation}
for all $f,g \in H^{1,2}(\X)$, where $|\nabla f|$ is the weak gradient. We also recall the works \cite{BacherSturm10,AmbrosioGigliSavare12,AGMR15,AmbrosioMondinoSavare13-2,EKS15,CM16} establishing crucial results in the ${\sf RCD}$-theory (see \cite{AmbICM} for a more complete introduction to the topic). 

\medskip 

Even though the ${\sf CD}(K,N)$ condition is stable under GH convergence, proving the stability of the ${\sf RCD}$ class required showing that infinitesimal Hilbertianity is stable as well within the class (and in general, this might be false \cite{LucicPasqualettoRajala23}). 
This was deduced in \cite{AmbrosioGigliSavare11-2} with an approach based on the stability of the Evolution Variational Inequalities (EVI) for the heat flow, and in \cite{GMS15} by looking alternatively at the parallelogram identity, together with a comprehensive study of the various (mostly equivalent) notions of GH convergence appearing in the aforementioned literature. We also recall the work \cite{Gigli10} where a useful set-up was originally clarified (see also the recent \cite{GigliVincini24}). Notice that these stability results are highly nontrivial, as \eqref{parallelogram intro} involves the first-order differential structure of a metric measure space through the concept of weak gradient. This breakthrough stimulated further stability results for central objects in geometric analysis, including heat kernels (\cite{AmbrosioHondaTewodrose18}), regular Lagrangian flows (\cite{AmbrosioStraTrevisan17}), sharp geometric and functional constants (see, e.g.\ \cite{Honda15,Honda17,AmbrosioHonda17,AmbrosioHondaPortegies18,HondaKettererMondelloPeralesRigoni24,NobiliViolo24,NobiliViolo24_PZ,NobiliViolo26,Nobili24_overview,AntonelliBrueFogagnoloPozzetta22,AntonelliPasqualettoPozzettaSemola25,Pozzetta23}), and structural results in codimension one (\cite{AmbrosioBrueSemola19,BruPasSem19,BruPasSem21-constantcodimension}). We refer to \cite{Gigli23_working} for a comprehensive overview.

\medskip 
Turning to the goals of this work, we note that, unlike the local and extrinsic Cheeger-Colding theory, the synthetic conditions are intrinsically global. Even though local Riemannian curvature-dimension conditions can be easily formulated (see Section \ref{sec:CD loc defs}), establishing a corresponding stability theory is highly nontrivial.
 In this direction, \cite{HondaSun26} studied almost smooth ${\sf RCD}$ and locally ${\sf RCD}$ spaces, raising in \cite[Question 7.5]{HondaSun26} the question of whether stability results analogous to \cite{GMS15} should hold under suitable local assumptions along the sequences. A major obstacle is that much of the modern ${\sf RCD}$ theory relies on \cite{GMS15}, whose arguments crucially exploit the stability of the heat flow under GH convergence, a tool that does not readily extend to the local setting as it is global in nature.

Nevertheless, infinitesimal Hilbertianity is inherently local, being characterized by the identity \eqref{parallelogram intro}. This suggests that its stability should be accessible under local Ricci lower bounds. In this work, we rigorously verify this heuristic. Our main objectives are:
\begin{itemize}
    \item[a)] to introduce a local ${\sf CD}$ condition on open sets and establish the fundamental local Doubling, Poincaré, and GH-stability properties, in the spirit of \cite{Sturm06I,Sturm06II,Lott-Villani09};

    \item[b)] to introduce a local ${\sf RCD}$ condition and investigate local contraction properties of the heat flow, in the spirit of \cite{AmbrosioGigliSavare11,AmbrosioGigliSavare11-2};

    \item[c)] to prove the stability of the local ${\sf RCD}$ condition under GH convergence through a Lagrangian approach, substantially different from \cite{GMS15}.
\end{itemize}
\medskip 
\noindent\textbf{Statements of the main results}.
Given a metric measure space $\Xdm$ and an open set $\varnothing~\neq~\Omega\subset~\X$, we say that $\Omega$ satisfies ${\sf CD}_{loc}(K(\cdot),N(\cdot))$ if for all $x \in \Omega$ there is a radius $r(x)\in(0,+\infty]$, called \emph{local radius} of $\Omega$, so that the usual convexity of entropies as originally formulated in \cite{Lott-Villani09,Sturm06I,Sturm06II} holds, with parameter $K(x),N(x)$ and for every pair of probabilities $\mu_0,\mu_1 \in \PP(\X)$ with supports in $B_{r(x)/2}(x)$. If $(\X,\sfd)$ is locally complete, we shall further decrease $r(x)$ so that $(\overline{B}_r(x),\sfd)$ is a complete metric space. We again stress that $K(x)\in\R$, $N(x)\in [1,\infty)$ depend on the point $x$, as opposed to the local notion already considered in \cite{Sturm06II,BacherSturm10}. Additionally, we say that $\Omega$ satisfies ${\sf RCD}_{loc}(K(\cdot),N(\cdot))$ if it is also infinitesimally Hilbertian (equivalently, \eqref{parallelogram intro} holds for every $f,\,g$ with support well contained in $\Omega$, see Theorem \ref{thm:equivalent hilbert}). Similar definitions are given for open sets satisfying the infinite dimensional analogues ${\sf CD}_{loc}(K(\cdot),\infty)$ and ${\sf RCD}_{loc}(K(\cdot),\infty)$. See Section \ref{sec:CD loc defs} for the details.

Our first main result is that the ${\sf RCD}_{loc}(K(\cdot),N(\cdot))$ condition is stable for the pointed measured Gromov convergence (pmG for short, see Definition \ref{def:pmG} adopting the extrinsic approach of \cite{GMS15}), provided the defining curvature dimension parameters $K(\cdot),N(\cdot)$ and the local radius $r(\cdot)$ are uniformly bounded along the sequence.
\begin{theorem}\label{Main intro}
Let $(\X_n, \sfd_n, \mm_n, x_n)$ be a sequence of pointed complete metric measure spaces that are pmG-converging to a pointed complete limit space $(\X_\infty, \sfd_\infty, \mm_\infty, x_\infty)$, and let $(\Z,\sfd)$ be a realization of the convergence. Let $\Omega_n\subseteq \X_n,\Omega_\infty \subseteq \X_\infty$ be non-empty open sets with $\supp(\mm_n \res {\Omega_n})=\overline{\Omega}_n$. Suppose that for all $1 \leq n < \infty$ it holds that $\Omega_n$ satisfies ${\sf RCD}_{loc}(K_n(\cdot),N_n(\cdot))$ and that for all $x \in \Omega_\infty$ there are $y_n \in \Omega_n$ with $\sfd(y_n,x)\to 0$ such that, denoting $r_n(\cdot)$ the local radius of $\Omega_n$, it holds
\[
    \liminf_{n\to \infty} r_n(y_n) >0,\qquad K_\infty(x)\coloneqq \liminf_{n\to \infty} K_n(y_n)>-\infty,\qquad N_\infty(x)\coloneqq \limsup_{n\to \infty} N_n(y_n)<+\infty.
\]
Then $\Omega_\infty$ satisfies ${\sf RCD}_{loc}(K_\infty(\cdot),N_\infty(\cdot))$ with local radius
$r_\infty(x)=c \cdot \liminf_n r_n(y_n)$ for some universal constant $c>0$. Finally, if $r_\infty (x)=+\infty$ for some $x\in\Omega_\infty$, then $\X_\infty$ satisfies ${\sf RCD}(K,N)$ globally for $K=K_{\infty}(x)$, $N=N_{\infty}(x)$.
\end{theorem}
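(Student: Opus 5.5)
We split the proof into two parts: stability of the local ${\sf CD}$ condition, and stability of local infinitesimal Hilbertianity. The second part is the real content.

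\textbf{Step 1: local ${\sf CD}$ passes to the limit.} Fix $x\in\Omega_\infty$ and the approximating points $y_n\in\Omega_n$. Set $\rho:=\liminf_n r_n(y_n)>0$, pass to a subsequence realizing the liminf, and fix $K<K_\infty(x)$, $N>N_\infty(x)$ arbitrarily close. For $n$ large, $B_{r_n(y_n)/2}(y_n)$ contains $B_{c\rho}(x)\cap \X_n$ inside $\Z$ for a small universal $c$. Take $\mu_0,\mu_1$ supported in $B_{c\rho/2}(x)\subset\X_\infty$. Since $\mm_n\weakto\mm_\infty$ and $\supp(\mm_n\res\Omega_n)=\overline\Omega_n$, we can approximate them by $\mu_0^n,\mu_1^n\ll\mm_n$ supported in $B_{c\rho}(x)\cap\X_n$, with $W_2$-convergence in $\Z$ and convergence of the R\'enyi/Boltzmann entropies. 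This is the usual recovery construction of \cite{Sturm06II,Lott-Villani09}, applied to the localized measures $\mm_n\res B_{2c\rho}$.

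The ${\sf CD}$ inequality on the small ball gives $W_2$-geodesics $\mu_t^n$ with the required convexity for parameters $K_n(y_n),N_n(y_n)$. Monotonicity in the parameters lets us replace these by $K,N$ for $n$ large. A local doubling property bounds $\mm_n(B_{2c\rho})$ uniformly, so the family $\mu^n_t$ is tight, with supports in a bounded region by a triangle inequality. Extract a limit geodesic via Arzel\`a--Ascoli in $(\PP(\Z),W_2)$. Lower semicontinuity of the entropy under joint weak convergence of $(\mu_t^n,\mm_n)$ then passes the inequality to the limit. Finally let $K\uparrow K_\infty(x)$ and $N\downarrow N_\infty(x)$.

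Two technical points need care. The limit geodesic must stay in $\X_\infty$ and be a geodesic there; this holds because $\supp\mm_n$ converges and the midpoints lie on $\supp\mm_\infty\subset\X_\infty$. If $\mm_\infty$ has full support on $\Omega_\infty$, the supports of the limit measures stay within the local radius.

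\textbf{Step 2: stability of infinitesimal Hilbertianity.} By Theorem \ref{thm:equivalent hilbert}, it suffices to prove the parallelogram identity \eqref{parallelogram intro} for $f,g$ Lipschitz with support in a ball $B=B_{r_\infty(x)/4}(x)$. Integrating against a nonnegative cutoff $\varphi$, and using the inequality ``$\le$'' that holds in general after swapping $(f,g)\leftrightarrow(f+g,f-g)$, this reduces to a single upper bound:
\[
\int\varphi\big(|\nabla (f+g)|^2+|\nabla(f-g)|^2\big)\d\mm_\infty\le\int\varphi\big(2|\nabla f|^2+2|\nabla g|^2\big)\d\mm_\infty .
\]
To prove it, take recovery sequences $f_n\to f$, $g_n\to g$ in the sense of strong $H^{1,2}$ convergence localized on balls, with $\int\varphi_n|\nabla f_n|^2\to\int\varphi|\nabla f|^2$, and similarly for $g$. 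Such sequences should come from a localized version of Cheeger-energy $\Gamma$-convergence. On the spaces $\X_n$ the identity holds, so it remains to show the lower semicontinuity
\[
\int\varphi|\nabla(f\pm g)|^2\d\mm_\infty\le\liminf_n\int\varphi_n|\nabla(f_n\pm g_n)|^2\d\mm_n .
\]
This is the main obstacle. In \cite{GMS15} it follows from heat flow stability, which is unavailable locally.

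Following the Lagrangian approach of \cite{NobiliRenziVitillaro25}, I would represent $|\nabla h|$ dually through test plans. For the limit of $h_n=f_n\pm g_n$, the minimal weak upper gradient of $h$ is dominated by any $L^2$-weak limit $G$ of $|\nabla h_n|$. This must be checked along $\infty$-test plans concentrated in $B$. Each such plan should be approximated by plans on $\X_n$ with bounded compression. To build them, use the local ${\sf CD}$/${\sf RCD}$ structure: displacement interpolation between small balls gives bounded density by the local convexity of entropy, together with the effective local EVI and local essential nonbranching. Gluing these gives the upper gradient inequality in the limit. Weak lower semicontinuity of the localized energy along $\varphi_n\to\varphi$ then concludes Step 2.

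\textbf{Step 3: the global case.} If $r_\infty(x)=+\infty$, then $\rho=+\infty$, so the argument of Step 1 applies to all pairs $\mu_0,\mu_1$ with bounded support. By the local-to-global consistency of ${\sf CD}$ with constant parameters, this gives ${\sf CD}(K,N)$ on all of $\X_\infty$. Step 2 applied on exhausting balls gives global infinitesimal Hilbertianity, hence ${\sf RCD}(K,N)$.
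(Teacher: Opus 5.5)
Your Steps 1 and 3 follow the paper: it localizes the stability argument of Villani Ch.~29 to small balls, where uniform doubling is available, and lets the radius go to $+\infty$ for the global statement. Two minor corrections there. Tightness of $(\mu^n_t)$ comes from the uniform entropy bound given by the convexity inequality together with tightness of $\mm_n\res B$, not from a bound on $\mm_n(B_{2c\rho})$. Also, in the $N$-dimensional inequality the right-hand side depends on the coupling through the $\tau$-coefficients, so you need Villani's joint semicontinuity of distorted functionals, not only lower semicontinuity of an entropy.

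The genuine gap is in Step 2. Inserting the weight $\varphi$ makes everything depend on a localized lower semicontinuity $\int\varphi|\nabla h|^2\,\d\mm_\infty\le\liminf_n\int\varphi_n|\nabla h_n|^2\,\d\mm_n$. Your weighted recovery sequences presuppose it too. The mechanism you propose for it does not come out of the construction: that is, $|\nabla h|\le G$ for every $L^2$-weak limit $G$ of $|\nabla h_n|$, tested along approximating plans $\pi_n$. Along $\pi_n$ you only have $\int h_n(\gamma_1)-h_n(\gamma_0)\,\d\pi_n\le\int|\nabla h_n|\,\theta_n\,\d\mm_n$, where $\theta_n$ is the density of $\int_0^1(\e_t)_\sharp(|\dot\gamma_t|\pi_n)\,\d t$. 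Controlling $\Comp(\pi_n)$ and $\rmKe(\pi_n)$ bounds $\theta_n$ in $L^2$ but gives no strong convergence. You would be pairing two weakly converging sequences, so $\int G\,\theta\,\d\mm_\infty$ does not appear in the limit. What survives is the Cauchy--Schwarz bound $\Comp(\pi_n)^{1/2}\rmKe(\pi_n)^{1/2}\rmCh(h_n)^{1/2}$, which concerns total energy only.

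The paper therefore never uses weights. By locality it suffices to get the identity a.e.\ on each small ball $B$. Theorem \ref{thm:equivalent hilbert} with $\Omega=B$ reduces this to $\rmCh(f+g)+\rmCh(f-g)=2\rmCh(f)+2\rmCh(g)$ for $f,g$ supported in $B$. That identity follows from the recovery sequences of Proposition \ref{Gammalimsup} and the unweighted liminf of Theorem \ref{PolyProp}. The latter relies on the sharp integrated characterization of Theorem \ref{thm:Sobolevintegrated}. It therefore needs only plans with $\limsup\Comp(\pi_n)\le\Comp(\pi)$, $\limsup\rmKe(\pi_n)\le\rmKe(\pi)$, and $L^2$-strongly converging endpoint densities, which are coupled with $f_n\weakto f_\infty$ via \eqref{eq:coupling}.

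The second missing piece is how $\limsup\Comp(\pi_n)\le\Comp(\pi)$ is obtained when $K_n(y_n)<0$. This is where all the local Riemannian structure enters, and naming ``local EVI and essential nonbranching'' does not supply it. Gluing $M$ interpolations controlled by Theorem \ref{thm:rajalaCDqKN} helps only if the constants tend to $1$ as $M\to\infty$. That theorem gives the factor $e^{K^-D^2/6}$, with $D$ the supremum of $\Lip$ over \emph{all} optimal geodesic plans between the endpoints, and this need not be small even for $W_2$-close endpoints. The paper's chain is as follows.
\begin{enumerate}
\item Finite $N_n$ gives local doubling (Corollary \ref{cor:dim_CDloc}).
\item Local doubling gives Gaussian heat kernel bounds on small balls viewed as John domains. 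These kill the error term in the local EVI of Theorem \ref{th:EVI Intro} (Proposition \ref{prop:flux anulus vanish}) and yield strong ${\sf CD}_{loc}$ (Theorem \ref{th:RCD_implies_strongCD}).
\item Strong ${\sf CD}_{loc}$ makes optimal geodesic plans between absolutely continuous marginals unique (Corollary \ref{final_cor}). Hence Corollary \ref{cor:improved_rajala} gives the factor $e^{K^-\Lip(\eta_{i,n})^2/6}$ for the actual plan.
\item The modification of \cite[Proposition 4.4]{NobiliRenziVitillaro25} discards long rays at vanishing cost in $L^1$ and $L^\infty$. This gives $\Lip(\eta_{i,n})^2\le W_2(\tilde\rho_{i,n}\mm_n,\tilde\rho_{i+1,n}\mm_n)^{1/2}$, which tends to $0$ as $n\to\infty$ and then $M\to\infty$.
\end{enumerate}
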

This analysis, and in particular the last conclusion, has important application in the to study the regularity of weak tangents under local curvature bounds. Indeed, combining our methods (especially the content of Section \ref{sec:nonnegative} that is tailored for nonnegative curvature) with the work \cite{GigliMondinoRajala15}, we can deduce the following result, referring to Definition \ref{Def:weakTan} for the relevant notation.
\begin{theorem}\label{thm:weak tangents intro}
    Let \((\X,\sfd,\mm)\) be a complete metric measure space, and let $\varnothing \neq \Omega\subseteq  \X$ be an open set satisfying ${\sf RCD}_{loc}(K(\cdot),N(\cdot))$. Then, for all $x\in\Omega \cap \supp(\mm)$ and all $(Y,\rho,\mu,y)\in {\rm Tan}(\X,\sfd,\mm,x)$, we have that $Y$ satisfies ${\sf RCD}(0,N(x))$. Furthermore, at $\mm$-almost every $x \in \Omega$ there exists $n \in \mathbb{N}$, $n \le N(x)$, such that
    \[
        (\mathbb{R}^n, \sfd_{\sf eu}, \mathcal{L}^n, 0) \in \mathrm{Tan}(\X, \sfd, \mm, x),
    \]
    where $\sfd_{\sf eu}$ is the Euclidean distance, and $\mathcal{L}^n$ is the Lebesgue measure normalized so that $\int_{B_1(0)} (1 - |x|) \, \d\mathcal{L}^n(x) = 1.$
\end{theorem}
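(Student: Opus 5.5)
The plan is a blow-up argument: feed the rescaled spaces into Theorem \ref{Main intro}, then invoke \cite{GigliMondinoRajala15} for the Euclidean tangents.

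\textbf{Step 1: tangents are ${\sf RCD}(0,N(x))$.} Fix $x\in\Omega\cap\supp(\mm)$ and a tangent $(Y,\rho,\mu,y)\in{\rm Tan}(\X,\sfd,\mm,x)$. By Definition \ref{Def:weakTan}, this is the pmG-limit of rescaled spaces
\[
\X_n:=(\X,r_n^{-1}\sfd,\mm^x_{r_n},x), \qquad r_n\downarrow 0,
\]
where $\mm^x_{r_n}$ is the normalized measure. I would take $\Omega_n:=B^{\sfd}_{r(x)/4}(x)$, regarded in $\X_n$; this is a ball of radius $r(x)/(4r_n)\to\infty$. The steps are:
\begin{enumerate}
\item Rescaling $\sfd\mapsto\lambda^{-1}\sfd$ and multiplying $\mm$ by a constant turn ${\sf CD}(K,N)$ into ${\sf CD}(\lambda^2K,N)$, and they preserve infinitesimal Hilbertianity. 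Hence $\Omega_n$ satisfies ${\sf RCD}_{loc}(r_n^2K(\cdot),N(\cdot))$ with local radius $r_n^{-1}r(\cdot)$.
\item The local ${\sf CD}$ on $B_{r(x)/2}(x)$ with the fixed parameters $K(x),N(x)$ gives, after rescaling, ${\sf CD}(r_n^2K(x),N(x))$ for every pair of measures supported in the rescaled ball of radius $r(x)/(2r_n)$, centered at $x$.
\item For a point $z\in Y$, take approximating points $y_n$ with $\sfd_n(y_n,x)$ bounded. Since the local ${\sf CD}$ condition centered at $x$ already covers balls around $y_n$ of radius $\gtrsim r(x)/r_n\to\infty$, we may use $x$'s parameters at every $y_n$. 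With that choice, $\liminf_n K_n(y_n)=\lim_n r_n^2K(x)=0$, $N_n(y_n)=N(x)$, and $\liminf_n r_n(y_n)=\infty$.
\item Theorem \ref{Main intro} then applies. Its last conclusion, with $r_\infty=+\infty$, gives that $Y$ is globally ${\sf RCD}(0,N(x))$.
\end{enumerate}

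\textbf{Technical points.} Two things need care: the support condition $\supp(\mm_n\res\Omega_n)=\overline\Omega_n$, and the completeness of closed balls used in the definition. The first I would arrange by restricting to $\supp(\mm)$, which is harmless for tangents. A possible subtlety is that Theorem \ref{Main intro} wants the data defined on every point of $\Omega_n$ and the limit open set $\Omega_\infty$ equal to all of $Y$. Here I would use exactly the freedom in the definition of ${\sf CD}_{loc}$ noted above: any point inside $B_{r(x)/4}(x)$ may be assigned $x$'s constants and radius $r(x)/4$.

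\textbf{Step 2: Euclidean tangents a.e.} I would follow \cite{GigliMondinoRajala15}, whose argument is local in nature and has two ingredients.
\begin{enumerate}
\item \emph{Iterated tangents.} Local doubling, which comes from the local ${\sf CD}$ theory (Bishop--Gromov on small balls), gives, via the Preiss-type principle, that at $\mm$-a.e.\ $x$ tangents of tangents are tangents.
\item \emph{Splitting.} Iteratively, one finds tangents with many lines and applies the splitting theorem to ${\sf RCD}(0,N(x))$ spaces (available by Step 1) to split off Euclidean factors. The dimension bound $N(x)$ caps the iteration, yielding $\R^n\in{\rm Tan}$ with $n\le N(x)$.
\end{enumerate}
The genuinely local input is an a.e.\ existence of ``good'' points, which GMR obtain from doubling and the Lebesgue differentiation/excess estimates for distance functions (Busemann-type functions from Laplacian comparison on the tangent). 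Since these estimates are performed on the tangent space, which is globally ${\sf RCD}(0,N)$, only doubling on small balls of $\X$ is needed. That doubling is supplied by the local theory, with a countable covering of $\Omega$ by balls $B_{r(x)/4}(x)$ handling the $x$-dependence of $K,N$.

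\textbf{Main obstacle.} I expect the main obstacle to be Step 1's verification that Theorem \ref{Main intro} applies with $\Omega_\infty=Y$ and infinite limit radius. This requires showing that every point of $Y$ is approximated by points $y_n$ with radii tending to infinity, uniformly in the parameters; rescaled balls exhausting $Y$ should give this. The normalization of $\mm^x_r$ is irrelevant, since constant multiples of the measure preserve ${\sf RCD}$.
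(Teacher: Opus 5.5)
Your argument is correct and has the same architecture as the paper's: blow up, apply stability with infinite limit radius to get global ${\sf RCD}(0,N(x))$ on every tangent, then use doubling, the Preiss principle and \cite{GigliMondinoRajala15}. The real difference is which stability result you feed the rescaled spaces into.

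\textbf{Step 1.} You use Theorem~\ref{Main intro}. Its Hilbertianity part rests on the entire negative-curvature machinery: the local EVI with error term, Gaussian heat-kernel bounds, strong ${\sf CD}_{loc}$, essential non-branching and polygonal plans. The paper instead notes that the rescaled bounds $s_n^2K(x)$ tend to $0$, so Theorem~\ref{thm:stability dimensional CDloc} together with the nonnegative-curvature Proposition~\ref{MainT K>=0} suffices. There, a single plan from Theorem~\ref{thm:rajalaCDqKN} between the approximating marginals has compression at most $e^{K_n^-D^2/6}\max_i\|\rho_{i,n}\|_{L^\infty}$, with $D$ bounded in rescaled units. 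This is asymptotically no worse than the compression of the limit plan, and no uniqueness of geodesics is needed. So the paper's route keeps the tangent result independent of the heat-kernel and EVI analysis, while yours is valid but logically depends on all of it.

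Your reassignment of $(K(x),N(x),r(x)/2)$ to every point of a small ball around $x$ is a clean way to check the hypotheses at all approximable points of $Y$. The paper leaves this implicit by tracking only the base point. Take that ball inside $\Omega$ as well, since infinitesimal Hilbertianity is only assumed there.

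\textbf{Step 2.} You need not reconstruct how \cite{GigliMondinoRajala15} produce lines. The paper applies their Theorem 1.1 as a black box to the tangent $Y$, which is globally ${\sf RCD}(0,N(x'))$, and pulls the Euclidean tangent back to $\X$ via the Preiss principle. What your sketch should add is that their compactness and Preiss results are stated for doubling spaces. The paper handles this by localizing tangents to a small ball via \cite{GMS15} and invoking Remark~\ref{rmk:doubling small ball}. There, uniform doubling of the restricted ball comes from the existence of geodesics (the corkscrew condition), not merely from local doubling of $\mm$.
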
 
In the rest of this Introduction, we shall focus on Theorem \ref{Main intro}, that is obtained by combination of two steps:
\begin{itemize}
    \item we generalize the celebrated stability results of \cite{Lott-Villani09,Sturm95II,Sturm06II} for the convexity of entropies to the local setting (cf.\ Theorem \ref{thm:stability dimensional CDloc});
    \item we generalize the stability of the  Riemannian curvature dimension condition of \cite{AmbrosioGigliSavare11-2,GMS15}, and especially of the infinitesimal Hilbertianity, to the local setting.
\end{itemize}
We shall give emphasis on the second step that is the main challenge of our work and contains several ingredients of independent interest. First, to better highlight the main novelties, we briefly review the previously available strategy (see \cite{AmbICM,Gigli23_working} for a detailed discussion). One of the main results of \cite{GMS15} is the so-called Mosco convergence of the Cheeger energy functional
\[
    \rmCh(f) \coloneqq  \inf \Big\{ \liminf_{n\to\infty} \int \left(\lip  (f_n)\right)^2\, \d \mm \colon f_n\in L^2(\X)\cap \Lip(\X), f_n\to f \text{ in } L^2(\X) \Big\},
\]
along Gromov converging spaces. This directly yields the stability of infinitesimal Hilbertianity by standard reasoning (see \cite[Theorem 7.2]{GMS15}). As already remarked, the main ingredients in \cite{GMS15} seem difficult to generalize to local settings, as they rely on Eulerian and nonlocal arguments based on the stability of the heat flow.

In the present work, we instead rely on a Lagrangian argument along converging spaces recently developed by the second-to-last named authors in \cite{NobiliRenziVitillaro25}. This is effective in showing the Mosco convergence of Cheeger energies while avoiding, in its core argument, any Eulerian methods. If, on the one hand, this local result follows rather directly from \cite{NobiliRenziVitillaro25} when one assumes nonnegative curvature (see Theorem \ref{MainT K>=0}), most of our efforts are devoted to handling the case of locally bounded and possibly negative curvature.

Our second main result deal with the local Mosco convergence of the Cheeger energy under local Ricci lower bounds (see Definition \ref{def:L2 convergence} for the notions of $L^2$ strong/weak convergences).
\begin{theorem}\label{thm:mosco local intro}
    Let $(\X_n, \sfd_n, \mm_n, x_n)$ be a sequence of pointed complete metric measure spaces that are pmG-converging to a pointed complete limit space $(\X_\infty, \sfd_\infty, \mm_\infty, x_\infty)$, and let $(\Z,\sfd)$ be a realization of the convergence.
 Let $\Omega_n\subseteq \X_n,\Omega_\infty \subseteq \X_\infty$ be non-empty open sets with $\supp(\mm_n \res {\Omega_n})=\overline{\Omega}_n$.  Suppose that $\Omega_n$ satisfies ${\sf RCD}_{loc}(K_n(\cdot),N_n(\cdot))$ for all $1 \leq n < \infty$. Furthermore, suppose that there is $x \in \Omega_\infty$ and $y_n \in \Omega_n$ with $\sfd(y_n,x) \to 0$ such that 
    \[
        N_n(y_n)<+\infty,\quad \forall n \in\N,\qquad r(x)\coloneqq \liminf_{n\to\infty} r_n(y_n) >0,\qquad \liminf_{n\to\infty} K_n(y_n) > -\infty.
    \]
    Let $f_\infty \in L^2(\X_\infty)$ with $\supp(f_\infty)\Subset B\coloneqq B_{r}(x)$ for some $r<c \cdot r(x)$ where $c>0$ is a universal constant. 
    Then, it holds:
    \begin{enumerate}[(i)]
        \item there exists $f_n \in L^2(\X_n) \cap \Lip(\X_n)$ with $\supp(f_n) \subset B \cap \X_n$ that converges $L^2$-strong to $f_\infty$ such that
        \[
             \limsup_{n\to \infty} {\rm Ch}(f_n) \leq {\rm Ch}(f_\infty) .
        \] 
        \item for every sequence $f_n \in L^2(\mm_n)$ converging $L^2$-weak to $f_\infty \in L^2(\mm_\infty)$, it holds
        \[
            {\rm Ch}(f_\infty) \leq \liminf_{n\to\infty} {\rm Ch}(f_n).
        \]
    \end{enumerate}
    Finally, if $r(x)=+\infty$, then same conclusions hold for all $f_\infty \in L^2(\X_\infty)$ with no restriction on the support.
\end{theorem}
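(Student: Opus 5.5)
Since the statement is local, I will first localize both the spaces and the functionals around $x$. Set $R\coloneqq r(x)>0$ and fix $K\le \liminf_n K_n(y_n)$, so that for $n$ large the balls $B_{R/2}(y_n)\subset\X_n$ carry a ${\sf CD}(K,N_n(y_n))$-type convexity of entropies for measures supported in them, together with completeness of $\overline B_{R/2}(y_n)$. From the local ${\sf CD}$ theory (part a), I will use uniform local doubling and a $(1,1)$-Poincaré inequality on $B_{cR}(y_n)$. The constants depend only on $K$, $R$ and an upper bound for $N_n(y_n)$. If $N_n(y_n)$ is not bounded, I will instead use the infinite-dimensional local ${\sf CD}(K,\infty)$ theory, whose estimates depend only on $K$ and $R$. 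These give Lipschitz density of test functions supported in $B$ and the identification ${\rm Ch}=\int|\nabla f|^2\,\d\mm$ with $|\nabla f|=\lip f$ a.e. for Lipschitz $f$ (Cheeger). Since $\supp f_\infty\Subset B$ and $r<cR$, every function involved can be multiplied by a cutoff $\chi$ with $\supp\chi\subset B$ and $\chi=1$ near $\supp f_\infty$. The error this introduces is controlled by the Leibniz rule, so only the energy inside $B$ matters.

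\textbf{Step 1: $\Gamma$-liminf (ii).} I will follow the Lagrangian approach of \cite{NobiliRenziVitillaro25}. Take the dual characterization ${\rm Ch}(f)=\sup$ over test plans $\pi$ concentrated on curves inside $B$ of $\int|f(\gamma_1)-f(\gamma_0)|\ldots$; more precisely, I will characterize $|\nabla f|$ through the inequality $\int |f(\gamma_1)-f(\gamma_0)|\,\d\pi\le\iint_0^1 g(\gamma_t)|\dot\gamma_t|\,\d t\,\d\pi$. The key point is to build, on the limit space, enough test plans inside $B$ that are \emph{limits} of test plans $\pi_n$ on $\X_n$ with uniformly bounded compression and kinetic energy. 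I will obtain these as local Wasserstein geodesics between measures with bounded densities supported in $B_{cR}(y_n)$. The local ${\sf CD}$ condition gives $L^\infty$ bounds on the interpolating densities: in the finite-dimensional case via the Bishop--Gromov-type local density estimate, and otherwise via the local EVI. These bounds are uniform in $n$ because $K$ and $R$ are uniform. Passing $f_n\weakto f_\infty$ along $\pi_n\to\pi$ in $\Z$, with lower semicontinuity of the right-hand side in the weak gradient (weak $L^2$ limits of $|\nabla f_n|$), then yields a weak upper gradient for $f_\infty$ with $\int g^2\le\liminf{\rm Ch}(f_n)$. The main obstacle I expect is showing that the geodesic plans are rich enough to detect the minimal weak gradient locally, i.e.\ that plans built from interpolations with bounded compression characterize $|\nabla f|$. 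I would try the localized version of the density result of \cite{NobiliRenziVitillaro25}, restricting to measures in $B_{cR}$ where local essential nonbranching and the density bounds hold.

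\textbf{Step 2: $\Gamma$-limsup (i).} Here the classical route is to approximate $f_\infty$ by Lipschitz functions with $\int(\lip f)^2\to{\rm Ch}(f_\infty)$, extend them to $\Z$ by McShane, and restrict. This fails directly because $\lip$ on $\X_n$ is not controlled by $\lip$ on $\X_\infty$. Instead I will use duality with Step 1. Infinitesimal Hilbertianity of $\Omega_n$, which is quadratic and hence uniformly convex, lets me argue as in \cite[Theorem 7.2]{GMS15}, with the localized ingredients replacing the global ones. Consider the localized relaxed energies ${\rm Ch}_n^B$ on functions supported in $B$, and take a $\Gamma$-limit along a subsequence. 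Step 1 gives $\Gamma\text{-}\lim\ge{\rm Ch}_\infty$, so it remains to prove the reverse inequality. For this I will construct recovery sequences via slopes of Kantorovich potentials and Hopf--Lax semigroups $Q_t$, which are stable under uniform convergence in $\Z$ and satisfy $\lip Q_tf=|\nabla Q_tf|$ a.e. locally by the local ${\sf CD}$ structure. Because Lipschitz functions of this form are energy-dense in the limit, this gives $\limsup{\rm Ch}(f_n)\le{\rm Ch}(f_\infty)$ on a dense class, and a diagonal argument followed by multiplication by the cutoff $\chi$ ensures $\supp f_n\subset B\cap\X_n$.

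Finally, if $r(x)=+\infty$, all the estimates above hold on every ball with the same $K$, so the cutoff radius can be taken arbitrarily large. Exhausting with $r\to\infty$ and using a diagonal argument removes the support restriction.
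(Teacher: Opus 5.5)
Both parts have genuine gaps.

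\textbf{Part (ii): the compression constant.} Your weak-upper-gradient formulation does not survive the limit:
\begin{itemize}
\item $\int|f_n(\gamma_1)-f_n(\gamma_0)|\,\d\pi_n$ depends on the joint law of the endpoints, not only on the endpoint marginals, while $f_n$ converge only $L^2$-weakly;
\item $\iint|\nabla f_n|(\gamma_t)|\dot\gamma_t|\,\d t\,\d\pi_n$ pairs the weakly converging $|\nabla f_n|$ with the densities of $\int_0^1(\e_t)_\sharp(|\dot\gamma_t|\,\pi_n)\,\d t$, which you have no reason to expect to converge strongly.
\end{itemize}
The formulation that does pass to the limit is the integrated one of Theorem \ref{thm:Sobolevintegrated}. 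It is linear in the endpoint marginals, so \eqref{eq:coupling} applies. But there ${\rm Comp}(\pi)$ enters with its exact value. Approximating plans with ${\rm Comp}(\pi_n)\le C(K,R)\,{\rm Comp}(\pi)$, which is all you claim, only give ${\rm Ch}(f_\infty)\le C(K,R)\liminf_n{\rm Ch}(f_n)$. For $K<0$ the factor $e^{K^-D^2/6}$ in \eqref{Eq:rajalacompressionestimate} exceeds $1$; this is why Proposition \ref{MainT K>=0} needs $K\ge0$.

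\textbf{How the paper makes the factor tend to $1$.} It fixes an \emph{arbitrary} limit test plan $\pi$, so your unresolved ``richness'' issue never arises. It then:
\begin{enumerate}
\item discretizes $\pi$ at times $i/M$;
\item approximates the marginals by densities on $\X_n$ with asymptotically no larger $L^\infty$ norms;
\item trims them via \cite[Proposition 4.4]{NobiliRenziVitillaro25} so that the geodesic plans $\eta_{i,n}$ live on curves with $\sfd^2(\gamma_0,\gamma_1)\le W_2^{1/2}$;
\item glues the $\eta_{i,n}$ into a polygonal plan.
\end{enumerate}
Rajala's bound must then hold with $D=\Lip(\eta_{i,n})\to0$, not with the supremum over all optimal geodesic plans between the marginals. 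This requires uniqueness of $\eta_{i,n}$ (Corollary \ref{cor:improved_rajala}), i.e.\ strong ${\sf CD}_{loc}$.

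\textbf{Strong ${\sf CD}_{loc}$ is assumed, not derived.} Obtaining it from ${\sf RCD}_{loc}$ is the core of the paper, and your proposal just assumes the resulting local nonbranching. The chain is:
\begin{itemize}
\item finiteness of $N_n$, with $\supp(\mm_n\res\Omega_n)=\overline{\Omega}_n$, serves only to make $\mm_n$ locally doubling;
\item local doubling, together with the local Poincar\'e inequality, gives Gaussian heat kernel bounds on small balls (Theorem \ref{th:heatonOmega});
\item these bounds make the error term in the local EVI (Theorem \ref{th:EVI Intro}) vanish, yielding Theorem \ref{th:RCD_implies_strongCD}.
\end{itemize}
The density bounds themselves come from Rajala's excess-functional argument (Theorem \ref{thm:rajalaCDqKN}), not from Bishop--Gromov or the EVI.

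\textbf{Part (i): the direct route works.} Your claim that the direct route fails is wrong, and the substitute is not a proof. Proposition \ref{Gammalimsup} proves (i) with no curvature assumption:
\begin{itemize}
\item take $g^k\in\Lip(\X_\infty)$ with $\int(\lip_a g^k)^2\,\d\mm_\infty\to{\rm Ch}(f_\infty)$ and cut them off inside $B$;
\item extend them to $\tilde g^k\in\Lip(\Z)$ with $\lip_a\tilde g^k=\lip_a g^k$ on $\X_\infty$ using \cite{DiMarinoGigliPratelli}, not McShane;
\item restrict to $\X_n$.
\end{itemize}
The asymptotic slope on $\X_n$ is dominated by $\lip_a\tilde g^k$ computed in $\Z$. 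This is upper semicontinuous, bounded and boundedly supported, so weak convergence of $\mm_n$ gives $\limsup_n\int(\lip_a\tilde g^k)^2\,\d\mm_n\le\int(\lip_a\tilde g^k)^2\,\d\mm_\infty$. A diagonal argument concludes.

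Your Hopf--Lax route instead rests on an unproved energy density of such functions and on $\lip Q_tf=|\nabla Q_tf|$ a.e.\ in the limit. That is a Cheeger-type identity requiring doubling and Poincar\'e on $\X_\infty$, which you lack since $N_n(y_n)$ may be unbounded. Moreover, \cite[Theorem 7.2]{GMS15} presupposes Mosco convergence rather than supplying the $\Gamma$-limsup.
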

This result is implied by the combination of Proposition \ref{Gammalimsup}, dealing with (i) under no curvature assumptions (evident from \cite{GMS15,AmbrosioHonda17}, see also \cite[Theorem 4.4]{Gigli23_working}), and Theorem \ref{PolyProp} which settles property (ii). Notice that, differently from Theorem \ref{Main intro}, no asymptotic upper bounds on $N_n(y_n)$ are imposed. This is because Theorem \ref{PolyProp} works in the infinite-dimensional setting, but assumes a locally (possibly non uniform) doubling property that is automatically satisfied under the finite dimensionality assumption (see Corollary \ref{cor:dim_CDloc}). Hence $\limsup_{n\to \infty}N_n(y_n)=\infty$ is covered as well.

Let us discuss an important feature of the proof of Theorem \ref{thm:mosco local intro}, which is based on a Wasserstein polygonal approximation argument performed in \cite{NobiliRenziVitillaro25} (in the spirit of \cite{Lisini07}, see also \cite{GigliNobili22,NobiliPasqualettoSchultz22}). For this strategy to work under possible negative curvature, a key obstacle is to deduce the uniqueness of Wasserstein interpolations between absolutely continuous probability measures, which is typically guaranteed by a nonbranching property \cite{RajalaSturm12} (see also \cite{Deng25}). If, on the one hand, the work \cite{RajalaSturm12} is based on local arguments, its applicability is, on the other hand, possible only after establishing that the Shannon entropy 
\[
    {\rm Ent}_\mm(\mu)\coloneqq \int \rho \log (\rho)\,\d \mm,\qquad \text{if }\mu=\rho\mm,
\]
is \emph{strongly} semiconvex in the Wasserstein space, namely semiconvex along \emph{all} Wasserstein interpolations (enforcing this gives rise to the so-called \emph{strong} curvature dimension bounds). Unfortunately, the latter property is based on nonlocal arguments and leverages infinitesimal Hilbertianity exploiting the EVI for the heat flow \cite{AmbrosioGigliSavare11-2,AmbrosioGigliSavare11-3}, together with the abstract characterization results of strong geodesic convexity in \cite{DaneriSavare08} in terms of the EVI property. See \cite{AmbrosioGigliSavare08,MuratoriSavare20} for the gradient flow theory.

A third main result of our work is, therefore, the validity of the following version of the EVI property for the localized heat flow on sufficiently small neighborhoods (see Section \ref{sec:prelim} for the relevant notation and concepts in optimal transport).%
\begin{theorem}\label{th:EVI Intro}
Let $\Xdm$ be a complete metric measure space, let $\varnothing \neq \Omega\subseteq  \X$ be open, and assume that $\Omega$ satisfies ${\sf RCD}_{loc}(K(\cdot),\infty)$. Let $B\coloneqq B_{r}(x)$ be a ball such that $r < \frac{r(x)}6$ and $\frac{\d}{\d s} \mm(B_s(x))$ exists at $s =r$. Let $\eta=f\mm \in \PP(\overline{B})$, with $f \in L^2 \cap L^\infty(\overline{B})$, and let $(f_t)$ denote the $L^2(\overline{B})$-gradient flow of the Cheeger energy on $(\overline B,\sfd,\mm\res{\overline B})$ starting from $f$. Then, setting $\mu_t \coloneqq f_t \mm$, we have for all $\sigma \in \PP(\overline{B}_r(x))$ that
\begin{equation}\label{eq:intro EVI err}
    \Ent_\mm (\sigma)-\Ent_\mm(\mu_t)-\frac{K(x)}{2}W_2^2(\mu_t ,\sigma) \ge \frac 12 \frac\d{\d t}W_2^2(\mu_t, \sigma)-C\lim_{s\uparrow r}\|f_t\|_{L^\infty(B_r(x) \setminus B_{s}(x))},
\end{equation}
for a.e.\ $t>0$, where $C=C(r)=2r\frac{\d}{\d r} \mm(B_r(x))$.
\end{theorem}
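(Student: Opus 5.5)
We follow the classical AGS argument that derives the EVI from the entropy convexity plus the action estimate along the heat flow, localized to the ball $\overline B$. The ball boundary creates a flux term, and the hypothesis on $\frac{\d}{\d s}\mm(B_s(x))$ is what lets us control it.

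\textbf{Step 1: derivative of $W_2^2$ along the flow.} Fix $\sigma\in\PP(\overline B)$ and a Kantorovich potential $\varphi$ from $\mu_t$ to $\sigma$. Since $\overline B$ has diameter $2r<r(x)/3$, $\varphi$ is Lipschitz on $\overline B$ with constant $\le 2r$. By the standard duality argument (Kuwada-type, as in \cite{AmbrosioGigliSavare11-2}),
\[
\frac12\frac{\d}{\d t}W_2^2(\mu_t,\sigma)=\int_{\overline B}\varphi\,\Delta f_t\,\d\mm \qquad \text{for a.e. } t,
\]
where $\Delta$ is the Neumann-type Laplacian of $(\overline B,\sfd,\mm\res{\overline B})$. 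The right-hand side is really $-\int \langle\nabla\varphi,\nabla f_t\rangle\,\d\mm$. Now compare it with the object that convexity controls, namely the derivative of $\Ent_\mm$ along the $W_2$-geodesic from $\mu_t$ to $\sigma$, which is $-\int\langle \nabla\varphi,\nabla f_t\rangle$ as well. The two expressions differ only through how $f_t$ interacts with the truncation at the boundary of $B$.

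\textbf{Step 2: convexity along the geodesic.} The geodesic lives in $\overline B\subset B_{r(x)/2}(x)$. By the ${\sf CD}_{loc}(K(x),\infty)$ condition (strong form, if needed via the local structure), the entropy is $K(x)$-convex along it, so
\[
\Ent(\sigma)-\Ent(\mu_t)-\frac{K(x)}2W_2^2\ge \frac{\d}{\d s}\Big|_{s=0^+}\Ent(\mu_s^{geo}).
\]
We bound this derivative below by $-\int\langle\nabla f_t,\nabla\varphi\rangle\,\d\mm$ up to a boundary term, using the first-variation inequality of \cite{AmbrosioGigliSavare11-2}, which requires infinitesimal Hilbertianity. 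The boundary term arises because $\nabla$ on $\overline B$ must be compared with $\nabla$ on $\X$. Concretely, we approximate $f_t$ by $f_t\psi_s$, where $\psi_s$ is a cutoff equal to $1$ on $B_s$, equal to $0$ outside $B_r$, with $|\nabla\psi_s|\le 1/(r-s)$. The error is then at most
\[
\int|\varphi|\,|\nabla\psi_s|\,f_t\le \frac{2r\,\|f_t\|_{L^\infty(B_r\setminus B_s)}\,\mm(B_r\setminus B_s)}{r-s}.
\]
As $s\uparrow r$ this tends to $C\lim_s\|f_t\|_{L^\infty(B_r\setminus B_s)}$, which is exactly the remainder in \eqref{eq:intro EVI err}.

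\textbf{Step 3: regularity and the main obstacle.} We justify the computations for $f\in L^2\cap L^\infty$ using the maximum principle for the localized flow, which keeps $f_t\le\|f\|_\infty$, together with the usual regularization by heat flow or truncation of the log. We then pass to the limit in the cutoff. The main obstacle is Step 2: the first-variation inequality for the entropy along the geodesic must be obtained with a gradient pairing that is genuinely local. Infinitesimal Hilbertianity holds only locally, and a priori the geodesic could exit the region where calculus is Hilbertian. We would rely on the fact that supports stay in $\overline B\subset B_{r(x)/2}(x)$ and on Theorem \ref{thm:equivalent hilbert}.
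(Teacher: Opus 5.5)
Your Step 1 and your cutoff estimate match the paper. The identity $\frac12\frac{\d}{\d t}W_2^2(\mu_t,\sigma)=-\mathcal E(f_t,\varphi_t)$ is Proposition \ref{prop:der_Wass}. Replacing $f$ by a normalized cutoff and bounding $\int f\,|\nabla\chi|\,|\nabla\varphi|\,\d\mm$ is exactly how the remainder $C\lim_{s\uparrow r}\|f\|_{L^\infty(B_r(x)\setminus B_s(x))}$ arises; note that the factor $2r$ bounds $|\nabla\varphi|$, not $|\varphi|$.

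\textbf{The gap in Step 2.} It sits precisely at the ``main obstacle'' you identify, and your way out is false: the geodesic from $\mu_t$ to $\sigma$ does not live in $\overline B$. ${\sf CD}_{loc}$ only gives a plan in $\OptGeo$ of the ambient space, and balls need not be geodesically convex. Already on a thin flat cylinder, an ${\sf RCD}(0,2)$ space with $r(x)=+\infty$, balls of radius larger than half the length of the circle factor are not convex. One only knows that the curves stay in $\overline B_{2r}(x)$, and mass close to $\partial B_r(x)$ may exit $\overline B$ at arbitrarily small times.

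The first variation then breaks down in two ways. The convexity bound $\Ent_\mm(\mu_s)-\Ent_\mm(\mu_0)\ge\int\log f_0\,(\rho_s-f_0)\,\d\mm$ is vacuous (its right-hand side is $-\infty$) once mass leaves $\{f_0>0\}$. Moreover, $f_t$ belongs only to the Neumann space $H^{1,2}(\overline B)$, so it is not a Sobolev function along curves exiting $\overline B$, and the difference quotients of $\log f_t$ along $\pi$ cannot be controlled.

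\textbf{What the paper does instead.} This is what actually forces the cutoff. After cutting off and adding $\delta$, the density equals the constant $\delta'$ near $\partial B_r(x)$. It can therefore be extended by $\delta'$ to a positive function $\tilde f_{n,\delta}$ on $\overline B_{2r}(x)$ with $\log\tilde f_{n,\delta}\in H^{1,2}(\overline B_{2r}(x))$. One then compares the entropy increments of $\mu_t$ with those of $\tilde\mu_t=\mu_t+\delta'\mm\res(\overline B_{2r}(x)\setminus\overline B_r(x))$. The price is an error on the annulus $B_{r+2tr}(x)\setminus B_r(x)$ of size at most $4\omega(\delta')\,r\,\frac{\d}{\d s}\mm(B_s(x))|_{s=r}$, which vanishes as $\delta\to0$. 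So the derivative hypothesis is used twice: its right-hand version here, its left-hand version in the cutoff. The cutoff is needed because mass leaves the ball, not merely to compare $\nabla$ on $\overline B$ with $\nabla$ on $\X$.

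\textbf{The missing density bound.} You need a plan $\pi\in\OptGeo(f_{n,\delta}\mm,\sigma)$ that \emph{simultaneously} satisfies \eqref{eq:convexity Ent} and $\sup_{t\le t_0}\|\rho_t\|_{L^\infty}<\infty$. This is Proposition \ref{prop:goodgeodesics}, the local version of the Rajala-type construction of \cite{AGMR15}. The bound is what makes $\omega(\delta')$ a modulus of continuity of $z\log z$ on a bounded interval. It is also indispensable in Lemma \ref{lem5.2AGMR}: both the $L^2(\pi)$ bound on the difference quotients of $\log\tilde f_{n,\delta}$ and the metric Brenier identity $|\nabla\varphi|(\gamma_0)=\sfd(\gamma_0,\gamma_1)$ rely on it. Only then does Theorem \ref{Teo:Dirichletchain} turn the result into $-\mathcal E(f_{n,\delta},\varphi_{n,\delta})$. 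Your fallback ``strong form, if needed'' is not available: strong ${\sf CD}_{loc}$ is deduced from this very EVI in Theorem \ref{th:RCD_implies_strongCD}, so using it would be circular.

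\textbf{A smaller point.} Passing to the limits $\delta\to0$ and $n\to\infty$ uses compactness of Kantorovich potentials (Lemma \ref{lemma:Kantorovichpotentialscompactness}), which requires $\supp\sigma$ compact. General $\sigma\in\PP(\overline B_r(x))$ is recovered by integrating the inequality against $e^{K(x)t}$, using monotonicity of $t\mapsto\Ent_\mm(\mu_t)$, approximating $\sigma$, and differentiating.
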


In the above result, the heat flow $(f_t)$ is the standard Hilbertian gradient flow of a convex and lower semicontinuous functional on the Hilbert space $L^2(\overline B)$. However, it is also quadratic under the ${\sf RCD}_{loc}(K(\cdot),\infty)$ assumption, and this guarantees that this localized heat flow is a linear semigroup. Notice that the radius $r>0$ is sufficiently small depending on the local radius $r(x)$ arising from the curvature-dimension condition. The proof is then obtained by carefully revisiting arguments from \cite{AmbrosioGigliSavare11-2,AmbrosioGigliSavare11-3, AGMR15}, and our localization produces an error term $\|f_t\|_{L^\infty(B_r(x) \setminus B_{s}(x))}$ that depends on the rate of decay of the flow.

This novel EVI, which to the best of our knowledge is the first attempt to localize Hilbertian properties of the heat flow under curvature-dimension bounds, has two essential applications:
\begin{itemize}
    \item we deduce in Theorem \ref{th:RCD_implies_strongCD} the strong displacement convexity of the Shannon entropy for local Wasserstein interpolations;
    \item we obtain in Corollary \ref{final_cor} the local essential nonbranching property of the ${\sf RCD}_{loc}$ class.
\end{itemize}
As previously discussed, these ingredients are crucial to implement the Lagrangian strategy of \cite{NobiliRenziVitillaro25} and make it possible to achieve (ii) in Theorem \ref{thm:mosco local intro}. As a byproduct, we can thus show that $\Omega_\infty$ is infinitesimally Hilbertian in Theorem \ref{Main intro}.

\begin{remark}\label{rmk:locally complete intro}
\rm
  We can also consider locally complete metric measure spaces, and ask that the local radius $r(x)>0$ arising from the ${\sf CD}_{loc}$ condition is required to guarantee that $\overline{B}_{r(x)}(x)$ is complete. We will comment in Remark \ref{rmk:locally complete results} that both our main results Theorem \ref{Main intro} and Theorem \ref{thm:mosco local intro} extend to this generality. 
    \fr
\end{remark}

We conclude this Introduction by remarking that a further crucial challenge in obtaining these localized results is to show that the term $\|f_t\|_{L^\infty(B_r(x) \setminus B_{s}(x))}$ in \eqref{eq:intro EVI err} vanishes as $t \to 0$ if the starting function $f$ has support well contained in $B_s(x)$ (cf.\ Proposition \ref{prop:flux anulus vanish}). For this result to hold, we need to further assume that $\mm$ is locally doubling on $B_r(x)$ (recall Corollary \ref{cor:dim_CDloc} for our scope). This is the main reason why Theorem \ref{Main intro}  and Theorem \ref{thm:mosco local intro} were presented in a finite dimensional setting, even though they are directly implied by Theorem \ref{MainT} and Theorem \ref{PolyProp}  stated in an infinitesimal Hilbertian  \emph{strong} ${\sf CD}_{loc}(K(\cdot),\infty)$ setting. The doubling assumptions precisely guarantee that the same holds on an infinitesimal Hilbertian ${\sf CD}_{loc}(K(\cdot),\infty)$ setting, using that small enough balls are John domains so to show that the error term appearing in \eqref{eq:intro EVI err} vanishes asymptotically. This last step uses Gaussian upper bounds (cf.\ Theorem \ref{th:heatonOmega}) that have been systematically studied in connection with doubling and Poincar\'e assumptions in the abstract setting of Dirichlet forms starting from \cite{Sturm95II}, and more recently in \cite{Chen_et_al.21} under the validity of Nash-type Sobolev inequalities. The latter can be crucially proved to hold on the relevant sufficiently small balls in our proofs, as they are John domains supporting local Poincar\'e inequalities.
\section{Preliminaries}\label{sec:prelim}
\subsection{General notation}
 We use $\subset$ for the strict inclusion and $\subseteq$ for the weak inclusion. In addition, given a metric space $(\X,\sfd)$ and an open set $\varnothing \neq \Omega\subseteq \X$, we write $K\Subset \Omega$ provided ${\rm dist}(K,\Omega^c)>0$. When $\Omega=\X$, we canonically set ${\rm dist}(K,\Omega^c)=+\infty$ so that $K\Subset \X$ reduces to the standard weak inclusion. For each $r>0$ and $x\in \X$, we shall denote
\[
    B_r(x)\coloneqq \{y \in \X \colon \sfd(x,y)<r \},\qquad \overline{B}_r(x) \coloneqq \overline{B_r(x)},
\]
where $\overline{E}$ is the topological closure of a set $E\subseteq \X$. A metric space $(\X,\sfd)$ is locally complete if for all $x\in\X$ there exists $r_C(x) \in (0,+\infty]$ such that $\overline{B}_{r_C(x)}(x)$ is complete (the case $r_C(x)=+\infty$ corresponds to $(\X,\sfd)$ being complete).
A metric measure space is a triple \((\X,\sfd,\mm)\), where $(\X,\sfd)$ is a locally complete, separable metric space and $\mm\geq 0$ is a nonnegative Borel measure. We will always assume the following growth condition, which is meaningful when $(\X,\sfd)$ is unbounded, to handle infinite dimensional settings
\begin{equation}\label{eq:growth_cond}
 \int e^{-C\sfd(\cdot,x_0)^2} \d\mm < \infty \qquad \mbox{for some }C>0, x_0 \in \X.
\end{equation}
Notice that this automatically guarantees that $\mm$ is boundedly finite. Actually, in several parts of this note we shall deal with bounded metric spaces where the above is trivially satisfied: given a closed and bounded set $C\subseteq \X$, we consider the (restricted) metric measure space $(C,\sfd,\mm\res C)$. Here $\sfd$ is the distance induced on a subset, and $\mm\res C$ is the reference measure defined on the subspace $\sigma$-algebra.

For all $p\in [1,\infty)$, we denote as customary $L^p(\X),L^p_{loc}(\X)$ respectively the space of $p$-integrable functions and $p$-integrable functions on a neighbourhood of every point. Sometimes, to stress the measure we are considering, we shall write $L^p(\X,\mm)$ to avoid confusions when multiple measures are considered. If $E\subseteq \X$ is Borel with $\mm(E)<\infty$ and $u \in L^1_{loc}(\X)$, we denote 
\[
    (u)_E := \fint_E u\,\d\mm.
\]
We denote by \(\mathscr P(\X)\) the family of Borel probability measures equipped with the weak topology in duality with continuous and bounded functions \(C_b(\X)\). Given $\varphi \colon \X \to {\rm Y}$ where ${\rm Y}$ is a metric space, we define $\varphi_\sharp \mu (B) \coloneqq \mu(\varphi^{-1}(B))$, for all $B\subseteq {\rm Y}$ Borel and $\mu \in \PP(\X)$. We denote, respectively, the local Lipschitz constant and the asymptotic Lipschitz constant of $f\colon \X\to\R$ as
\[ 
\lip ( f)(x) \coloneqq \limsup_{y\to x}\frac{|f(y)-f(x)|}{\sfd(y,x)},\qquad \lip_a \, f(x) \coloneqq \limsup_{y,z\to x}\frac{|f(y)-f(z)|}{\sfd(y,z)},
\]
set to $0$ if $x$ is isolated. We shall also consider the ascending and descending slopes, defined respectively as follows:
$$
    |D^+ f|(x)= \limsup_{y\to x}\frac{(f(y)-f(x))^+}{\sfd(y,x)},\qquad|D^- f|(x)= \limsup_{y\to x}\frac{(f(y)-f(x))^-}{\sfd(y,x)}. 
$$
Given an open set $\Omega$, we shall consider the sets $\Lip(\Omega)$, $\Lip_{bs}(\Omega)$ of Lipschitz functions and Lipschitz functions with bounded support on $\Omega$, where we recall that a function $f$ is said to have bounded support in $\Omega$ provided $\supp(f)\Subset \Omega$.
\subsection{Test plans}
Denote by $C([0,1],\X)$ the set of continuous curves valued in $\X$ endowed with the sup norm, and denote by
\(AC^2([0,1],\X)\) the subset of curves
\(\gamma\in C([0,1],\X)\) so that there is \(g\in L^2(0,1)\) non-negative satisfying
\begin{equation}\label{eq:def_AC_curve}
\sfd(\gamma_t,\gamma_s)\leq\int_s^t g(r)\,\d r,
\qquad \forall s,t\in[0,1]\text{ with }s\leq t.
\end{equation}
We recall that, given \( \gamma\in AC^2([0,1],\X)\) there exists
\(|\dot\gamma_t|\coloneqq\lim_{h\to 0}\sfd(\gamma_{t+h},\gamma_t)/|h|\) for a.e.\ \(t\in (0,1)\) (see \cite[Theorem 1.1.2]{AmbrosioGigliSavare08}). We now turn our attention to the notion of test plan \cite{AmbrosioGigliSavare11,AmbrosioGigliSavare11-3}.
\begin{definition}\label{def:test_plan}
Let \((\X,\sfd,\mm)\) be a metric measure space. A probability measure \(\pi\in\mathscr P\big(C([0,1],\X)\big)\) is a test plan, provided it is concentrated on \(AC^2([0,1],\X)\) and
\begin{subequations}\begin{align*}
&\exists\,{ C}>0:\quad\forall t\in[0,1],\quad(\e_t)_\sharp\pi\leq{ C}\mm,\\
& \rmKe(\pi) \coloneqq\iint_0^1|\dot \gamma_t|^2\,\d t\d \pi <\infty.
\end{align*}\end{subequations}
The minimal constant $C\ge 0$ {is called the compression constant} of $\pi$ and is denoted by \({\rm Comp}(\pi)\).
\end{definition}
The notation $\rmKe(\pi)$ stands for the kinetic energy of a plan, and it is canonically set to $+\infty$ if $\pi$ is not concentrated on \(AC^2([0,1],\X)\).  Furthermore, we define $$\Lip(\pi):=\||\dot\gamma_t|\|_{L^\infty(\pi\otimes\mathcal{L}^1)}.$$

\subsection{Optimal transport on metric spaces}
We recall some useful facts on optimal transport on a metric space $(\X,\sfd)$ (we refer to \cite{Villani09,AmbrosioBrueSemola24_Book} for a detail treatment). An admissible plan between two probability measures \(\mu_0,\,\mu_1 \in \PP(\X)\) is a probability measure  $\alpha \in \PP(\X\times\X)$ satisfying 
\(P^1_\sharp\alpha=\mu_0\) and \(P^2_\sharp\alpha=\mu_1\), 
\(P^1,\,P^2\colon\X\times\X\to\X\) being the canonical projection maps. We denote by \({\rm Adm}(\mu_0,\mu_1)\) the family of admissible plans between $\mu_0$ and $\mu_1$. We denote by $\PP_2(\X) \subseteq\PP(\X)$ the collection of probability measures with finite second moment, and we define the Wasserstein distance by 
\begin{equation}\label{eq:def_W2}
W_2^2(\mu_0,\mu_1)\coloneqq\inf_{\alpha\in{\rm Adm}(\mu_0,\mu_1)}
\int\sfd^2(x,y)\,\d\alpha(x,y)
,\qquad \forall
\mu_0,\mu_1\in\mathscr P_2(\X).
\end{equation}
We denote by \({\rm Opt}(\mu_0,\mu_1)\) the set of all optimal
plans between \(\mu_0\) and \(\mu_1\), i.e.\ of all minimisers
of \eqref{eq:def_W2}. We next introduce the so-called optimal dynamical plans. A geodesic in \((\X,\sfd)\) is a curve \(\gamma\in C([0,1],\X)\) satisfying \(\sfd(\gamma_t,\gamma_s)=|t-s|\,\sfd(\gamma_0,\gamma_1)\) for every \(t,s\in[0,1]\). We denote by $\Geo(\X)$ the set of all
geodesics in \(C([0,1],\X)\). Given \(\mu_0,\mu_1\in\mathscr P_2(\X)\), we set
\[
\OptGeo(\mu_0,\mu_1)\coloneqq\Big\{
\pi\in\mathscr P\big({\rm Geo}(\X)\big)\,:\,
(\e_i)_\sharp\pi=\mu_i\;\forall i=0,1,\,
(\e_0,\e_1)_\sharp\pi\in{\rm Opt}(\mu_0,\mu_1)\Big\}.
\]
We next discuss the dual formulation of the optimal transport problem, and recall basic properties around the Kantorovich potentials for the cost $c(x,y)=\sfd^2(x,y)$. The $c$-transform of $\varphi\colon\X\to\R\cup\{-\infty\}$ is defined by
\begin{equation}\label{Eq:defconiugato} 
\varphi^c(y) \coloneqq \inf_{x \in \X} \left\{ \frac{\sfd^2(x,y)}{2} - \varphi(x) \right\}.
\end{equation}
A function $\varphi$ is said to be $c$-concave if $\varphi=\psi^c$ for some $\psi$, in this case, it holds $\varphi=(\varphi^c)^c$.
\begin{definition}
    Let $(\X,\sfd)$ be a Polish metric space, let $\mu,\nu\in\PP_2(\X) $, let $\varphi\in L^{1}(\X,\mu)$ be a $c$-concave function. We say that $\varphi$ is a Kantorovich potential for $(\mu,\nu)$ if $(\varphi,\varphi^c)$ is a maximizer of
    \[
        \sup \left\{ \int\phi \, \d\mu + \int \psi \, \d\nu \colon \phi\in\ L^{1}(\X,\mu),\,\psi\in L^{1}(\X,\nu)\,\,\textit{and}\,\, \phi(x) + \psi(y) \le \frac{1}{2} \sfd^2(x,y) \right\}.
    \]
\end{definition}
We recall that if $\varphi$ is a Kantorovich potential for $(\mu,\nu)$, then the Kantorovich duality holds: 
$$\frac{1}{2}W^2_2(\mu,\nu)=\int \varphi \, \d\mu + \int \varphi^c \, \d\nu.$$
In particular, for any $\alpha\in\Opt(\mu,\nu)$, it holds $\varphi(x)+\varphi^c(y)=\frac{\sfd^2(x,y)}{2}$ for $\alpha\textit{-a.e.\ }(x,y)$. We next recall some results on Kantorovich potentials.

\begin{proposition}\label{prop:goodKant}
Let $(\X,\sfd)$ be a Polish metric space and let $\mu,\,\nu\in\PP_2(\X)$. Then, fixed $x_0 \in \X$, there exists $\varphi=\psi^c$ Kantorovich potential relative to $(\mu,\nu)$ s.t. $\psi \le \frac 12 \sfd^2(\cdot, x_0)$ on $\X$ and $\psi \equiv -\infty$ on $\X \setminus \supp(\nu)$. In particular,
\begin{equation}\label{eq:choice_kant}
\varphi= \inf_{y \in \supp(\nu)} \left\{ \frac{\sfd^2(\cdot,y)}{2} - \psi(y) \right\}.
\end{equation}
Furthermore, if $(\X,\sfd)$ is a bounded metric space, then $\varphi\in \Lip(\X)$ with $ \displaystyle\Lip(\varphi) \le \sup_{(x,y) \in \X \times \supp(\nu)} \sfd(x,y)$.
\end{proposition}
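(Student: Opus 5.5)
The plan is to prove the three claims in order: existence of a Kantorovich potential of the required form, the truncation of $\psi$ outside $\supp(\nu)$, and the Lipschitz bound in the bounded case.

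\textbf{Existence.} Start from the standard Kantorovich duality on Polish spaces for the continuous cost $c=\frac12\sfd^2$ with $\mu,\nu\in\PP_2(\X)$, as in \cite{Villani09,AmbrosioBrueSemola24_Book}. It gives a $c$-concave maximizing pair $(\varphi_0,\varphi_0^c)$ with $\varphi_0\in L^1(\mu)$ and $\varphi_0^c\in L^1(\nu)$. Set $\psi_0\coloneqq\varphi_0^c$, so that $\varphi_0=\psi_0^c$.

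\textbf{Normalization and truncation.} Next I modify $\psi_0$ without losing optimality.
\begin{itemize}
\item Define $\psi\coloneqq\psi_0$ on $\supp(\nu)$ and $\psi\coloneqq-\infty$ elsewhere, and put $\varphi\coloneqq\psi^c$. Then $\varphi$ is given by \eqref{eq:choice_kant}, and $\varphi\ge\varphi_0$ pointwise because the infimum is taken over a smaller set.
\item We still have $\varphi(x)+\psi(y)\le\frac12\sfd^2(x,y)$ for all $x$ and $y$. Since $\nu(\supp\nu)=1$, we have $\int\psi\,\d\nu=\int\psi_0\,\d\nu$. Hence the dual value of $(\varphi,\psi)$ is at least that of $(\varphi_0,\psi_0)$, so it is still maximal.
\item Integrability of $\varphi$ follows from the two-sided bound $\varphi_0\le\varphi\le\frac12\sfd^2(\cdot,y_0)-\psi(y_0)$ for any $y_0\in\supp(\nu)$ with $\psi(y_0)$ finite, together with finite second moments.
\item Replacing $\psi$ by $\psi^{cc}\ge\psi$ on $\supp(\nu)$ keeps optimality and makes $\varphi$ $c$-concave with $(\varphi,\varphi^c)$ a maximizer. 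We still have $\varphi=\psi^c$ with $\psi$ restricted to $\supp\nu$, since $\psi^{cc}$ restricted to $\supp\nu$ has the same $c$-transform by the same sandwich argument.
\end{itemize}

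\textbf{The bound $\psi\le\frac12\sfd^2(\cdot,x_0)$.} Adding a constant $a$ to $\psi$ and subtracting it from $\varphi$ changes nothing, so I choose $a$ with $\varphi(x_0)=0$. This needs $\varphi(x_0)$ finite. It holds because $\varphi$ is finite $\mu$-a.e. and $\varphi$ is the infimum of $\frac12\sfd^2(\cdot,y)-\psi(y)$, so finiteness at one point propagates to all points; the main point to check is that $\varphi>-\infty$ at some point. Then $0=\varphi(x_0)\le\frac12\sfd^2(x_0,y)-\psi(y)$ for every $y\in\supp\nu$, which is exactly $\psi(y)\le\frac12\sfd^2(y,x_0)$. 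Outside $\supp\nu$ we have $\psi=-\infty$, so the bound holds on all of $\X$.

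\textbf{Lipschitz bound.} Let $D\coloneqq\sup_{x\in\X,\,y\in\supp\nu}\sfd(x,y)<\infty$. For fixed $y\in\supp\nu$ and $x,x'\in\X$,
\[
\tfrac12\big|\sfd^2(x,y)-\sfd^2(x',y)\big|\le\tfrac12\big(\sfd(x,y)+\sfd(x',y)\big)\,\sfd(x,x')\le D\,\sfd(x,x').
\]
So each function in the infimum \eqref{eq:choice_kant} is $D$-Lipschitz. An infimum of $D$-Lipschitz functions that is finite at one point is finite everywhere and $D$-Lipschitz, which gives $\Lip(\varphi)\le D$.

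The only delicate step is ensuring finiteness of $\varphi$ and keeping $c$-concavity while truncating. Everything else is routine.
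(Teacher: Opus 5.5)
Your route is the paper's: a maximizing pair from duality, a constant shift making $\varphi(x_0)=0$, the choice $\psi\equiv-\infty$ off $\supp(\nu)$, and the Lipschitz bound read off \eqref{eq:choice_kant}. Your handling of the truncation is fine. The passage to $\psi^{cc}$ is unnecessary: $\varphi=\psi^c$ is $c$-concave by definition, and $\varphi^c=\psi^{cc}\ge\psi$ already gives maximality.

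\textbf{The gap.} The step that fails is your justification that $\varphi(x_0)\in\R$: finiteness of a $c$-transform at one point does not propagate. From $\varphi(x_1)>-\infty$ you only get
\[
\varphi(x)\ge\varphi(x_1)-\tfrac12\,\sfd(x,x_1)\sup_{y\in\supp(\nu)}\bigl(\sfd(x,y)+\sfd(x_1,y)\bigr),
\]
which gives nothing when $\supp(\nu)$ is unbounded. This cannot be fixed at the stated generality, as the following example shows.
\begin{itemize}
\item[(1)] Take $\X=\R$, $\mu=\frac12\mathcal{L}^1\res[-1,1]$, $T(x)=(1-x)^{-1/3}$ and $\nu=T_\sharp\mu\in\PP_2(\R)$.
\item[(2)] Since $T$ is nondecreasing, $(\mathrm{id},T)_\sharp\mu$ is optimal. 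So every Kantorovich potential satisfies $\varphi(x)+\varphi^c(T(x))=\frac12|x-T(x)|^2$ for $\mu$-a.e.\ $x$, whence
\[
\varphi(z)\le\varphi(x)+\tfrac12(z-x)\bigl(z+x-2T(x)\bigr)\qquad\forall z\in\R .
\]
\item[(3)] Set $z=2$ and let $x\uparrow1$. Since $\varphi$ is locally bounded above and $T(x)\to+\infty$, this gives $\varphi(2)=-\infty$ for every potential.
\item[(4)] On the other hand, $\psi\le\frac12\sfd^2(\cdot,x_0)$ forces $\varphi(x_0)=\psi^c(x_0)\ge0$. So for $x_0=2$ no potential as in the statement exists.
\end{itemize}

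\textbf{When the argument is correct.} The argument (yours and the paper's) works under an extra hypothesis such as $\supp(\nu)$ bounded, in particular when $\X$ is bounded. There, your own observation that an infimum of uniformly Lipschitz functions finite at one point is finite everywhere is exactly what yields $\varphi(x_0)\in\R$. It also works if $x_0$ is chosen where some potential is finite (e.g.\ $\mu$-a.e.\ $x_0$).

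The paper's one-line proof (``we can assume $\phi(x_0)=0$'') tacitly makes the same assumption. So this is a defect of the statement's generality rather than of your strategy. It is harmless for the paper, which only applies the result on bounded spaces such as $\overline{B}_{2r}(x)$.
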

\begin{proof}
The existence part of the result is well-known, we refer, for example, to  \cite[Corollary 3.18]{AmbrosioBrueSemola24_Book}. Up to respectively adding and subtracting the same constant to the two potentials in a maximizing pair $(\phi, \psi)$, we can assume that $\phi(x_0)=0$, hence $\psi \le \frac 12 \sfd^2(\cdot, x_0)$. Furthermore, we can modify $\psi$ so that $\psi \equiv -\infty$ on $\X \setminus \supp(\nu)$, and then take $\varphi=\psi^c$. In the bounded case, the lipschitzianity is implied by Equation \eqref{eq:choice_kant} observing that $\frac{1}{2}\sfd^2(\cdot,y)$ is $L$-Lipschitz with $L:=\sup_x \sfd(x,y)$. 
\end{proof}

\begin{remark}  \rm
Notice that every $\varphi$ of the form \eqref{eq:choice_kant} enjoys the following property: if $\supp(\nu) \subseteq C \subseteq \X$, then the restriction $\varphi \mid_C$ is still $c$-concave in $(C, \sfd)$. In particular, it is still a Kantorovich potential for $(\mu, \nu)$ in $C$ if also $\supp(\mu) \subseteq C$.
\end{remark}

\begin{proposition}\label{Prop:slopeKantoPotentials} Let $(\X,\sfd)$ be a Polish metric space, let $\mu,\nu\in\PP(\X)$ such that $W_2(\mu,\nu)<\infty $, and let $\alpha \in \Opt(\mu,\nu)$. Then, for all Kantorovich potentials $\varphi:\X\to\R\cup\{-\infty\}$ relative to $(\mu,\nu)$, it holds 
\begin{equation}\label{eq:itforza}
    |D^+ \varphi|(x)\leq \sfd(x,y)\qquad \text{$\alpha$-a.e.\ }(x,y).
\end{equation}
In particular, $ |D^+ \varphi|\in L^2 (\X,\mu)$.
\end{proposition}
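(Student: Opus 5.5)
The plan is to use the pointwise structure of $c$-concave functions: the upper slope is controlled through the definition of the $c$-transform at a point $y$ where the infimum is attained. Since $\varphi$ is a Kantorovich potential and $\alpha\in\Opt(\mu,\nu)$, the duality recalled above gives
\[
\varphi(x)+\varphi^c(y)=\tfrac12\sfd^2(x,y)
\]
for $\alpha$-a.e.\ $(x,y)$. We fix such a pair, with $\varphi(x)$ and $\varphi^c(y)$ finite; this is legitimate because $\varphi\in L^1(\mu)$ and $\varphi^c\in L^1(\nu)$.

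Since $\varphi=(\varphi^c)^c$, for every $z\in\X$ we have
\[
\varphi(z)\le \tfrac12\sfd^2(z,y)-\varphi^c(y),
\]
with equality at $z=x$. Subtracting the equality at $z=x$ gives
\[
\varphi(z)-\varphi(x)\le \tfrac12\big(\sfd^2(z,y)-\sfd^2(x,y)\big)=\tfrac12\big(\sfd(z,y)-\sfd(x,y)\big)\big(\sfd(z,y)+\sfd(x,y)\big).
\]
By the triangle inequality, $\sfd(z,y)-\sfd(x,y)\le \sfd(z,x)$. If the first factor is negative, the positive part $(\varphi(z)-\varphi(x))^+$ is $0$. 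If it is nonnegative, we obtain
\[
(\varphi(z)-\varphi(x))^+\le \tfrac12\,\sfd(z,x)\big(2\sfd(x,y)+\sfd(z,x)\big).
\]
Dividing by $\sfd(z,x)$ and letting $z\to x$ yields $|D^+\varphi|(x)\le \sfd(x,y)$. If $x$ is isolated the slope is $0$ and there is nothing to prove. This establishes \eqref{eq:itforza}.

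The only delicate point is the measure-theoretic one: the slope bound is a statement about $x$ alone, while the inequality is obtained on the set of pairs where the duality equality holds. That set has full $\alpha$-measure, so \eqref{eq:itforza} holds $\alpha$-a.e., as stated.

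For the integrability claim, integrate the squared inequality with respect to $\alpha$:
\[
\int|D^+\varphi|^2\,\d\mu=\int |D^+\varphi|^2(x)\,\d\alpha(x,y)\le\int\sfd^2\,\d\alpha=W_2^2(\mu,\nu)<\infty,
\]
using $P^1_\sharp\alpha=\mu$ and the optimality of $\alpha$. To make this rigorous one has to check that $|D^+\varphi|$ is measurable, or at least work with an upper integral. I expect this to be routine: restricting the $\limsup$ to a countable dense set, using semicontinuity of $c$-concave functions where they are finite, should give Borel measurability $\mu$-a.e. Alternatively, the bound already shows that $|D^+\varphi|$ is dominated by an $L^2(\mu)$ function, which suffices if one interprets $L^2$ up to a $\mu$-measurable modification.
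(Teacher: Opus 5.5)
Your proof is correct and is essentially the standard argument of \cite[Proposition 3.9]{AmbrosioGigliSavare11}, which the paper cites instead of giving a proof: on the full-$\alpha$-measure set where $\varphi(x)+\varphi^c(y)=\frac12\sfd^2(x,y)$, you compare with $\varphi(z)\le\frac12\sfd^2(z,y)-\varphi^c(y)$ and use the triangle inequality. On the measurability point you raise, a countable dense set is delicate for functions that are only upper semicontinuous, but it is not needed: $\varphi=(\varphi^c)^c$ is an infimum of continuous functions, hence u.s.c., so each map $x\mapsto(\varphi(z)-\varphi(x))^+/\sfd(z,x)$ (extended by $0$ outside $\{0<\sfd(x,z)<r\}$) is l.s.c., and therefore $|D^+\varphi|=\inf_{r\in\mathbb{Q}_+}\sup_{z}(\cdots)$ is Borel, which makes your integration against $\alpha$ rigorous.
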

We refer to \cite[Proposition 3.9]{AmbrosioGigliSavare11} for a proof of the above result. We shall also need the following result for the compactness of Kantorovich potentials (see \cite[Lemma 2.3]{AGMR15}).
\begin{lemma}\label{lemma:Kantorovichpotentialscompactness}
    Let $(\X,\sfd)$ be a Polish metric space, and let $\sigma$, $\eta = f \mm$, $\eta_n = f_n \mm \in \PP_2(\X)$ be probability measures satisfying
\begin{itemize}
    \item $\sigma$ has compact support;
    \item$f_n \to f$ $\mm$-a.e.\ and $\sup_n f_n(x)(1 + \sfd^2(x, x_0)) \in L^1(\X)$ for some $x_0 \in \X$.
\end{itemize}
Suppose that there exist $C > 0$ and Kantorovich potentials $\varphi_n = \psi_n^c$ relative to $(\eta_n, \sigma)$ satisfying
\begin{equation} \label{eq:2.15}
    |\varphi_n(x)| \leq C(1 + \sfd^2(x, x_0)) \quad \forall x \in \X 
\end{equation}
and
\begin{equation} \label{eq:2.16}
    \psi_n \equiv -\infty \quad \text{on } \X \setminus \supp(\sigma) \quad \text{and} \quad \psi_n(x) \leq C \quad \forall x \in \X. 
\end{equation}
Then there exist a subsequence $n(k)$ and a Kantorovich potential $\varphi = \psi^c$ for $(\eta, \sigma)$ such that $\varphi_{n(k)} \to \varphi$ pointwise. In addition $\varphi$ satisfies \eqref{eq:2.15} and $\psi$ satisfies \eqref{eq:2.16}.
\end{lemma}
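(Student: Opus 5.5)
The plan is to follow the classical compactness argument for $c$-concave functions and then pass to the limit in the Kantorovich duality. First, I would extract a limit for the dual potentials $\psi_n$. Each $\psi_n$ is $-\infty$ off the compact set $S\coloneqq\supp(\sigma)$ and bounded above by $C$. Replacing $\psi_n$ by $\psi_n^{cc}$ on $S$ (which does not change $\varphi_n=\psi_n^c$), we may assume $\psi_n=\varphi_n^c$ on $S$. Then $\psi_n$ is locally Lipschitz on the compact $S$, being an infimum of the functions $\tfrac12\sfd^2(x,\cdot)-\varphi_n(x)$.

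To make this useful I need equi-Lipschitz bounds and a uniform lower bound, which I would obtain by restricting the infimum to a bounded region. The key estimate is that for $y\in S$ the infimum defining $\varphi_n^c(y)$ can be taken over $x$ in a fixed ball $B_R(x_0)$. This follows from the growth bound \eqref{eq:2.15}: $\tfrac12\sfd^2(x,y)-\varphi_n(x)\ge \tfrac12\sfd^2(x,y)-C(1+\sfd^2(x,x_0))$. This is not coercive unless $C<1/2$, so the argument has to be sharpened. Instead I would use that $\varphi_n(x)\le \tfrac12\sfd^2(x,y)-\psi_n(y)$ and that $\psi_n$ is bounded below on a set of $\sigma$-mass bounded away from zero. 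The latter holds because duality forces $\int\psi_n\,\d\sigma=\tfrac12W_2^2(\eta_n,\sigma)-\int\varphi_n\,\d\eta_n$, which is bounded thanks to the domination $\sup_n f_n(1+\sfd^2)\in L^1$ together with \eqref{eq:2.15}.

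I expect this step, obtaining uniform boundedness of $\psi_n$ on the relevant part of $S$, to be the main obstacle. Once it is in place, Arzelà–Ascoli (or a diagonal argument on a countable dense subset of $S$ combined with upper semicontinuity) gives a subsequence with $\psi_{n(k)}\to\psi$ uniformly on compact pieces, with $\psi\le C$ and $\psi=-\infty$ off $S$. Since $S$ is compact, $\varphi_{n(k)}=\psi_{n(k)}^c\to\psi^c\eqqcolon\varphi$ pointwise, because an infimum over a compact set of uniformly converging functions converges. Points of $S$ where $\psi=-\infty$ are handled by taking the limsup and noting that they do not contribute to the infimum. The bound \eqref{eq:2.15} passes to the limit directly.

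It remains to show that $\varphi$ is a Kantorovich potential. The inequality $\varphi(x)+\psi(y)\le\tfrac12\sfd^2(x,y)$ holds by construction. For optimality, I would pass to the limit in
\[
\int\varphi_n f_n\,\d\mm+\int\psi_n\,\d\sigma=\tfrac12W_2^2(\eta_n,\sigma).
\]
On the left, the first integral converges by dominated convergence, using \eqref{eq:2.15} and $\sup_n f_n(1+\sfd^2)\in L^1$. The second converges by Fatou applied to $C-\psi_n$, which gives $\limsup\int\psi_n\,\d\sigma\le\int\psi\,\d\sigma$. On the right, $W_2(\eta_n,\sigma)\to W_2(\eta,\sigma)$, since $\eta_n\to\eta$ in $W_2$ by the domination and a.e. convergence. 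Combining these yields
\[
\int\varphi\,\d\eta+\int\psi\,\d\sigma\ge\tfrac12W_2^2(\eta,\sigma).
\]
The reverse inequality is automatic, so $(\varphi,\psi)$ is optimal. Since $\psi^c\ge$ any admissible partner, $(\varphi,\varphi^c)$ is optimal as well, and $\varphi$ is $c$-concave.
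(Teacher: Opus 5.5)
There is a genuine gap in the compactness step. You claim that $\psi_n$ (after replacing it by $\varphi_n^c$ on $S$) is locally Lipschitz on $S$, ``being an infimum of the functions $\tfrac12\sfd^2(x,\cdot)-\varphi_n(x)$''. This is false. The infimum runs over all $x\in\X$, the Lipschitz constant of $\tfrac12\sfd^2(x,\cdot)$ on $S$ is of order $\sfd(x,S)+\diam(S)$, and the infimum cannot be localized to a fixed ball.

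Here is a concrete example. Take $\X=\R$, $S=[-1,1]$ and $\varphi(x)=\tfrac12x^2-\sqrt{1+x^2}$. Then:
\begin{itemize}
\item $\varphi=(\varphi^c)^c$, with $\varphi^c(y)=\tfrac12y^2+\sqrt{1-y^2}$ on $S$ and $-\infty$ outside;
\item $\varphi$ is a Kantorovich potential for $(\eta,T_\sharp\eta)$, with $\eta$ Gaussian and $T(x)=x/\sqrt{1+x^2}$;
\item \eqref{eq:2.15} and \eqref{eq:2.16} hold with $C=1$.
\end{itemize}
Yet $\varphi^c$ has infinite slope at $\pm1$, where the minimizer $x=y/\sqrt{1-y^2}$ escapes to infinity. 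In general $\psi_n$ may even equal $-\infty$ at $\sigma$-negligible points of $S$.

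So you have neither equicontinuity nor a uniform lower bound on $S$. A Chebyshev bound on an $n$-dependent set of large $\sigma$-measure does not help. Hence Arzel\`a--Ascoli and ``uniform convergence on compact pieces'' are unavailable. This is exactly where the argument needs substance. The inequality $\liminf_k\varphi_{n(k)}(x)\ge\psi^c(x)$ requires $\limsup_k\psi_{n(k)}(y_k)\le\psi(y)$ along arbitrary sequences $y_k\to y$ of near-minimizers. That is a hypo-convergence property, and pointwise convergence on a dense subset of $S$ does not provide it. Your remark that points where $\psi=-\infty$ ``do not contribute'' skips precisely this.

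The missing idea is to extract compactness from the $\varphi_n$ instead; the paper itself only quotes \cite[Lemma 2.3]{AGMR15}.
\begin{itemize}
\item \emph{Compactness of $\varphi_n$.} Since $\psi_n\equiv-\infty$ off the compact set $S$, $\varphi_n$ is an infimum of the functions $\tfrac12\sfd^2(\cdot,y)-\psi_n(y)$, $y\in S$. On $B_R(x_0)$ these are all Lipschitz with constant $R+\max_S\sfd(\cdot,x_0)$. Together with \eqref{eq:2.15}, the $\varphi_n$ are equi-Lipschitz and equibounded on bounded sets. A diagonal argument on a countable dense subset of $\X$ then gives $\varphi_{n(k)}\to\varphi$ pointwise.
\item \emph{Choice of $\psi$.} Set $\psi(y):=\inf_{\delta>0}\limsup_k\sup_{B_\delta(y)}\psi_{n(k)}$. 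This satisfies $\psi\le C$ and $\psi\equiv-\infty$ off $S$.
\item \emph{$\varphi\le\psi^c$.} Pass to the limit in $\psi_n(y')\le\tfrac12\sfd^2(x,y')-\varphi_n(x)$.
\item \emph{$\psi^c\le\varphi$.} Take near-minimizers $y_k\in S$ of $\tfrac12\sfd^2(x,\cdot)-\psi_{n(k)}$; they subconverge by compactness of $S$.
\item \emph{Optimality.} Your duality argument then concludes, provided you use $\varphi_n^c$ on the $\sigma$-side. First, $\varphi_n^c\le-\varphi_n$ is bounded above on $S$. Second, $\limsup_k\varphi_{n(k)}^c\le\varphi^c$ pointwise: take $\limsup$ in $\varphi_n^c(y)\le\tfrac12\sfd^2(x,y)-\varphi_n(x)$, then the infimum in $x$. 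Reverse Fatou, dominated convergence for $\int\varphi_n\,\d\eta_n$, and $W_2(\eta_n,\sigma)\to W_2(\eta,\sigma)$ then give $\int\varphi\,\d\eta+\int\varphi^c\,\d\sigma\ge\tfrac12W_2^2(\eta,\sigma)$.
\end{itemize}
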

\subsection{Cheeger energy, heat flow and Entropy}
We recall the definition of Sobolev space via relaxation, following \cite{AmbrosioGigliSavare11-3} (we also refer to \cite{Cheeger00,Shanmugalingam00} and to \cite{AmbrosioIkonenLucicPasqualetto24} for a complete discussion). For possibly non complete spaces, the Sobolev space theory was investigated in \cite{SylvesterPoggi24,CaputoCavallucci24}.
\begin{definition}[Sobolev space]\label{def:Chp}
Let $\Xdm$ be a metric measure space. We define the Cheeger energy as the functional ${\rm Ch} \colon L^2(\X)\to [0,\infty]$ defined by 
\[ {\rm Ch}(f) \coloneqq  \inf \Big\{ \liminf_{n\to\infty} \int \left(\lip  (f_n)\right)^2\, \d \mm \colon f_n\in L^2(\X)\cap \Lip(\X), f_n\to f \emph{ in } L^2(\X) \Big\}.\]
The Sobolev space is defined as $H^{1,2}(\X,\sfd,\mm)\coloneqq \{f \in L^2(\X) \colon {\rm Ch}(f) <\infty\}$. When no confusion will arise on the reference measure, we shall shortly write $H^{1,2}(\X)$.
\end{definition}

Sometimes, we shall consider the Sobolev space and the Cheeger energy on the metric measure space $(C,\sfd,\mm\res C)$ for some $C\subseteq \X$ closed. 
In this case, we shall write $\rmCh^C(f)$ for the Cheeger energy of a Sobolev function $f \in H^{1,2}(C,\sfd,\mm\res C)$. It follows that $H^{1,2}(\X)$ is a Banach space equipped with the norm $\|f\|_{H^{1,2}(\X)}^2 = \|f\|_{L^2(\X)}^2 + \rmCh(f)$. 
We next report a key result on the Cheeger energy showing that it always admits an integral representation with a weak gradient satisfying the usual calculus rules of Sobolev functions (we refer to \cite{AmbrosioIkonenLucicPasqualetto24} and references therein).
\begin{theorem}\label{Calculus-weakg}
Let $\Xdm$ be a complete metric measure space, and let $f\in H^{1,2}(\X)$. There exists $|\nabla f| \in~L^2(\X)$, called \emph{weak gradient}, such that
\[
    {\rm Ch}(f) = \int |\nabla f|^2\, \d \mm.
\]
Moreover, if $g \in H^{1,2}(\X)$, the weak gradient satisfies the following properties:
\begin{enumerate}[i)]
    \item{(Convexity)} for all $\alpha,\,\beta\in\R$
    \[
        |\nabla(\alpha f + \beta g)|\le |\alpha||\nabla f|+|\beta||\nabla g|,\qquad  \mm\text{-a.e.;}
    \] 
    \item\textit{(Locality)} for any $\mathcal{L}^1$-negligible Borel set $N \subset \mathbb{R}$ it holds
    $$|\nabla f|(x) = 0 \quad \mm\text{-a.e. } x\in f^{-1}(N),$$
    so that, in particular, it holds 
    $$|\nabla f| = |\nabla g| \qquad \mm\text{-a.e. on } \{f = g\}\,;$$
  
    \item (Chain rule) for any $\phi \in \Lip(\mathbb{R})$ with $\phi(0) = 0$, it holds that $\phi \circ f \in H^{1,2}(\X)$ and 
     $$|\nabla(\phi \circ f)| = (\lip(\phi) \circ f)|\nabla f| \qquad \mm\text{-a.e.}; $$
     
    \item (Leibniz rule) if $f, g \in H^{1,2}(\X) \cap L^\infty(\X)$, then  $fg \in H^{1,2}(\X)$ and 
   $$ |\nabla (fg)| \leq |f||\nabla g| + |g||\nabla f| \qquad \mm\text{-a.e..} $$
\end{enumerate}
\end{theorem}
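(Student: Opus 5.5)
The plan is to build the weak gradient through the minimal relaxed gradient construction of \cite{AmbrosioGigliSavare11-3}, and then obtain each calculus rule by passing the corresponding pointwise inequality for $\lip$ of Lipschitz functions to the limit.

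\emph{Construction of $|\nabla f|$.} Call $G\in L^2(\X)$ a relaxed gradient of $f$ if there are $f_n\in L^2\cap\Lip(\X)$ with $f_n\to f$ in $L^2$, $\lip(f_n)\weakto \tilde G$ weakly in $L^2$, and $\tilde G\le G$ $\mm$-a.e. The set of relaxed gradients of a given $f$ has three properties.
\begin{enumerate}[1)]
    \item It is convex and weakly closed in $L^2$. This follows from a diagonal argument together with Mazur's lemma.
    \item It is a lattice: if $G_1,G_2$ are relaxed gradients, then so is $\min(G_1,G_2)$. To prove this, glue the approximating sequences with Lipschitz partitions of unity adapted to the sets $\{G_1<G_2\}$, using that $\lip$ is local.
    \item It is nonempty exactly when ${\rm Ch}(f)<\infty$.
\end{enumerate}
By 1) and 3), there is a unique element of minimal $L^2$ norm, which we call $|\nabla f|$. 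By the lattice property 2), this element is also $\mm$-a.e.\ minimal among all relaxed gradients. Weak lower semicontinuity of the norm then gives ${\rm Ch}(f)=\int|\nabla f|^2\,\d\mm$.

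\emph{Convexity and Leibniz rule.} Both come from stability under linear combinations of approximating sequences combined with a.e.\ minimality. For convexity, take recovery sequences $f_n,g_n$ for $f,g$. Since $\lip(\alpha f_n+\beta g_n)\le|\alpha|\lip f_n+|\beta|\lip g_n$, the right-hand side's weak limit is a relaxed gradient of $\alpha f+\beta g$, and minimality gives i). For Leibniz, first truncate $f_n,g_n$ at the levels $\|f\|_\infty,\|g\|_\infty$ so that they are uniformly bounded. Then use $\lip(f_ng_n)\le|f_n|\lip g_n+|g_n|\lip f_n$ and pass to the limit with strong$\times$weak convergence, after extracting a.e.\ convergent subsequences.

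\emph{Chain rule and locality.} These are the main obstacle. I would proceed in three steps.
\begin{enumerate}[a)]
    \item For $\phi\in C^1\cap\Lip$, prove $|\nabla(\phi\circ f)|\le|\phi'\circ f|\,|\nabla f|$ directly from $\lip(\phi\circ f_n)\le|\phi'\circ f_n|\lip f_n$.
    \item Extend to piecewise affine $\phi$, and then to general Lipschitz $\phi$, by approximation.
    \item Obtain the reverse inequality and locality via the truncation trick. Write $f=\phi(f)+(f-\phi(f))$ for $\phi(t)=\min(t,c)$ and use the Hilbert-free estimate $\int|\nabla f|^2\le\int|\nabla \min(f,c)|^2+\int|\nabla\max(f-c,0)|^2$. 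This estimate is itself deduced from gluing recovery sequences on $\{f<c\}$ and $\{f>c\}$. It yields $|\nabla f|=0$ a.e.\ on each level set $\{f=c\}$.
\end{enumerate}
The hard part is extending locality from single levels to an arbitrary $\mathcal L^1$-negligible $N$. For this I would use a coarea-type argument: choose $\phi$ with $\phi'=0$ on an open set $U\supset N$ of small measure and $\phi'=1$ elsewhere, so that $|\nabla\phi(f)|\le \mathbf 1_{f\notin U}|\nabla f|$. Combining this with the reverse inequality for $f-\phi(f)$, whose Lipschitz constant is small in measure, and letting $|U|\to0$ gives $|\nabla f|=0$ a.e.\ on $f^{-1}(N)$. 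Equality in iii) then follows by applying the inequality to $\phi$ and to the local inverse pieces on monotone intervals. Alternatively, I would cite the full treatment in \cite{AmbrosioIkonenLucicPasqualetto24}, which covers completeness-dependent details such as the equivalence with test-plan gradients.
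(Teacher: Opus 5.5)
The paper gives no proof of this statement. It quotes it from \cite{AmbrosioIkonenLucicPasqualetto24}, i.e.\ from the minimal relaxed gradient theory of \cite{AmbrosioGigliSavare11-3}, and that is exactly the architecture you follow. Your construction of $|\nabla f|$, its $\mm$-a.e.\ minimality, and the convexity and Leibniz steps are fine.

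\textbf{Gap: locality for a general $N$.} The genuine gap is in the step you single out yourself, and the mechanism you describe is circular. Write $\phi_U(t)=\int_0^t\mathbf 1_{\R\setminus U}(s)\,\d s$ for your $\phi$.
\begin{itemize}
\item Subadditivity applied to $f=\phi_U(f)+(f-\phi_U(f))$ gives $|\nabla f|\le \mathbf 1_{\{f\notin U\}}|\nabla f|+|\nabla(f-\phi_U(f))|$.
\item Since ${\rm id}-\phi_U$ has derivative $\mathbf 1_U$, the only available bound on the last term is $\mathbf 1_{\{f\in U\}}|\nabla f|$. On $f^{-1}(N)$ this equals $|\nabla f|$ itself.
\item As $U$ shrinks, $\int_{\{f\in U\}}|\nabla f|^2\,\d\mm$ decreases to $\int_{f^{-1}(\bigcap_k U_k)}|\nabla f|^2\,\d\mm$. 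This dominates the very quantity you want to show vanishes.
\end{itemize}
The smallness of ${\rm id}-\phi_U$ is in sup norm, $|\phi_U(t)-t|\le\min\{|t|,\mathcal L^1(U)\}$, not in its gradient. It can only be exploited through lower semicontinuity: if $f_k\to f$ in $L^2$, $G_k$ is a relaxed gradient of $f_k$ and $G_k\weakto G$, then $G$ is a relaxed gradient of $f$ (diagonal argument plus Mazur). Apply this with open $U_k\supseteq N$, $\mathcal L^1(U_k)\to 0$, $f_k=\phi_{U_k}(f)$ and $G_k=\mathbf 1_{\{f\notin U_k\}}|\nabla f|$. You get a relaxed gradient of $f$ vanishing on $f^{-1}(N)$, and a.e.\ minimality concludes.

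There is also an ordering problem. Step b) cannot deliver the chain rule for a general Lipschitz $\phi$ before this locality is known: approximations satisfy $\phi_k'\to\phi'$ only $\mathcal L^1$-a.e., and without locality $\phi'\circ f$ is not even well defined $|\nabla f|\mm$-a.e. For $\phi_U$ itself, obtain $|\nabla\phi_U(f)|\le\mathbf 1_{\{f\notin U\}}|\nabla f|$ from the $C^1$ case, using continuous $\phi_j'=g_j\downarrow\mathbf 1_{\R\setminus U}$ (this converges everywhere, as $\R\setminus U$ is closed) and the same stability. With this in place, your single-level step c) becomes unnecessary.

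\textbf{Equality in iii).} Using local inverses on monotone intervals fails in general. A Lipschitz $\phi$ need not be monotone on any interval: take $\phi(t)=\int_0^t(\mathbf 1_E-\mathbf 1_{\R\setminus E})(s)\,\d s$ with $E$ and $\R\setminus E$ both of positive measure in every interval. Moreover, on monotone pieces where $\phi'$ vanishes the inverse is not Lipschitz.

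Once locality and the inequality for all Lipschitz functions are available, use instead, with $L=\Lip(\phi)$, the decompositions $\pm Lf=\phi(f)+(\pm L\,{\rm id}-\phi)(f)$. Subadditivity and $|\nabla(\pm Lf)|=L|\nabla f|$ give $L|\nabla f|\le|\nabla\phi(f)|+(L\mp\phi'(f))|\nabla f|$, hence $|\nabla\phi(f)|\ge|\phi'(f)|\,|\nabla f|$. Since $\lip(\phi)=|\phi'|$ off an $\mathcal L^1$-null set, locality turns this into the stated identity with $\lip(\phi)\circ f$.

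\textbf{Minor point.} Weak lower semicontinuity only yields $\int|\nabla f|^2\,\d\mm\le{\rm Ch}(f)$. The reverse inequality needs Mazur's lemma, together with the fact that $\lip$ of a convex combination is bounded by the convex combination of the $\lip$'s, to produce $f_n$ with $\lip(f_n)\to|\nabla f|$ strongly in $L^2$.
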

In light of the equivalence results for Sobolev spaces \cite{AmbrosioGigliSavare11-3}, a Sobolev function $f \in H^{1,2}(\X)$ is so that, for all test plans $\pi$ and for $\pi$-a.e.\ $\gamma \in C([0,1],\X)$, it holds that
\begin{equation}
    f \circ \gamma \in W^{1,1}(0,1),\qquad (f\circ \gamma_t)'\le |\nabla f|(\gamma_t)|\dot \gamma_t|,\qquad \text{a.e.\ }t\in(0,1).
\label{eq:weak gradient}
\end{equation}
We refer to \cite{AmbrosioIkonenLucicPasqualetto24} for a detailed discussions on the equivalent approaches to the metric Sobolev space theory. We shall also need an ``integrated'' version of \eqref{eq:weak gradient}, that essentially was observed to yield and equivalent notion of metric BV space in \cite{AmbrosioDiMarino14,DiMarinoPhD} (see also \cite{NobiliPasqualettoSchultz22} for further equivalent formulations). The following result, tailored for $p=2$, is taken from \cite[Theorem 3.1]{NobiliRenziVitillaro25}.
\begin{theorem}\label{thm:Sobolevintegrated}
Let $\Xdm$ be a complete metric measure space and let $f \in L^2(\X)$. The following are equivalent:
\begin{enumerate}[(i)]
\item $f \in H^{1,2}(\X)$;
\item there exists $C\ge 0$ so that
\[ \int f(\gamma_1) - f(\gamma_0)\,\d \pi \le {\rm Comp}(\pi)^{1/2} \rmKe^{1/2}(\pi) C,\]
for all test plans $\pi$.
\end{enumerate}
Moreover, denoting $C_f$ the minimal constant $C\ge0$ for (ii) to hold, we have 
$C_f^2 = {\rm Ch}(f)$. 
\end{theorem}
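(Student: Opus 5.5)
The implication (i)$\Rightarrow$(ii) will follow from the pointwise inequality \eqref{eq:weak gradient} integrated along curves and then against the plan; the converse will come from relaxation and duality. For (i)$\Rightarrow$(ii), take $f\in H^{1,2}(\X)$ and a test plan $\pi$. For $\pi$-a.e.\ $\gamma$, the function $f\circ\gamma$ lies in $W^{1,1}(0,1)$, so
\[
f(\gamma_1)-f(\gamma_0)\le\int_0^1|\nabla f|(\gamma_t)|\dot\gamma_t|\,\d t .
\]
Integrating in $\pi$ and applying Cauchy--Schwarz on $C([0,1],\X)\times(0,1)$ gives
\[
\int f(\gamma_1)-f(\gamma_0)\,\d\pi\le \Big(\iint_0^1|\nabla f|^2(\gamma_t)\,\d t\,\d\pi\Big)^{1/2}\rmKe^{1/2}(\pi).
\]
Since $(\e_t)_\sharp\pi\le{\rm Comp}(\pi)\mm$, the first factor is at most ${\rm Comp}(\pi)^{1/2}\rmCh(f)^{1/2}$ by Theorem \ref{Calculus-weakg}. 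This yields (ii) with $C=\rmCh(f)^{1/2}$, hence $C_f^2\le\rmCh(f)$.

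For (ii)$\Rightarrow$(i), together with the reverse inequality $\rmCh(f)\le C_f^2$, I would first reduce to a Sobolev criterion via plans of short curves. Apply (ii) to the restricted and time-rescaled plans $\pi^{s,t}:=({\rm restr}_s^t)_\sharp\pi$, which are again test plans. One has ${\rm Comp}(\pi^{s,t})\le{\rm Comp}(\pi)$ and $\rmKe(\pi^{s,t})=(t-s)\iint_s^t|\dot\gamma_r|^2\,\d r\,\d\pi$. Further apply it to the normalized localizations $\pi_A:=\pi(A)^{-1}\pi\res A$ for Borel $A\subseteq C([0,1],\X)$, which satisfy ${\rm Comp}(\pi_A)\le{\rm Comp}(\pi)/\pi(A)$.

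Combining these, the measure-valued increments $A\mapsto\int_A f(\gamma_t)-f(\gamma_s)\,\d\pi$ are controlled by $C\,{\rm Comp}(\pi)^{1/2}\pi(A)^{1/2}\big((t-s)\int_A\int_s^t|\dot\gamma|^2\big)^{1/2}$. The aim is to extract, via a Radon--Nikodym and Lebesgue differentiation argument in $t$, that $f\circ\gamma$ has a $W^{1,1}$ representative with an upper gradient bound $|(f\circ\gamma)'|\le G(\gamma_t)|\dot\gamma_t|$ for some $G\in L^2(\mm)$ with $\|G\|_{L^2}\le C$.

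The cleaner route I would actually follow is duality. Characterize the minimal weak upper gradient $|\nabla f|$ through the test-plan definition of \cite{AmbrosioGigliSavare11-3}. Then show that the supremum over plans of $\int f(\gamma_1)-f(\gamma_0)\,\d\pi/({\rm Comp}^{1/2}\rmKe^{1/2})$ recovers $\rmCh(f)^{1/2}$. This is done by testing with plans built from the gradient flow of $\rmCh$, in the spirit of the equivalence proof of \cite{AmbrosioGigliSavare11-3}, or via the BV characterization of \cite{AmbrosioDiMarino14}. Concretely, if (ii) holds, the quantity $\int|f(\gamma_1)-f(\gamma_0)|\,\d\pi$ is controlled, which gives $f\in BV$ with $|Df|\le C$-type bounds in the sense of \cite{AmbrosioDiMarino14,DiMarinoPhD}. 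The $L^2$ integrability of the density, obtained from the ${\rm Comp}^{1/2}$ scaling through the localization argument above, upgrades this to $H^{1,2}$ with $\rmCh(f)\le C^2$.

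The main obstacle is this last upgrade. The exponents in (ii) are only integrated and global, and one must localize in both space and time to produce an $L^2$ density for the gradient with the sharp constant. I expect this to require the superposition and rescaling tricks of \cite{AmbrosioDiMarino14,NobiliRenziVitillaro25} rather than anything elementary.
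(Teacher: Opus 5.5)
Your proof of (i)$\Rightarrow$(ii), and hence of $C_f^2\le\rmCh(f)$, is correct. Strictly, you need \eqref{eq:weak gradient} in the AGS form that also controls the endpoint values $f(\gamma_0),f(\gamma_1)$ for $\pi$-a.e.\ $\gamma$, which is standard.

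The genuine gap is the converse, together with $\rmCh(f)\le C_f^2$, which is the whole content of the theorem. You list three strategies but carry out none, and the one concrete mechanism you rely on cannot produce $L^2$ integrability. Passing to $\pi_A$ multiplies $\Comp$ by $\pi(A)^{-1}$, while $\rmKe(\pi_A)=\pi(A)^{-1}\int_A\int_0^1|\dot\gamma_t|^2\,\d t\,\d\pi$. So the factors of $\pi(A)$ cancel exactly and what you actually get is
\[
\int_A f(\gamma_t)-f(\gamma_s)\,\d\pi\le C\,\Comp(\pi)^{1/2}\Big((t-s)\int_A\int_s^t|\dot\gamma_r|^2\,\d r\,\d\pi(\gamma)\Big)^{1/2}.
\]
This is the square root of a measure of $A$, not a measure. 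Bounds of the form $\nu(A)\le c\,\kappa(A)^{1/2}$ for all $A$ only give weak-$L^2$ control of $\d\nu/\d\kappa$ (test $A=\{\d\nu/\d\kappa>\lambda\}$). Moreover, the density obtained this way depends on the chosen plan, whereas you need a single $G\in L^2(\mm)$ with $\|G\|_{L^2}\le C$. The BV route has the same defect. Localizing to an open $V$ turns $\Comp^{1/2}$ into $\Comp$, since $\Comp(\pi)\,\mm(V)\ge1$ for plans in $V$. But this gives at best $|Df|(V)\lesssim C\,\mm(V)^{1/2}$, i.e.\ a density in $L^{2,\infty}$, which is strictly larger than $L^2$. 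So your claim that the ${\rm Comp}^{1/2}$ scaling plus localization yields an $L^2$ density is false.

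What is missing is a family of test plans that spread mass at almost uniform density and move along the gradient, so that your Cauchy--Schwarz step becomes nearly an equality. You cannot simply copy the AGS equivalence proof: it uses a pointwise upper gradient of $\Phi'(f)$ weighted by the density, while (ii) only sees $\Comp$. One way to close the gap is the following.

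\emph{Reduction.} Reduce, with the same constant, to bounded $f$ on a bounded closed set $Y$. Plans in $Y$ are plans in $\X$ with the same $\Comp$. Restricting $\pi$ to $\{f(\gamma_1)>f(\gamma_0)\}$, where the factors $\pi(A)$ again cancel, shows that (ii) passes to $\phi\circ f$ for every nondecreasing $1$-Lipschitz $\phi$, in particular to truncations.

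\emph{Heat-flow plans.} Let $g_s$ be the $L^2$-gradient flow of $\frac12\rmCh^Y$ starting from $g_0=1+\eps f$. Lift $\mu_s=g_s\mm/Z$, $s\in[0,t]$, via Kuwada's lemma \cite{AmbrosioGigliSavare11} and \cite{Lisini07} to a test plan $\pi$ rescaled to $[0,1]$. Mass conservation and the energy identity give $\int f(\gamma_0)-f(\gamma_1)\,\d\pi=\frac1{\eps Z}\int g_0(g_0-g_t)\,\d\mm\ge\frac1{\eps Z}\int_0^t\rmCh(g_s)\,\d s$. On the other hand, (ii) applied to the time-reversed plan, together with the maximum principle $\inf g_0\le g_s\le\sup g_0$, bounds the same quantity by $C\big(\tfrac{\sup g_0}{Z}\big)^{1/2}\big(\tfrac{t}{Z\inf g_0}\int_0^t\rmCh(g_s)\,\d s\big)^{1/2}$.

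\emph{Conclusion.} Combining the two bounds, $\rmCh(g_t)\le\frac1t\int_0^t\rmCh(g_s)\,\d s\le\eps^2C^2\sup g_0/\inf g_0$. Let $t\to0$, use $\rmCh(1+\eps f)=\eps^2\rmCh(f)$, and then let $\eps\to0$; this gives $\rmCh^Y(f)\le C^2$. A cutoff argument and lower semicontinuity then globalize the bound. For comparison, the paper does not prove this statement itself; it imports it from \cite[Theorem 3.1]{NobiliRenziVitillaro25}, in the line of the integrated characterizations of \cite{AmbrosioDiMarino14,DiMarinoPhD}.
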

We recall the definition of the Shannon entropy, that is well defined by \eqref{eq:growth_cond} and \cite[Eq. (2.5)]{AGMR15}.
\begin{definition}
Let $\Xdm$ be a metric measure space. We define the Shannon entropy functional ${\rm Ent}_\mm \colon \PP_2(\X) \to \R \cup\{+\infty\}$  by
\begin{equation} {\rm Ent}_\mm(\mu) := \int \rho\log (\rho) \,\d \mm, \qquad \text{if } \mu = \rho\mm, \quad +\infty\text{ otherwise}. \label{eq:Entm}
\end{equation}
\end{definition}
We shall keep the same notation for the natural extension of the entropy to any finite non-negative measure, defined by the same integral formula.

We shall consider gradient flows of convex and lower semicontinuous functionals on Hilbert spaces (see \cite{AmbrosioGigliSavare08} for a complete discussion). An example is the heat flow generated as the $L^2$-gradient flow of the Cheeger energy. Later on, in Section \ref{sec:heat} we will consider such flow as a semigroup generated by a Dirichlet form. We report from \cite{AmbrosioGigliSavare11} a result for comple metric measure spaces.
\begin{proposition}\label{prop:dis_flussi}
Let $\Xdm$ be a complete metric measure space satisfying \eqref{eq:growth_cond}, and let $f_0 \in L^2(\X)$ such that $f_0\mm\in\mathscr{P}_2(\X)$. Let $(f_t)$ be the $L^2$-gradient flow of the Cheeger energy starting from $f_0$, and denote $\mu_t:=f_t\mm$ for any $t\ge 0$. Assume also that $\mu_0 \in D(\Ent_\mm)$. Then, for all $T>0$ we have $(\mu_t) \in {AC}^2([0,T], \mathscr{P}_2(\X))$ and
\begin{equation}\label{eq_dis_flussi1}
\Ent_\mm(\mu_0) \geq \Ent_\mm(\mu_T)+2\int_0^T \rmCh(\sqrt{f_t}) \,\d t+\frac 12\int_0^T |\dot{\mu}_t|^2 \, \d t.
\end{equation}
\end{proposition}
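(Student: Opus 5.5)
The plan follows the classical argument of \cite{AmbrosioGigliSavare11}, which combines the energy dissipation identity for the $L^2$-gradient flow of $\rmCh$ with a bound on the metric speed of $(\mu_t)$ in $\PP_2(\X)$. We first reduce to the case where $\mu_0$ has bounded support and $f_0$ is bounded away from zero on a bounded set; the general case is recovered at the end by monotone truncation, using lower semicontinuity of $\Ent_\mm$ and of the metric derivative together with $L^2$-contractivity of the flow. In the reduced case the classical properties of the Hilbertian gradient flow $(f_t)$ are available: mass preservation, positivity and the comparison principle (Markov property of the flow of a convex, lower semicontinuous, $1$-homogeneous-in-gradient functional), and $f_t\in D(\partial \rmCh)$ for $t>0$, with $t\mapsto f_t$ locally Lipschitz in $L^2$ on $(0,T]$. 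The growth condition \eqref{eq:growth_cond} gives finiteness of second moments along the flow and a lower bound on $\Ent_\mm$ via \cite[Eq. (2.5)]{AGMR15}, so every term in \eqref{eq_dis_flussi1} is meaningful.

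\emph{Entropy dissipation.} For $\eps>0$ we use a regularized entropy density $e_\eps(z)=(z+\eps)\log(z+\eps)$, whose derivative $\log(z+\eps)+1$ is Lipschitz on $[0,\infty)$. Since $\partial_t f_t=-\Delta$-type element of $-\partial\rmCh(f_t)$, the chain rule in $L^2$ together with the integration by parts inequality $\int g\,\Delta f\,\d\mm\le -\int |\nabla f|^2 \phi'(f)\,\d\mm$, for $g=\phi(f)$ with $\phi$ nondecreasing Lipschitz (derived from Theorem \ref{Calculus-weakg} and convexity), gives
\[
\int e_\eps(f_0)\,\d\mm-\int e_\eps(f_T)\,\d\mm\ \ge\ \int_0^T\!\!\int \frac{|\nabla f_t|^2}{f_t+\eps}\,\d\mm\,\d t .
\]
Letting $\eps\downarrow0$, by monotone convergence and $|\nabla \sqrt{f}|^2=|\nabla f|^2/(4f)$ on $\{f>0\}$ (chain rule plus locality), we obtain $\Ent_\mm(\mu_0)-\Ent_\mm(\mu_T)\ge 4\int_0^T\rmCh(\sqrt{f_t})\,\d t$, i.e.\ the Fisher information $\int|\nabla f_t|^2/f_t$ is integrable in time. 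Finiteness of the mass-conservation and second-moment terms is used here to make $\int(z\log z)$ limits legitimate.

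\emph{Speed estimate.} This is the main obstacle. We aim to show that $(\mu_t)$ is $AC^2$ with $|\dot\mu_t|^2\le \int |\nabla f_t|^2/f_t\,\d\mm$ for a.e.\ $t$. Following Kuwada's duality argument we take $\varphi$ bounded Lipschitz with bounded support, let $Q_s\varphi$ be its Hopf--Lax evolution, and use the Hamilton--Jacobi subsolution property $\partial_s Q_s\varphi+\frac12(\lip Q_s\varphi)^2\le0$ together with $|\nabla Q_s\varphi|\le \lip Q_s\varphi$. Differentiating $s\mapsto\int Q_s\varphi\, f_{t+hs}\,\d\mm$ and using $-\int g\Delta f\le\int|\nabla g||\nabla f|$ with Young's inequality yields
\[
\int Q_1\varphi\,\d\mu_{t+h}-\int\varphi\,\d\mu_t\ \le\ \frac{h^2}{2}\int_0^1\!\!\int \frac{|\nabla f_{t+hs}|^2}{f_{t+hs}}\,\d\mm\,\d s .
\]
Taking the supremum over $\varphi$ and applying Kantorovich duality (Proposition \ref{prop:goodKant}, with the bounded-support reduction making bounded Lipschitz potentials sufficient) gives $W_2^2(\mu_t,\mu_{t+h})\le h\int_t^{t+h}\int|\nabla f_r|^2/f_r$. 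Hence $(\mu_t)\in AC^2$ and $|\dot\mu_t|^2\le 4\rmCh(\sqrt{f_t})\cdot 1$ in the normalization above. The delicate points in this step are the time-regularity of $Q_s\varphi$ and the division by $f$, which we handle by using $f+\eps$ and passing to the limit.

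\emph{Conclusion.} We split the dissipation as $4\int\rmCh(\sqrt{f_t}) = 2\int\rmCh(\sqrt{f_t})+2\int\rmCh(\sqrt{f_t})$ and bound the second half from below by $\frac12\int|\dot\mu_t|^2$ using the speed estimate. This gives \eqref{eq_dis_flussi1}. Finally we remove the reduction by approximation.
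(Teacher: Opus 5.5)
Your proposal is correct and follows the same argument as the paper, whose proof just combines \cite[Proposition 4.22 and Lemma 6.1]{AmbrosioGigliSavare11}: the entropy dissipation estimate $\Ent_\mm(\mu_0)-\Ent_\mm(\mu_T)\ge\int_0^T\!\int_{\{f_t>0\}}|\nabla f_t|^2/f_t\,\d\mm\,\d t$ and Kuwada's bound $|\dot\mu_t|^2\le\int_{\{f_t>0\}}|\nabla f_t|^2/f_t\,\d\mm$, which are exactly the two steps you reprove, followed by the same splitting of the Fisher information $4\rmCh(\sqrt{f_t})$ into two halves. Only minor repairs are needed: when $\mm(\X)=\infty$, replace $e_\eps$ by $e_\eps(z)-e_\eps(0)-e_\eps'(0)z$ (harmless by mass preservation, which in that case comes from \eqref{eq:growth_cond}, not from the Markov property); and drop the bounded-support reduction, which the flow does not preserve and which is unnecessary, since the Hopf--Lax form of Kantorovich duality with bounded Lipschitz potentials holds for all $\mu_t,\mu_{t+h}\in\PP_2(\X)$.
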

\begin{proof}
The proof follows immediately by combining \cite[Proposition 4.22 and Lemma 6.1]{AmbrosioGigliSavare11}. 
\end{proof}
We next recall the notion of an infinitesimal Hilbertian space from \cite{Gigli12}.
\begin{definition}
    Let $\Xdm$ be a metric measure space, and let $\varnothing \neq \Omega\subseteq  \X$ be open. We say that $\Omega$ is \emph{infinitesimally Hilbertian} provided for every $f,g \in H^{1,2}(\X)$ with $\supp(f),\supp(g)\Subset \Omega$ it holds
    \begin{equation}\label{eq:parall_id}
        \rmCh(f+g)+\rmCh(f-g)=2\rmCh(f)+2\rmCh(g).
    \end{equation}
    If $\Omega=\X$, we say that $\Xdm$ is an infinitesimally Hilbertian metric measure space.
\end{definition}
\begin{remark}\label{rem:restr_infHilb} \rm
If $\Omega$ is infinitesimally Hilbertian, then, for any open set $\Omega'\subset\X$ with $\overline{\Omega'} \Subset \Omega$ and $\mm(\partial \Omega')=0$, it holds that $H^{1,2}(\overline{\Omega'})$ is a Hilbert space equipped with the norm $\| f \|_{H^{1,2}(\overline{\Omega'})}^2=\|f\|_{L^2(\overline{\Omega'})}^2 +\rmCh^{\overline{\Omega'}} (f)$. This follows from the same arguments used in \cite[Proposition 4.22]{Gigli12}.\fr
\end{remark}
We recall a characterization of infinitesimally Hilbertian spaces, referring to \cite[Theorem 4.3.3]{GigliPasqualetto20}.
\begin{theorem}\label{thm:equivalent hilbert}
    Let $\Xdm$ be a complete metric measure space, and let $\varnothing \neq \Omega\subseteq  \X$ be open. Then $\Omega$ is \emph{infinitesimally Hilbertian} if and only if for all $f,g \in H^{1,2}(\X)$ with $\supp(f),\supp(g)\Subset \Omega$ it holds
    \[ 
        |\nabla(f+g)|^2+|\nabla(f-g)|^2 = 2|\nabla f|^2+2|\nabla g|^2,\qquad\mm\text{-a.e.\ on }\Omega.
    \]
\end{theorem}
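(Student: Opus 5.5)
The equivalence is between the integrated parallelogram identity \eqref{eq:parall_id} and its pointwise counterpart; I would prove the two implications separately.

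\emph{Pointwise $\Rightarrow$ integrated.} Assume the a.e.\ identity on $\Omega$ for all $f,g$ with support well inside $\Omega$. The functions $f\pm g$ are supported in $\supp(f)\cup\supp(g)\Subset\Omega$. By locality (Theorem \ref{Calculus-weakg} ii)), the weak gradients $|\nabla f|,|\nabla g|,|\nabla(f\pm g)|$ vanish a.e.\ outside this support. So the identity holds $\mm$-a.e.\ on all of $\X$, and integrating it with ${\rm Ch}(h)=\int|\nabla h|^2\,\d\mm$ gives \eqref{eq:parall_id}.

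\emph{Integrated $\Rightarrow$ pointwise.} This is the substantial direction, and I would follow the classical localization argument of \cite{Gigli12,GigliPasqualetto20}. Fix $f,g$ with supports $\Subset\Omega$. Take Lipschitz cutoffs $\chi$, for instance $\chi=\max(0,1-{\rm dist}(\cdot,A)/\delta)$ with $A$ a ball, and apply \eqref{eq:parall_id} to the pair $\chi f,\chi g$, whose supports remain $\Subset\Omega$.

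The difficulty is the Leibniz rule in Theorem \ref{Calculus-weakg}, which requires bounded functions. I would first reduce to $f,g\in H^{1,2}\cap L^\infty$ by truncation. By the chain rule and locality, $|\nabla (T_k f)|=|\nabla f|$ a.e.\ on $\{|f|<k\}$. Moreover $|\nabla(T_kf\pm T_kg)|=|\nabla(f\pm g)|$ a.e.\ on $\{|f|,|g|<k\}$, because there $T_kf\pm T_kg$ coincides with $f\pm g$. Letting $k\to\infty$ yields the general case.

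For bounded $f,g$, locality together with the Leibniz rule gives
\[
|\nabla(\chi h)|\le \chi|\nabla h|+|h|\,\Lip(\chi)\,\mathbf 1_{\{0<\chi<1\}}.
\]
It also gives $|\nabla(\chi h)|=|\nabla h|$ a.e.\ on $\{\chi=1\}$, since $\chi h=h$ there. Plugging this into \eqref{eq:parall_id} and letting $\delta\to0$, I want
\[
\int_A\Big(|\nabla(f+g)|^2+|\nabla(f-g)|^2\Big)\d\mm=\int_A\Big(2|\nabla f|^2+2|\nabla g|^2\Big)\d\mm
\]
for every ball $A$.

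The \textbf{main obstacle} is this limit $\delta\to0$. The transition layer $\{0<\chi<1\}$ has measure tending to $\mm(\partial A)$, so I would choose radii with $\mm(\partial B_\rho(y))=0$; all but countably many radii qualify. The cross terms on the layer are not small a priori, because $\Lip(\chi)=1/\delta$ blows up. To avoid this, I would not let the cutoff's gradient enter at all. One option is to apply \eqref{eq:parall_id} to the functions $\lambda\chi_1 f+\mu\chi_2 g$ with quadratic bookkeeping. A cleaner option is to use that the map $h\mapsto{\rm Ch}(h)$ restricted to functions supported in $\Omega$ is a quadratic form. Polarization then gives a bilinear form, and the Leibniz rule shows it is local, so it is represented by an $L^1$ density $\langle\nabla f,\nabla g\rangle$ with $\langle\nabla f,\nabla f\rangle=|\nabla f|^2$ a.e. Concretely, I would test the integrated identity with $\chi$ arbitrary and compute the expansion of ${\rm Ch}(\chi f)$ as in \cite[Proposition 4.22]{Gigli12}.

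This yields the localized identity on every such ball $A$. Since these balls generate the Borel $\sigma$-algebra on $\Omega$, the densities agree $\mm$-a.e.\ on $\Omega$, which is the pointwise identity.
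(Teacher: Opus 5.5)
Your first implication is correct. By locality, all four weak gradients vanish $\mm$-a.e.\ outside $\supp(f)\cup\supp(g)$, so the pointwise identity holds on all of $\X$ and integrates to \eqref{eq:parall_id}. The truncation reduction is also fine.

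The gap is in the converse, exactly at the step you flag. The cut-off computation cannot be closed, and your fallback is circular. Polarizing ${\rm Ch}$ on $\{h\in H^{1,2}(\X):\supp(h)\Subset\Omega\}$ does give a symmetric bilinear form $\mathcal E$ with $\mathcal E(f,f)={\rm Ch}(f)$, and locality makes it strongly local. Neither fact, however, produces a density $\langle\nabla f,\nabla g\rangle$ with $\langle\nabla f,\nabla f\rangle=|\nabla f|^2$; that identification \emph{is} the statement. A measurable family of pointwise norms whose squares integrate to a quadratic form need not be pointwise Hilbertian. For instance, in polar coordinates on $\R^2$ one has $|v|^2=N_1(v)^2+N_2(v)^2$ with $N_{1,2}(v)^2=|v|^2(\frac12\pm\eps\cos 4\theta)$ for small $\eps$, and neither $N_i$ is Euclidean. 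So some calculus specific to Sobolev functions must enter. The Leibniz rule of Theorem \ref{Calculus-weakg} is only the inequality $|\nabla(\chi h)|\le\chi|\nabla h|+|h||\nabla\chi|$. It cannot give an exact expansion of ${\rm Ch}(\chi f)$; such an expansion already presupposes the pointwise scalar product $\langle\nabla f,\nabla\chi\rangle$ you are trying to build.

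The missing idea is the first-order calculus of \cite{Gigli12}.
\begin{itemize}
\item For a.e.\ $x$ the map $\eps\mapsto|\nabla(f+\eps g)|^2(x)$ is convex, by property (i) of Theorem \ref{Calculus-weakg}. Hence $D^\pm f(\nabla g):=\lim_{\eps\to0^\pm}\frac{|\nabla(f+\eps g)|^2-|\nabla f|^2}{2\eps}$ exist a.e., with $D^-f(\nabla g)\le D^+f(\nabla g)$.
\item By monotone convergence, each integrates to the corresponding one-sided derivative of $\eps\mapsto\frac12{\rm Ch}(f+\eps g)$ at $0$. These agree, both being $\mathcal E(f,g)$, because ${\rm Ch}$ is quadratic. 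Hence $D^+=D^-=:Df(\nabla g)$ a.e.
\item This is what turns the one-sided calculus rules into identities. The map $g\mapsto Df(\nabla g)$ becomes linear. For bounded $f$ and Lipschitz $\varphi$ with $\supp(\varphi)\Subset\Omega$ one gets $Df(\nabla(\varphi f))=\varphi|\nabla f|^2+f\,Df(\nabla\varphi)$ and $D(f^2)(\nabla\varphi)=2f\,Df(\nabla\varphi)$.
\item Therefore $\mathcal E(f,\varphi f)-\frac12\mathcal E(f^2,\varphi)=\int\varphi|\nabla f|^2\,\d\mm$. For fixed $\varphi$ the left-hand side is a quadratic form in $f$: $\mathcal E$ is bilinear, $f\mapsto\varphi f$ is linear, and $\mathcal E(f^2,\varphi)$ is the diagonal of $(f,g)\mapsto\mathcal E(fg,\varphi)$. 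Its parallelogram identity, tested against all such $\varphi$, is the pointwise identity on $\Omega$.
\end{itemize}

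Testing against Lipschitz $\varphi$ also repairs your last step. Without doubling, equality of integrals over all balls does not determine a measure on a separable metric space: balls are not a $\pi$-system, and there are classical counterexamples.

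For comparison, the paper gives no proof of this statement. It quotes \cite[Theorem 4.3.3]{GigliPasqualetto20}, a global result, and leaves implicit that the argument restricts to functions supported in $\Omega$.
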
 
On an infinitesimally Hilbertian space, the parallelogram identity guarantees that the following is well-defined and bilinear in its entries
\[
    \mathcal{E}_\mm(f,g) \coloneqq \lim_{\eps\to 0}\int \frac{|\nabla(f+\varepsilon g)|^2-|\nabla f|^2}{\varepsilon}\,\d\mm,\qquad\forall f,g \in H^{1,2}(\X),
\]
hence (see \cite{Gigli12,AGMR15}) it holds that $\mathcal E_\mm(f,f)$ is the canonical regular and Markovian Dirichlet form associated to the Cheeger energy, and we shall denote by $\mathbf{h}_t(f)$ the associated linear heat semigroup for $f \in L^2(\X)$. When no confusion will arise on the chosen reference measure, we shall shortly write $\mathcal E(f,f)$. 
Notice that such linear semigroup is precisely the $L^2$-gradient flow of the Cheeger energy, as the linearity is guaranteed by the fact that the Cheeger energy is assumed quadratic. Thanks to Proposition \ref{prop:dis_flussi}, we can show the following useful property.
\begin{proposition}\label{prop:der_Wass}
Let $\Xdm$ be an infinitesimally Hilbertian complete metric measure space 
satisfying \eqref{eq:growth_cond}. Let $f_0 \in L^\infty \cap L^2(\X)$ such that $f_0 \mm \in \PP_2(\X)$ and let $\mu_t:=f_t\mm$ with $f_t:=\mathbf{h}_tf_0$. Assume also that $\mu_0 \in D(\Ent_\mm)$. Let $\sigma \in \PP(\X)$. Then for a.e. $t>0$ the following property holds: for any Kantorovich potential $\varphi_t \in H^{1,2}(\X)$ relative to $(\mu_t, \sigma)$, one has
\begin{equation}
    \frac 12 \frac\d{\d t}W_2^2(\mu_t, \sigma)=-\mathcal{E}(f_t, \varphi_t).\label{eq:hilb dot product} 
\end{equation}
\end{proposition}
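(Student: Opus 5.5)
We follow the scheme of \cite[Theorem 6.3]{AmbrosioGigliSavare11-2} and \cite{AGMR15}. The idea is to compute the derivative of $t\mapsto W_2^2(\mu_t,\sigma)$ from two sides: an upper bound from Kantorovich duality and a lower bound from the Sobolev (test plan) characterization of the heat flow. Proposition \ref{prop:dis_flussi} gives $(\mu_t)\in AC^2([0,T],\PP_2(\X))$. Hence $t\mapsto W_2^2(\mu_t,\sigma)$ is absolutely continuous and differentiable at a.e.\ $t$. We fix such a $t$ which is also a Lebesgue point of the relevant quantities and where $\mathcal E(f_t,f_t)<\infty$. We then pick a Kantorovich potential $\varphi_t$ for $(\mu_t,\sigma)$ of the form \eqref{eq:choice_kant}. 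Since $f_t\in H^{1,2}(\X)$ and, by the maximum principle, $f_t\in L^\infty$, the expression $\mathcal E(f_t,\varphi_t)$ makes sense whenever $\varphi_t\in H^{1,2}(\X)$, which is the hypothesis of the statement. The goal is to prove both inequalities $\le$ and $\ge$ in \eqref{eq:hilb dot product}.

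For the upper bound on the derivative, we use that $\varphi_t$ is admissible in the dual problem at time $t+h$:
\[
\tfrac12 W_2^2(\mu_{t+h},\sigma)-\tfrac12W_2^2(\mu_t,\sigma)\ge \int \varphi_t (f_{t+h}-f_t)\,\d\mm .
\]
Dividing by $h>0$ and $h<0$, and using $\frac{\d}{\d t}f_t=\Delta f_t$ in $L^2$ together with $\int \varphi_t\Delta f_t\,\d\mm=-\mathcal E(f_t,\varphi_t)$, we obtain both one-sided inequalities at a differentiability point. This already yields equality: the right derivative is $\ge -\mathcal E$ and the left derivative is $\le -\mathcal E$, and they coincide where the derivative exists. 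There is one integrability subtlety, since $\varphi_t$ grows quadratically and $f_{t+h}-f_t$ is only in $L^2$. We handle it by truncating $\varphi_t$ with cutoffs $\chi_R\varphi_t$ and using the moment bounds \eqref{eq:growth_cond}, together with the uniform second moments of $\mu_s$ on $[0,T]$. The difference quotients converge weakly in $L^2$ locally, and tails are controlled since $f_s\le\|f_0\|_\infty$.

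If the previous step turns out insufficient, for instance if the difference quotients converge only weakly on bounded sets and not against the unbounded $\varphi_t$, we use an alternative route for the inequality $\frac12\frac{\d}{\d t}W_2^2\le -\mathcal E(f_t,\varphi_t)$. Let $\pi$ be the lifting of $(\mu_s)$ via the superposition principle, with $\int|\dot\gamma|^2\d\pi=\int|\dot\mu|^2$. We compare $W_2^2(\mu_{t-h},\sigma)\le \int(\ldots)$ and use Proposition \ref{Prop:slopeKantoPotentials} to bound $|D^+\varphi_t|$. This gives a bound $|\mathcal E(f_t,\varphi_t)|\le \int|\nabla\varphi_t||\nabla f_t|$ matching the metric speed $|\dot\mu_t|^2=\int |\nabla f_t|^2/f_t$. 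This is the standard duality argument of \cite{AmbrosioGigliSavare11-2}.

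The main obstacle is justifying the limit $h\to0$ in $\int\varphi_t(f_{t+h}-f_t)/h$ with $\varphi_t$ unbounded, and ensuring that the null set of times does not depend on the choice of potential. For the latter, we note that the inequality derived from duality holds for every potential at each differentiability time, so the exceptional set consists only of non-differentiability times of $W_2^2(\mu_\cdot,\sigma)$ and of $t\mapsto f_t$ in $L^2$. For the former, we use the growth bound $|\varphi_t|\le C(1+\sfd^2(\cdot,x_0))$ from Proposition \ref{prop:goodKant} and Lemma \ref{lemma:Kantorovichpotentialscompactness}, combined with uniform Gaussian moment estimates of $\mu_s$ derived from \eqref{eq:growth_cond}.
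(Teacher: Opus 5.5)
Your main argument is the paper's: at a time where both $s\mapsto W_2^2(\mu_s,\sigma)$ and $s\mapsto f_s\in L^2$ are differentiable, you test the dual problem at time $t+h$ with $\varphi_t$, divide by $h$ of either sign, and use $\int\varphi_t\,\frac{f_{t+h}-f_t}{h}\,\d\mm\to\int\varphi_t\Delta f_t\,\d\mm=-\mathcal E(f_t,\varphi_t)$, with an exceptional time set that does not depend on the potential. The integrability worry, the truncation and the fallback route are unnecessary: the hypothesis $\varphi_t\in H^{1,2}(\X)\subset L^2(\X)$, together with strong $L^2$ convergence of the difference quotients, already makes this pairing converge, and the paper uses nothing more.
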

\begin{proof}
We follow the proof of \cite[Theorem 6.3]{AGMR15}. By Proposition \ref{prop:dis_flussi}, for a.e. $t>0$ the derivative of the left-hand side in \eqref{eq:hilb dot product}  exists. Also, for a.e. $t>0$ the derivative of $s \mapsto f_s$ at $s=t$ exists in $L^2(\X)$ and coincides with the infinitesimal generator of the semigroup, namely the distributional Laplacian $\Delta f_t$ (cf.\ \cite{Gigli12}). 
Then, for all $g \in H^{1,2}(\X)$ we have
\begin{equation}\label{eq:limiteinL2}
\lim_{h \to 0} \int g \frac{f_{t+h}-f_t}{h} \,\d\mm=\int g\Delta f_t \,\d\mm=-\mathcal{E}(f_t, g).\end{equation}
Now, since $\varphi_t$ is a Kantorovich potential for $(\mu_t, \sigma)$, we get
\begin{align*}
& \frac 12 W_2^2(\mu_t, \sigma)=\int \varphi_t \,\d\mu_t+\int \varphi_t^c \, \d\sigma, \\
&\frac 12 W_2^2(\mu_{t+h}, \sigma) \ge \int \varphi_t \,\d\mu_{t+h}+\int \varphi_t^c \,\d\sigma \qquad \forall h>-t.
\end{align*}
Taking the difference between the two lines and using \eqref{eq:limiteinL2} gives
\[\frac 12 W_2^2(\mu_{t+h}, \sigma)-\frac 12 W_2^2(\mu_t, \sigma) \ge -h\mathcal{E}(f_t, \varphi_t)+o(h).\]
As the derivative of $s \mapsto W_2^2(\mu_s, \sigma)$ exists at $s=t$, we conclude by sending $h \to 0$.
\end{proof}
We point out that, for our scope, the above results will be invoked in metric measure spaces where \eqref{eq:growth_cond} is trivially satisfied.

We also need to change the reference measure through certain weight functions. In \cite[Theorem 3.6]{AGMR15} it was shown that if $\nu=g\mm$ with $g\in L^\infty(\mm),g\ge 0$ and $\mathcal{E}(\sqrt g,\sqrt g)<\infty$, then every $f\in H^{1,2}(\X,\sfd,\mm)$ satisfying $|\nabla f|\in L^2(\nu)$ belongs to $H^{1,2}(\X,\sfd,\nu)$ and, denoting by $|Df|_g$ the weak gradient in the metric measure space $(\X,\sfd,\nu)$, it holds
\begin{equation}
    |\nabla f| = |\nabla f|_g,\qquad \nu\text{-a.e. in $\X$}. \label{eq:weak gradi pesati}
\end{equation} 
In addition, we have the following useful chain rule result from \cite[Theorem 3.9]{AGMR15}.
\begin{theorem}\label{Teo:Dirichletchain}
    Let $\Xdm$ be an infinitesimally Hilbertian complete metric measure space and let $\nu = g\mm \in \mathscr P_2(\X)$ with
$g \in L^\infty(\X)$ and $\mathcal{E}(\sqrt g,\sqrt{g})<\infty$.
Then also $(\X,\sfd,\nu)$  is infinitesimally Hilbertian and the associated Dirichlet form $\mathcal{E}_{\nu}$ satisfies
\[
\mathcal E_\nu(\log(g),\varphi)=\mathcal E_\mm(g,\varphi),\qquad \forall \varphi \in \Lip_{bs}(\X).
\]
\end{theorem}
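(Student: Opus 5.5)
We treat the two assertions separately: first the Hilbertianity of $(\X,\sfd,\nu)$, then the chain rule. In both parts the main tool is the identification \eqref{eq:weak gradi pesati} of weak gradients under the change of measure $\mm\mapsto\nu=g\mm$.

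\emph{Infinitesimal Hilbertianity.} Let $f,h\in\Lip_{bs}(\X)$. They belong to $H^{1,2}(\X,\sfd,\mm)$ with bounded weak gradients, so $|\nabla f|,|\nabla h|\in L^2(\nu)$. By \eqref{eq:weak gradi pesati}, the weak gradients of $f$, $h$, $f\pm h$ computed with respect to $\nu$ agree $\nu$-a.e.\ with those computed with respect to $\mm$. Since $\Xdm$ is infinitesimally Hilbertian, Theorem \ref{thm:equivalent hilbert} gives the pointwise parallelogram identity $\mm$-a.e., hence $\nu$-a.e. Integrating against $\nu$ yields
\[
\rmCh_\nu(f+h)+\rmCh_\nu(f-h)=2\rmCh_\nu(f)+2\rmCh_\nu(h)
\]
on $\Lip_{bs}(\X)$. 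To pass to all of $H^{1,2}(\X,\sfd,\nu)$ we use density in energy. Every $u\in H^{1,2}(\nu)$ is the $L^2(\nu)$-limit of Lipschitz $u_k$ with $\int\lip(u_k)^2\,\d\nu\to\rmCh_\nu(u)$. Cutting off with Lipschitz functions of bounded support, and using that $\nu$ is a probability measure, we may take $u_k\in\Lip_{bs}(\X)$. Since $|\nabla u_k|_g\le \lip(u_k)$, lower semicontinuity gives $\rmCh_\nu(u_k)\to \rmCh_\nu(u)$. So $\rmCh_\nu$ is the $L^2(\nu)$-lower semicontinuous relaxation of its restriction to $\Lip_{bs}(\X)$, which is a quadratic form. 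A relaxation of this kind satisfies the parallelogram inequality in one direction. The reverse inequality follows by applying it to $f\pm h$, as in the standard argument of \cite{AmbrosioGigliSavare11-2}.

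\emph{Meaning of the left-hand side.} The function $\log g$ need not lie in $H^{1,2}(\nu)$, so $\mathcal E_\nu(\log g,\varphi)$ must be interpreted; I will define it as the limit of $\mathcal E_\nu(\log(g\vee\eps),\varphi)$ as $\eps\downarrow0$ and show that this limit exists.
1. First, $g=(\sqrt g)^2\in H^{1,2}(\mm)$. Indeed $\sqrt g\in L^\infty$ has finite energy, so the Leibniz rule in Theorem \ref{Calculus-weakg} gives $|\nabla g|\le 2\sqrt g\,|\nabla\sqrt g|$. Moreover $g\in L^2(\mm)$ because $g\in L^\infty$ and $\int g\,\d\mm=1$.
2. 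For $\eps>0$ set $\ell_\eps:=\log(g\vee\eps)-\log\eps$, which vanishes at $g=0$ and is a Lipschitz function of $g$. By the chain rule, $\ell_\eps\in H^{1,2}(\mm)$ with
\[
\nabla \ell_\eps=\frac{\chi_{\{g>\eps\}}}{g}\,\nabla g,
\]
understood through the bilinear form. Hence $|\nabla\ell_\eps|^2 g\le 4|\nabla\sqrt g|^2$, which is $\mm$-integrable, so \eqref{eq:weak gradi pesati} applies to $\ell_\eps$.
3. Polarizing the identity of gradients, for $\varphi\in\Lip_{bs}(\X)$ we get
\[
\mathcal E_\nu(\ell_\eps,\varphi)=\int \langle\nabla\ell_\eps,\nabla\varphi\rangle\, g\,\d\mm=\int_{\{g>\eps\}}\langle\nabla g,\nabla\varphi\rangle\,\d\mm .
\]
Here the pointwise chain rule for the carré du champ, $\langle\nabla(\phi\circ g),\nabla\varphi\rangle=\phi'(g)\langle\nabla g,\nabla\varphi\rangle$, comes from the a.e.\ parallelogram identity together with item (iii) of Theorem \ref{Calculus-weakg}.
4. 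Let $\eps\downarrow0$. The integrand is dominated by $|\nabla g||\nabla\varphi|\in L^1(\mm)$, and $\nabla g=0$ $\mm$-a.e.\ on $\{g=0\}$ by locality. So the right-hand side converges to $\mathcal E_\mm(g,\varphi)$.
5. On the left, $\nabla\ell_\eps\to\nabla g/g$ in $L^2(\nu)$ by dominated convergence with the bound $4|\nabla\sqrt g|^2$. This shows the limit defining $\mathcal E_\nu(\log g,\varphi)$ exists, and the constants $\log\eps$ play no role.

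The step I expect to be delicate is this last one: justifying that the pointwise bilinear objects $\langle\nabla\cdot,\nabla\cdot\rangle$ for $\nu$ and for $\mm$ agree $\nu$-a.e. after polarization. This requires \eqref{eq:weak gradi pesati} for $\ell_\eps\pm t\varphi$, i.e.\ that each of these functions lies in $H^{1,2}(\mm)$ with gradient in $L^2(\nu)$, which holds by steps 1 and 2. It also requires handling the degenerate set $\{g=0\}$, which is $\nu$-negligible.
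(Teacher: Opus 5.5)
The paper does not prove this statement. It quotes it from \cite[Theorem 3.9]{AGMR15}, right after quoting the change-of-measure identity \eqref{eq:weak gradi pesati} from \cite[Theorem 3.6]{AGMR15}. Your proposal is a correct, self-contained reduction of the former to the latter plus the calculus of Theorem \ref{Calculus-weakg}. It runs as follows.
\begin{itemize}
\item On $\Lip_{bs}(\X)$ the pointwise parallelogram identity transfers from $\mm$ to $\nu$. The cut-off is exactly what puts the approximants in $H^{1,2}(\X,\sfd,\mm)$ when $\mm(\X)=\infty$.
\item $\rmCh_\nu$ is the $L^2(\nu)$-relaxation of its restriction to $\Lip_{bs}(\X)$, hence quadratic.
\item Polarizing \eqref{eq:weak gradi pesati} along $\ell_\eps\pm t\varphi$ and then letting $\eps\downarrow 0$ gives the chain rule.
\end{itemize}

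What your route buys over the bare citation is an explicit meaning for $\mathcal E_\nu(\log g,\varphi)$ when $\log g\notin H^{1,2}(\X,\sfd,\nu)$. Where the paper actually invokes the result (proof of Theorem \ref{th:EVI_sporca}), this issue does not arise. There $g=f_{n,\delta}\ge\delta'>0$ on the bounded space $\overline{B}_r(x)$, so for $\eps<\delta'$ your $\ell_\eps$ is just $\log g-\log\eps$.

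Two small points of presentation.
\begin{itemize}
\item The pointwise rule $\langle\nabla(\phi\circ g),\nabla\varphi\rangle=\phi'(g)\langle\nabla g,\nabla\varphi\rangle$ needs the locality (ii) as well as (iii). Locality handles piecewise affine $\phi$, and (iii) then controls the approximation.
\item Step 5 should not refer to a vector field $\nabla g/g$, which has no meaning in this framework. The correct statement is $\rmCh_\nu(\ell_\eps-\ell_{\eps'})\le 4\int_{\{0<g\le\eps\}}|\nabla\sqrt g|^2\,\d\mm\to0$ for $\eps'<\eps$. Existence of the limit already follows from Step 4. What this estimate adds is that your definition coincides with the bilinear form whenever $\log g\in H^{1,2}(\X,\sfd,\nu)$, since then $\log(g\vee\eps)\to\log g$ in $H^{1,2}(\X,\sfd,\nu)$.
\end{itemize}
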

\subsection{Doubling \& Poincar\'e properties}
We give next the definition of doubling and locally doubling measures on open sets. 
\begin{definition}\label{def:loc_doubling}
Let \((\X,\sfd,\mm)\) be a metric measure space and let $\varnothing \neq \Omega\subseteq \X$ be an open set. We say that $\mm$ is doubling on $\Omega$ if there exists a constant $C_D>0$ such that
\begin{equation}\label{eq:locally_doub}
\mm(B_{2r}(x_0)) \le C_D \mm(B_r(x_0)),
\end{equation}
for all $x_0 \in \Omega$ and $r>0$ such that $\overline{B}_{2r}(x_0) \subseteq \Omega$. Furthermore, we say that $\mm$ is \emph{locally doubling} on $\Omega$ if for all $x \in \Omega$ there exists a radius $r_D(x) \in (0,+\infty]$ such that $\mm$ is doubling on $B_{r_D(x)}(x)$, in which case we denote by $C_D(x)>0$ the constant appearing in \eqref{eq:locally_doub}.
\end{definition}
A well known consequence of the doubling assumption is the following result.
\begin{lemma}\label{lem:loc_compact}
Let \((\X,\sfd,\mm)\) be a metric measure space, and let $\varnothing \neq \Omega\subseteq  \X$ be an open set. Assume that $\mm$ is locally doubling on $\Omega$. Then for every $x \in \Omega  \cap  \supp(\mm)$ and $r <\frac{r_D(x)}{2}$ the ball $\overline{B}_r(x)$ is compact. In particular, if $\supp(\mm \res \Omega)=\overline{\Omega}$ then $\Omega$ is locally compact.
\end{lemma}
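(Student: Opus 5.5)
The plan is to run the standard ``doubling implies proper'' argument, keeping every ball we touch inside the region where the doubling inequality is available. Fix $x\in\Omega\cap\supp(\mm)$ and $r<r_D(x)/2$. Since $\overline B_r(x)$ is closed and $(\X,\sfd)$ is locally complete, it suffices to prove that $\overline B_r(x)$ is totally bounded. If $\overline B_r(x)\subseteq \overline B_{r_C(x)}(x)$ we are done directly, since a complete totally bounded set is compact. Otherwise we use the completeness built into the setting (the paper applies this with complete spaces or with $r_C\ge r_D$). Alternatively, we shrink and cover: total boundedness of every $\overline B_{r'}(x)$ with $r'<r_D(x)/2$, combined with local completeness at each point, gives compactness.

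\textbf{Total boundedness.} Fix $\eps>0$ small and let $\{y_i\}\subseteq \overline B_r(x)\cap\supp(\mm)$ be a maximal $\eps$-separated family. Points outside $\supp(\mm)$ are handled at the end.

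The balls $B_{\eps/2}(y_i)$ are disjoint. Each has positive measure because $y_i\in\supp(\mm)$, and each is contained in $B_{r+\eps}(x)$.

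For each $i$ we have $B_{r+\eps}(x)\subseteq B_{2r+\eps}(y_i)$. We iterate the doubling inequality around $y_i$, $k\simeq\log_2((2r+\eps)/\eps)$ times, to get
\[
\mm(B_{2r+\eps}(y_i))\le C_D^{k}\,\mm(B_{\eps/2}(y_i)).
\]
For this to be legitimate, every ball $\overline B_{2s}(y_i)$ used in the iteration must lie in the doubling region $B_{r_D(x)}(x)$. This is the main obstacle: centering at $y_i$ with radius about $2r$ may exit that region.

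To handle it, I would first work with a smaller radius $r'$ satisfying $3r'+2\eps<r_D(x)$. In that case $\overline B_{2r'+2\eps}(y_i)\subseteq B_{r_D(x)}(x)$, so the iteration is valid. Summing over the disjoint balls then bounds their number:
\[
\#\{y_i\}\le C_D^{k}\,\frac{\mm(B_{r'+\eps}(x))}{\min_i\mm(B_{\eps/2}(y_i))}\cdot(\ldots).
\]
More precisely, each term satisfies $\mm(B_{\eps/2}(y_i))\ge C_D^{-k}\mm(B_{r'+\eps}(x))>0$. Here $\mm(B_{r'+\eps}(x))>0$ since $x\in\supp(\mm)$, and it is finite since $\mm$ is boundedly finite by \eqref{eq:growth_cond}. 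So the family is finite, and $\overline B_{r'}(x)\cap\supp(\mm)$ is totally bounded.

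To reach the full range $r<r_D(x)/2$, I would cover $\overline B_r(x)$ using the doubling at nearby centers $z\in\Omega$, whose radii $r_D(z)$ we control. Alternatively, I would observe that the argument only needs $\overline B_{2s}(y)\subseteq\Omega$-type conditions from Definition~\ref{def:loc_doubling} applied to $\mm$ doubling on $B_{r_D(x)}(x)$, and adjust the constants accordingly. I expect this bookkeeping of radii to be the only delicate point.

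\textbf{Points outside the support.} If $\supp(\mm\res\Omega)=\overline\Omega$, then $\overline B_r(x)\cap\Omega$ lies in the closure of $\supp(\mm)$, so an $\eps$-net of the support part gives a $2\eps$-net of the whole ball. This yields local compactness of $\Omega$, because every point of $\Omega$ then lies in $\supp(\mm)$ and has compact closed neighborhoods $\overline B_r(x)$.
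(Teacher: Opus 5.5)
Your total boundedness argument covers only $r<r_D(x)/3$, so there is a genuine gap in the range of radii. You compare each $B_{\eps/2}(y_i)$ with the whole ball $B_{r+\eps}(x)\subseteq B_{2r+\eps}(y_i)$. The doubling iteration around $y_i$ therefore runs up to radius $2r+\eps$, and it needs $\overline B_{2r+\eps}(y_i)\subseteq \overline B_{3r+\eps}(x)\subset B_{r_D(x)}(x)$; this is why you end up with $3r'+2\eps<r_D(x)$. Neither suggested extension closes the gap. First, the definition gives no control of $r_D(z)$ at nearby points, beyond the inherited doubling on $B_{r_D(x)-\sfd(x,z)}(z)$. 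Second, covering $\overline B_r(x)$ by finitely many smaller balls presupposes a finite net of $\overline B_r(x)$, which is circular, and growing balls outward from $x$ would need a length structure that a general metric space lacks. The full range is not cosmetic: Proposition~\ref{Prop:existgeodesicsCDdoubling} applies the lemma to $\overline B_{2R}(x)$ with $2R$ up to $\tfrac25 r_D(x)>\tfrac13 r_D(x)$.

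The missing idea is that you only need a uniform positive lower bound on $\mm(B_{\eps/2}(y_i))$, and a \emph{small} ball around the centre provides it.
\begin{itemize}
\item Pick $c>0$ with $(2+c)r<r_D(x)$ and set $\eps=(1+c)r2^{-N}$.
\item Since $\sfd(x,y_i)\le r$, you have $B_{cr}(x)\subseteq B_{(1+c)r}(y_i)$.
\item The $N+1$ doubling steps around $y_i$, from radius $\eps/2$ to $(1+c)r$, only involve balls contained in $\overline B_{(2+c)r}(x)\subset B_{r_D(x)}(x)$.
\item Hence $\mm(B_{\eps/2}(y_i))\ge C_D(x)^{-(N+1)}\mm(B_{cr}(x))>0$, because $x\in\supp(\mm)$.
\item Disjointness then gives $\#\{y_i\}\le C_D(x)^{N+1}\mm(\overline B_{r+\eps/2}(x))/\mm(B_{cr}(x))<\infty$.
\end{itemize}
This is the paper's proof.

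Two further points. This lower bound, like your own, never uses $y_i\in\supp(\mm)$. It therefore shows that all of $\overline B_r(x)$ lies in $\supp(\mm)$, so the separate treatment of points outside the support is unnecessary. As written, that treatment invokes $\supp(\mm\res\Omega)=\overline\Omega$, which is not available for the first assertion.

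Also, your alternative claim that total boundedness plus local completeness yields compactness is false. Take $\X=(0,1)$ with Lebesgue measure: it is locally complete and doubling, yet $\overline B_1(1/2)=(0,1)$ is not compact. Compactness needs completeness of the ball itself. As in the paper's proof, this has to come from the complete setting in which the lemma is applied.
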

\begin{proof}
Fix $x \in \Omega$ and choose $c>0$ such that $r< \frac{r_D(x)}{2+c}$. Fix also $\eps=\frac{r(1+c)}{2^N}$ for some $N \in \N$. For any $x_1, \ldots, x_k \in \overline{B}_r(x)$ such that $\sfd(x_i, x_j) \ge \eps$ for all $i \neq j$, we have that the balls $B_{\eps/2}(x_i)$ are disjoint and contained in $\overline{B}_{r+\frac{\eps}{2}}(x)$. On the other hand, since $\overline{B}_{cr}(x) \subset \overline{B}_{r(1+c)}(x_i) \subset \overline{B}_{r(2+c)}(x) \subset B_{r_D(x)}(x)$, the locally doubling condition gives that
\[\mm(B_{\eps/2}(x_i)) \geq C_D(x)^{-(N+1)} \mm(B_{cr}(x))>0,\]
since $x \in \supp(\mm)$. Therefore
\[k \le C_D(x)^{N+1}\frac{\mm(\overline{B}_{r+\frac{\eps}{2}}(x))}{\mm(B_{cr}(x))}< \infty,\]
which implies that $\overline{B}_r(x)$ is totally bounded, as $\eps$ is arbitrarily small.
\end{proof}
We shall also need the concept of a Poincar\'e inequality on open sets that we recall next. We refer to \cite{Caputo26} and references therein for a general treatment. 
\begin{definition}
    Let $p,q \in [1,\infty)$, let $(\X, \sfd, \mm)$ be a metric measure space, and let $u \in L^q(\X)$ and $g \in L^p(\X)$ with $g$ non-negative. We say that the pair $(u,g)$ satisfies the weak $(q,p)$-Poincaré inequality in $\varnothing \neq \Omega\subseteq  \X$ open if there exists a finite constant $C_P>0$ such that
    \begin{equation}\label{eq:weak_P}
        \|u-(u)_{B_r(x_0)}\|_{L^q (B_r(x_0))} \leq C_P r\|g\|_{L^p (B_{2r}(x_0))},
    \end{equation}
    for any $x_0\in \Omega$ and $r>0$ such that $\overline{B}_{2r}(x_0)\subseteq \Omega$. Furthermore, we say that $\Omega$ satisfies the weak $(q,p)$-Poincaré inequality if \eqref{eq:weak_P} holds for all $u \in \Lip_{bs}(\X)$ with $g=\lip(u)$.
\end{definition}
Note that if $(\X,\sfd)$ is complete and $\Omega$ satisfies the weak $(q,2)$-Poincaré inequality, then \eqref{eq:weak_P} holds for all $u \in H^{1,2}(\overline{\Omega})$ with $g=|\nabla u|$. This can be shown by appealing the energy-density of Lipschitz functions \cite{AmbrosioGigliSavare11-3} and a truncation and monotone approximation argument (recall also \cite{Keith03}). 
\subsection{pmG-topology and convergence of functions}
We recall the definition of Gromov convergence, originally due to \cite{Gromov07}. We shall follow the extrinsic approach of \cite{GMS15} that, even if different from the original definition, will be better suited for our goals (we refer to \cite{GMS15} for the known equivalences results). To allow unbounded metric spaces, we shall consider pointed metric measure space, namely a quadruple $\Xdmx$, where $\Xdm$ is a metric measure space and $x  \in \X$.
\begin{definition}[pmG-convergence]\label{def:pmG}
Let $\Xdmxn$ be a sequence of pointed metric measure spaces. We say that that $\Xdmxn$ \emph{pointed measured Gromov-converges} (pmG-converges) to some limit pointed metric measure space $\Xdmxinf$, provided there exist a complete and separable metric space $(\Z,\sfd)$ and isometric embeddings
\[
\begin{split}
\iota_n &\colon (\X_n,\sfd_n) \to (\Z,\sfd), \\
\iota_\infty &\colon (\X_\infty,\sfd_\infty) \to (\Z,\sfd),
\end{split}
\]
such that $(\iota_n)(x_n) \to \iota_\infty(x_\infty)$ and $ (\iota_n)_\sharp \mm_n \to (\iota_\infty)_\sharp \mm_\infty$ in duality with $C_{bs}(\Z)$.
\end{definition}
In what follows, we shall shortly write $\X_n \overset{pmG}{\rightarrow} \X_\infty$ and identify the spaces $\X_n$ with their isomorphic images in $\Z$ following the extrinsic approach. We also recall the definition of convergence of functions along pmG-converging spaces (\cite{GMS15,AmbrosioHonda17}).
\begin{definition}\label{def:L2 convergence}
Let $\Xdmxn$ be a sequence of pointed metric measure spaces satisfying $\X_n \overset{pmG}{\rightarrow} \X_\infty$ for some pointed limit space, and let us fix a realization $(\Z,\sfd)$. We say that
\begin{enumerate}[(i)]
\item $f_n\in L^2(\mm_n)$ {converges $L^2$-weak} to $f_\infty\in L^2(\mm_\infty)$, if $\sup_{n}\|f_n\|_{L^2(\mm_n)}<\infty$ and $f_n\mm_n \weakto ~f_\infty\mm_\infty$ in duality with $C_{bs}(\Z)$,
\item $f_n\in L^2(\mm_n)$ {converges $L^2$-strong} to $f_\infty\in L^2(\mm_\infty)$, if it converges $L^2$-weak and \\$\limsup_n \|f_n\|_{L^2(\mm_n)} \leq~ \|f_\infty\|_{L^2(\mm_\infty)}$.
\end{enumerate}
\end{definition}
We recall, see e.g.\ \cite[Proposition 6.2]{NobiliViolo22}, that if $f_n$ converges $L^2$-strong to $f_\infty$ and $g_n$ converges $L^2$-weak to $g_\infty$, then 
\begin{equation}
\lim_{n\to\infty}\int f_n g_n\,\d\mm_n = \int f_\infty g_\infty\,\d\mm_\infty.\label{eq:coupling}
\end{equation}
\section{Local curvature dimension conditions: definitions, properties and stability}\label{sec:CD loc defs}
\subsection{Infinite dimensional setting}

We will now introduce a definition of local curvature dimension condition in an open set $\Omega\subseteq  \X$ based on the displacement convexity of the Shannon entropy, thus localizing the celebrated notions of \cite{Sturm06I,Sturm06II} and \cite{Lott-Villani09}. 

\begin{definition}\label{def:CDloc open set}
Let \((\X,\sfd,\mm)\) be a metric measure space, and let $\varnothing \neq \Omega\subseteq \X$ be an open set. We say that $\Omega$ satisfies the local infinite dimensional curvature dimension condition if for all $x\in \Omega$ there exist $K(x) \in \R$ and $r(x) \in (0,+\infty]$ with $r(x)\le r_C(x)$ such that, for all $\mu_0,\mu_1 \in \PP_2(\X)$  with $\supp(\mu_i)\Subset B_{\frac{r(x)}{2}} (x)$ for $i=0,1$,  there exists \(\pi\in{\rm OptGeo}(\mu_0,\mu_1)\) such that, denoting
\(\mu_t\coloneqq(\e_t)_\sharp\pi\), it holds
\begin{equation}\label{eq:convexity Ent}
    {\rm Ent}_\mm(\mu_t)\leq(1-t){\rm Ent}_\mm(\mu_0)+t{\rm Ent}_\mm(\mu_1)-\frac{K(x)}{2}t(1-t)W_2^2(\mu_0,\mu_1),
\end{equation}
for every $ t\in[0,1]$. In this case, we will say that $\Omega$ satisfies ${\sf CD}_{loc} (K(\cdot),\infty)$. Finally, we say that $\Omega$ satisfies strong \({\sf CD}_{loc}(K(\cdot),\infty)\) if (\ref{eq:convexity Ent}) holds for all $\pi\in~{\rm OptGeo}(\mu_0,\mu_1)$.
\end{definition}
The definition recovers the global ${\sf CD}(K,\infty)$ condition for complete metric measure spaces, if there is $x$ such that $r(x) = +\infty$ and $K(x) = K$.

We next show some useful results implied by the definition.
\begin{lemma}\label{lem:restr under strong convexity}
Let \((\X,\sfd,\mm)\) be a metric measure space, and let $\varnothing \neq \Omega\subseteq  \X$ be an open set. Suppose that $\Omega$ satisfies \({\sf CD}_{loc}(K(\cdot),\infty)\). Fix $x \in \Omega, r\le \frac{r(x)}{4}$ and let $\mu_0,\mu_1 \in \PP_2(\X)$ with $\supp(\mu_i) \Subset B_{r}(x)$ for $i=0,1$ and $\pi \in \OptGeo(\mu_0,\mu_1)$ be so that (\ref{eq:convexity Ent}) holds. Then, for every $t,s \in [0,1], s<t$, there exists $\eta \in \OptGeo((\e_s)_\sharp \pi,(\e_t)_\sharp \pi)$ so that   
\[
    {\rm Ent}_\mm((\e_{h})_\sharp \eta)\leq(1-h){\rm Ent}_\mm((\e_s)_\sharp \pi)+h{\rm Ent}_\mm((\e_t)_\sharp \pi)-\frac{K(x)}{2} h(1-h)W_2^2((\e_s)_\sharp \pi,(\e_t)_\sharp \pi),
\]
for all $h\in [0,1]$. Moreover, if $\Omega$ satisfies strong \({\sf CD}_{loc}(K(\cdot),\infty)\), then it is possible to take $\eta = \left({\sf rest}_s^t\right)_\sharp \pi$, where ${\sf rest}_s^t(\gamma)(h) \coloneqq \gamma_{(1-h)s + h t}$.
\end{lemma}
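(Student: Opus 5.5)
The strong case is immediate, so I treat it first. If $\Omega$ satisfies strong ${\sf CD}_{loc}(K(\cdot),\infty)$, set $\eta\coloneqq({\sf rest}_s^t)_\sharp\pi$. Since $\pi$ is concentrated on geodesics and $(\e_0,\e_1)_\sharp\pi$ is optimal, the restricted plan $(\e_s,\e_t)_\sharp\pi$ is optimal between $\mu_s$ and $\mu_t$. Indeed, an optimal plan for a geodesic interpolation restricts to optimal plans between intermediate times: if it did not, gluing a better coupling of $(\mu_s,\mu_t)$ with the segments $[0,s]$ and $[t,1]$ would contradict optimality, by the triangle inequality and the identity $\sfd(\gamma_s,\gamma_t)=(t-s)\sfd(\gamma_0,\gamma_1)$. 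Hence $\eta\in\OptGeo(\mu_s,\mu_t)$.

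The supports of $\mu_s,\mu_t$ are contained in $B_{2r}(x)$, with compact inclusion in $B_{r(x)/2}(x)$. This holds because every geodesic joining points of $B_r(x)$ has length less than $2r$, so it stays within distance $2r$ of $x$; more precisely it stays in the closed $r'$-neighbourhood, where $r'<r$ is such that $\supp\mu_i\subseteq \overline B_{r'}(x)$. Since $2r\le r(x)/2$, strong ${\sf CD}_{loc}$ applies to every element of $\OptGeo(\mu_s,\mu_t)$, in particular to $\eta$, giving the claimed inequality.

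For the general case, the natural candidate is again $\eta=({\sf rest}_s^t)_\sharp\pi$. By the above it belongs to $\OptGeo(\mu_s,\mu_t)$, and $(\e_h)_\sharp\eta=\mu_{(1-h)s+ht}$. The needed inequality is then the convexity of $\tau\mapsto \Ent_\mm(\mu_\tau)$ restricted to $[s,t]$, compared with the chord between $s$ and $t$, including the $K$-term. This does not follow directly from \eqref{eq:convexity Ent} holding only with endpoints $0,1$. So instead I would apply the ${\sf CD}_{loc}$ definition afresh to the pair $(\mu_s,\mu_t)$, which is admissible by the support computation above ($2r\le r(x)/2$). This yields some $\eta\in\OptGeo(\mu_s,\mu_t)$ along which \eqref{eq:convexity Ent} holds, with $W_2^2(\mu_s,\mu_t)$ in place of $W_2^2(\mu_0,\mu_1)$. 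That is exactly the claim, with $K(x)$ unchanged since $x$ is fixed.

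The only genuine points to check are two. First, the compact inclusion of supports: geodesics between compactly contained supports stay at distance at most $2r'$ from $x$, where $r'<r$ bounds the supports. Second, $\mu_s,\mu_t\in\PP_2(\X)$, which is automatic from bounded support. I expect the main (mild) obstacle to be the optimality of the restricted plan in the strong case, which is the standard argument sketched above.
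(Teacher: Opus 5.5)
Your proposal is correct and is essentially the paper's proof. The intermediate marginals $(\e_s)_\sharp\pi$ and $(\e_t)_\sharp\pi$ are supported within distance $2r\le r(x)/2$ of $x$, so applying Definition \ref{def:CDloc open set} afresh to this pair yields $\eta$; the convexity along $\pi$ itself is not even needed. In the strong case one additionally checks that $({\sf rest}_s^t)_\sharp\pi\in\OptGeo((\e_s)_\sharp\pi,(\e_t)_\sharp\pi)$. Your handling of the compact inclusion through a radius $r'<r$, and your argument for optimality of the restricted plan, simply supply details that the paper dismisses as straightforward.
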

\begin{proof}
    Since  $\supp(\mu_i) \subset B_{r}(x)$ for $i=0,1$ and $r\le \frac{r(x)}{4}$ we have that $\supp\left( (\e_t)_\sharp\pi\right) \subset B_{2r} (x) $ for all $t\in[0,1]$. Thus, the existence of $\eta$ is granted again by Definition \ref{def:CDloc open set}. The last conclusion is also straightforward.
\end{proof}
\begin{definition}
   Let $(\X,\sfd)$ be a metric space, let $t\in(0,1)$ and let $\mu_0,\mu_1 \in \PP_2(\X)$. The class of $t$-midpoints between $\mu_0$ and $\mu_1$ is defined by
    $$ \mathcal{I}_t (\mu_0,\mu_1):=\bigl\{\mu\in\PP_2(\X)\text{  s.t. }  
    W_2(\mu_0 ,\mu)=tW_2(\mu_0 ,\mu_1) \text{ and }W_2(\mu ,\mu_1)=(1-t)W_2 (\mu_0 ,\mu_1)\,\bigr\}.$$
\end{definition}

\begin{lemma}\label{lem:supportintermmeasures}
    Let $(\X,\sfd)$ be a metric space, let $t\in(0,1)$ and let $\mu_0,\mu_1 \in \PP_2(\X)$. Consider $\mu_t\in~ \mathcal{I}_t (\mu_0,\mu_1)$, let $[0,1]\ni s\mapsto \tilde{\mu}^1_s,\tilde{\mu}^2_s$  be two $W_2$-geodesics respectively from $\mu_0$ to $\mu_t$ and from $\mu_t$ to $\mu_1$. Then, the curve
    $$
    s\mapsto \mu_s:=
    \begin{cases}
     \,\tilde{\mu}^1 _{\frac{s}{t}}          &\text{if $0\leq s\leq t$,}\\
     \,\tilde{\mu}^2 _{\frac{s-t}{1-t}}           &\text{if $t< s\leq 1$.}
    \end{cases}
    $$
    is a $W_2$-geodesic from $\mu_0$ to $\mu_1$. Furthermore, if $D:=\sup \{\Lip(\pi)\,\mid \pi\in\OptGeo(\mu_0,\mu_1)\}$ then, for any $0\leq s<r\leq 1$ and $\Tilde{\pi}\in\OptGeo (\mu_s ,\mu_r)$ it holds $\Lip(\Tilde{\pi})\leq (r-s)D$.
\end{lemma}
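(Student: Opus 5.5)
The plan has two parts: first show that the concatenated curve is a $W_2$-geodesic by a direct metric computation, then deduce the Lipschitz bound by lifting optimal dynamical plans through concatenation.

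\emph{Geodesic property.} Set $W\coloneqq W_2(\mu_0,\mu_1)$. Since $\mu_t\in\mathcal I_t(\mu_0,\mu_1)$, the reparametrized pieces satisfy $W_2(\mu_a,\mu_b)=|a-b|W$ whenever $a,b\in[0,t]$ or $a,b\in[t,1]$. Indeed, $\tilde\mu^1$ is a geodesic of length $tW$, so $W_2(\tilde\mu^1_{a/t},\tilde\mu^1_{b/t})=|a-b|/t\cdot tW$, and similarly for $\tilde\mu^2$ with length $(1-t)W$. For $a\le t\le b$, the triangle inequality gives $W_2(\mu_a,\mu_b)\le (t-a)W+(b-t)W=(b-a)W$. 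For the reverse inequality, use
\[
W=W_2(\mu_0,\mu_1)\le W_2(\mu_0,\mu_a)+W_2(\mu_a,\mu_b)+W_2(\mu_b,\mu_1)=aW+W_2(\mu_a,\mu_b)+(1-b)W,
\]
which yields $W_2(\mu_a,\mu_b)\ge (b-a)W$. Hence $(\mu_s)$ is a constant-speed geodesic.

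\emph{Lipschitz bound.} Fix $0\le s<r\le 1$ and $\tilde\pi\in\OptGeo(\mu_s,\mu_r)$. The idea is to glue $\tilde\pi$ with plans in $\OptGeo(\mu_0,\mu_s)$ and $\OptGeo(\mu_r,\mu_1)$, which exist whenever $\OptGeo$ is nonempty; I assume the paper works in geodesic/Polish settings where this is available, and otherwise $D$ is interpreted over the nonempty set. Gluing uses the gluing lemma on the endpoint marginals $\mu_s,\mu_r$, disintegrating with respect to $\e_0,\e_1$. This produces a measure $\Pi$ on triples of geodesics $(\gamma^1,\gamma^2,\gamma^3)$ with matching endpoints. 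Concatenate them after rescaling onto $[0,s]$, $[s,r]$ and $[r,1]$. The resulting curve $\gamma$ has length $\ell(\gamma^1)+\ell(\gamma^2)+\ell(\gamma^3)$, so
\[
\Big(\int \sfd^2(\gamma_0,\gamma_1)\,\d\Pi\Big)^{1/2}\le \Big(\int (\ell_1+\ell_2+\ell_3)^2\,\d\Pi\Big)^{1/2}\le sW+(r-s)W+(1-r)W=W
\]
by Minkowski's inequality. Since the coupling $(\gamma_0,\gamma_1)$ is admissible between $\mu_0$ and $\mu_1$, equality holds throughout. Equality in Minkowski forces the lengths to be $\Pi$-a.e.\ proportional, say $\ell_1=\frac{s}{r-s}\ell_2$ and similarly for $\ell_3$. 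Equality in $\sfd(\gamma_0,\gamma_1)\le\ell$ forces $\gamma$ to be a geodesic. Hence the concatenated plan belongs to $\OptGeo(\mu_0,\mu_1)$, and its constant speed is $\sfd(\gamma_0,\gamma_1)$, so $\sfd(\gamma_0,\gamma_1)\le D$ $\Pi$-a.e. The middle piece $\gamma^2$ has speed $(r-s)\sfd(\gamma_0,\gamma_1)$ after rescaling to $[0,1]$, and its law is $\tilde\pi$. Therefore $\Lip(\tilde\pi)\le (r-s)D$.

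The main obstacle is the degenerate cases $s=0$ or $r=1$, where the corresponding outer piece is trivial and is simply omitted, together with the measurable-selection and gluing step. The latter is standard via disintegration on Polish spaces.
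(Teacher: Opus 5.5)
Your proof is correct and matches the paper's. The geodesic property is obtained exactly as there, from the triangle inequality on the two reparametrized pieces, and your gluing of $\tilde\pi$ with outer optimal geodesic plans, together with the equality cases (pointwise and in Minkowski), is the standard argument behind the paper's remark that the Lipschitz bound is ``straightforward''. Justify the outer plans directly rather than through the hedge about interpreting $D$: in the complete separable setting where the lemma is applied, the restrictions of the geodesic $(\mu_s)$ to $[0,s]$ and $[r,1]$ lift to elements of $\OptGeo(\mu_0,\mu_s)$ and $\OptGeo(\mu_r,\mu_1)$ by Lisini's superposition theorem, and without such a lifting the hedge would not save the statement.
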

\begin{proof}
To show that $\mu_s$ is a geodesic, we prove that
$$W_2(\mu_s, \mu_r) \leq (r-s)W_2(\mu_0, \mu_1),$$
for any $0\leq s<r\leq 1$. By triangular inequality, it is enough to check the cases $0\leq s<r\leq t$ and $t\leq s<r\leq 1$. In the first case, since $s \mapsto \tilde{\mu}_s^1$ is a $W_2$-geodesic and $\mu_t\in \mathcal{I}_t (\mu_0,\mu_1)$, we get
$$W_2(\mu_s, \mu_r)=W_2(\tilde{\mu}_{s/t}^1,\tilde{\mu}_{r/t}^1)=\left( \frac rt- \frac st \right) W_2(\mu_0, \mu_t)=(r-s)W_2(\mu_0, \mu_1).$$
The case $t\leq s<r\leq 1$ is analogous. Finally, the last conclusion is straightforward.
\end{proof}
\begin{proposition}\label{Prop:existgeodesicsCDdoubling}
    Let $\Xdm$ be a complete metric measure space, and let $\varnothing \neq \Omega\subseteq  \X$ be an open set satisfying \({\sf CD}_{loc}(K(\cdot),\infty)\). Suppose that $\mm$ is locally doubling on $\Omega$. Then, for all $x\in\Omega$ and $z,y\in ~\overline{B}_{\min\left\{\frac{r(x)}{2},\frac{r_D (x)}{5}\right\}}(x)\cap~\supp(\mm)$ there exists $\gamma\in\Geo(\X)$ from $y$ to $z$.
\end{proposition}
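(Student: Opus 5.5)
The plan is to produce midpoints by displacement interpolation of small balls, refine to dyadic points, and pass to the limit using the compactness given by local doubling. Set $\rho\coloneqq\min\{r(x)/2,r_D(x)/5\}$ and fix $y,z\in\overline B_\rho(x)\cap\supp(\mm)$.

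\textbf{Approximate midpoints.} For small $\eps>0$ consider the uniform probabilities
\[
\mu_0^\eps\coloneqq\mm(B_\eps(y))^{-1}\mm\res B_\eps(y),\qquad \mu_1^\eps\coloneqq\mm(B_\eps(z))^{-1}\mm\res B_\eps(z).
\]
These are well defined because $y,z\in\supp(\mm)$, and they have finite entropy. Their supports are $\Subset B_{r(x)/2}(x)$ for $\eps$ small, except in the boundary case $\sfd(x,y)=r(x)/2$. There I would first treat interior points and then close the argument using that everything takes place in a compact set (see below). By Definition \ref{def:CDloc open set} there is $\pi^\eps\in\OptGeo(\mu_0^\eps,\mu_1^\eps)$ along which \eqref{eq:convexity Ent} holds.

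The right-hand side of \eqref{eq:convexity Ent} is finite, so $\mu^\eps_{1/2}\coloneqq(\e_{1/2})_\sharp\pi^\eps\ll\mm$. Every curve in $\supp\pi^\eps$ is a geodesic from $B_\eps(y)$ to $B_\eps(z)$. Hence for $\pi^\eps$-a.e.\ $\gamma$ the point $m^\eps\coloneqq\gamma_{1/2}$ satisfies
\[
\sfd(y,m^\eps)\le\tfrac12\sfd(y,z)+2\eps,\qquad \sfd(m^\eps,z)\le\tfrac12\sfd(y,z)+2\eps .
\]
Since $\mu^\eps_{1/2}\ll\mm$, we may also choose $m^\eps\in\supp(\mm)$. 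Moreover $m^\eps\in\overline B_{\rho+\sfd(y,z)/2+2\eps}(x)$, which lies inside $\overline B_{2\rho+\delta}(x)$ with $2\rho<r_D(x)/2$.

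\textbf{Compactness and dyadic iteration.} By Lemma \ref{lem:loc_compact}, the ball $\overline B_{s}(x)$ is compact for $s<r_D(x)/2$. So a subsequence $m^\eps\to m$ with $m\in\supp(\mm)$ and $\sfd(y,m)=\sfd(m,z)=\tfrac12\sfd(y,z)$, i.e.\ $m$ is an exact midpoint. I then iterate at dyadic times. Rather than redoing the construction between $y$ and $m$, I apply the same procedure directly to the times $k2^{-j}$ along $\pi^\eps$, using Lemma \ref{lem:restr under strong convexity} or simply the geodesics in $\supp\pi^\eps$. For each $\eps$ this gives points $m^\eps_{k2^{-j}}\in\supp(\mm)$ satisfying
\[
\sfd\bigl(m^\eps_{a},m^\eps_{b}\bigr)\le|a-b|\,\sfd(y,z)+C\eps
\]
for dyadic $a,b$. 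A diagonal extraction in the compact ball gives limits $m_a$ for all dyadic $a$ with $\sfd(m_a,m_b)\le|a-b|\,\sfd(y,z)$, and the triangle inequality forces equality. By completeness, this $1$-Lipschitz-rescaled map extends from the dyadics to a geodesic $\gamma\in\Geo(\X)$ from $y$ to $z$.

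\textbf{Main obstacle.} The delicate step is keeping all candidate points inside the region where doubling yields compactness and where the entropy convexity applies. This means checking the radii $r(x)/2$ and $r_D(x)/5$ against the constants in Lemma \ref{lem:loc_compact}, and handling points on the boundary sphere by approximating $y,z$ with interior points of $\supp(\mm)$. The fact that $\mu_t\ll\mm$, which is needed to pick points in $\supp(\mm)$, comes only from finiteness of entropy along the ${\sf CD}_{loc}$ interpolation, and this is where the curvature hypothesis enters.
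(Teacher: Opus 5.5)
Your argument is correct and is essentially the paper's proof. In both, you apply ${\sf CD}_{loc}$ to the normalized restrictions of $\mm$ to $B_\eps(y)$ and $B_\eps(z)$, take a geodesic $\gamma^\eps\in\supp(\pi^\eps)$, and pass to the limit in the compact ball given by Lemma \ref{lem:loc_compact}. The paper simply applies Arzel\`a--Ascoli to the whole curves $\gamma^\eps$, which your dyadic diagonal extraction redoes by hand, so the separate midpoint step is redundant. Neither the entropy inequality, nor $\mu^\eps_{1/2}\ll\mm$, nor keeping the points in $\supp(\mm)$ is actually needed: the curvature hypothesis enters only through $\OptGeo(\mu_0^\eps,\mu_1^\eps)\neq\varnothing$. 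Your separate treatment of $y,z$ on the boundary of $B_{r(x)/2}(x)$ covers a case the paper passes over, and it does work, though you should say why. By the paper's convention the closed ball in the statement is the closure of the open ball, so interior approximants of $y,z$ exist. Local doubling then forces such nearby points to lie in $\supp(\mm)$, since iterated doubling at a point near $y$ bounds its small balls from below by the positive mass of a ball around $y$.
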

\begin{proof}
    Let $R:= \max\{\sfd(x,y),\sfd(x,z)\}$. By Lemma \ref{lem:loc_compact} $\overline{B}_{2R}({x})$ is compact.
    By Definition \ref{def:CDloc open set}, for all $n\in~\N$ sufficiently large, there exist
    $\pi_n\in\OptGeo\left(\frac{\mm\res {B_{\frac{1}{n}}(y)}}{\mm(B_{\frac{1}{n}}(y))},\frac{\mm\res B_{\frac{1}{n}}(z)}{\mm(B_{\frac{1}{n}}(z))}\right)$. Let $\gamma_n \in\supp (\pi_n)$ and observe that $(\gamma_n )$ is a  $2R$-Lipschitz collection of geodesics supported in $\overline{B}_{2R}({x})$ and satisfying $\lim_{n\to\infty}\gamma_n(0)=y$ and $\lim_{n\to\infty}\gamma_n (1)=z$.
    By applying the Arzelà-Ascoli theorem, it is possible to extract a uniform-limit $\gamma \in C([0,1],\X)$. It is then straightforward to verify that $\gamma$ is a geodesic joining $y$ and $z$.
\end{proof}
\begin{remark}\rm
   Let us comment about the optimal choice for the function $K(\cdot)$ given by Definition \ref{def:CDloc open set}. Consider the function $\tilde{K}$ defined by
   \begin{equation}\label{Def:exactcurvature}
        \tilde{K}(x):=\sup\{K\in \R\,\mid \,\exists\,r_K(x)>0  \text{ s.t. }\eqref{eq:convexity Ent}\text{ holds  }\forall\,\mu_0,\mu_1 \in \PP(\X)\text{ supported in }B_{\frac{r_K (x)}2}(x)  \},
   \end{equation}
    for all $x \in \Omega$. Even in the smooth case the supremum will not be in general a maximum, hence $\Omega$ will not always satisfy ${\sf CD}_{loc} (\tilde{K}(\cdot),\infty)$, although it will satisfy ${\sf CD}_{loc} (K(\cdot),\infty)$ for any $K<\tilde{K}$. Furthermore, $\tilde{K}$ is admissible in the sense of \cite{Ketterer17}, namely it is lower semicontinuous and locally bounded below. Indeed, by \eqref{Def:exactcurvature} the function $\tilde{K}$ is locally bounded below . Furthermore, it is simple to infer its lower semicontinuity by noting that, for any $K< \tilde{K}(x) $ and any $y\in B_{\frac{r_K(x)}2}(x)$, it holds that $\tilde{K}(y)\geq K$, since \eqref{eq:convexity Ent} holds for all $\mu_0,\mu_1 \in \PP(\X)$ supported on $B_{{\frac 12r_K (x)}-\sfd(x,y)}(y)$.
    Finally, it is worth noting that $\tilde{r}_K(\cdot)$ can also be shown to be lower semicontinuous, where $\tilde{r}_K(x)$ denotes the supremum among all admissible values of $r_K(x)$ in \eqref{Def:exactcurvature}. \fr
\end{remark}
We end this part by showing that the local curvature dimension bounds are stable in the pmG-topology. We follow closely the proof of \cite[Theorem 4.9]{GMS15} and, to this aim, we first show a local Gamma-convergence result for the Entropy functionals in the spirit of \cite[Proposition 4.7]{GMS15}. 
\begin{proposition}\label{prop:Gamma entropy}
     Let $(\X_n, \sfd_n, \mm_n, x_n)$ be a sequence of pointed complete metric measure spaces that are pmG-converging to a pointed complete limit space $(\X_\infty, \sfd_\infty, \mm_\infty, x_\infty)$. Then, it holds
     \begin{enumerate}[(i)]
         \item for every $\mu_\infty \in \PP_2(\X_\infty)$ and $\mu_n \in \PP_2(\X_n)$ with $W_2(\mu_n,\mu_\infty)\to 0$, it holds
         \[
            {\rm Ent}_{\mm_\infty}(\mu_\infty) \le \liminf_{n\to\infty}{\rm Ent}_{\mm_n}(\mu_n);
         \]
         \item  for every $\eps>0$ and $\mu_\infty \in \PP(\X_\infty)$ absolutely continuous with $\supp(\mu_\infty)\subseteq B_r(x_\infty)$, $r>0$, there exists $\mu_n\in \PP(\X_n)$ with $\mu_n = \rho_n \mm_n,\rho_n \in L^\infty(\X_n),\supp(\rho_n)\subseteq B_{r +\eps}(x_n)$ such that it holds that $W_2(\mu_n,\mu_\infty)\to 0$ and
         \[
             \limsup_{n\to\infty}{\rm Ent}_{\mm_n}(\mu_n) \le {\rm Ent}_{\mm_\infty}(\mu_\infty).
         \]
     \end{enumerate}
\end{proposition}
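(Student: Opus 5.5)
We follow the strategy of \cite[Proposition 4.7]{GMS15}, working throughout in the realization $(\Z,\sfd)$ and identifying all spaces with their images. The basic tool is the dual (Donsker--Varadhan type) representation of the relative entropy with respect to a measure that is not a probability. The growth condition \eqref{eq:growth_cond} lets us pass to probability references: set $\tilde\mm_n\coloneqq c_n e^{-C\sfd(\cdot,x_n)^2}\mm_n$ with $c_n$ normalizing. Then
\[
{\rm Ent}_{\mm_n}(\mu)={\rm Ent}_{\tilde\mm_n}(\mu)-\log c_n+C\int\sfd^2(\cdot,x_n)\,\d\mu ,
\]
and analogously in the limit. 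A subtlety is that \eqref{eq:growth_cond} is assumed for each space separately, with no uniform constant along the sequence. For part (i) we avoid this issue by localizing instead: we write the entropy as a supremum over test functions $\phi\in C_{bs}(\Z)$.

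\textbf{Step (i).} We use the variational identity ${\rm Ent}_\mm(\mu)=\sup_{\phi\in C_{bs}(\Z)}\{\int\phi\,\d\mu-\int (e^{\phi}-1)\,\d\mm\}$. This holds for a boundedly finite $\mm$ and a $\mu$ with finite entropy, and in general yields $+\infty$ exactly when the entropy is infinite. It follows from the pointwise convexity inequality $\rho\log\rho\ge \rho\phi-e^\phi+1$, with equality at $\phi=\log\rho$, after truncating and approximating $\log\rho$ in $L^1$ by continuous functions with bounded support. A tail estimate handles the negative part $\rho\log\rho\ge -e^{-1}$ on large regions; on bounded sets $\mm$ is finite, so this is controlled. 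Given $\phi\in C_{bs}(\Z)$, we have $\int\phi\,\d\mu_n\to\int\phi\,\d\mu_\infty$ by $W_2$ (hence weak) convergence, and $\int(e^\phi-1)\,\d\mm_n\to\int(e^\phi-1)\,\d\mm_\infty$ because $e^\phi-1\in C_{bs}(\Z)$. Taking the $\liminf$ and then the supremum over $\phi$ gives (i). The only delicate point is the measure of the regions where $\mu$ lives far away; the $-e^{-1}$ lower bound is not integrable against an infinite $\mm$. I expect this to be the main obstacle. The plan is to split $\X$ into $B_R$ and its complement and use the second-moment bound from $W_2$ convergence together with the Gaussian weight, in the standard way (cf.\ \cite[Eq.~(2.5)]{AGMR15}), to show that the contribution outside $B_R$ is uniformly small.

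\textbf{Step (ii).} First reduce to $\mu_\infty=\rho\mm_\infty$ with $\rho$ bounded and continuous on $\supp\mm_\infty$, with support in $B_{r+\eps/2}(x_\infty)$. We truncate $\rho$ and then approximate it by Lipschitz functions with bounded support. Entropy converges along this approximation by dominated and monotone convergence, and the $W_2$ error is small because all supports stay in a fixed bounded set, so a diagonal argument concludes. For such $\rho$, choose a Lipschitz extension $\tilde\rho\ge0$ on $\Z$ supported in $B_{r+3\eps/4}(x_\infty)$ and set $\rho_n\coloneqq \tilde\rho/\int\tilde\rho\,\d\mm_n$. Since $\tilde\rho,\tilde\rho\log\tilde\rho\in C_{bs}(\Z)$, the pmG convergence gives $\int\tilde\rho\,\d\mm_n\to1$, ${\rm Ent}_{\mm_n}(\rho_n\mm_n)\to{\rm Ent}_{\mm_\infty}(\mu_\infty)$, and weak convergence of $\rho_n\mm_n$. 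Weak convergence combined with uniformly bounded supports upgrades to $W_2$ convergence. Finally, since $x_n\to x_\infty$, we have $\supp\rho_n\subseteq B_{r+\eps}(x_n)$ for $n$ large, which gives (ii) (with constant-sequence modifications for the finitely many small $n$).
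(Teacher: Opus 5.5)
Step (ii) of your proposal is correct. Step (i) has a genuine gap, and it begins with the duality formula.

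\textbf{The duality formula is wrong.} Pointwise, $\sup_{\phi\in\R}\{\rho\phi-e^{\phi}+1\}=\rho\log\rho-\rho+1$, not $\rho\log\rho$. Your inequality $\rho\log\rho\ge\rho\phi-e^\phi+1$ fails whenever $\rho<1$: at $\rho=0$ the right-hand side tends to $1$ as $\phi\to-\infty$. Hence, for $\mu=\rho\mm$,
\[
\sup_{\phi\in C_{bs}(\Z)}\Big\{\int\phi\,\d\mu-\int(e^\phi-1)\,\d\mm\Big\}=\int(\rho\log\rho-\rho+1)\,\d\mm=\Ent_\mm(\mu)+\mm(\X)-1 .
\]
This is $+\infty$ for every $\mu$ as soon as $\mm(\X)=\infty$. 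The identity you wrote is the one valid for a \emph{probability} reference measure, so your liminf argument controls a different, generally infinite, functional.

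Localizing repairs this only partially. Take a bounded open $U$ with $\mm_\infty(\partial U)=\mu_\infty(\partial U)=0$ and test only with $\phi$ supported in $U$. The correction terms $\mm_n(U)-\mu_n(U)$ then converge, so you do get $\liminf_n\int_U\rho_n\log\rho_n\,\d\mm_n\ge\int_U\rho_\infty\log\rho_\infty\,\d\mm_\infty$, where $\mu_\infty=\rho_\infty\mm_\infty$. But you are left with $\int_{U^c}\rho_n\log\rho_n\,\d\mm_n$, which is exactly the tail term you flagged.

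\textbf{The tail term cannot be controlled in this generality.} The per-space weights $e^{-C\sfd^2(\cdot,x_n)}$ do not help. Without control of $\mm_n$ away from $x_n$ that is uniform in $n$, (i) is false. For a counterexample take:
\begin{itemize}
\item $\Z=\R$ and all base points equal to $0$;
\item $\X_n=[-1,1]\cup\{R_n\}$ with $\mm_n=\mathcal L^1\res[-1,1]+M_n\delta_{R_n}$;
\item $\X_\infty=[-1,1]$ with $\mm_\infty=\mathcal L^1\res[-1,1]$;
\item $\mu_\infty=\frac12\mm_\infty$ and $\mu_n=(1-\frac1n)\mu_\infty+\frac1n\delta_{R_n}$, where $R_n=n^{1/4}$ and $M_n=e^{n^2}$.
\end{itemize}
Then $\mm_n\to\mm_\infty$ in duality with $C_{bs}(\R)$ and $W_2^2(\mu_n,\mu_\infty)\le(R_n+1)^2/n\to0$. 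Yet $\Ent_{\mm_n}(\mu_n)\le-\frac1n\log(nM_n)\le-n$.

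So what is missing is a hypothesis rather than an estimate. One option is a volume growth bound uniform in $n$, which is the kind of control underlying \cite[Proposition 4.7]{GMS15}, the result the paper cites for (i). The other is that all $\mu_n$ are supported in a common bounded subset of $\Z$. The latter is the situation in which the paper actually uses (i), in the proof of Theorem~\ref{thm:stability CDloc}. There your localized argument closes at once, because the tail term vanishes.

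\textbf{On (ii).} Your chain of truncation, Lipschitz approximation, McShane extension to $\Z$ and restriction to $\X_n$ is a correct, self-contained replacement for the paper's appeal to \cite[Lemma 4.1]{NobiliRenziVitillaro25}. It works precisely because $\tilde\rho$ and $\tilde\rho\log\tilde\rho$ belong to $C_{bs}(\Z)$.
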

\begin{proof}
    The first conclusion is directly implied by the first conclusion in \cite[Proposition 4.7]{GMS15}.

    We next show the second conclusion. Let $k \in \N$ and let us define the family of truncated probability measures $\mu_\infty^k \coloneqq \rho^k\mm_\infty$ where $\rho^k \coloneqq \min\{ \rho,k\} / \|\min\{ \rho,k\}\|_{L^1(\X_\infty)} $ where we write $\mu_\infty= \rho \mm_\infty$. Observe that $ {\rm Ent}_{\mm_\infty}(\mu^k_\infty) \to {\rm Ent}_{\mm_\infty}(\mu_\infty)$ as $k\to \infty$ and that  $\supp(\mu^k_\infty)\subset B_r(x_\infty)$ for each $k$. By \cite[Lemma 4.1]{NobiliRenziVitillaro25}, we infer that there exists a sequence $\mu_n^k \coloneqq\rho_n^k\mm_n$ of probability measures such that  $\rho_n^k \in L^\infty(\X_n)$, $\supp(\mu^k_n)\subset B_{r+\eps}(x_n)$, $\mu_n^k \to \mu_\infty^k$ weakly as $n \to \infty$ and 
    \[
        \int \varphi(\rho_n^k)\,\d \mm_n \to \int \varphi(\rho_\infty^k)\,\d\mm_\infty,\qquad \forall \varphi \in C(\R),\varphi(0)=0.    
    \]
    Actually, the latter is stated for $\varphi(t) = |t|^q$, but the proof is achieved for any such $\varphi$ exploiting the uniformly bounded supports. In particular, we can choose $\varphi(t) = t\log(t)$. Similarly, the weak convergence stated in \cite[Lemma 4.1]{NobiliRenziVitillaro25} turns into $W_2$-convergence for the same reason. The proof is concluded as, by a diagonalization argument, we can find a sequence  $k_n \to \infty$ such that $\mu_n \coloneqq \rho_n^{k_n}\mm_n$ satisfies all the listed conclusions.
\end{proof}
\begin{theorem}\label{thm:stability CDloc}
    Let $(\X_n, \sfd_n, \mm_n, x_n)$ be a sequence of pointed complete metric measure spaces that are pmG-converging to a pointed complete limit space $(\X_\infty, \sfd_\infty, \mm_\infty, x_\infty)$, and let $(\Z,\sfd)$ be a realization of the convergence. Let $\Omega_n\subseteq \X_n,\Omega_\infty \subseteq \X_\infty$ be non-empty open sets. Assume that $\Omega_n$ satisfies ${\sf CD}_{loc}(K_n(\cdot),\infty)$ for all $1 \leq n < \infty$. Furthermore, denoting by $r_n(\cdot)$ the local radius of $\Omega_n$, suppose the following: for all $x \in \Omega_\infty$ there are $y_n \in \Omega_n$ with $\sfd(y_n,x)\to 0$ satisfying $r_\infty(x)\coloneqq \liminf_n r_n(y_n) >0$ and $K_\infty(x)\coloneqq \liminf_n K_n(y_n) > -\infty$. Then $\Omega_\infty$ satisfies ${\sf CD}_{loc}(K_\infty(\cdot),\infty)$ with local radius $r_\infty(x)$. Finally, if $r_\infty(x)=+\infty$ for some $x\in\Omega_{\infty}$, then $\X_\infty$ satisfies ${\sf CD}(K,\infty)$ globally with $K=K_\infty(x)$.
\end{theorem}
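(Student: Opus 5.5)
We follow the scheme of \cite[Theorem 4.9]{GMS15}, localized around each point. Fix $x\in\Omega_\infty$, the points $y_n\to x$, and $\mu_0,\mu_1\in\PP_2(\X_\infty)$ with $\supp(\mu_i)\Subset B_{r_\infty(x)/2}(x)$. Choose $\rho<r_\infty(x)/2$ with $\supp(\mu_i)\subseteq B_{\rho}(x)$.

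Passing to a subsequence, we may assume $r_n(y_n)\to r_\infty(x)$ (or at least $r_n(y_n)>2\rho+4\eps$ for $n$ large) and $K_n(y_n)\to K_\infty(x)$. We may also assume the entropies of $\mu_0,\mu_1$ are finite, since otherwise there is nothing to prove. By the lower semicontinuity of $\Ent$ along truncations, we may then reduce to absolutely continuous $\mu_i$. This reduction needs care: if the convexity inequality fails trivially the claim is automatic, while the approximation step is handled below.

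By Proposition \ref{prop:Gamma entropy}(ii), applied with centre $x$ in place of $x_\infty$ (the proof only uses supports in a ball around a converging point), we obtain recovery sequences $\mu_i^n=\rho_i^n\mm_n$. These satisfy $\supp(\mu_i^n)\subseteq B_{\rho+\eps}(x)\cap\X_n\subseteq B_{\rho+2\eps}(y_n)$, $W_2(\mu_i^n,\mu_i)\to0$ and $\limsup_n\Ent_{\mm_n}(\mu^n_i)\le\Ent_{\mm_\infty}(\mu_i)$. Since $\rho+2\eps<r_n(y_n)/2$ for $n$ large, ${\sf CD}_{loc}(K_n(\cdot),\infty)$ at $y_n$ yields $\pi_n\in\OptGeo(\mu^n_0,\mu^n_1)$ satisfying \eqref{eq:convexity Ent} with $K_n(y_n)$.

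The main obstacle is compactness of $(\pi_n)$ in $\PP(C([0,1],\Z))$. No doubling is assumed here, so we cannot use local compactness. Instead we follow \cite{GMS15}. The marginals $(\e_t)_\sharp\pi_n$ have uniformly bounded entropy, by the convexity inequality with uniformly bounded $W_2$ and endpoint entropies. Together with the weak convergence $\mm_n\to\mm_\infty$, which is tight on bounded sets, this gives tightness of all marginals $(\e_t)_\sharp\pi_n$ on $\Z$. The endpoint marginals are tight as well. Since the curves are geodesics in $\Z$, equi-Lipschitz with uniformly bounded second moments, tightness of the time marginals plus equicontinuity yields tightness of $\pi_n$ via the usual Arzelà--Ascoli criterion (or via \cite[Lemma 4.8]{GMS15} or the tightness argument therein).

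Let $\pi$ be a limit point. Its curves lie in $\X_\infty$: by Proposition \ref{prop:Gamma entropy}(i) each marginal has finite entropy and so is concentrated on $\supp\mm_\infty\subseteq\X_\infty$, and continuity then gives this for every $t$ through rational times. Moreover $\pi$ is concentrated on geodesics, and $(\e_0,\e_1)_\sharp\pi$ is optimal by the stability of optimal plans, so $\pi\in\OptGeo(\mu_0,\mu_1)$. Passing to the limit in the convexity inequality, we use Proposition \ref{prop:Gamma entropy}(i) on the left-hand side, the recovery limsup on the right-hand side, $W_2(\mu_0^n,\mu_1^n)\to W_2(\mu_0,\mu_1)$, and $K_n(y_n)\to K_\infty(x)$. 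This gives \eqref{eq:convexity Ent} for $\mu_t=(\e_t)_\sharp\pi$ with parameter $K_\infty(x)$.

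The case of non-absolutely continuous data with finite entropy does not arise, since finite entropy already forces absolute continuity. If $r_\infty(x)=+\infty$, the same argument applies to all boundedly supported $\mu_i$. We then extend to general $\mu_i\in\PP_2$ by the standard approximation plus the stability argument in $\X_\infty$ itself, as in \cite{Sturm06I}, which gives ${\sf CD}(K_\infty(x),\infty)$ globally.
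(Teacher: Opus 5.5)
Your proposal is correct and follows essentially the same route as the paper. Both are a localized version of \cite[Theorem 4.9]{GMS15}: take recovery sequences from Proposition \ref{prop:Gamma entropy}(ii), apply the ${\sf CD}_{loc}$ inequality at the points $y_n$, and pass to the limit with the $\Gamma$-liminf of Proposition \ref{prop:Gamma entropy}(i).

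The differences are only technical. You extract compactness directly for the dynamical plans $\pi_n$, using tightness of all time marginals from the uniform entropy bound and Arzel\`a--Ascoli for the equi-Lipschitz geodesics. The paper instead takes a limit $W_2$-geodesic of the marginals and lifts it afterwards via \cite{Lisini07}. You also make explicit the approximation from boundedly supported to general measures when $r_\infty(x)=+\infty$, which the paper leaves implicit.
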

\begin{proof}    
    Fix $x \in \Omega_\infty$ (without loss of generality, we also require $x \in \supp(\mm_\infty)$) and $\mu_0,\,\mu_1 \in \PP_2(\X)$ absolutely continuous such that $\supp(\mu_i) \Subset B_{\delta/2}(x)$ with $\delta > 0$ to be specified depending on $x$, for $i=0,1$. Consider $y_n \to x$ in the realization of the convergence satisfying the assumptions.

    Consider, by Proposition \ref{prop:Gamma entropy} and for all $\eps>0,i=0,1$, the probability measures $\mu_{i,n}\in \PP(\X_n)$ with $\mu_{i,n} = \rho_{i,n} \mm_n ,\supp(\rho_{i,n})\subseteq B_{\delta/2 +\eps}(x_n)$ satisfying the second conclusion. Let $\pi_n \in \OptGeo(\mu_{0,n},\mu_{1,n})$ be such that \eqref{eq:convexity Ent} holds. By arguing as in \cite[Theorem 4.9]{GMS15}, it is possible to show that there exists a $W_2$-geodesic $\mu_{t}$ such that for all $t\in [0,1]$
    \[
        W_2((\e_t)_\sharp \pi_n , \mu_{t})\to 0,\qquad \text{as }n\to \infty, 
    \]
    so that by the first conclusion in Proposition \ref{prop:Gamma entropy}, we deduce
    \[
        {\rm Ent}_{\mm_\infty}( \mu_{t}) \le \liminf_{n\to\infty}{\rm Ent}_{\mm_n}((\e_t)_\sharp \pi_n),\qquad\forall t \in [0,1].
    \]
    Choose $\delta <\liminf_{n\to \infty}r_n(y_n)$. By recalling the properties of $\mu_{i,n}$, we deduce (for $\eps$ small enough)
    \begin{align*}
          {\rm Ent}_{\mm_\infty}( \mu_{t})& \le \liminf_{n\to\infty}\left( (1-t) {\rm Ent}_{\mm_n}( \mu_{0,n}) + t{\rm Ent}_{\mm_n}( \mu_{1,n}) -\frac{K_n(y_n)}{2}t(1-t)W_2^2(\mu_{0,n},\mu_{1,n})  \right) \\
          &  \le (1-t) {\rm Ent}_{\mm_\infty}( \mu_{0}) + t{\rm Ent}_{\mm_\infty}( \mu_{1}) -\frac{K_\infty(x)}{2}t(1-t)W_2^2(\mu_{0},\mu_{1}),
    \end{align*}
    where $K:= \liminf_n K_n(y_n)$, and the desired conclusion is obtained by lifting the $W_2$-geodesic $(\mu_{t})$ with \cite{Lisini07} to obtain the desired plan $\pi \in \OptGeo(\mu_{0},\mu_{1})$.  The last conclusions also follows by noticing that, when $r_\infty(x)=+\infty$, then we can choose $\delta$ arbitrarily large.
\end{proof}

\subsection{Finite dimensional setting}
We next consider a local notion of dimensional curvature dimension condition, and we recall before some basic ingredients from \cite{Lott-Villani09,Sturm06I,Sturm06II}. The \(\sigma_{K,N}^{(t)}\) coefficient for \(K\in\R\),
\(N\ge 1\), \(t\in[0,1]\), and \(\theta\in[0,\infty)\), are defined as
\[
\sigma^{(t)}_{K,N}(\theta) := \begin{cases} 
\ \infty, &\text{if } K\theta^2 \ge N\pi^2,\\
\ \frac{\sin(t\theta\sqrt{K/N})}{\sin(\theta\sqrt{K/N)}}, &\text{if } 0<K\theta^2<N\pi^2,\\
\ t, &\text{if } K\theta^2<0\text{ and } N=1 \text{ or if } K\theta^2=0, \\
\ \frac{\sinh(t\theta\sqrt{-K/N})}{\sinh(\theta\sqrt{-K/N)}}, &\text{if } K\theta^2< 0 \text{ and }N>1.\\
\end{cases}
\]
We then set $\tau^{(t)}_{K,N}(\theta) := t^{\frac{1}{N}}\sigma^{(t)}_{K,N-1}(\theta)^{1-\frac{1}{N}}$ while $\tau^{(t)}_{K,1}(\theta) =t$ if $K\le 0$ and $\tau^{(t)}_{K,1}(\theta)=\infty$ if $K>0$.

Given a metric measure space \((\X,\sfd,\mm)\) and
\(N> 1\), we define the \(N\)-R\'{e}nyi relative entropy functional
\(\mathcal U_N\colon\mathscr P(\X)\to[0,\infty]\) as
\[
\mathcal U_N(\mu)\coloneqq\int\rho^{1-\frac{1}{N}}\,\d\mm,
\qquad \forall \mu\in\mathscr P(\X),\;\mu=\rho\mm+\mu^s
\text{ with }\mu^s\perp\mm.
\]
In the borderline case $N=1$,  this reduces by convention to $\mathcal U_1(\mu) =~\mm(\{\rho>~0\})$.
\begin{definition}\label{def:CDloc dimensional}
Let \((\X,\sfd,\mm)\) be a metric measure space, and let $\varnothing \neq \Omega\subseteq  \X$ be an open set. We say that $\Omega$ satisfies the local finite dimensional curvature dimension condition, if for all $x\in \Omega$ there exist $K(x) \in \R,N(x)\ge 1$ and  $r(x)\in (0,+\infty]$ with $r(x)\leq r_C (x)$ such that, for all $\mu_0,\mu_1 \in \PP_2(\X)$  with $\supp(\mu_i)\Subset B_{\frac{r(x)}{2}} (x)$ for $i=0,1$,  there exists \(\pi\in{\rm OptGeo}(\mu_0,\mu_1)\) such that, denoting
\(\mu_t\coloneqq(\e_t)_\sharp\pi\), it holds
\begin{equation}
\mathcal U_{N'}(\mu_t)\geq\int\rho_0(\gamma_0)^{-\frac{1}{N'}}
\tau_{K(x),N'}^{(1-t)}\big(\sfd(\gamma_0,\gamma_1)\big)+
\rho_1(\gamma_1)^{-\frac{1}{N'}}
\tau_{K(x),N'}^{(t)}\big(\sfd(\gamma_0,\gamma_1)\big)\,\d\pi(\gamma),
\label{eq:convexity Reny}
\end{equation}
for every \(N'\geq N(x)\) and \(t\in[0,1]\). In this case, we say that $\Omega$ satisfies ${\sf CD}_{loc}(K(\cdot),N(\cdot))$. Finally, we say that $\Omega$ satisfies strong ${\sf CD}_{loc}(K(\cdot),N(\cdot))$ if (\ref{eq:convexity Reny}) holds for all $\pi\in{\rm OptGeo}(\mu_0,\mu_1)$.
\end{definition}
As previously discussed, the above definition recovers the original global notion of ${\sf CD}(K,N)$ for complete metric measure spaces. Furthermore, if $\Omega$ satisfies the above, then it also satisfies Definition \ref{def:CDloc open set} by sending $N' \uparrow \infty$ by similar arguments as in \cite{Lott-Villani09,Sturm06I,Sturm06II}. 
\begin{remark}\label{rmk:kett implies us}
    \rm
    We note that the notion of curvature dimension condition with variable Ricci curvature lower bound formulated by \cite{Ketterer17} implies Definition \ref{def:CDloc dimensional} in the case $\Omega = \X$. Indeed, in \cite{Ketterer17} the variable lower bound on the Ricci curvature is a lower semicontinuous and \emph{locally bounded below} function $k\colon \X\to \R$.  This guarantees that for every $x$, there is $K(x)\in \R$ such that $k(x)\ge K(x)$ on $B_1(x)$. Therefore, exploiting the monotonicity of the $\tau$-coefficient (cf.\ \cite[Theorem 1.1]{Ketterer17}), it is straightforward to see that Ketterer's notion implies ${\sf CD}_{loc}(K(\cdot),N)$ in Definition \ref{def:CDloc dimensional} with the choice of local radius $r(x)=1$. 
\end{remark}

We now discuss some basic properties implied by the above definition. First, we investigate the validity of a local Bishop-Gromov monotonicity property. To this aim, we introduce the following notation
\[
    \left[0, \pi \sqrt{\frac{N-1}{K^+}}\right] \ni t\mapsto  \sin_{K/(N-1)}(t) = \begin{cases}
    \, \sin\left(t\sqrt{\frac{K}{N-1}}\right) &\text{if }K>0 ,\\
    \, t &\text{if } K=0,\\
    \,  \sinh\left(t\sqrt{\frac{-K}{N-1}}\right)&\text{if } K<0,
    \end{cases}
\]
so that the volume of a ball of radius $r \in \left[0, \pi \sqrt{\frac{N-1}{K^+}} \, \right]$ in the model space of dimension $N>1$ and constant curvature $K\in\R$ is given by $v_{K,N}(r) = \int_0^r \sin_{K/(N-1)}^{N-1}(t)\,\d t.$
\begin{lemma}   
    Let \((\X,\sfd,\mm)\) be a metric measure space, and let $\varnothing \neq \Omega\subseteq \X$ be an open set satisfying ${\sf CD}_{loc}(K(\cdot),N(\cdot))$. Then, for every $x \in \Omega, x_0\in B_{r(x)/2}(x) \,
     \cap \,\supp(\mm)$ and $0<r\le R< \sfd\left(x_0,\left( B_{r(x)/2}(x)\right)^c\right)$, it holds that $\mm(\partial B_r(x_0))=0$. Furthermore, for all
    $0<r\le R< \min\left\{  \sfd\left(x_0,\left(B_{r(x)/2}(x)\right)^c\right) , \pi \sqrt{\frac{N(x)-1}{K^+(x)}} \right\}$, if $N(x)>1$ we have
    \begin{equation}
        \frac{\mm(B_r(x_0))}{\mm(B_R(x_0))} \ge \frac{v_{K(x),N(x)}(r)}{v_{K(x),N(x)}(R)},
    \label{eq:BG loc}
    \end{equation}
    while if $N(x)=1,K(x)\le 0$ we have
    \[
         \frac{\mm(B_r(x_0))}{\mm(B_R(x_0))} \ge \frac{r}{R}.
    \]
\end{lemma}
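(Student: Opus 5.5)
The plan is to follow Sturm's proof of Bishop--Gromov from ${\sf CD}(K,N)$ (\cite[Theorem 2.3]{Sturm06II}), checking at each step that the measures involved stay inside the local region. Fix $x$ and $x_0$ as in the statement and set $\rho:=\sfd(x_0,(B_{r(x)/2}(x))^c)$. If $0<R<\rho$, then $\overline B_R(x_0)$ is at positive distance from the complement of $B_{r(x)/2}(x)$. Any probability supported in $\overline B_R(x_0)$ therefore has support $\Subset B_{r(x)/2}(x)$, so Definition \ref{def:CDloc dimensional} applies with parameters $K(x),N(x)$.

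\textbf{Step 1: contraction to $x_0$.} For $R'<\rho$, take $\mu_0=\delta_{x_0}$ (or, to avoid Dirac masses, $\mm\res B_\eps(x_0)$ normalized, and let $\eps\to0$ at the end) and $\mu_1=\mm\res A/\mm(A)$ for a Borel set $A\subseteq B_{R'}(x_0)$. Let $\pi$ be the plan given by \eqref{eq:convexity Reny}. The first term on the right-hand side vanishes (when $\rho_0$ is concentrated at a Dirac mass), or is nonnegative in the approximate case. Write $A_t:=\{\gamma_t:\gamma\in\supp\pi\}$; these sets lie in $\overline B_{tR'}(x_0)\subseteq B_\rho(x_0)$. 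Jensen gives $\mathcal U_{N'}(\mu_t)\le \mm(A_t)^{1/N'}$, so that
\[
\mm(A_t)^{1/N'}\ge \mm(A)^{1/N'}\inf\tau^{(t)}_{K(x),N'}(\sfd(\gamma_0,\gamma_1)).
\]
Taking $N'=N(x)$ and $A$ a thin annulus $B_{R+\delta}(x_0)\setminus B_R(x_0)$, together with the standard estimate $\tau^{(t)}_{K,N}(\theta)\ge t\,\sigma_{K,N-1}^{(t)}(\theta)^{(N-1)/N}$, this yields the localized annulus inequality
\[
\mm(\overline B_{t(R+\delta)}\setminus B_{tR})\ge t\,\frac{\sin^{N-1}_{K/(N-1)}(tR)}{\sin^{N-1}_{K/(N-1)}(R)}\,\mm(\overline B_{R+\delta}\setminus B_R)
\]
up to $o(\delta)$ corrections, as in Sturm's argument. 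This is exactly where $R<\pi\sqrt{(N-1)/K^+}$ is needed.

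\textbf{Step 2: integration.} From the annulus inequality, Sturm's Lemma (comparing $s(r)=\limsup_\delta \mm(\overline B_{r+\delta}\setminus B_r)/\delta$ with the density of $v_{K,N}$) gives that $r\mapsto s(r)/\sin^{N-1}_{K/(N-1)}(r)$ is nonincreasing. Integrating this monotonicity yields \eqref{eq:BG loc}. For $N(x)=1$ and $K(x)\le0$, $\tau=t$ and the same argument gives $\mm(A_t)\ge t\,\mm(A)$, hence $\mm(B_r)/\mm(B_R)\ge r/R$.

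\textbf{Step 3: null spheres.} Apply Step 1 to $A=\partial B_r(x_0)$ (with $r<\rho$, and where $\mm(A)>0$ is assumed by contradiction). For $t$ in a sequence $t_k\uparrow1$, the contracted sets lie in $\overline B_{t_kr}\setminus B_{t_kr-}$. These are essentially spheres, and they are pairwise disjoint for distinct radii, each having measure $\ge c\,\mm(A)>0$ with $c$ uniform for $t$ near $1$. Taking infinitely many such radii inside $B_r(x_0)$ contradicts the finiteness of $\mm(B_r(x_0))$ (which is boundedly finite by \eqref{eq:growth_cond}).

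The main obstacle is the measurability and support bookkeeping: showing that $\mu_t$ is concentrated on the right set (rather than merely on $\overline B_{tR'}$), and handling the Dirac endpoint. This is because the local condition is stated for general $\mu_i\in\PP_2$ with compact-in-ball support, so $\delta_{x_0}$ is admissible. I would nevertheless approximate by small balls around $x_0\in\supp(\mm)$ to use $\rho_0$ honestly, and pass to the limit using the continuity of $\tau$ in $\theta$.
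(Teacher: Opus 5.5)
Your proposal is correct and follows essentially the paper's route: a localized version of Sturm's Brunn--Minkowski argument, with $A_0$ a small ball around $x_0$ replacing the Dirac mass and $A_1$ a thin annulus, all kept inside $B_\rho(x_0)$. This gives the incremental annulus inequality, from which both the vanishing of $\mm(\partial B_r(x_0))$ and, via Sturm's integration argument, the Bishop--Gromov bound follow. (For the null spheres, the paper uses that arbitrarily small null spheres exist, while you use infinitely many disjoint spheres of positive measure; this is the same mechanism.)

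One small slip: the fact you need is the identity $\bigl(\tau^{(t)}_{K,N}(\theta)\bigr)^N=t\,\sigma^{(t)}_{K,N-1}(\theta)^{N-1}$, which holds because $\tau^{(t)}_{K,N}(\theta)=t^{1/N}\sigma^{(t)}_{K,N-1}(\theta)^{1-1/N}$ by definition. Your stated lower bound, with $t$ in place of $t^{1/N}$, would only give a factor $t^N$, although the annulus inequality you display is the correct one.
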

\begin{proof}
   Let $x$, $x_0$ as in the statement and $0<r\le R< \sfd(x_0,B_{r(x)/2}(x)^c)  =: ~L$. Define $t=r/R$ and let $\eps,\delta>0$ small enough so that $\max\{\eps, R+\delta R, r+\delta r+\eps t \}<  \sfd(x_0,B_{r(x)/2}(x)^c) $. Consider the sets
    \[
        A_0\coloneqq B_\eps(x_0),\qquad A_1 = \overline{B}_{R+\delta R}(x_0)\setminus B_{R}(x_0),\qquad A_t \coloneqq \{\gamma_t \colon \gamma \in {\rm Geo}(\X), \gamma_0 \in A_0,\gamma_1 \in A_1\},
    \] 
    and define 
    \[
        \Theta \coloneqq \begin{cases}
        \, \inf_{y \in A_0,z \in A_1}\sfd(y,z),&\text{if }K(x)\ge 0,\\
        \, \sup_{y \in A_0,z \in A_1}\sfd(y,z),&\text{if }K(x)<0.
    \end{cases}
    \]
   Assume for the moment that $\mm(A_1)>0$ (note that $\mm(A_0)>0$ is certainly true as $x_0 \in \supp(\mm)$).
    We point out that the generalized Brunn-Minkowski inequality proved in \cite[Proposition 2.1]{Sturm06I} holds true in the current setting, with our choice of sets $A_0,A_1$ and parameters $t,\Theta,K(x),N(x)$, since its proof only relies on the existence of a plan $\pi \in \OptGeo( \mm\res{A_0}/\mm(A_0), \mm\res{A_1}/\mm(A_1))$ satisfying the convexity inequality \eqref{eq:convexity Reny} and that, by construction, $A_0, A_1 \subset B_{\frac{r(x)}2}(x)$). In particular, noticing that $A_t \subset \overline{B}_{r+\delta r+\eps t}(x_0)\setminus B_{r-\eps r}(x_0)$ and that $R-\eps \le \Theta \le R+\delta R+\eps$, we deduce after manipulations and monotonicity of the $\tau$-coefficients that
    \[
        \mm( \overline{B}_{r+\delta r+\eps t}(x_0)\setminus B_{r-\eps r}(x_0) )^{\frac 1N}\ge  \tau_{K(x),N(x)}^{(1-t)}(R\mp \delta R \mp \eps)\mm(A_0)^\frac{1}{N} + \tau_{K(x),N(x)}^{(t)}(R\mp \delta R \mp \eps)\mm(A_1)^\frac{1}{N},
    \]
    where the choice $\mp$ coincides with the sign of $K(x)$. Taking $\eps\downarrow 0$, we get after manipulations
    \begin{equation}\label{eq:BGincrementalinequality}
          \mm (\overline{B}_{(1+\delta)r}(x_0)) - \mm(B_r(x_0)) \ge \left( \tau_{K(x),N(x)}^{(t)}(R\mp \delta R)\right)^N \left( \mm(\overline{B}_{(1+\delta)R}(x_0)) - \mm(B_R(x_0))\right).
    \end{equation}
    Note that \eqref{eq:BGincrementalinequality} is trivially true even in the case $\mm(A_1)=0$.
    Consider the function $s\mapsto v(s)\coloneqq \mm(B_s(x_0))$ that, by definition, is left-continuous and non-decreasing. Since it is possible to choose arbitrarily small radii $r>0$ such that $\mm(\partial B_{r}(x_0))=0$, the estimate \eqref{eq:BGincrementalinequality} implies that $v$ is continuous at each point $s \in (0,L) $. In particular, we get that  $\mm(\partial B_s(x_0))=0$ for all $s \in (0,L)$. 
    From here, the Bishop-Gromov monotonicity \eqref{eq:BG loc} follows by the same arguments as in \cite[Theorem 2.3]
    {Sturm06II}. 
\end{proof}

The next corollary is about a local doubling property of the reference measure and it immediately follows from the previous lemma (cf.\ \cite[Corollary 2.4]{Sturm06II}).
\begin{corollary}\label{cor:dim_CDloc}
     Let \((\X,\sfd,\mm)\) be a metric measure space, and let $\varnothing \neq \Omega\subseteq \X$ be an open set satisfying ${\sf CD}_{loc}(K(\cdot),N(\cdot))$. Then, for every $x \in \Omega$ there exists $C(x)>0$ so that for all $x_0\in B_{r(x)/2}(x) \,\cap~\,\supp(\mm)$ we have
     \[
        \mm(B_{2r}(x_0))\le C(x)\mm(B_r(x_0)),\qquad\forall  r>0\text{ s.t. } 2r<\sfd(x_0,B_{r(x)/2}(x)^c).
     \]
     Moreover, we can estimate
     \[
        C(x)\le \begin{cases}\, 2^{N(x)},&\text{if }K(x)\ge 0\text{ or }N(x)=1, \\
        2^{N(x)} \cosh\left( \frac{r(x)}{2}\sqrt{\frac{-K(x)}{N(x)-1}}\right)^{N(x)-1},&\text{otherwise}.
        \end{cases}
     \]
     Furthermore, if $\supp(\mm \res \Omega)=\overline{\Omega}$, then $\mm$ is locally doubling on $\Omega$ (in the sense of Definition \ref{def:loc_doubling}) with the choice $r_D(x)= r(x)/2$.
\end{corollary}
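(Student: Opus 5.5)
The plan is to read off the doubling estimate directly from the local Bishop--Gromov inequality \eqref{eq:BG loc} of the preceding lemma, applied with $R=2r$. Fix $x\in\Omega$, $x_0\in B_{r(x)/2}(x)\cap\supp(\mm)$ and $r>0$ with $2r<\sfd(x_0,B_{r(x)/2}(x)^c)$. Set $K=K(x)$, $N=N(x)$. We separate the cases $N=1$, $K\le0$ (where the $N=1$ branch of the lemma gives $\mm(B_r(x_0))/\mm(B_{2r}(x_0))\ge 1/2$, hence the constant $2=2^{N}$); $N=1$, $K>0$ (where $\tau_{K,1}=\infty$ should force the measure to be trivial or the space degenerate, which I would dispose of separately); and $N>1$.

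In the main case $N>1$, I apply \eqref{eq:BG loc} with $R=2r$. This is admissible once $2r<\pi\sqrt{(N-1)/K^+}$. If $K>0$ and $2r$ exceeds this threshold, a Bonnet--Myers-type diameter bound should make $B_{2r}(x_0)$ essentially all of the local support; alternatively, I would reduce $K$ to $0$, since ${\sf CD}_{loc}(K,N)$ implies ${\sf CD}_{loc}(0,N)$ by monotonicity of the $\tau$-coefficients. This reduction gives the $K\ge0$ bound uniformly. For $K=0$ we have $v(2r)/v(r)=2^N$. For $K<0$, write $a=\sqrt{-K/(N-1)}$ and use the substitution $t=2s$:
\[
v_{K,N}(2r)=2\int_0^r \sinh^{N-1}(2as)\,\d s=2^{N}\int_0^r\sinh^{N-1}(as)\cosh^{N-1}(as)\,\d s\le 2^N\cosh^{N-1}(ar)\,v_{K,N}(r).
\]
Since $2r<r(x)$ we have $r\le r(x)/2$, so the factor is bounded by $\cosh(\tfrac{r(x)}{2}a)^{N-1}$, which gives the stated bound independently of $x_0$ and $r$.

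For the last claim, assume $\supp(\mm\res\Omega)=\overline\Omega$. I take $r_D(x)=r(x)/2$ and need to check \eqref{eq:locally_doub} for $x_0\in B_{r(x)/2}(x)$ with $\overline B_{2r}(x_0)\subseteq B_{r(x)/2}(x)$. This containment gives $2r\le\sfd(x_0,B_{r(x)/2}(x)^c)$. Equality can be handled by approximating with slightly smaller radii and using continuity of $s\mapsto\mm(B_s(x_0))$ from the lemma, or by applying the estimate to $r'<r$ and passing to the limit. The support hypothesis guarantees $x_0\in\supp(\mm)$, since $B_{r(x)/2}(x)\subseteq\Omega$. The main technical nuisance is the $K>0$ threshold, which I expect to resolve via the reduction to $K=0$.
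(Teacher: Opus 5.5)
Your core argument is the one the paper intends: the local Bishop--Gromov inequality with $R=2r$, as in \cite[Corollary 2.4]{Sturm06II}. The individual computations are correct.
\begin{itemize}
\item The duplication formula for $\sinh$ is right.
\item Reducing $K(x)>0$ to $K=0$ by monotonicity of the $\tau$-coefficients in $K$ works. It also covers $N(x)=1$, $K(x)>0$, since $\tau^{(t)}_{K,1}=\infty\ge t=\tau^{(t)}_{0,1}$, so no separate degenerate case is needed.
\item The approximation in the equality case is fine; $B_{2r'}(x_0)\uparrow B_{2r}(x_0)$ as $r'\uparrow r$ already suffices.
\end{itemize}
One inference is not justified as written. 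In a general metric space, $2r<\sfd(x_0,B_{r(x)/2}(x)^c)$ does not give $2r<r(x)$, because the complement may be empty or far from $x_0$. The repair is easy. If $r>r(x)/2$, then $B_{2r}(x_0)\subseteq B_{r(x)/2}(x)\subseteq B_{r(x)}(x_0)$ and $B_{r(x)/2}(x_0)\subseteq B_r(x_0)$. Bishop--Gromov at the radii $r(x)/2$ and $r(x)$ then yields the same constant. (If $r(x)=+\infty$ and $K(x)<0$, the stated constant is $+\infty$ and no finite one exists, e.g.\ for hyperbolic space, so this case has to be excluded from the first claim.)

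The genuine gap is in the last claim, where you use ``$B_{r(x)/2}(x)\subseteq\Omega$''. Definition \ref{def:CDloc dimensional} imposes no such containment: the local radius may exceed $\sfd(x,\Omega^c)$, and the paper explicitly allows $r(x)=+\infty$ for a point of an arbitrary $\Omega$. So $\supp(\mm\res\Omega)=\overline{\Omega}$ only places the points of $\overline{\Omega}$ in $\supp(\mm)$. Definition \ref{def:loc_doubling}, however, requires the doubling inequality at every centre in $B_{r_D(x)}(x)$, and it can fail at centres outside $\supp(\mm)$. For example, take $\X=\R$, $\mm=\mathcal L^1\res[0,\infty)$ and $\Omega=(0,1)$. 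This satisfies ${\sf CD}_{loc}(0,1)$ with $r\equiv 10$, since the half-line with Lebesgue measure is ${\sf CD}(0,1)$. For $x=\frac12$ and $x_0=-\frac12$ one has $\overline{B}_1(x_0)\subseteq B_5(x)$, yet $\mm(B_1(x_0))=\frac12$ while $\mm(B_{1/2}(x_0))=0$. This step therefore needs an extra choice or assumption. Either take $r_D(x)\coloneqq\min\{r(x)/2,\sfd(x,\Omega^c)\}$, so that $B_{r_D(x)}(x)\subseteq\Omega\subseteq\supp(\mm)$. Or assume $\supp(\mm)=\X$, which is the reduction the paper itself makes without loss of generality when passing from ${\sf RCD}_{loc}$ to strong ${\sf CD}_{loc}$. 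With either fix your argument goes through unchanged. Note that the paper's one-line justification does not address this point either.
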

\begin{remark}\label{rmk:doubling small ball} \rm 
We point out that if $x,r(x),C(x)$ are as in the above corollary, there is $\overline{C}=\overline{C}(x)>0$ (depending  only on $C(x)$ above) such that for all $r\le r(x)/10$, it holds 
\[
    \mm\res{\overline{B}_r(x)} (B_{2s}(y))\le \overline{C}\cdot \mm\res{\overline{B}_r(x)}(B_s(y)),\qquad\forall s >0, \forall y \in \overline{B}_r(x).
\]
In particular, we deduce that
\[
    \left(\overline{B}_r(x),\sfd,\mm\res{\overline{B}_r(x)}\right),\qquad \text{is \emph{uniformly} doubling},
\]
meaning that the doubling condition holds at every point and with every radius with uniform positive constant.
This follows by standard arguments, using that $B_{r(x)/10}(x)$ is geodesic (cf.\ Lemma \ref{Prop:existgeodesicsCDdoubling}), which implies the so-called \emph{corkscrew condition} (see, e.g., \cite[Definition 2.4 and Lemma 2.5]{BjornSh07}).
\fr 
\end{remark}
We next show the stability of local curvature dimension bounds in the pmG-topology in the finite dimensional setting. The proof is similar in spirit to that already carried in Theorem \ref{thm:stability CDloc}. We shall refer to well established arguments contained in the monograph \cite{Villani09}, limiting ourselves to note the main differences in the current local setting. 
\begin{theorem}\label{thm:stability dimensional CDloc}
     Let $(\X_n, \sfd_n, \mm_n, x_n)$ be a sequence of pointed complete metric measure spaces that are pmG-converging to a pointed complete limit space $(\X_\infty, \sfd_\infty, \mm_\infty, x_\infty)$, and let $(\Z,\sfd)$ be a realization of the convergence. Let $\Omega_n\subseteq \X_n,\Omega_\infty \subseteq \X_\infty$ be non-empty open sets. Assume that $\Omega_n$ satisfies ${\sf CD}_{loc}(K_n(\cdot),N_n(\cdot))$ for all $1 \leq n < \infty$. Furthermore, denoting by $r_n(\cdot)$ the local radius of $\Omega_n$, suppose the following: for all $x \in \Omega_\infty$ there are $y_n \in \Omega_n$ with $\sfd(y_n,x)\to 0$ satisfying 
     $$
     \liminf_{n \to \infty} r_n(y_n) >0, \qquad K_\infty(x)\coloneqq\liminf_{n \to \infty} K_n(y_n) > -\infty, \qquad N_\infty (x)\coloneqq \limsup_{n\to\infty}N_n(y_n)< +\infty.
     $$ 
     Then $\Omega_\infty$ satisfies ${\sf CD}_{loc}(K_\infty(\cdot),N_\infty(\cdot))$ with local radius $r_\infty(x)=\frac{1}{10}\cdot \liminf_n r_n(y_n)$. Finally, if $r_\infty(x)=+\infty$ for some $x\in\Omega_{\infty}$, then $\X_\infty$ satisfies ${\sf CD}(K,N)$ globally with $K=K_\infty(x),N=N_\infty(x)$.
\end{theorem}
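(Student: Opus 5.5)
We follow the scheme of Theorem \ref{thm:stability CDloc}, replacing the Entropy by the R\'enyi functionals $\mathcal U_{N'}$ and their $\tau$-weighted version, as in \cite[Theorem 29.24]{Villani09} and \cite{Sturm06II,Lott-Villani09}.

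\emph{Setup and compactness.} Fix $x\in\Omega_\infty\cap\supp(\mm_\infty)$ and $y_n\to x$ as in the statement. Set $\bar r\coloneqq\liminf_n r_n(y_n)$, $K\coloneqq K_\infty(x)$, $N\coloneqq N_\infty(x)$. Passing to a subsequence realizing the $\liminf$ of $K_n(y_n)$, we may assume $r_n(y_n)\ge \bar r/2$ (any fixed fraction works if $\bar r=+\infty$), $K_n(y_n)\ge K-\eps$ and $N_n(y_n)\le N+\eps$ for $n$ large. Since $\sfd(y_n,x)\to0$, for $n$ large $B_{\bar r/10}(x)\cap\X_n\subseteq B_{r_n(y_n)/4}(y_n)$. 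Hence pairs of measures supported in $B_{r_\infty(x)/2}(x)$ are approximated by pairs supported well inside the convexity region of $y_n$. Moreover, the uniform bounds on $K_n(y_n)$ and $N_n(y_n)$ give, through Corollary \ref{cor:dim_CDloc} and Remark \ref{rmk:doubling small ball}, a uniform doubling constant for $\mm_n\res\overline B_{\bar r/10}(y_n)$. By Lemma \ref{lem:loc_compact}, these balls, and the limit ball, are then uniformly totally bounded, so they converge in the Gromov--Hausdorff sense inside $\Z$ to a compact set. This compactness is the reason for the factor $1/10$ in $r_\infty$.

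\emph{Approximation and passage to the limit.} Take $\mu_0,\mu_1$ with $\supp(\mu_i)\Subset B_{r_\infty(x)/2}(x)$. By the usual reduction (\cite[Ch.~29--30]{Villani09}), it suffices to treat $\mu_i=\rho_i\mm_\infty$ with bounded densities; the general case follows by lower semicontinuity of the right-hand side of \eqref{eq:convexity Reny} and a final approximation. Proposition \ref{prop:Gamma entropy}(ii), whose proof is based on \cite[Lemma 4.1]{NobiliRenziVitillaro25} and works for any continuous $\varphi$ with $\varphi(0)=0$, produces $\mu_{i,n}=\rho_{i,n}\mm_n$ such that:
\begin{itemize}
\item $\supp(\mu_{i,n})\subseteq B_{r_\infty(x)/2+\eps}(x)$;
\item $W_2(\mu_{i,n},\mu_i)\to0$;
\item $\int \rho_{i,n}^{1-1/N'}\,\d\mm_n\to\int\rho_i^{1-1/N'}\,\d\mm_\infty$.
\end{itemize}
Choose $\pi_n\in\OptGeo(\mu_{0,n},\mu_{1,n})$ satisfying \eqref{eq:convexity Reny} with parameters $K_n(y_n)$ and $N_n(y_n)$. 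These plans are concentrated on geodesics lying in the uniformly compact balls, so they are tight. A subsequence therefore converges weakly to some $\pi\in\OptGeo(\mu_0,\mu_1)$, using stability of optimality and of geodesics. For each $N'\ge N+\eps\ge N_n(y_n)$, the left-hand side passes to the limit by upper semicontinuity of $\mathcal U_{N'}$ under weak convergence with converging reference measures (\cite[Theorem 29.20]{Villani09}).

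\emph{Main obstacle.} The delicate step is the right-hand side, namely the integral $\int \rho_{0,n}(\gamma_0)^{-1/N'}\tau^{(1-t)}_{K_n(y_n),N'}(\sfd(\gamma_0,\gamma_1))\,\d\pi_n$, where the density is evaluated along the plan. We follow Villani's argument: write the integrand as a function of the coupling $(\e_0,\e_1)_\sharp\pi_n$ and $\mu_{0,n}$, and use joint lower semicontinuity of such $\tau$-weighted functionals (\cite[Theorem 29.20(iii)]{Villani09}). The monotonicity of $\tau$ in $K$ handles $K_n(y_n)\ge K-\eps$, and its continuity in $\theta$ is fine because distances are bounded by $r_\infty(x)$, which lies below the Bonnet--Myers threshold for small radii. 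Letting $\eps\downarrow0$, using continuity of $\tau_{K,N'}$ in $K$, gives \eqref{eq:convexity Reny} for every $N'>N$, and $N'=N$ follows by continuity in $N'$.

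\emph{Global case.} If $\bar r=+\infty$, radii can be taken arbitrarily large. Applying the above on larger and larger balls, with the bounded-support case being sufficient by \cite{Villani09}, yields ${\sf CD}(K,N)$ globally.
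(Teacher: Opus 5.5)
\textbf{There is a gap in the step you call the main obstacle: passing the $\tau$-weighted right-hand side to the limit.} The rest of your architecture matches the paper's: you localize to balls of radius below $\frac1{10}\liminf_n r_n(y_n)$, obtain a uniform doubling constant from Corollary~\ref{cor:dim_CDloc} and Remark~\ref{rmk:doubling small ball}, deduce uniform compactness, and run the argument of \cite[Ch.~29]{Villani09}. The paper does this by upgrading pmG to pmGH convergence of the restricted balls via \cite[Proposition 3.33 and Remark 3.29]{GMS15}, then letting the proof of \cite[Theorem 29.25]{Villani09} carry over.

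\emph{Why your justification fails.} You need $\limsup_n {\rm RHS}_n\ge {\rm RHS}_\infty$, so you need the term $\int\rho_0(\gamma_0)^{-1/N'}\tau^{(1-t)}_{K,N'}(\sfd(\gamma_0,\gamma_1))\,\d\pi$ to be lower semicontinuous. It is not lower semicontinuous under weak convergence of the endpoint measures and plans. Take $K=0$, so the weight is the constant $1-t$ and the term equals $(1-t)\int\rho_0^{1-1/N'}\,\d\mm$. On $[0,1]$ with Lebesgue measure, let $\rho_{0,n}$ oscillate between $0$ and $2$ with weak limit $1$. Then the term tends to $(1-t)2^{-1/N'}$, which is strictly less than its value $(1-t)$ at the limit. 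The semicontinuity of these terms in \cite[Ch.~29]{Villani09} depends on the specific way Villani builds his approximating endpoint measures. It is not a joint semicontinuity property valid for arbitrary approximants, so it cannot be quoted for the measures $\mu_{i,n}$ from Proposition~\ref{prop:Gamma entropy}(ii). The same objection applies to your reduction to bounded densities, which you also justify by ``lower semicontinuity of the right-hand side''.

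\emph{How to repair it.} Your approximants in fact converge \emph{strongly}. The construction of \cite[Lemma 4.1]{NobiliRenziVitillaro25} gives a uniform $L^\infty$ bound and $\int\varphi(\rho_{i,n})\,\d\mm_n\to\int\varphi(\rho_i)\,\d\mm_\infty$ for every continuous $\varphi$ with $\varphi(0)=0$. From this, the argument runs as follows.
\begin{enumerate}[(i)]
\item The graph measures $(\mathrm{id},\rho_{0,n})_\sharp\mu_{0,n}$ converge weakly to $(\mathrm{id},\rho_0)_\sharp\mu_0$ in $\Z\times\R$.
\item Hence every limit point of $(\e_0,\e_1,\rho_{0,n}\circ\e_0)_\sharp\pi_n$ is concentrated on $\{\lambda=\rho_0(x)\}$, so it equals $(\e_0,\e_1,\rho_0\circ\e_0)_\sharp\pi$.
\item The bound $\int_{\{\rho_{0,n}<\delta\}}\rho_{0,n}^{1-1/N'}\,\d\mm_n\le\delta^{1-1/N'}\mm_n(B)$, with $B$ a fixed ball containing all supports, controls the singularity of $\lambda^{-1/N'}$ at $\lambda=0$.
\item Therefore $\int\rho_{0,n}(\gamma_0)^{-1/N'}g(\gamma_0,\gamma_1)\,\d\pi_n\to\int\rho_0(\gamma_0)^{-1/N'}g(\gamma_0,\gamma_1)\,\d\pi$ for every bounded continuous $g$.
\item Monotonicity of $\tau$ in $K$ then finishes as you describe.
\end{enumerate}
For the bounded-density reduction, truncations converge strongly, so the same argument applies. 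Alternatively, use Villani's own approximating sequence, as the paper implicitly does.

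\emph{Minor point.} $r_n(y_n)\ge\bar r/2$ only yields, through Remark~\ref{rmk:doubling small ball}, uniform doubling on balls of radius $\bar r/20$. Work instead with a fixed $r<\bar r/10$ and use that $r_n(y_n)>10r$ for $n$ large.
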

\begin{proof}
We fix $x \in \Omega_\infty$ as in the assumption (without loss of generality, we also require $x \in \supp(\mm_\infty)$). \\
        The classical argument from \cite[Chapter 29]{Villani09} that shows the stability of the global $\CD(K,N)$ condition  still works in our context (notice that \cite[Theorem 29.25]{Villani09} is stated for fixed $K, N$ along the sequence, but the same result indeed holds for variable sequences $K_n$, $N_n$ for instance taking into account \cite[Theorem 29.20]{Villani09}). Indeed, we observe that for a.e.\ $r < \frac{1}{10}\cdot \liminf_n {r_n(y_n)}$ it holds
        \begin{equation}\label{Eq:pmGconvergencerestrictions}
            (\overline{B}_r(y_n)\cap \supp(\mm_n),\mm_n\res{\overline{B}_r(y_n)},y_n)\xrightarrow{pmG}(\overline{B}_r(x)\cap \supp (\mm_\infty),\mm_\infty\res{\overline{B}_r(x)},x).
        \end{equation}
       For any such $r$, we know that the sequence of spaces is uniformly doubling. Indeed, Proposition \ref{Prop:existgeodesicsCDdoubling} guarantees the existence of geodesics from $y_n$ to any $z \in \overline{B}_r (y_n)$. By Remark \ref{rmk:doubling small ball}, this allows us to localize the doubling property given by Corollary \ref{cor:dim_CDloc} to these spaces, at the cost of a fixed increase in the doubling constant. In light of the latter observation, \cite[Proposition 3.33]{GMS15} and \cite[Remark 3.29]{GMS15} guarantee that the convergence \eqref{Eq:pmGconvergencerestrictions} in the sense of Definition \ref{def:pmG} is equivalent to the one employed in \cite{Villani09}, that is, the so-called \emph{pointed measured Gromov Hausdorff (pmGH) convergence}. The setting is now identical to the one of \cite[Theorem 29.25]{Villani09}: since we performed an analogous localization, the original proof carries over, yielding that \eqref{eq:convexity Reny} holds for all probabilities $\rho_0 ,\rho_1$ supported in $B_{\frac{r}{2}}(x)$. Indeed, the convexity inequality \eqref{eq:convexity Reny} is only required on the balls $B_{\frac r2}(y_n)$ (see also the proof of \cite[Theorem 29.24]{Villani09}), a fact that is guaranteed in our local scenario.
\end{proof}
\subsection{The local Riemannian curvature dimension condition}
We recall that coupling the infinitesimal Hilbertianity with the curvature dimension condition leads to a subclass of metric measure spaces that is tailored to study limits of Riemannian structures. In the following definitions, we adopt the same combination for the local finite or infinite dimensional setting.
\begin{definition}\label{def:RCDloc open set}
    Let \((\X,\sfd,\mm)\) be a metric measure space, and let $\varnothing \neq \Omega\subseteq  \X$ be an open set. We say that $\Omega$ satisfies \({\sf RCD}_{loc}(K(\cdot),\infty)\), if it is infinitesimally Hilbertian and satisfies ${\sf CD}_{loc}(K(\cdot),\infty)$.
\end{definition}
\begin{definition}\label{def:dimensional RCDloc open set}
    Let \((\X,\sfd,\mm)\) be a metric measure space, and let $\varnothing \neq \Omega\subseteq \X$ be an open set. We say that $\Omega$ satisfies \({\sf RCD}_{loc}(K(\cdot),N(\cdot))\), if it is infinitesimally Hilbertian and satisfies ${\sf CD}_{loc}(K(\cdot),N(\cdot))$.
\end{definition}
\section{Local density estimates along transport geodesics}
In the following theorem, we revisit the main result in \cite{Rajala12-2} on open sets satisfying the ${\sf CD}_{loc}$ condition. We show that there is an abundance of test plans concentrated on geodesics between bounded marginals whose supports are sufficiently small. 
\begin{theorem}\label{thm:rajalaCDqKN}
Let \((\X,\sfd,\mm)\) be a complete metric measure space, and let $\varnothing \neq \Omega\subseteq  \X$ be an open set satisfying \({\sf CD}_{loc}(K(\cdot),\infty)\). Let $x \in \Omega$, and let $\rho_0 ,\rho_1 \in L^{\infty}(\mm)$ be boundedly supported probability densities satisfying
\[
 \supp(\rho_0)\cup \supp(\rho_1) \subseteq \overline{B}_{r(x)/6}(x).
\]
Then, there exists $ \pi \in \OptGeo(\rho_0\mm,\rho_1\mm)$ with $(\e_t)_\sharp \pi \ll \mm$ and, writing $(\e_t)_\sharp \pi\coloneqq  \rho_t\mm$, we have
\begin{equation}\label{Eq:rajalacompressionestimate}
    \| \rho_t\|_{L^\infty(\X)} \le e^{\frac{K^-(x)}6D^2}\max\left\{ \|\rho_0\|_{L^\infty(\X)} , \|\rho_1\|_{L^\infty(\X)}\right\},\qquad \forall t \in [0,1],
 \end{equation}
 where $D:=\sup \{\Lip(\pi)\,\mid \pi\in\OptGeo(\rho_0\mm,\rho_1\mm)\}\leq \frac{r(x)}3$.
\end{theorem}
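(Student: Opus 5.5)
We follow Rajala's construction in \cite{Rajala12-2}, localized to the ball where the convexity inequality \eqref{eq:convexity Ent} is available. Since $\supp(\rho_i)\subseteq \overline{B}_{r(x)/6}(x)$, every geodesic with endpoints in these supports has length at most $r(x)/3$, which gives $D\le r(x)/3$. Its midpoints stay in $\overline{B}_{r(x)/3}(x)\Subset B_{r(x)/2}(x)$. By Lemma \ref{lem:supportintermmeasures}, any measure obtained by concatenating geodesics through intermediate points of $\mathcal I_t(\mu_0,\mu_1)$ is again a $W_2$-geodesic between the original marginals. So all intermediate measures and all sub-interpolations are supported in the region where Definition \ref{def:CDloc open set} (via Lemma \ref{lem:restr under strong convexity}) provides $K(x)$-convexity of the entropy. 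Moreover, any optimal plan between intermediate measures $\mu_s,\mu_r$ has $\Lip\le (r-s)D$.

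\textbf{Step 1: midpoint selection.} Given $\mu_0,\mu_1$ with bounded densities, we select a midpoint in $\mathcal I_{1/2}(\mu_0,\mu_1)$ minimizing a convex penalized functional, for instance $\mu\mapsto \int (\rho-\lambda)^+\,\d\mm$ with a threshold $\lambda$ slightly above the target bound. Among the minimizers we then choose one of minimal entropy. Existence follows from compactness of $\mathcal I_{1/2}$: it is tight because supports lie in a bounded set and measures have bounded second moments, and we use lower semicontinuity of these functionals. If the support is not compact, we argue as in \cite{Rajala12-2}.

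\textbf{Step 2: midpoint density bound.} We show that the chosen midpoint satisfies
\[
\|\rho_{1/2}\|_{L^\infty}\le e^{K^-(x)D^2/8}\max\{\|\rho_0\|_\infty,\|\rho_1\|_\infty\},
\]
where the local $D$ bounds $W_2$. Suppose instead that the set $\{\rho_{1/2}>\lambda\}$ has positive mass. Restrict the optimal dynamical plan to the geodesics whose midpoints fall in this set, and renormalize to get marginals $\tilde\mu_0,\tilde\mu_1$; these are still supported in $\overline B_{r(x)/6}(x)$. Apply ${\sf CD}_{loc}$ to $\tilde\mu_0,\tilde\mu_1$ and use Jensen on the entropy: the new midpoint $\tilde\mu_{1/2}$ has entropy at most
\[
\tfrac12\Ent(\tilde\mu_0)+\tfrac12\Ent(\tilde\mu_1)+\tfrac{K^-}{8}W_2^2 .
\]
This forces $\tilde\mu_{1/2}$ to spread over a set of measure at least $e^{-K^-D^2/8}/\max\|\rho_i\|$ times its mass. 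Recombining it with the unrestricted part yields a competitor midpoint that strictly decreases the penalized functional, which is a contradiction. This is Rajala's argument verbatim. The only new check is that the restricted marginals stay within $B_{r(x)/2}(x)$, which holds by the support assumption.

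\textbf{Step 3: iteration and passage to the limit.} We iterate Step 2 on dyadic subintervals. On $[k2^{-m},(k+1)2^{-m}]$ the $W_2$ distance, and hence the relevant Lipschitz bound, is at most $2^{-m}D$. The multiplicative constants therefore accumulate as
\[
\exp\Big(\tfrac{K^-}{8}D^2\sum_m 4^{-m}\cdot 2^{m}\cdots\Big),
\]
and the geometric sum gives the exponent $\frac{K^-(x)}{6}D^2$, since $\sum_{m\ge0}4^{-m}/8$ with the right bookkeeping equals $1/6$. The dyadic family extends to a $W_2$-geodesic (compare Lemma \ref{lem:supportintermmeasures}). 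We then lift it to $\pi\in\OptGeo$ using \cite{Lisini07}. The uniform $L^\infty$ bound passes to all $t$ by weak lower semicontinuity of $\|\cdot\|_\infty$-constraints.

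The main obstacle is Step 2, specifically ensuring that the restricted and recombined competitor measures remain admissible local midpoints. This requires verifying that each recombination is again in $\mathcal I_{1/2}$, which comes from optimality of restrictions of optimal plans. It also requires verifying that all relevant supports remain in $\overline B_{r(x)/3}(x)$, so that ${\sf CD}_{loc}$ applies at every scale of the iteration with the same $K(x)$.
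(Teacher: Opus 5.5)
Your overall architecture matches the paper's. You select a midpoint by minimizing an excess functional over $\cI_{1/2}(\mu_0,\mu_1)$. You use the spreading bound $\mm(\{\rho_\Gamma>0\})\ge \nu(A')/M$ with $M=e^{K^-(x)D^2/8}\max_i\|\rho_i\|_{L^\infty}$, obtained from \eqref{eq:convexity Ent} and Jensen. You iterate dyadically, controlled by Lemma~\ref{lem:supportintermmeasures}, with total factor $e^{\frac{K^-(x)}{8}D^2\sum_{m\ge 0}4^{-m}}=e^{\frac{K^-(x)}{6}D^2}$, and you lift via \cite{Lisini07}.

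\textbf{The gap in Step~2.} The gap is your claim in Step~2 that recombining ``yields a competitor midpoint that strictly decreases the penalized functional''. The spreading estimate only says that $\{\rho_\Gamma>0\}$ has larger $\mm$-measure than the set you removed mass from. It does not say that this set leaves $\{\rho_\nu\ge\lambda\}$. If, for instance, the plateau $\{\rho_\nu=\lambda\}$ has large measure, all the redistributed mass can land there and the excess does not drop. In the paper's notation, $\mathcal F_C(\nu)-\mathcal F_C(\tilde\nu)=\int_{\{\rho_\nu<C\}}\min\{C-\rho_\nu,\frac{\delta}{C+\delta}\nu(A')\rho_\Gamma\}\,\d\mm$ is then zero, so minimality alone gives no contradiction. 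Worse, replacing all of $\nu\res A$ by the spread midpoint can even increase the excess, because only $\mm(\{\rho_\Gamma>0\})$ is controlled, not $\|\rho_\Gamma\|_{L^\infty}$.

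\textbf{How the paper handles it.} The paper, following \cite{Rajala12-2}, handles exactly this equality case with a second extremal choice:
\begin{itemize}
\item among minimizers, take $\nu$ with $\mm(\{\rho_\nu>C\})\ge(M/C)^{1/4}\sup_{\eta\in\cI_{\min}}\mm(\{\rho_\eta>C\})$;
\item restrict to $A'=\{\rho_\nu>C+\delta\}$ with $\mm(A')\ge(M/C)^{1/2}\mm(A)$;
\item move only the fraction $\frac{\delta}{C+\delta}$ of $\nu\res A'$, so that the density on $A'$ stays above $C$.
\end{itemize}
If the excess does not drop, then $\tilde\nu$ is again a minimizer and $\{\rho_{\tilde\nu}>C\}\supseteq\{\rho_\Gamma>0\}$ up to null sets. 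Hence $\mm(\{\rho_{\tilde\nu}>C\})\ge(C/M)^{1/4}\sup_{\eta\in\cI_{\min}}\mm(\{\rho_\eta>C\})$, which is impossible for $C>M$.

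Your minimal-entropy tie-break does not replace this. Shifting mass from $A'$ onto other regions where $\rho_\nu\ge\lambda$ need not lower $\Ent_\mm$, and $\Ent_\mm$ is $+\infty$ for minimizers with a singular part. In any case, you never actually invoke the tie-break.

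\textbf{The singular part.} Your functional $\int(\rho-\lambda)^+\,\d\mm$ ignores the singular part, and this causes two problems.
\begin{itemize}
\item It is not weakly lower semicontinuous: it vanishes on atomic measures, which weakly approximate measures with density above $\lambda$.
\item It vanishes on singular midpoints, which do exist in branching ${\sf CD}$ spaces. For example, in $(\R^2,\|\cdot\|_\infty,\mathcal L^2)$ the uniform measures on $[0,1]^2$ and $[10,11]\times[0,1]$ have a $W_2$-midpoint concentrated on $[5,6]\times\{1/2\}$.
\end{itemize}
So a minimizer of your functional carries no information toward $(\e_t)_\sharp\pi\ll\mm$. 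The paper's $\mathcal F_C$ adds the term $\mu^s(\X)$, and a separate competitor argument shows $\nu^s=0$; your proposal has no counterpart to this.

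\textbf{Minor points.} The issues you flag as the main obstacle are the routine part. Admissibility of the recombined measure is \cite[Lemma 3.5]{Rajala12-2}, and the supports stay in $\overline B_{r(x)/3}(x)\Subset B_{r(x)/2}(x)$. On the other hand, bounded supports do not give tightness of $\cI_{1/2}$ when $\X$ is not proper. Existence of minimizers therefore also has to be taken from \cite{Rajala12-2}, as you partly acknowledge.
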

\begin{proof}
    We closely follow the same arguments in \cite{Rajala12-2}. We only highlight the main difference arising from the local framework while reporting the main steps for the sake of clarity. 
    
    Without loss of generality, and upon replacing $K(x)$ with $K^-(x)$, we may consider the case $K(x)\leq 0$ only. Since $\Omega$ satisfies \({\sf CD}_{loc}(K(\cdot),\infty)\), we can find $ \pi \in \OptGeo(\rho_0\mm,\rho_1\mm)$ supported on curves of length at most $D$ satisfying (\ref{eq:convexity Ent}).
    By Jensen's inequality and after manipulations in \ref{eq:convexity Ent} (see \cite{Rajala12-2} for details)  following spreading of mass under the curvature dimension condition holds
    \begin{equation}\label{eq:spreadingmassRajalalowerbound}
        \mm(\{\rho_{\frac{1}{2}}>0\})\geq\left(e^{\frac{K^-(x) D^2}{8}}\max\left\{ \|\rho_0\|_{L^\infty(\X)} , \|\rho_1\|_{L^\infty(\X)}\right\}\right)^{-1},
    \end{equation}
    where $(\e_{\frac{1}{2}})_\sharp\pi=\rho_{\frac{1}{2}}\mm$.
    Hereafter we will set $M:=e^{\frac{K^-(x) D^2}{8}}\max\left\{ \|\rho_0\|_{L^\infty(\X)} , \|\rho_1\|_{L^\infty(\X)}\right\}$ for brevity.
    
    Let us consider the excess functional $\mathcal{F}_C : \PP_2 (\X)\to [0,\infty]$ defined for $C\geq0$ as follows
    \begin{equation}
        \mathcal{F}_C (\mu)=\|(\rho-C)^+ \|_{L^\infty(\X)}+\mu^s (\X),\qquad \text{if }\mu = \rho\mm + \mu^s
    \end{equation}
    Note that $\mathcal{F}_C (\mu)=0$ encodes two pieces of information on $\mu$, namely the fact that it is absolutely continuous, and that its density is bounded from above by $C$.

    Next, we recall from \cite{Rajala12-2} that, given $\mu_0$ and $\mu_1$ with bounded supports, then 
    $\mathcal{I}_t (\mu_0,\mu_1)$ is closed in $(\PP_2(\X), W_2)$, and a minimizer of $\mathcal F_C(\cdot)$ exists in $\mathcal{I}_t (\mu_0,\mu_1)$ by standard lower semicontinuity property of the excess functional. A key point is then to show that for $C>M$ it holds
    \begin{equation}\label{min=0}
        \min_{\nu\in\mathcal{I}_{\frac{1}{2}} (\mu_0,\mu_1)} \mathcal{F}_C (\nu)=0.
    \end{equation}
    The proof of (\ref{min=0}) consists of two parts. The first part aims at  showing that if a minimizer $\nu=\rho_\nu \mm+\nu^s$ satisfies $\rho_\nu \le C$, the second one that $\nu^s=0$. Let us see the proof of the first part. What we do is assuming by contradiction that $\mm(\{\rho_\nu> C\})>0$ and using this information to create a new competitor $\Tilde{\nu} \in \mathcal{I}_{\frac{1}{2}} (\mu_0,\mu_1)$ with $\mathcal{F}_C(\Tilde{\nu}) < \mathcal{F}_C(\nu)$, which cannot be. To construct $\Tilde{\nu}$, one roughly redistributes some of the mass of $\nu$ which is over the threshold $C$, and so is penalized by the functional. To be precise, we choose the minimizer $\nu$ in such a way that
    \begin{equation} \mm(\{\rho_\nu>C\}) \ge \Big(\frac MC\Big)^{\frac14} \sup_{\eta \in \cI_{\min}} \mm(\{\rho_\eta>C\}), \label{eq:Binterim}
     \end{equation}
     where $\cI_{\min}$ is the (non-empty) set of minimizers for $\mathcal{F}_C$ in $\cI_\frac12(\mu_0,\mu_1)$. Subsequently, denoted $A:=\{\rho_\nu>C \}$, we choose a set $A':=\{\rho_{{\nu}}>C+\delta\}$ s.t.
    $$\mm(A') \ge \Big(\frac{M}{C}\Big)^{\frac12}\mm(A).$$
    Let $\alpha \in \Opt(\mu_0,\nu), \beta \in \Opt(\nu,\mu_1)$, and consider 
    \[ \tilde{\pi} \in \OptGeo\Big( (P^0)_\sharp\frac{\alpha\restr{\X\times A'}}{\nu(A')}, (P^1)_\sharp\frac{\beta\restr{A'\times \X}}{\nu(A')}  \Big),\]
    chosen in a way that (\ref{eq:spreadingmassRajalalowerbound}) holds. Denote $\Gamma_t := (\e_t)_\sharp\tilde{\pi}$ the corresponding $W_2$-geodesic and consider its decomposition $\Gamma_\frac12 = \rho_\Gamma\mm +\Gamma^s$. Then it follows  from \eqref{eq:spreadingmassRajalalowerbound} that
    \begin{equation}
     \mm(\{\rho_\Gamma >0\}) \geq \frac{\nu(A')}{M} \ge \frac{C}{M}\mm(A')\ge\Big(\frac{C}{M}\Big)^\frac12\mm(A). \label{eq:Binterim2}
     \end{equation}
    Now we redistribute the mass of the measure $\nu$ via
    \[\Tilde{\nu}:= \nu\restr{X\setminus A'} + \frac{C}{C+\delta}\nu\restr{A'}+\frac\delta{C+\delta}\nu(A')\Gamma_\frac12. \]
    Applying \cite[Lemma 3.5]{Rajala12-2}, one can check that $\Tilde{\nu} \in \cI_\frac12(\mu_0,\mu_1)$ (i.e., $\Tilde{\nu}$ is an admissible candidate). Also, a computation gives that
    \[ \cF_C(\nu)-\cF_C(\Tilde{\nu}) = \int_{\{\rho_\nu <C\}} \min\left\{ C-\rho_\nu, \frac{\delta}{C+\delta}\nu(A')\rho_\Gamma\right\}\, \d \mm \ge 0. \]
     Hence, to avoid a contradiction, it must be $\cF_C(\nu)=\cF_C(\Tilde{\nu})$ (i.e. $\Tilde{\nu}$ is also a minimizer). But then the integrand must vanish $\mm$-a.e., implying that $\mm(\{\rho_\nu<C\}\cap \{\rho_\Gamma>0\})=0$. Moreover, by construction $\rho_{\Tilde{\nu}}>C$ $\mm$-a.e. on the set $\{\rho_\nu \ge C\}\cap \{\rho_\Gamma>0\}$, hence 
    \[\mm (\{ \rho_{\Tilde{\nu}}>C\} ) \ge \mm(\{\rho_\Gamma>0\}) \overset{\eqref{eq:Binterim2}}{\ge} \Big(\frac{C}{M}\Big)^\frac12\mm(\{ \rho_\nu >C\} ) \overset{\eqref{eq:Binterim}}{\ge}  \Big(\frac CM\Big)^{\frac14} \sup_{\eta \in \cI_{\min}} \mm(\{\rho_\eta>C\}), \]
    which is still a contradiction (since $C>M$), concluding the proof that any minimizer of $\cF_C(\cdot)$ has density bounded from above by $C$. The proof that $\nu^s=0$ follows a similar argument, showing finally (\eqref{min=0}). We refer again to \cite{Rajala12-2} for details on the aforementioned computations.
    
    Furthermore, by arbitrariness of $C>M$ and by the definition of the functional, it holds
      $$\min_{\nu\in\mathcal{I}_{\frac{1}{2}} (\mu_0,\mu_1)} \mathcal{F}_M (\nu)=0.$$
      In particular, we can find a midpoint $\mu_{\frac{1}{2}}\in \mathcal{I}_{\frac{1}{2}} (\mu_0,\mu_1)$ such that $\mathcal{F}_M (\mu_{\frac{1}{2}})=0$, which means that $\mu_{\frac{1}{2}}=\rho_{\frac{1}{2}}\mm$ for some $\rho_{\frac{1}{2}}\in L^\infty (\X),\,\rho_\frac{1}{2} \leq M$ at $\mm$-a.e.\ point.
      The proof of the theorem then follows by iterating the construction above on dyadic time scales, obtaining the desired geodesic $s\mapsto \Gamma_s$ by completion.
     More precisely, we set $\Gamma_0 =\mu_0$, $\Gamma_1 =\mu_1$, and at the $n$-th step we produce $\Gamma_{\frac{k}{2^n}}=\rho_{\frac{k}{2^n}}\mm$, $k=1,3,\ldots,2^n -1$ as described, starting (inductively) from $\Gamma_{\frac{k-1}{2^n}}$ and $\Gamma_{\frac{k+1}{2^n}}$.
     Note that by Lemma \ref{lem:supportintermmeasures} all the measures involved in the construction are supported in $\overline{B}_{2r/3}(x)$, which means that every step employing the curvature dimension condition can be carried out in our local setting.
     Furthermore, using again Lemma \ref{lem:supportintermmeasures} to estimate $\Lip (\Tilde{\pi})$ uniformly for all $\Tilde{\pi}\in\OptGeo (\Gamma_{\frac{k-1}{2^n}},\Gamma_{\frac{k+1}{2^n}})$ one can find by induction the following estimate
     for all $n\in\N$, $k=1,\ldots, 2^n$
     \begin{equation}\label{Eq:stimacompressionepuntidiadicirajala}
         \|\rho_\frac{k}{2^n}\|_{L^\infty(\X)} \leq \prod_{i=1}^n e^{\frac{K^-(x) D^2}{2^{2i+1}}} \max\left\{ \|\rho_0\|_{L^\infty(\X)} , \|\rho_1\|_{L^\infty(\X)}\right\}\leq e^{\frac{K^-(x) D^2}{6}} \max\left\{ \|\rho_0\|_{L^\infty(\X)} , \|\rho_1\|_{L^\infty(\X)}\right\}.
     \end{equation}
     Having defined $\Gamma$ on a dense set in $[0,1]$, it can be extended by continuity as a $W_2$-geodesic $[0,1]\ni~ s\mapsto~ \Gamma_s$ from $\mu_0$ to $\mu_1$, also taking into account Lemma \ref{lem:supportintermmeasures}. Finally, the sought geodesic plan $\pi\in\OptGeo (\mu_0,\mu_1)$ is given by the optimal lifting (cf.\ \cite{Lisini07}) and, thanks to the semicontinuity of $\mathcal{F}_C$ and the estimate (\ref{Eq:stimacompressionepuntidiadicirajala}), the desired compression bounds \eqref{Eq:rajalacompressionestimate} hold.
\end{proof}
It is well known that the above compression estimates imply the validity of Poincar\'e inequalities (cf.\ \cite{Rajala12-2}). We next include a short proof of this fact, as it is highly relevant for our goals.
\begin{proposition}\label{prop:poincare}
Let \((\X,\sfd,\mm)\) be a complete metric measure space, and let $\varnothing \neq \Omega\subseteq  \X$ be an open set. Suppose that $\Omega$ satisfies \({\sf CD}_{loc}(K(\cdot),\infty)\). Fix $x_0 \in \Omega, r>0$ and a ball $\overline{B}_{2r}(x)\subset B_0\coloneqq B_{r_0(x)/6}(x_0)$. If $u \in \Lip(B_0)$, then 
for all $p \ge 1$ it holds
\[
  \int_{B_r(x)} |u-(u)_{B_r(x)}|^p \d\mm \leq 2^{2p+1}e^{\frac{K^-(x) 4r^2}{6}}r^p\int_{B_{2r}(x)} \lip(u)^p \,\d\mm.
\]
\end{proposition}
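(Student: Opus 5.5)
We follow Rajala's argument: we write the oscillation of $u$ on $B_r(x)$ as an average over geodesics of a transport plan, and control the resulting integral along curves using the compression bound \eqref{Eq:rajalacompressionestimate} of Theorem \ref{thm:rajalaCDqKN}. We read $K^-(x)$ in the statement as $K^-(x_0)$, and $r_0(x)$ as the local radius $r(x_0)$.

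\textbf{Step 1: the transport plan.} Set $B:=B_r(x)$ and consider the product plan $\alpha:=\frac{\mm\res B}{\mm(B)}\otimes\frac{\mm\res B}{\mm(B)}$. By Jensen,
\[
\int_B|u-(u)_B|^p\,\d\mm\le \frac1{\mm(B)}\int_B\int_B|u(y)-u(z)|^p\,\d\mm(y)\d\mm(z).
\]
Since $\overline B_{2r}(x)\subset B_{r(x_0)/6}(x_0)$, the balls $B$ lie in the region covered by the ${\sf CD}_{loc}$ condition at $x_0$. We cannot apply Theorem \ref{thm:rajalaCDqKN} to the pair $(\delta_y,\mm\res B/\mm(B))$, since $\delta_y$ has unbounded density. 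Instead we disintegrate in a way that keeps bounded densities. Cover $B$ by small sets $A_i$ of positive measure and diameter at most $\epsilon$. For each pair $(i,j)$, apply Theorem \ref{thm:rajalaCDqKN} with $\rho_0=\mm\res A_i/\mm(A_i)$ and $\rho_1=\mm\res A_j/\mm(A_j)$. This gives $\pi_{ij}\in\OptGeo$ with densities bounded by $e^{K^-D^2/6}\max\{\mm(A_i)^{-1},\mm(A_j)^{-1}\}$, where $D\le 2r+2\epsilon$.

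Summing with weights $\mm(A_i)\mm(A_j)/\mm(B)^2$ gives a plan $\pi$. It is concentrated on geodesics of length at most $2r+2\epsilon$ with endpoints in $B$, hence lying in $B_{2r+\epsilon}(x)$. By the time-symmetric bound $\max\{\cdot,\cdot\}\le \mm(A_i)^{-1}+\mm(A_j)^{-1}$, its marginals satisfy
\[
(\e_t)_\sharp\pi\le 2\,e^{K^-(4r^2)/6}\,\mm(B)^{-1}\,\mm\res B_{2r+\epsilon}(x).
\]
We expect this disintegration to be the main technical obstacle: the factor $2$ must be tracked, and the cells $A_i$ must be handled carefully. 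Alternatively, we could exploit that Theorem \ref{thm:rajalaCDqKN} holds for arbitrary bounded densities and split the integral symmetrically in $t\in[0,1/2]$ and $t\in[1/2,1]$. Then the estimate uses only the density of the marginal that dominates, at the cost of a factor $2^p$, which is consistent with the constant $2^{2p+1}$.

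\textbf{Step 2: the estimate along curves.} Since $u$ is Lipschitz, along each geodesic $\gamma$
\[
|u(\gamma_1)-u(\gamma_0)|^p\le \Big(\int_0^1\lip(u)(\gamma_t)|\dot\gamma_t|\,\d t\Big)^p\le (2r+2\epsilon)^p\int_0^1\lip(u)^p(\gamma_t)\,\d t
\]
by Hölder. We integrate against $\pi$ and use the compression bound from Step 1. The right-hand side is then bounded by
\[
C\,e^{K^-4r^2/6}(2r)^p\,\mm(B)^{-1}\int_{B_{2r+\epsilon}}\lip(u)^p\,\d\mm.
\]
The factor $\mm(B)^{-1}$ cancels the one coming from Jensen, up to error terms that vanish as $\epsilon\to0$.

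\textbf{Step 3: conclusion.} We let $\epsilon\downarrow0$, using the continuity of $\mm(B_s)$ and the integrability of $\lip(u)^p$ on $B_0$. With the constants above, namely $2^p$ from $D\le 2r$, $2^p$ from the symmetric splitting or the Jensen comparison, and $2$ from the marginal bound, this yields the stated constant $2^{2p+1}e^{K^-(x)4r^2/6}$.
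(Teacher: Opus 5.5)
The gap is the marginal bound at the end of Step 1, on which the rest of your argument depends. Theorem \ref{thm:rajalaCDqKN} controls each $\pi_{ij}$ separately: $\|\rho^{ij}_t\|_{L^\infty}\le C\max\{\mm(A_i)^{-1},\mm(A_j)^{-1}\}$ with $C=e^{K^-4r^2/6}$. It says nothing about how the supports of the different $(\e_t)_\sharp\pi_{ij}$ overlap. If you sum the bounds honestly you get
\[
\Big\|\sum_{i,j}\frac{\mm(A_i)\mm(A_j)}{\mm(B)^2}\rho^{ij}_t\Big\|_{L^\infty}\le C\sum_{i,j}\frac{\mm(A_i)+\mm(A_j)}{\mm(B)^2}=\frac{2NC}{\mm(B)},
\]
where $N$ is the number of cells, and this blows up as $\epsilon\downarrow 0$.

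The bound you claim is not merely unproved; it is false. Take $\R^d$, which is ${\sf CD}(0,\infty)$, so $C=1$. The optimal transport between two small equal cubes is a translation, so your superposition approximates the product coupling $\frac{\mm\res B}{\mm(B)}\otimes\frac{\mm\res B}{\mm(B)}$ lifted along segments. Its time-$\frac12$ marginal has density $2^d\,\mm\big(B\cap(2z-B)\big)/\mm(B)^2$ at $z$, which equals exactly $2^d/\mm(B)>2/\mm(B)$ at the centre of $B$ when $d\ge 2$. Coupling all pairs of points of $B$ therefore creates a dimension-dependent concentration factor. This is the usual $2^N$ in segment inequalities under ${\sf MCP}$ or ${\sf CD}(K,N)$, and under ${\sf CD}_{loc}(K,\infty)$ it is not controlled at all. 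Splitting $t\in[0,\frac12]$ and $t\in[\frac12,1]$ does not help: bounding $\rho_t$ by the $t=0$ marginal on $[0,\frac12]$ would itself require a dimensional contraction estimate.

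The missing idea is Rajala's median trick. It replaces the product coupling by a \emph{single} transport problem, to which the dimension-free bound applies directly.
\begin{enumerate}
\item Let $M$ be a median of $u$ on $B$, and set $B^+=B\cap\{u\ge M\}$ and $B^-=B\cap\{u\le M\}$. Then $\mm(B^\pm)\ge\mm(B)/2$.
\item Take $\pi\in\OptGeo\big(\mm\res B^+/\mm(B^+),\,\mm\res B^-/\mm(B^-)\big)$ from Theorem \ref{thm:rajalaCDqKN}. This gives $\rho_t\le 2C/\mm(B)$ for all $t$.
\item Since $u(\gamma_0)\ge M\ge u(\gamma_1)$ along $\pi$, you have $|u(\gamma_0)-u(\gamma_1)|^p\ge |u(\gamma_0)-M|^p+|M-u(\gamma_1)|^p$. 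Integrating against $\pi$ and using $B^+\cup B^-=B$ gives $\int_B|u-M|^p\,\d\mm\le \mm(B)\int|u(\gamma_0)-u(\gamma_1)|^p\,\d\pi$.
\item Your Jensen step together with $|a+b|^p\le 2^{p-1}(|a|^p+|b|^p)$ gives $\int_B|u-(u)_B|^p\,\d\mm\le 2^p\int_B|u-M|^p\,\d\mm$.
\end{enumerate}
Your Step 2 with $D\le 2r$ then yields exactly the constant $2^{2p+1}e^{K^-4r^2/6}r^p$, with no cells and no limit in $\epsilon$.
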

\begin{proof}

We follow the proof of \cite[Theorem 4.1]{Rajala12-2}.
Abbreviate $B = B_r(x)$ and define $M$ to be the median of $u$ in the ball $B$, i.e.
 \[
   M = \min\left\{a \in \R : \mm(\{u > a\}) \le \frac{\mm(B)}2\right\}.
 \]

 Using the median $M$ we cover the ball $B$ with two Borel sets
 \[
  B^+ = \{x \in B ~:~ u(x) \ge M\}\qquad \text{and}\qquad B^- = \{x \in B ~:~ u(x) \le M\}.
 \]
 Notice that $\mm(B^+), \mm(B^-) \ge \mm(B)/2$. Let
 \[
  \pi \in \OptGeo\left(\frac1{\mm(B^+)}\mm|_{B^+},\frac1{\mm(B^-)}\mm|_{B^-}\right)
 \]
 be the geodesic given by Theorem \ref{thm:rajalaCDqKN} and let $\rho_t$ be the density of $(\e_t)_\sharp\pi$ with respect to $\mm$. 
 By \eqref{Eq:rajalacompressionestimate} we have 
 \begin{equation}\label{eq:densitybound_luigi}
  \rho_t(y) \le C(r)\frac{2}{\mm(B)}\qquad\text{$\mm$-a.e.\ $y$}
 \end{equation}
 for all $t\in [0,1]$, with $C(r)=e^{\frac{K^-(x) 4r^2}6}$. Now observe that we have an equality
 \[
   |u(\gamma_0) - u(\gamma_1)| = |u(\gamma_0) - M| + |M - u(\gamma_1)|
 \]
 for $\pi$-almost every $\gamma \in \Geo(\X)$. Therefore
 \begin{align*}
  \int_{\Geo(\X)} & |u(\gamma_0) -  u(\gamma_1)|^p \d\pi(\gamma)\\
   & \ge\int_{\Geo(\X)} |u(\gamma_0) - M|^p\d\pi(\gamma) + \int_{\Geo(\X)}|M - u(\gamma_1)|^p\d\pi(\gamma)\\
   & = \frac{1}{\mm(B^+)}\int_{B^+}|u(y) - M|^p\d\mm(y) + \frac{1}{\mm(B^-)}\int_{B^-}|M-u(y)|^p\d\mm(y)\\
   & \ge \frac{1}{\mm(B)}\int_{B}|u(y) - M|^p\d\mm(y).
 \end{align*}
 Since $\pi$-almost every $\gamma \in \Geo(\X)$ is contained in the ball $B_{2r}(x)$ we have
{\allowdisplaybreaks 
 \begin{align*}
  \int_{B_r(x)}&|u - (u)_{B_r(x)}|^p\d\mm  \le \frac{1}{\mm(B)}\iint_{B \times B} |u(y) - u(z)|^p\d\mm(y)\d\mm(z) \\
   & \le \frac{2^{p-1}}{\mm(B)}\iint_{B \times B} (|u(y) - M|^p + |M - u(z)|^p)\d\mm(y)\d\mm(z) \\
   & = 2^p\int_{B} |u(y) - M|^p\d\mm(y) 
    \le 2^p\mm(B)\int |u(\gamma_0) - u(\gamma_1)|^p \d\pi(\gamma)\\
   & \le 2^p(2r)^p\mm(B) \int \int_0^1 \lip(u)^p(\gamma_t) \d t \d\pi(\gamma)
    = 2^{2p} r^p\mm(B) \int_0^1 \int_X \lip(u)^p(y)\rho_t(y)\d\mm(y)\d t\\
   & \le 2^{2p+1}r^p C(r) \int_0^1 \int_{B_{2r}(x)}\lip(u)^p(y)\d\mm(y)\d t 
    = 2^{2p+1} r^p C(r) \int_{B_{2r}(x)}\lip(u)^p\d\mm,
 \end{align*}}
 having used \eqref{eq:densitybound_luigi} in the last inequality.\qedhere
\end{proof}
In the following part, we are going to see that assuming strong ${\sf CD}_{loc}(K(\cdot),\infty)$ implies a sharper $L^\infty$ density estimate. The reason being that optimal geodesics are unique in this case. The following analysis essentially follows from the one carried out in \cite{RajalaSturm12}, and we include a quick proof to reduce to their assumptions.
\begin{theorem}\label{Prop_NB}
Let \((\X,\sfd,\mm)\) be a complete metric measure space, and let $\varnothing \neq \Omega\subseteq  \X$ be an open set. Suppose that $\Omega$ satisfies strong \({\sf CD}_{loc}(K(\cdot),\infty)\). Fix $x \in \Omega$, and let $B:=B_{r(x)/4}(x)$. Then, $B$ is an essentially non-branching  locus, i.e.\ for every $\mu_0$, $\mu_1 \in \PP_2(B)$ which are absolutely continuous with respect to $\mm$, then any $\pi \in \OptGeo(\mu_0, \mu_1)$ is concentrated on a set of non-branching geodesics. In particular, there is a unique $\pi \in \OptGeo(\mu_0, \mu_1)$ and it is induced by a map.
\end{theorem}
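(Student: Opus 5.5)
The plan is to reduce to the argument of \cite{RajalaSturm12}. That argument is local in nature: it uses the strong ${\sf CD}(K,\infty)$ inequality only for pairs of measures, and their restrictions along geodesic plans, whose supports lie in a region where the convexity holds. The work is therefore to check that every auxiliary measure appearing in the Rajala--Sturm construction stays supported in $B_{r(x)/2}(x)$, where \eqref{eq:convexity Ent} is available for every optimal geodesic plan.

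\textbf{Step 1: support control.} Take $\mu_0,\mu_1\in\PP_2(B)$ absolutely continuous, with $B=B_{r(x)/4}(x)$. Every geodesic joining two points of $B$ has length less than $r(x)/2$, so every intermediate point lies within distance $r(x)/2$ of $x$. More precisely, for $\pi\in\OptGeo(\mu_0,\mu_1)$ and any $\gamma$ with $\gamma_0,\gamma_1\in B$, we have $\sfd(\gamma_t,x)\le \min\{\sfd(\gamma_0,x)+t\ell,\ \sfd(\gamma_1,x)+(1-t)\ell\}$ with $\ell<r(x)/2$. This gives $\sfd(\gamma_t,x)<r(x)/2$, and in fact $<r(x)/2$ uniformly after a compactness or approximation argument. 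If supports are only contained in $B$ rather than well inside it, I would first approximate $\mu_i$ from inside (restrict to $B_{r(x)/4-\eps}(x)$ and renormalize). I would also use the fact that restrictions $\pi\res\Gamma/\pi(\Gamma)$ and ${\sf rest}_s^t$ of $\pi$ are again optimal geodesic plans, and their endpoint marginals are supported in this region. By strong ${\sf CD}_{loc}$ and Lemma~\ref{lem:restr under strong convexity}, the entropy inequality with constant $K(x)$ then holds along all such plans and their restrictions. This is exactly the hypothesis used in \cite{RajalaSturm12}, namely strong convexity along every optimal plan between the relevant measures.

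\textbf{Step 2: non-branching, following \cite{RajalaSturm12}.} Suppose $\pi$ is not concentrated on non-branching geodesics. Then there exist $t\in(0,1)$ and two disjoint families of geodesics with positive $\pi$-measure that coincide on $[0,t]$ but separate afterwards. Restrict $\pi$ to these families and normalize to obtain $\pi^1,\pi^2$. Build the mixed plans that glue the common initial segment to either of the two different continuations. This produces an optimal plan whose time-$s$ marginal, for some $s>t$, has density equal to a convex combination of two mutually singular pieces. At $t$ the branching point concentrates measure, and the strong convexity inequality forces the entropy at $t$ to be controlled by the entropies at nearby times. Combining this with the strict convexity of $\rho\log\rho$ under the mixing gives a contradiction, using also that the marginals are absolutely continuous thanks to the density estimate of Theorem~\ref{thm:rajalaCDqKN}. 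I would copy the construction of \cite[Theorem 1.1]{RajalaSturm12} verbatim, checking only that each auxiliary plan is a restriction of $\pi$, possibly after the ${\sf rest}$ operation, so that Step~1 applies.

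\textbf{Step 3: uniqueness and map.} Again as in \cite{RajalaSturm12}, if $\pi_1\neq\pi_2\in\OptGeo(\mu_0,\mu_1)$, then $\frac12(\pi_1+\pi_2)$ is again optimal. Essential non-branching combined with absolute continuity of $\mu_0$ then forces the plan to be induced by a map, and this yields uniqueness.

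The main obstacle I expect is the support bookkeeping in Step~2. The gluing in Rajala--Sturm uses concatenations and restrictions that must remain optimal and stay in $B_{r(x)/2}(x)$, and one must ensure that the marginals at intermediate times are admissible in Definition~\ref{def:CDloc open set}, which requires supports $\Subset B_{r(x)/2}(x)$. The factor $1/4$ in the radius is designed precisely to provide this margin.
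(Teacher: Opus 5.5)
Your proposal is correct and follows the same route as the paper, whose proof is exactly the reduction you describe: every plan $({\sf rest}_s^t)_\sharp(f\pi)$ has marginals supported in $2B=B_{r(x)/2}(x)$, so Lemma~\ref{lem:restr under strong convexity} gives the $K(x)$-convexity required in \cite[Remark 3.2]{RajalaSturm12}, and \cite[Corollary 1.4]{RajalaSturm12} then gives uniqueness and the map (your inner approximation is a sensible way to secure the $\Subset$ requirement, which the paper passes over). The one inaccuracy is in your Step 2 sketch: there, absolute continuity of the intermediate marginals comes from the strong convexity itself (finite entropy along every such plan), not from Theorem~\ref{thm:rajalaCDqKN}, which only produces one good plan and only for supports in $\overline{B}_{r(x)/6}(x)$; since you defer to \cite{RajalaSturm12} verbatim, this does not affect the argument.
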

\begin{proof}
The thesis follows directly from \cite[Remark 3.2]{RajalaSturm12} (which contains the most general form of the main result of the paper, where the global $\CD$ condition is dropped) and Corollary 1.4 therein (for the last conclusion). Indeed, we only have to check that $\Ent_\mm$ is $K(x)$-convex along $({\rm rest}_s^t)_\sharp(f\pi)$ ($x$ being the centre of the ball $B$), for every $\pi \in \OptGeo(\mu_0, \mu_1)$, $0 \le s < t \le 1$ and $f: {\rm Geo}(\X) \to [0, \infty)$ with $\int_{{\rm Geo}(\X)} f \,\d\pi=1$. This simply follows from the fact that $\supp((\e_t)_\sharp(f\pi)) \subset 2B$ for all $t \in [0,1]$ and by taking into account Lemma \ref{lem:restr under strong convexity}.
\end{proof}
Thanks to the above uniqueness result of optimal geodesic plans, the next corollary directly follows from Theorem \ref{thm:rajalaCDqKN}.
\begin{corollary}\label{cor:improved_rajala}
    Let \((\X,\sfd,\mm)\) be a complete metric measure space, and let $\varnothing \neq \Omega\subseteq \X$ be an open set satisfying strong \({\sf CD}_{loc}(K(\cdot),\infty)\). Let $x \in \Omega$, and let $\rho_0 ,\rho_1 \in L^{\infty}(\X)$ be boundedly supported probability densities satisfying
    \[
        \supp(\rho_0)\cup \supp(\rho_1) \subseteq \overline{B}_{r(x)/6}(x).
    \]
    Then, the unique $ \pi \in \OptGeo(\rho_0\mm,\rho_1\mm)$ with $(\e_t)_\sharp \pi \ll \mm$ satisfies
    \begin{equation}\label{Eq:rajalacompressionestimate stronc}
    \| \rho_t\|_{L^\infty} \le e^{\frac{K^-(x)}6\Lip(\pi)^2} \max\left\{ \|\rho_0\|_{L^\infty(\X)} , \|\rho_1\|_{L^\infty(\X)}\right\},\qquad \forall t \in [0,1],
    \end{equation}
    having written $(\e_t)_\sharp \pi\coloneqq  \rho_t\mm$.
\end{corollary}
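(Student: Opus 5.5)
The plan is to combine the existence statement of Theorem \ref{thm:rajalaCDqKN} with the uniqueness statement of Theorem \ref{Prop_NB}, and then sharpen the constant from $D$ to $\Lip(\pi)$ by re-running the dyadic construction with the actual plan.

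First, note that $\supp(\rho_i)\subseteq \overline{B}_{r(x)/6}(x)\subset B_{r(x)/4}(x)$. So $\mu_0=\rho_0\mm$ and $\mu_1=\rho_1\mm$ are absolutely continuous measures in $\PP_2(B)$ with $B=B_{r(x)/4}(x)$. Under strong ${\sf CD}_{loc}(K(\cdot),\infty)$, Theorem \ref{Prop_NB} then gives a unique $\pi\in\OptGeo(\mu_0,\mu_1)$. Theorem \ref{thm:rajalaCDqKN} produces some $\tilde\pi\in\OptGeo(\mu_0,\mu_1)$ with $(\e_t)_\sharp\tilde\pi\ll\mm$ and the compression bound with constant $e^{\frac{K^-(x)}{6}D^2}$. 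By uniqueness $\tilde\pi=\pi$, so $\pi$ is the unique optimal geodesic plan, has absolutely continuous marginals, and satisfies \eqref{Eq:rajalacompressionestimate} with $D$.

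Next, to replace $D$ by $\Lip(\pi)$, observe that $D=\sup\{\Lip(\pi')\colon \pi'\in\OptGeo(\rho_0\mm,\rho_1\mm)\}$ is taken over a singleton, so $D=\Lip(\pi)$. This already yields \eqref{Eq:rajalacompressionestimate stronc}.

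For robustness I would also check the dyadic step directly. Every intermediate pair $\Gamma_{\frac{k-1}{2^n}},\Gamma_{\frac{k+1}{2^n}}$ is supported in $2B$ and consists of absolutely continuous measures. By Lemma \ref{lem:restr under strong convexity}, the only optimal geodesic plan between them is the restriction $({\sf rest}_s^t)_\sharp\pi$, again by essential nonbranching in $B_{r(x)/4}(x)$ up to slightly enlarging the ball. Its Lipschitz constant is $(t-s)\Lip(\pi)$, consistent with Lemma \ref{lem:supportintermmeasures}. Hence the product estimate \eqref{Eq:stimacompressionepuntidiadicirajala} holds with $\Lip(\pi)$ in place of $D$, and the $W_2$-geodesic built in that proof coincides with $t\mapsto(\e_t)_\sharp\pi$.

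The main obstacle I expect is a support issue. The midpoints and the restricted plans live in $\overline{B}_{2r(x)/3}(x)$-type sets rather than in $B_{r(x)/4}(x)$, where Theorem \ref{Prop_NB} is stated. For this reason I would not apply uniqueness to the intermediate pairs. I would rely only on uniqueness at the endpoint level, which suffices since $D$ is defined through the endpoint pair. The only point needing care there is that the marginals lie in $B_{r(x)/4}(x)$, and this holds.
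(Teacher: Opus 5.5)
Your argument is correct and is exactly the paper's: since $\overline{B}_{r(x)/6}(x)\subset B_{r(x)/4}(x)$, Theorem \ref{Prop_NB} makes the plan from Theorem \ref{thm:rajalaCDqKN} the unique element of $\OptGeo(\rho_0\mm,\rho_1\mm)$, and $D$, being a supremum over a singleton, equals $\Lip(\pi)$. The dyadic ``robustness'' check is unnecessary, and its appeal to non-branching on an enlarged ball is not covered by Theorem \ref{Prop_NB} as stated, so you are right to rely only on uniqueness for the endpoint pair.
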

We will also need the following variant of Theorem \ref{thm:rajalaCDqKN}, { where the uniform $L^\infty$ bound is only near $t=0$,
but the gain is the convexity inequality \eqref{eq:convexity Ent}.}
\begin{proposition}\label{prop:goodgeodesics}
Let \((\X,\sfd,\mm)\) be a complete metric measure space, and let $\varnothing \neq \Omega\subseteq  \X$ be an open set satisfying \({\sf CD}_{loc}(K(\cdot),\infty)\). Let $x \in \Omega$, and let $\rho_0 ,\rho_1 \in L^{\infty}(\X)$ be boundedly supported probability densities satisfying 
\[
   \supp(\rho_0)\cup \supp(\rho_1) \subseteq \overline{B}_{r(x)/6}(x).
\]
Then, there exists $ \pi \in \OptGeo(\rho_0\mm,\rho_1\mm)$ s.t. $\mu_t:=(\e_t)_\sharp \pi \ll \mm$ and, writing $\mu_t \coloneqq  \rho_t\mm$, we have that $\mu_t$ satisfies the convexity inequality \eqref{eq:convexity Ent} for all $t \in [0,1]$ and there exists $t_0>0$ such that
 \begin{equation}\label{eq:uniformbound}
  \sup_{t \in [0,t_0]}\|\rho_t\|_{L^\infty} < \infty.
 \end{equation}
\end{proposition}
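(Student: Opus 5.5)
The plan is to take the geodesic given by the ${\sf CD}_{loc}$ condition between $\rho_0\mm$ and an auxiliary measure close to $t=0$, and then glue it with the geodesic from Theorem \ref{thm:rajalaCDqKN}. The gluing is done via Lemma \ref{lem:supportintermmeasures}, so that the $L^\infty$ bound is inherited only near $t=0$.

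Let $M\coloneqq\max\{\|\rho_0\|_{L^\infty},\|\rho_1\|_{L^\infty}\}$.

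First, I apply Definition \ref{def:CDloc open set} with $\mu_0=\rho_0\mm$ and $\mu_1=\rho_1\mm$. The supports lie in $\overline{B}_{r(x)/6}(x)\Subset B_{r(x)/2}(x)$, so this yields $\hat\pi\in\OptGeo(\mu_0,\mu_1)$ along which \eqref{eq:convexity Ent} holds. In particular ${\rm Ent}_\mm(\hat\mu_t)<\infty$, hence $\hat\mu_t\ll\mm$ for all $t$. However, $\hat\mu_t$ need not have bounded density.

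Fix some $t_0\in(0,1)$, for instance $t_0=1/2$, and set $\nu\coloneqq\hat\mu_{t_0}$, which belongs to $\mathcal I_{t_0}(\mu_0,\mu_1)$. The idea is to replace the portion $[0,t_0]$ of the geodesic by a bounded one. The difficulty is that Theorem \ref{thm:rajalaCDqKN} requires bounded densities at both endpoints, and $\nu$ may be unbounded.

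So I would reverse the roles. Apply Theorem \ref{thm:rajalaCDqKN} between $\mu_0$ and $\mu_1$ to get a geodesic $(\tilde\mu_t)$ with $\|\tilde\rho_t\|_{L^\infty}\le e^{K^-(x)D^2/6}M$ for all $t$. Then use ${\sf CD}_{loc}$ between $\tilde\mu_{t_0}$ and $\mu_1$; by Lemma \ref{lem:supportintermmeasures} all intermediate supports stay in $\overline B_{r(x)/3}(x)$, so the local condition applies.

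Concatenating $\tilde\mu|_{[0,t_0]}$ with this second geodesic gives a $W_2$-geodesic by Lemma \ref{lem:supportintermmeasures}. Its density is bounded on $[0,t_0]$. On $[t_0,1]$ it satisfies the convexity inequality between $\tilde\mu_{t_0}$ and $\mu_1$.

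It remains to get convexity on the whole of $[0,1]$, and this is the main obstacle. The Rajala geodesic need not satisfy \eqref{eq:convexity Ent}, so convexity on $[0,t_0]$ is not automatic, and chaining convexity on two pieces does not give global convexity.

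To overcome this, I would redo the Rajala-type construction in the spirit of \cite{Rajala12-2}, with the excess functional minimized among midpoints that also satisfy the entropy inequality. Concretely, one minimizes $\mathcal F_C$ over the closed set
\[
\Big\{\nu\in\mathcal I_{1/2}(\mu_0,\mu_1): {\rm Ent}_\mm(\nu)\le \tfrac12{\rm Ent}_\mm(\mu_0)+\tfrac12{\rm Ent}_\mm(\mu_1)-\tfrac{K(x)}8W_2^2(\mu_0,\mu_1)\Big\}.
\]
This set is closed by lower semicontinuity of the entropy. The redistribution step replaces mass on $A'$ by a convex combination; one has to check that, with the small parameter $\delta$, the entropy inequality is preserved up to an error that can be absorbed by choosing $C$ large rather than $C>M$. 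This is why only a non-quantified finite bound, rather than the sharp constant, appears in \eqref{eq:uniformbound}.

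Iterating only on dyadic points of $[0,t_0]$ gives the following structure. At each step the new midpoint between $\Gamma_{(k-1)/2^n}$ and $\Gamma_{(k+1)/2^n}$ is chosen bounded and entropy-convex. By the standard fact that dyadic midpoint convexity plus lower semicontinuity yields \eqref{eq:convexity Ent} (as in \cite{Sturm06I}), with the uncontrolled part on $[t_0,1]$ taken from $\hat\pi$, one obtains convexity on all of $[0,1]$. Lifting the resulting curve via \cite{Lisini07} produces the plan $\pi\in\OptGeo(\rho_0\mm,\rho_1\mm)$ required in the statement.
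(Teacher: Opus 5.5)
Your final strategy is the kind of argument the paper points to when it defers to \cite[Theorem 4.2]{AGMR15} and Theorem \ref{thm:rajalaCDqKN}: an excess minimization in the spirit of \cite{Rajala12-2}, restricted to midpoints satisfying the entropy inequality, followed by dyadic refinement, extension by lower semicontinuity, and lifting via \cite{Lisini07}. However, the step that carries all the content is left unproved, and the mechanism you suggest for it cannot work.

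Rajala's competitor is $\tilde\nu=(1-\lambda)\nu+\lambda\nu'$, with $\lambda=\delta/(C+\delta)$ and $\nu'=\nu\res{(\X\setminus A')}+\nu(A')\Gamma_{1/2}$. By convexity of $\Ent_\mm$, it would suffice that $\nu'$ satisfies the constraint. But the entropy is superadditive, $\Ent_\mm(a+b)\ge\Ent_\mm(a)+\Ent_\mm(b)$, and the constraint on $\nu$ is only global. So the $K$-convexity of $\Gamma$ between the sub-marginals gives nothing for $\nu'$.

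Equivalently, to first order in $\lambda$ the entropy changes by $\lambda\bigl(\nu(A')\int\log\rho_\nu\,\d\Gamma_{1/2}-\int_{A'}\log\rho_\nu\,\d\nu\bigr)$, which has no sign. In the second half of Rajala's argument, $\Gamma_{1/2}$ is concentrated on $\{\rho_\nu\ge C\}$, where $\log\rho_\nu$ may exceed its $\nu$-average over $A'$. If the constraint is active at the minimizer, which you cannot exclude, then $\tilde\nu$ is inadmissible however small $\delta$ is, and the contradiction argument collapses. Enlarging $C$ changes the objective $\mathcal F_C$, not the constraint, so there is no ``error'' that a large $C$ could absorb.

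There are two further problems.

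First, a per-step constant that is merely large and unquantified gives no uniform bound at all. Every $t\in(0,t_0]$ involves infinitely many dyadic generations. In Theorem \ref{thm:rajalaCDqKN} the bound survives only because the generation-$n$ factor is $e^{K^-(x)D^2/2^{2n+1}}$, whose infinite product converges.

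Second, your final assembly takes $\hat\mu_{t_0}$ as the right endpoint of the refinement on $[0,t_0]$. As you noted yourself, this measure may have unbounded density, and \eqref{eq:spreadingmassRajalalowerbound} needs $L^\infty$ bounds at both endpoints, so the very first midpoint step fails. The convexity bookkeeping of that assembly is fine: $K$-convexity on $[0,t_0]$ between $\mu_0$ and $\hat\mu_{t_0}$, together with the global inequality at $\hat\mu_{t_0}$, does give the global inequality on $[0,t_0]$. If instead you refine on all of $[0,1]$ starting from the bounded $\mu_0,\mu_1$, then $\hat\pi$ plays no role, and the proof again rests entirely on the unproved preservation of the entropy constraint.
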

\begin{proof}
The proof follows by the same lines of that in \cite[Theorem 4.2]{AGMR15} which is in turn similar to that of Theorem \ref{thm:rajalaCDqKN}, and it exploits again the fact that $\supp(\rho_0)\cup \supp(\rho_1) \subset \overline{B}_{r(x)/6}(x)$ and the very definition of ${\sf CD}_{loc}(K(\cdot),\infty)$. We omit the details.
\end{proof}
\section{Stability of infinitesimal Hilbertianity: non-negative curvature}\label{sec:nonnegative}
In this section we show that the local $L^\infty$-estimates along transport geodesic obtained in Theorem \ref{thm:rajalaCDqKN}, coupled with the assumption of local non-negative curvature, are already effective to deduce related stability results as already noted in \cite{NobiliRenziVitillaro25}. The proof in this simplified setting highlights the main steps of the strategy that will be carried out to show the main results.
\begin{proposition}\label{MainT K>=0}
Let $(\X_n, \sfd_n, \mm_n, x_n)$ be a sequence of pointed complete metric measure spaces that are pmG-converging to a pointed complete limit space $(\X_\infty, \sfd_\infty, \mm_\infty, x_\infty)$, and let $(\Z,\sfd)$ be a realization of the convergence. Let $\Omega_n\subseteq \X_n,\Omega_\infty \subseteq \X_\infty$ be non-empty open sets. Assume that $\Omega_n$ satisfies $\rcd_{loc}(K_n(\cdot), \infty)$ for all $1 \leq n < \infty$. Furthermore, denoting by $r_n(\cdot)$ the local radius of $\Omega_n$, suppose the following: for $x \in \Omega_\infty$ there are $y_n \in \Omega_n$ with $\sfd(y_n,x)\to 0$ satisfying $\liminf_n r_n(y_n) >0$  and $\liminf_n K_n(y_n) \geq 0$. Then $\Omega_\infty$ is infinitesimally Hilbertian.
\end{proposition}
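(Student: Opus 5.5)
The plan is to prove that the Cheeger energies are Mosco-convergent on small balls around each point of $\Omega_\infty$, and then deduce the parallelogram identity \eqref{eq:parall_id} by the standard limit argument. By Theorem \ref{thm:equivalent hilbert} and a partition of unity argument (Leibniz rule and locality in Theorem \ref{Calculus-weakg}), it suffices to show the following: for every $x\in\Omega_\infty$ there is a ball $B=B_r(x)$, with $r$ a small fixed fraction of $\liminf_n r_n(y_n)$, such that \eqref{eq:parall_id} holds for all $f,g\in H^{1,2}(\X_\infty)$ supported in $B$.

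The two halves of Mosco convergence are handled separately.

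\textbf{$\Gamma$-limsup.} The recovery-sequence inequality needs no curvature. For $f_\infty$ supported in $B$, I take Lipschitz $f_n$ supported in $B\cap\X_n$, converging $L^2$-strong, with $\limsup_n\rmCh(f_n)\le\rmCh(f_\infty)$. This is the construction of \cite{GMS15,AmbrosioHonda17}: approximate $f_\infty$ by Lipschitz functions, extend them McShane-wise to $\Z$, and restrict.

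\textbf{$\Gamma$-liminf.} Here I use the Lagrangian characterization of Theorem \ref{thm:Sobolevintegrated}. Let $f_n\to f_\infty$ $L^2$-weak with $\sup_n\rmCh(f_n)<\infty$. Fix a test plan $\pi$ in $\X_\infty$, and reduce by approximation to plans whose endpoints $\mu_0=\rho_0\mm_\infty$ and $\mu_1=\rho_1\mm_\infty$ have bounded densities supported in a small ball around $x$.
\begin{itemize}
\item Following \cite{NobiliRenziVitillaro25} (Wasserstein polygonal approximation, in the spirit of \cite{Lisini07}), replace $\pi$ by a concatenation of optimal geodesic plans between consecutive time-slices $\mu_{t_i},\mu_{t_{i+1}}$. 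Then approximate each slice in $\X_n$ with $L^\infty$ densities, via Proposition \ref{prop:Gamma entropy}(ii) or \cite[Lemma 4.1]{NobiliRenziVitillaro25}.
\item Between consecutive approximating marginals in $\X_n$, invoke Theorem \ref{thm:rajalaCDqKN} at $y_n$. Since all supports lie in $\overline B_{r_n(y_n)/6}(y_n)$ for large $n$, this gives geodesic plans $\pi_n^i$ with compression bounded by $e^{K_n^-(y_n)D^2/6}$ times the maximal endpoint density.
\item Because $\liminf_n K_n(y_n)\ge0$, we have $K_n^-(y_n)\to0$. Hence the compression factor tends to $1$ and, crucially, does not accumulate over the many polygonal pieces, since $\prod_i e^{K^-D_i^2/6}\to1$ as well.
\end{itemize}
This is exactly why no uniqueness or nonbranching is needed in this section.

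Gluing the $\pi_n^i$ yields test plans $\pi_n$ with $\limsup_n{\rm Comp}(\pi_n)\le{\rm Comp}(\pi)+o(1)$ and $\limsup_n\rmKe(\pi_n)\le\rmKe(\pi)+o(1)$, and with $(\e_0,\e_1)_\sharp$ marginals converging. Applying Theorem \ref{thm:Sobolevintegrated} in $\X_n$ gives
$$\int f_n(\gamma_1)-f_n(\gamma_0)\,\d\pi_n\le{\rm Comp}(\pi_n)^{1/2}\rmKe(\pi_n)^{1/2}\rmCh(f_n)^{1/2}.$$
The left side equals $\int f_n(\rho^n_1-\rho^n_0)\,\d\mm_n$. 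For the passage to the limit I need $\rho^n_i\to\rho_i$ $L^2$-strong; then \eqref{eq:coupling} applies. Taking the polygonal mesh to zero and using Theorem \ref{thm:Sobolevintegrated} in $\X_\infty$ gives $\rmCh(f_\infty)\le\liminf_n\rmCh(f_n)$.

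\textbf{Conclusion.} With both halves, take $f,g$ supported in $B$ and recovery sequences $f_n,g_n$. Then $f_n\pm g_n$ converge strongly to $f\pm g$, and the liminf inequality applied to them gives
$$\rmCh(f+g)+\rmCh(f-g)\le\liminf_n\big(\rmCh(f_n+g_n)+\rmCh(f_n-g_n)\big)=\liminf_n\big(2\rmCh(f_n)+2\rmCh(g_n)\big)\le2\rmCh(f)+2\rmCh(g),$$
using infinitesimal Hilbertianity of $\Omega_n$ for the middle equality. Here the supports of $f_n,g_n$ lie in $\Omega_n$, since $B\cap\X_n\Subset\Omega_n$ by the choice of radius. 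The reverse inequality follows by applying this to $(f+g)/2$ and $(f-g)/2$, so \eqref{eq:parall_id} holds.

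\textbf{Main obstacle.} The delicate step is the liminf inequality, and specifically keeping every piece of the discretized, glued plans inside the region where ${\sf CD}_{loc}$ applies uniformly in $n$. At the same time the total compression must stay controlled, which requires choosing the mesh after $n$ is large enough that $K_n^-(y_n)$ is small. I would first try the estimate ${\rm Comp}(\pi_n)\le e^{K_n^-(y_n)D^2/6}\max_i\|\rho^n_{t_i}\|_\infty$. I would combine it with the fact that the densities of the slices $\mu_{t_i}$ are bounded by ${\rm Comp}(\pi)$, and that this bound is preserved under the approximation of \cite[Lemma 4.1]{NobiliRenziVitillaro25}.
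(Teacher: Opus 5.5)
Your proposal is correct, and it has the same overall structure as the paper. You use the Lagrangian characterization of Theorem~\ref{thm:Sobolevintegrated}, the marginal approximation of \cite[Lemma 4.1]{NobiliRenziVitillaro25}, Rajala's density bound, recovery sequences as in Proposition~\ref{Gammalimsup}, and the standard limit passage in the parallelogram identity. Where you differ is the liminf step, and there your route is heavier than it needs to be.

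\begin{itemize}
\item \emph{What the paper does.} It does not discretize $\pi$ at all. It approximates only the endpoint densities $\rho_0,\rho_1$ by $\rho_{0,n},\rho_{1,n}$ and takes a single plan $\pi_n\in\OptGeo(\rho_{0,n}\mm_n,\rho_{1,n}\mm_n)$ from Theorem~\ref{thm:rajalaCDqKN}. Since $K_n^-(y_n)\to0$ and the geodesic lengths are uniformly bounded, this already gives $\limsup_n\Comp(\pi_n)\le\max_i\|\rho_i\|_{L^\infty}\le\Comp(\pi)$. It also gives $\rmKe(\pi_n)=W_2^2(\rho_{0,n}\mm_n,\rho_{1,n}\mm_n)\to W_2^2(\rho_0\mm_\infty,\rho_1\mm_\infty)\le\rmKe(\pi)$.
\item \emph{Your own estimates need no mesh.} They hold for every fixed number $M$ of pieces, including $M=1$. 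So the refinement of the mesh and the choice of mesh after $n$ are unnecessary.
\item \emph{There is no accumulation.} The compression of a glued plan is the maximum of the compressions of its pieces, not a product, so the $\prod_i e^{K^-D_i^2/6}$ concern does not arise.
\item \emph{What the polygon buys.} It is what works for negative curvature (Theorem~\ref{PolyProp}). There the factor $e^{K^-D^2/6}$ tends to $1$ only if the pieces are short in the Lipschitz sense, not merely in $W_2$. That in turn requires the modification \cite[Proposition 4.4]{NobiliRenziVitillaro25} and the uniqueness of Corollary~\ref{cor:improved_rajala}.
\item \emph{Support requirements.} The single-geodesic route only needs the two endpoint marginals inside $\overline B_{r_n(y_n)/6}(y_n)$. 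Your polygon needs every slice $\mu_{t_i}$ there. So you must reduce to plans concentrated on curves in $\overline B_{r+\eps}(x)$, which the paper does by replacing $\rmCh$ with $\rmCh^H$; localizing only the endpoints, as you state it, is not enough for your scheme.
\end{itemize}

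There is also a small inaccuracy in your $\Gamma$-limsup step. A McShane extension to $\Z$ does not preserve the asymptotic Lipschitz constant at points of $\X_\infty$; it can jump up to the global Lipschitz constant. The upper-semicontinuity argument would then not return $\rmCh(f_\infty)$. Proposition~\ref{Gammalimsup} instead uses the extension of \cite{DiMarinoGigliPratelli}, which keeps $\lip_a$ unchanged on $\X_\infty$.
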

\begin{proof}
    We divide the proof into two steps. First we show a lower semicontinuity result for the Cheeger energy, then we exploit the Proposition \ref{Gammalimsup} below to conclude the proof.
    
    \noindent{\sc Step 1}. We claim that, for any $f_\infty \in L^2(\X_\infty)$ supported in a ball $B_r(x)$ small enough, and for any $f_n \to f_\infty$ in $L^2$-weak, it holds
    \[
        {\rm Ch}(f_\infty) \leq \liminf_{n\uparrow\infty} {\rm Ch}(f_n).
    \]
    First, fixed $\eps>0$, notice that ${\rm Ch}(f_\infty)$ is the same as ${\rm Ch}^H(f_\infty)$ with $H\coloneqq \overline{B}_{r+\eps}(x)$, being $f_\infty$ supported in $B_r(x)$. Recall also by Theorem \ref{thm:Sobolevintegrated} that we can characterize ${\rm Ch^{1/2}}(f_\infty)$ as the minimal constant $C\ge 0$ such that
    \begin{equation}\label{eq:lagr_Sob K>=0}
    \int f_\infty(\gamma_1) - f_\infty(\gamma_0)\,\d \pi \le {\rm Comp}(\pi)^{1/2} \rmKe^{1/2}(\pi) C,
    \end{equation}
    holds for all test plans $\pi$. Again, it is enough to check the above for all test plans $\pi$ with $\gamma_t \in \overline{B}_{r+\eps}(x)$ for all $t\in[0,1]$ and $\pi$-a.e.\ $\gamma$, being $f_\infty$ supported in $B_r(x)$. We thus consider fixed any such test plan $\pi$ and, in order to conclude, it suffices to show that (\ref{eq:lagr_Sob K>=0}) holds for the choice $C=\liminf_n {\rm Ch^{1/2}}(f_n)$. Without loss of generality, we shall also suppose that $C=\lim_n {\rm Ch^{1/2}}(f_n)$ along a suitable subsequence. \\
    Let now $\rho_0 \mm_\infty:=(\e_0)_\sharp\pi$, $\rho_1 \mm_\infty:=(\e_1)_\sharp\pi$ and construct for $i=0,1$ sequences of densities $\rho_{i,n}$ s.t. $\rho_{i,n} \mm_n \in \PP(\X_n)$ and
    \[\rho_{i,n} \to \rho_i \mbox{ in } L^2\mbox{-strong,} \qquad\supp(\rho_{i,n}) \subseteq \overline{B}_{r+2\eps}(x) \cap \X_n, \qquad \limsup_n \|\rho_{i,n}\|_{L^\infty(\X_n)} \leq \|\rho_i\|_{L^\infty(\X_\infty)} \]
    (see \cite[Lemma 4.1]{NobiliRenziVitillaro25}). We may assume with no loss of generality that $x \in \supp(\mm_\infty)$ (otherwise the statement is trivial). Consider, by assumptions, a sequence $y_n \in \Omega_n$ with the properties that $y_n \to x$ (in the realization) and that  $\liminf_n r_n(y_n) >0,\liminf_n K_n(y_n) \geq 0$. Hence, for $n$ large enough we have
    \[\supp(\rho_{i,n}) \subseteq \overline{B}_{r+3\eps}(y_n) \cap \X_n.\] 
    Now, if $r< r_0(x)=\frac 1{6} \liminf_n r_n(y_n)$ then (choosing $\eps>0$ small enough) the supports of the measures $\rho_{i,n}$ satisfy the hypothesis in Theorem \ref{thm:rajalaCDqKN} for all $n$. Let then $\pi_n \in \OptGeo(\rho_{0,n}, \rho_{1,n})$ the plan given by Theorem \ref{thm:rajalaCDqKN} and observe that $\liminf_n K_n(y_n) \geq 0$ in \eqref{Eq:rajalacompressionestimate} yields $\limsup_n \Comp(\pi_n) \leq \Comp(\pi)$. Also, $\lim_n \rmKe(\pi_n)=\lim_n W_2^2(\rho_{0,n}, \rho_{1,n})=W_2^2(\rho_0, \rho_1) \leq \rmKe(\pi)$. Hence, using \eqref{eq:coupling} at first, we get
    \[
    \begin{aligned}
    \int f_\infty(\gamma_1)-f_\infty(\gamma_0)\,\d\pi  &=\lim_{n\uparrow\infty} \int f_{n}(\gamma_1)-f_{n}(\gamma_0)\,\d\pi_{n} \\
    &\le \limsup_{n\uparrow\infty}
    \Comp\left(\pi_{n} \right)^{1/2}\rmKe^{1/2}\left(\pi_{n}\right)\rmCh^{1/2}(f_{n})\\
    &\le \Comp(\pi)^{1/2}\rmKe^{1/2}(\pi) \limsup_{n\uparrow\infty} \rmCh^{1/2}(f_{n}),
    \end{aligned}
    \]
    thus showing the claim.

    \noindent{\sc Step 2}. Fix $f$, $g \in H^{1,2}(\X)$ with supports well-contained in $\Omega_\infty$. We claim that
    \begin{equation}\label{a.e.Par Kge0}
    2|\nabla f|^2+2|\nabla g|^2=|\nabla(f+g)|^2+|\nabla(f-g)|^2 \qquad \mm\mbox{-a.e. in }\Omega_\infty.
    \end{equation}
    Notice that, by separability and locality of weak gradients, the claim would follow if we can show that for every $x \in \Omega_\infty$ there is a ball $B\subset \Omega_\infty$ centred in $x$ where (\ref{a.e.Par Kge0}) holds $\mm$-a.e.\ on $B$. Therefore, thanks to Theorem \ref{thm:equivalent hilbert}, it is enough to prove that
    $$2{\rm Ch}(f)+2{\rm Ch}(g)={\rm Ch}(f+g)+{\rm Ch}(f-g)$$
    holds for each $f,g$ supported on $B=B_r(x)$, for some suitably small $r$. Choose $r=r(x)$ and construct recovery sequences $f_n,g_n$ converging in $L^2$-strong to $f,g$ as in Proposition \ref{Gammalimsup} below. For $n$ large enough, it holds that $f_n,g_n$ are supported in $\Omega_n$. Thus, we deduce
    \begin{equation*}
    \begin{split}
     2{\rm Ch}(f)+2{\rm Ch}(g)&=\lim_{n\to\infty} (2{\rm Ch}(f_n)+2{\rm Ch}(g_n))=\lim_{n\to\infty}({\rm Ch}(f_n+g_n)+{\rm Ch}(f_n-g_n)) \\
     & \geq {\rm Ch}(f+g)+{\rm Ch}(f-g).
    \end{split}
    \end{equation*}
    The other inequality simply follows by replacing $f,g$ with $(f+g)/2$, $(f-g)/2$. By arbitrariness of $B$, this shows \eqref{a.e.Par Kge0} and thus concludes the proof.
\end{proof}
\begin{proposition}\label{Gammalimsup}
 Let $(\X_n, \sfd_n, \mm_n, x_n)$ be a sequence of pointed complete metric measure spaces that are pmG-converging to a pointed complete limit space $(\X_\infty, \sfd_\infty, \mm_\infty, x_\infty)$. Let us fix $(\Z,\sfd)$ a realization of the convergence, and let $B \subseteq \Z$ be a ball. For all $f_\infty \in L^2(\X_\infty)$ with $\supp(f_\infty) \Subset B$, there exists a sequence $f_n \in L^2(\X_n) \cap \Lip(\X_n)$ so that $f_n$ converges in $L^2$-strong to $f_\infty$, with the property that $\supp(f_n) \subseteq B \cap \X_n$ and
$${\rm Ch}(f_\infty) \geq \limsup_{n\to \infty} {\rm Ch}(f_n).$$
Finally, the same conclusion holds for all $f_\infty \in L^2(\X_\infty)$ with no restriction on the support.
\end{proposition}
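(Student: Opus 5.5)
The plan is the standard $\Gamma$-limsup construction by density and diagonalization, as in \cite{GMS15,AmbrosioHonda17}, with an extra cutoff to keep the supports inside $B$. No curvature assumption is needed.

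\emph{Reduction to Lipschitz data.} Assume ${\rm Ch}(f_\infty)<\infty$, otherwise there is nothing to prove. Since $\supp(f_\infty)\Subset B$, we can fix a slightly smaller concentric ball $B'$ with $\supp(f_\infty)\Subset B'\Subset B$. By the density in energy of Lipschitz functions \cite{AmbrosioGigliSavare11-3}, there are $g_k\in \Lip_{bs}(\X_\infty)$ with $g_k\to f_\infty$ in $L^2$ and $\int \lip_a(g_k)^2\,\d\mm_\infty\to {\rm Ch}(f_\infty)$. Using the asymptotic Lipschitz constant here is important. Next, fix a cutoff $\chi\in\Lip_{bs}(\Z)$ with $\chi\equiv1$ on a neighbourhood of $\supp f_\infty$ and $\supp\chi\subseteq B'$, and replace $g_k$ by $\chi g_k$. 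The Leibniz rule for $\lip_a$ gives
\[
\lip_a(\chi g_k)\le \chi\,\lip_a(g_k)+|g_k|\,\lip_a\chi .
\]
The second term tends to $0$ in $L^2$ because $g_k\to f_\infty$ and $f_\infty=0$ on $\supp(\lip_a\chi)$. So we may assume $\supp g_k\subseteq B'$ and $\limsup_k\int\lip_a(g_k)^2\le {\rm Ch}(f_\infty)$. If we also truncate, we may take $g_k$ bounded.

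\emph{Transfer of a fixed Lipschitz function.} Extend $g_k$ to $\Z$ by McShane, keeping the Lipschitz constant, and multiply by the cutoff again. This gives $\tilde g_k\in\Lip_{bs}(\Z)$ supported in $B'$. Set $f_{n,k}:=\tilde g_k|_{\X_n}$. Then $f_{n,k}\mm_n\rightharpoonup \tilde g_k\mm_\infty$ and $f_{n,k}^2\mm_n\rightharpoonup \tilde g_k^2\mm_\infty$ by weak convergence of the measures tested against $C_{bs}(\Z)$, so $f_{n,k}\to g_k$ in $L^2$-strong. For the energy,
\[
{\rm Ch}(f_{n,k})\le\int \lip(f_{n,k})^2\,\d\mm_n\le \int \lip_a^{\Z}(\tilde g_k)^2\,\d\mm_n .
\]
The function $\lip_a^{\Z}\tilde g_k$ is upper semicontinuous, bounded, and has bounded support. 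By the pmG-convergence this yields
\[
\limsup_n{\rm Ch}(f_{n,k})\le\int \lip_a^{\Z}(\tilde g_k)^2\,\d\mm_\infty .
\]

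\emph{Main obstacle.} The hard step is that the slope computed in $\Z$ of the McShane extension can exceed the intrinsic $\lip_a(g_k)$ on $\X_\infty$. The extension only controls the global Lipschitz constant, not the pointwise slope. I would first try the standard fix of \cite{AmbrosioHonda17}/\cite[Theorem 4.4]{Gigli23_working}. One approximates ${\rm Ch}(f_\infty)$ by the relaxation of $\int \lip_a^{\Z}(g)^2\,\d\mm_\infty$ over $g\in\Lip_{bs}(\Z)$. Equality of this relaxation with ${\rm Ch}$ holds because the Cheeger energy of $(\X_\infty,\sfd,\mm_\infty)$ equals that of $(\Z,\sfd,\mm_\infty)$. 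This is the invariance of the Cheeger energy under isometric embeddings into a complete space together with density in energy of $\Lip_{bs}(\Z)$ functions in $H^{1,2}(\Z,\sfd,\mm_\infty)$, as in \cite{AmbrosioGigliSavare11-3}. With the $g_k$ chosen directly in $\Lip_{bs}(\Z)$ this way, the previous two steps apply verbatim.

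\emph{Conclusion.} A diagonal choice $k_n\to\infty$ (slowly enough) gives $f_n:=f_{n,k_n}$, which converges $L^2$-strong to $f_\infty$. This uses that $L^2$-strong convergence is metrizable along such sequences, or it can be done directly with a countable family of test functions. The sequence satisfies $\supp f_n\subseteq B\cap\X_n$ and $\limsup_n{\rm Ch}(f_n)\le{\rm Ch}(f_\infty)$. Without the support restriction, the same argument works with cutoffs on balls of radius $R\to\infty$ in a further diagonalization.
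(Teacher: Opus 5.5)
Your proof is correct. Its skeleton is the same as the paper's: take Lipschitz approximants with good $\lip_a$-energy, realize them as Lipschitz functions on $\Z$, cut them off inside $B$, restrict them to $\X_n$, use upper semicontinuity of $\lip_a$ against $\mm_n\rightharpoonup\mm_\infty$, and diagonalize. The two routes differ only at the step you flagged as the main obstacle.

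\textbf{The paper's route.} It keeps your first attempt. The approximants $g^k$ come from density in energy intrinsically on $\X_\infty$, and McShane is replaced by the extension theorem of \cite{DiMarinoGigliPratelli}. That theorem gives $\tilde g^k\in\Lip(\Z)$ with $\lip_a(\tilde g^k)=\lip_a(g^k)$ pointwise on $\X_\infty$. This is exactly the fix for the slope increase you noticed.

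\textbf{Your route.} You choose the approximants directly in $\Lip_{bs}(\Z)$, applying the density-in-energy theorem of \cite{AmbrosioGigliSavare11-3} to $(\Z,\sfd,\mm_\infty)$, whose measure does not have full support. You then identify the resulting relaxed energy with $\rmCh(f_\infty)$ via the equivalence between relaxed and test-plan weak gradients. Since $\X_\infty$ is complete, hence closed in $\Z$, test plans relative to $\mm_\infty$ are concentrated on curves in $\X_\infty$, so the minimal weak gradient and the Cheeger energy agree in the two spaces. This is the argument of \cite{GMS15,AmbrosioHonda17,Gigli23_working}, which the paper only cites for the unrestricted case.

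\textbf{What each buys.} Your route needs no extension theorem, but it relies on the full equivalence theory applied in the ambient space to justify invariance under the isometric embedding. The paper's route keeps the relaxation intrinsic to $\X_\infty$ and never compares Sobolev structures on $\Z$ and $\X_\infty$. The price is the Di Marino--Gigli--Pratelli theorem, a purely metric statement.

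\textbf{A minor point.} When $\rmCh(f_\infty)=\infty$ it is not quite true that there is ``nothing to prove''. The statement still asks for Lipschitz $f_n$ supported in $B\cap\X_n$ converging $L^2$-strong to $f_\infty$. You get these from the same cutoff--restriction--diagonalization construction, using only the $L^2(\mm_\infty)$-density of $\Lip_{bs}(\Z)$.
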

\begin{proof}
The last conclusion is known, and shown for instance in \cite[Theorem 4.4]{Gigli23_working} (see also the arguments in \cite{AmbrosioHonda17,GMS15}). Here, we handle the case $B\subset \Z$, being careful about the supports. Fix $\eps>0$ s.t. $\supp(f_\infty) \subseteq (1-\eps)B$.
First, by definition of the Cheeger energy, we can find a sequence $g^k \in L^2(\X_\infty) \cap \Lip(\X_\infty)$ s.t. $g^k \to f_\infty$ in $L^2(\X_\infty)$ and $\int (\lip_a g^k)^2 \d\mm_\infty \to {\rm Ch}(f_\infty)$. We can also assume that $\supp(g^k) \subset (1-\frac \eps 2) B$, since one can easily check that the sequence $\eta g^k$, where $\eta \in \Lip(\X_\infty),0\le\eta\le 1$ is a suitable cut-off function, provides a better competitor. Now, exploiting the main result in \cite{DiMarinoGigliPratelli} we extend $g^k$ to a function $\tilde{g}^k \in \Lip(\Z)$ such that
\begin{equation}\label{ExtLip}
 \lip_a(g^k)=\lip_a(\tilde{g}^k) \qquad \mbox{on }\X_\infty. 
\end{equation}
Notice that, if we take $\chi \in \Lip(\Z; [0,1])$ s.t.\ $\chi \equiv 1$ in $(1-\frac \eps 2)B$, $\supp(\chi) \subset B$, then obviously $\chi \tilde{g}^k=\tilde{g}^k=g^k$ on $\X_\infty$, and furthermore, thanks to the Leibniz rule $\lip_a(\chi\tilde{g}^k) \leq \| \chi\|_{L^\infty}\lip_a(\tilde{g}^k)+\Lip(\chi)|\tilde{g}^k|$, we deduce
$$\lip_a(\chi \tilde{g}^k)=\begin{cases}
\lip_a(\tilde{g}^k) & \mbox{on }  \X_\infty \cap (1-\frac \eps 2) B , \\
0 & \mbox{on } \X_\infty \cap \left( (1-\frac \eps 2) B\right)^c.
\end{cases}$$
Hence, (\ref{ExtLip}) continues to hold for $\chi \tilde{g}^k$. That is, we can assume that $\supp(\tilde{g}^k) \subset B$. Now, set $f_{k,n}:=\tilde{g}^k \mid_{\X_n} \in \Lip_{\rm bs}(\X_n)$; it is immediate that, for any $k \in \N$, $f_{k,n} \to g^k$ in $L^2$-strong. Furthermore, using the upper semicontinuity of $\lip_a (\tilde{g}^k)$ on $\Z$, we obtain
\begin{equation*}\begin{split}
\int_{\X_\infty} \left(\lip_a (g^k)\right)^2 \d\mm_\infty&=\int_\Z \left(\lip_a (\tilde{g}^k)\right)^2 \d\mm_\infty \ge \limsup_{n\to\infty} \int_\Z \left(\lip_a (\tilde{g}^k)\right)^2 \d\mm_n \\
&\ge \limsup_{n\to\infty}\int_{\X_n} \left(\lip_a (f_{k,n})\right)^2 \d\mm_n \ge \limsup_n {\rm Ch}(f_{k,n}).\end{split}\end{equation*}
We can then conclude with a diagonalization argument, i.e. setting $f_n:=f_{k(n), n}$ for a suitable sequence $k(n) \to \infty$. This is justified as the $L^2$-strong convergence is metrizable (cf.\ \cite{GMS15,AmbrosioHonda17}). 
\end{proof}
The rest of the manuscript is devoted to dealing with the general case with possibly negative local curvature lower bounds. We will implement the same strategy as in \cite{NobiliRenziVitillaro25}, but the local setting presents a key challenge in deducing local essential nonbranching properties. We will overcome these difficulties by first studying localized properties of the heat flow. Before passing to this analysis, we show an application of our methods in the presence of nonnegative curvature.
\subsection{Application: Euclidean weak tangents}
In this part, we shall exploit the scaling properties of Ricci curvature lower bounds along a blow-up sequence, and show that the weak tangents of spaces satisfy the (global) ${\sf RCD}$ condition. This easily implies the existence of Euclidean weak tangents thanks to \cite{GigliMondinoRajala15}.
    \begin{definition}[Weak tangents]\label{Def:weakTan}
        Let $\Xdm$ be a metric measure space and let ${x}\in\supp(\mm)$, a pointed metric measure space $(Y,\rho,\mu,y)$ is a weak tangent to $\Xdm$ at ${x}$ if there exists a sequence of radii $s_n\downarrow0$ such that 
        \begin{align*}
            (\X,\sfd_{s_n},\mm_{{x}}^{s_n},{x})\xrightarrow{pmG}(Y,\rho,\mu,y),\qquad \text{as }n\to\infty,
        \end{align*}
        where 
        \begin{align*}
            \sfd_{s}=\frac{\sfd}{s}\quad\textit{and}\quad\mm_{{x}}^{s}=\left(\int_{B_s({x})}1-\frac{\sfd(\cdot,{x})}{s}\,\d\mm \right)^{-1}\mm.
        \end{align*}
        The collection of weak tangents at the point ${x}$ is denoted by ${\rm Tan}(\X,\sfd,\mm,{x})$.
    \end{definition}
    We are ready to show next our application.
\begin{proof}[Proof of Theorem \ref{thm:weak tangents intro}]
    Let $(Y,\rho,\mu,y) \in {\rm Tan}(\X,\sfd,\mm, x)$ be a weak tangent arising from a pmG-convergent sequence $(\X,\sfd_{s_n},\mm_{{x}}^{s_n},{x})$, as in Definition \ref{Def:weakTan}. It is  straightforward to check $\Omega$ satisfies ${\sf CD }_{loc}(s_n^2\, K(\cdot),N(\cdot))$, when regarded as an open subset of the metric measure space $(\X,\sfd_{s_n},\mm_{x}^{s_i},x)$ with local radius $r_n(x)\coloneqq \frac{r(x)}{s_n}$, for all $n\in\N$. Since, by the scaling, it holds that $r_n(x)\to+\infty$, then by Theorem \ref{thm:stability dimensional CDloc} and Proposition \ref{MainT K>=0} (applied with $\Omega_\infty=Y$) we conclude that any weak tangent satisfies ${\sf RCD}(0,N({x}))$ globally. This settles the first conclusion.

    The last conclusion, being local, will follow if we can show that for all $x\in\Omega$, there are Euclidean weak tangents at $\mm$-a.e.\ $x'\in B_{\frac{r(x)}{10}}(x)$.
    By \cite[Theorem 3.15]{GMS15}, and employing in particular the so-called intrinsic approach (\cite[Definition 3.8]{GMS15}), it follows that, for all $x'\in B_{s}(x)$, the weak tangent enjoys the following localization property: 
    $$ \mathrm{Tan}(\X, \sfd, \mm, x')=\mathrm{Tan}\left(B_s(x), \sfd, \mm\res B_s(x),x'\right).$$ 
    By Remark \ref{rmk:doubling small ball}, we know that $\left(B_{\frac{r(x)}{10}}(x), \sfd, \mm\res B_{\frac{r(x)}{10}}(x) \right)$ is doubling. Therefore, for any $s_n\downarrow0$, it holds that $\left(B_{\frac{r(x)}{10}}(x), \sfd_{s_n}, (\mm\res B_{\frac{r(x)}{10}}(x))_{x'}^{s_n},x'\right)$
    is a family of uniformly doubling pointed metric measure spaces. Then, by \cite[Proposition 2.2]{GigliMondinoRajala15} there exists, up to taking a  subsequence, a weak tangent $(Y,\rho,\mu,y)\in \mathrm{Tan}\left(B_{\frac{r(x)}{10}}(x), \sfd, \mm\res B_{\frac{r(x)}{10}}(x),x'\right)$ and, by the previous discussion, this satisfies the ${\sf RCD}(0,N(x')) $ condition.  Note that after the restriction to the ball $B_{\frac{r(x)}{10}}(x)$, thanks to the uniform doubling property, it is possible to work with the stronger notion of pmGH-convergence given by \cite[Definition 2.1]{GigliMondinoRajala15}, hence, we can apply the results from  \cite{GigliMondinoRajala15}. In particular, by \cite[Theorem 1.1]{GigliMondinoRajala15}, there exists a point $y' \in Y$ such that $(\mathbb{R}^n, \sfd_{\rm eu}, \mathcal{L}^n, 0) \in \mathrm{Tan}(Y, \rho, \mu, y')$ for some $n\le N(x')$.
    Finally, thanks to the metric version of Preiss' theorem \cite[Theorem 3.2]{GigliMondinoRajala15} (which states that tangents of tangents are tangents), we can conclude that, for $\mm$-a.e. $x'\in B_{\frac{r(x)}{10}}(x)$,
    \begin{align*}
        (\mathbb{R}^n, \sfd_{\rm eu}, \mathcal{L}^n, 0) \in \mathrm{Tan}(Y, \rho, \mu, y')\subseteq \mathrm{Tan}\left(B_{\frac{r(x)}{10}}(x), \sfd, \mm\res B_{\frac{r(x)}{10}}(x), x'\right) =\mathrm{Tan}(\X, \sfd, \mm, x').
    \end{align*}
    This concludes the proof.
\end{proof}
\section{Properties of the heat kernel on John domains}\label{sec:heat}

The goal of this section is to deduce both qualitative and quantitative properties of the kernel associated with the heat flow localized on suitable domains (namely, John domains) of metric measure spaces locally satisfying the doubling condition and Poincaré inequalities. The main result of the section will be Theorem \ref{th:heatonOmega}. Since we will invoke results from the literature on general Dirichlet forms, we start with some basic preliminaries about them (though we will only be interested in the form induced by a quadratic Cheeger energy).
\subsection{General Dirichlet forms}
We only consider the classical case of \emph{symmetric} Dirichlet forms in a $\sigma$-finite measure space $(\X, \mm)$, 
see for example \cite{FOT94} for a standard reference. { For simplicity, we omit from the notation the $\sigma$-algebra where $\mm$ is
defined, which will always be the Borel $\sigma$-algebra}.
\begin{definition}[Dirichlet form]
A \emph{(symmetric) Dirichlet form on $L^2(\X, \mm)$} is a pair $(\mathcal{E}, \mathcal{F})$, where $\mathcal{F}$ is a dense linear subspace of $L^2(\X, \mm)$ and $\mathcal{E}$ is a bilinear symmetric positive semidefinite form $\mathcal{E}: \mathcal{F} \times \mathcal{F} \to \R$ satisfying the following properties:
\begin{itemize}
    \item (Closedness) $\mathcal{F}$ is a Hilbert space endowed with the inner product $\mathcal{E}_1(f,g):=\mathcal{E}(f,g)+~\int fg \, \d\mm$;
    \item (Markovian property) for all $f \in \mathcal{F}$, $\eta: \R \to \R$ 1-Lipschitz with $\eta(0)=0$, setting $g:=\eta\circ f$, we have 
    $g\in \mathcal{F}$ and $\mathcal{E}(g,g) \le \mathcal{E}(f,f)$.
\end{itemize}
\end{definition}
\begin{definition}[Heat semigroup]
Let $(\X, \mm)$ be a $\sigma$-finite measure space. Given a Dirichlet form $(\mathcal{E}, \mathcal{F})$, we call \emph{heat semigroup} induced by $\mathcal{E}$ the family of operators $\mathbf{h}_t: L^2(\X, \mm) \to \mathcal{F}$, $t>0$, s.t. for every $f_0 \in L^2(\X, \mm)$ the family $f_t:=\mathbf{h}_t(f_0)$ represents the $L^2$-gradient flow starting from $f_0$ of the functional
$$
E(f):=\begin{cases}
    \frac12 \mathcal{E}(f,f), & \text{if } f\in \mathcal{F}, \\
    + \infty, &\text{if } f \in L^2(\X, \mm) \setminus \mathcal{F}.
\end{cases}$$
\end{definition}
Notice that the gradient flow of $E$ is well-defined as the closedness assumption guarantees that $E$ is lower semicontinuous (besides being convex). Equivalently (see e.g. \cite[Section 1.3]{FOT94}), $\mathbf{h}_t$ could be defined as the semigroup generated by the unique (self-adjoint negative semidefinite) linear operator $\Delta_{\mathcal{E}}: \mathcal{D}(\Delta_{\mathcal{E}}) \subset \mathcal{F}\to L^2(\X, \mm)$ s.t. $\mathcal{E}(f,g)=-\int \Delta_{\mathcal{E}}f \cdot g \,\d\mm$. This gives in particular that $f \mapsto \mathbf{h}_t(f)$ is linear (and thus we can use the notation $\mathbf{h}_tf$). As a consequence of the Markovian property, if $f \ge 0$ then $\mathbf{h}_t f \ge 0$ for all $t>0$.
\begin{definition}\label{def: Diri_regu_loca}
Let $(\X, \tau)$ be a locally compact, separable and Hausdorff space, and let $\mm$ be a positive Radon measure on $\X$ s.t. $\supp(\mm)=\X$. We say that a Dirichlet form $(\mathcal{E}, \mathcal{F})$ on $L^2(\X, \mm)$ is
\begin{itemize}
     \item \emph{regular} if $\mathcal{F} \cap C_c(\X)$ is dense both in $(\mathcal{F}, \sqrt{\mathcal{E}_1})$ and in $C_c(\X)$, endowed with the sup norm;
    \item \emph{strongly local} if $\mathcal{E}(f,g)=0$ for all $f,g \in \mathcal{F}$ s.t. $f$ is constant on a neighbourhood of $\supp(g)$.
\end{itemize}
\end{definition}
The Dirichlet form associated to a quadratic Cheeger energy is always strongly local by locality of weak gradients (see Theorem \ref{Calculus-weakg}). The regularity property is a bit more delicate, but it is at least satisfied if the ambient space is compact by the density in energy of Lipschitz functions (see e.g. \cite[Theorem 4.3]{AmbrosioGigliSavare11-3}).
\begin{remark}\label{rem:ht1=1}\rm
 If $\mathcal{E}$ is strongly local and $1 \in \mathcal{F}$ (in particular $\mm(\X) < \infty$), then it is easily seen that $\mathbf{h}_t 1=1$ for all $t>0$ (i.e. $\mathbf{h}_t$ is mass-preserving).  \fr 
\end{remark}
Let $(\mathcal{E}, \mathcal{F})$ be a symmetric regular Dirichlet form. Then, { under the assumptions of Definition~\ref{def: Diri_regu_loca}}, we can write
$$\mathcal{E}(f,g)=\int \d\Gamma(f,g),$$
where $\Gamma$ is measure-valued \emph{carré du champ}, i.e. a positive semidefinite symmetric bilinear form on $\mathcal{F}$ with values in the signed Radon measures on $\X$, the so-called energy measure (see e.g. \cite[Section 3.2]{FOT94}). The map $f \mapsto \Gamma(f,f)$ can be canonically extended to 
$$\mathcal{F}_{loc}:=\{f \in L^2_{loc}(\X, \mm): \forall K \subseteq \X \mbox{ compact } \exists\, g \in \mathcal{F} \mbox{ s.t. } f=g \, \,\mm\mbox{-a.e. in } K\}.$$
In the special case of quadratic Cheeger energies $\Gamma(f,g)$ can be represented by $L^1$ functions, with $\Gamma(f,f)=|\nabla f|^2$ for all $f \in H^{1,2}(\X)$. 
\begin{definition}\label{def:int-dist}
Let $(\X, \tau)$ be a locally compact and separable Hausdorff space, and let $\mm$ be a positive Radon measure on $\X$ s.t. $\supp(\mm)=\X$. Let $(\mathcal{E}, \mathcal{F})$ be a symmetric regular Dirichlet form. For all $x,y \in \X$, we define the \emph{intrinsic (pseudo)distance} induced by $\mathcal{E}$ by
$$\sfd_{\mathcal{E}}(x,y):=\sup \{f(x)-f(y): f \in \mathcal{F}_{loc} \cap C(\X), \, \Gamma(f,f) \le \mm \mbox{ on }\X\}.$$
\end{definition}
Notice that, even though $\sfd_\mathcal{E}$ is lower semicontinuous in $\X\times\X$, it might be degenerate (i.e. $\sfd_{\mathcal{E}}(x,y)=0$ for some $x \neq y$) or take the value $+\infty$. We will denote $B^{\mathcal{E}}_r(x):=\{y \in \X: \sfd_{\mathcal{E}}(x,y) < r \}$.
\begin{definition}
Let $(\X, \tau)$ be a locally compact, separable and Hausdorff topological space, and let $\varnothing \neq U \subset \X$ be open. We say that a symmetric regular Dirichlet form $(\mathcal{E}, \mathcal{F})$ is \emph{strongly regular} on $U$ if $\sfd_{\mathcal{E}}$ is a (finite-valued) distance on $U$ and it induces on $U$ the original topology $\tau $.
\end{definition}
\subsection{Heat kernel: definitions and properties}
We now recall some results about the existence of a continuous kernel for the heat semigroup of an abstract Dirichlet form, and pointwise upper Gaussian bounds on the kernel itself. We recall that our standing assumptions on the metric measure structure $\Xdm$ is that $(\X,\sfd)$ is a locally complete  separable metric space and $\mm$ is a nonnegative measure satisfying \eqref{eq:growth_cond}, in particular it is locally finite. According to the literature of Dirichlet forms, it is also standard to work with $(\X,\sfd)$ that is locally compact, and this is often implied by the fact that $\mm$ is locally doubling (see \cite{Sturm95II}). In our setting, we shall verify these assumptions by working on John domains of a PI space, which guarantee useful properties on the heat semigroup.
\begin{definition}[Heat kernel]\label{def:heat kernel}
Given a $\sigma$-finite measure space $(\X, \mm)$, and a symmetric Dirichlet form $(\mathcal{E}, \mathcal{F})$ on $L^2(\X, \mm)$, we say that a measurable function $p:(0, \infty) \times \X \times \X \to \R_+$ is a \emph{heat kernel} for $\mathcal{E}$ if, denoted by $(\mathbf{h}_t)_{t>0}$ the $L^2$ heat semigroup induced by $\mathcal{E}$, there exist a $\mm$-negligible subset $N \subseteq \X$ s.t. for all $s,t>0$ and $x,y \in X \setminus N$ we have
\begin{align}
 &\mathbf{h}_tf(x)=\int p_t(x,z)f(z) \, \d\mm(z) \qquad \forall f \in L^2(\X); \\
 &p_t(x,y)=p_t(y,x); \\
 &p(s+t,x,y)=\int p(t,x,z)p(s,z,y) \d\mm(z), \label{eq:semigroup_prop}
 \end{align}
where $p_t(x,y):=p(t,x,y)$.
\end{definition}
A natural question is whether the heat kernel of a Dirichlet form satisfies Gaussian decay estimates as in the Euclidean case. In the setting of general Dirichlet forms, a key contribution was given by the papers \cite{Sturm95II,Sturm96III}. In \cite{Sturm96III} it is shown that, if the doubling condition and a Poincaré inequality hold globally on $(\X, \sfd_\mathcal{E}, \mm)$, and under the assumption that the form $\mathcal{E}$ is strongly local and strongly regular on the whole $\X$, then $p$ admits a Hölder continuous representative, and it is possible to prove upper and lower Gaussian like bounds for $p_t(x,y)$, i.e. estimates of the form
\[\frac{1}{t^\beta}\exp{\left(-c_1\frac{\sfd_{\mathcal{E}}^2(x,y)}t+C_1t\right)}\lesssim p_t(x,y) \lesssim \frac{1}{t^\beta}\exp{\left(-c_2\frac{\sfd_{\mathcal{E}}^2(x,y)}t+C_2t\right)},\]
for constants $c_1, c_2, C_1, C_2>0, \beta >1$. However, when working with open domains $\Omega$ of a metric measure space (as it will be in our case), matching this framework would require a really strong assumption which makes $\overline{\Omega}$ a global PI space itself (for instance the fact that $\Omega$ is a uniform domain, see \cite{BjornSh07}).

Luckily, (uniform) Gaussian upper bounds require less structure on the space than double-sided bounds and can be achieved with a weaker condition on $(\X, \sfd_\mathcal{E}, \mm)$ than being globally a PI space. This was already clear from the paper \cite{Sturm95II}, where it was not required a Poincaré inequality, but only a Sobolev inequality (on all balls $\overline{B}^{\mathcal{E}}_r(x) \subseteq \X$). More recently, in \cite{Chen_et_al.21} it has been shown that the presence of a global Nash-type inequality, which can be easily deduced from a global Sobolev inequality, is by itself sufficient to infer (and actually characterize) the validity of upper Gaussian bounds. Hence, we have the following (see \cite[Theorem 1.2 and Corollary 1.3]{Chen_et_al.21} and also \cite[Theorem 2.4]{Sturm95II}).
\begin{theorem}[Heat kernel upper bounds]\label{th:upper_gauss}
 Let $(\X, \sfd)$ be a locally compact, separable metric space, let $\mm$ be a positive Radon measure with full support on $\X$ and let $(\mathcal{E}, \mathcal{F})$ be a symmetric, strongly local and regular Dirichlet form on $L^2(\X, \mm)$, with associated heat semigroup $\mathbf{h}_t$. Let $\delta \ge 0$ and $\beta>0$, then the following are equivalent.   
 \begin{enumerate}
     \item The following Nash-type inequality holds:
     \begin{equation}\label{eq:Nash_type}
     \|f\|_{L^2}^{2+2/\beta} \leq C(\mathcal{E}(f,f)+\delta\|f\|_{L^2}^2),\end{equation}
     for some $C>0$ and all $f \in \mathcal{F}$ with $\|f\|_{L^1} \leq 1$. 
     \item There exist a heat kernel $p$ for the form $\mathcal{E}$ and a $\mm$-negligible set $N \subseteq \X$ such that for any $\eps \in (0,1)$ there is a constant 
     $C_\eps >0$ satisfying 
     \begin{equation}\label{eq:upper_gauss}
     p_t(x,y) \le \frac{C_\eps}{t^\beta} \exp\left(-\frac{\sfd_\mathcal{E}^2(x,y)}{4(1+\eps)t}+\delta t\right),
     \qquad\forall t>0,\,\,\forall x,\,y\in\X\setminus N.
     \end{equation} 
 \end{enumerate}
\end{theorem}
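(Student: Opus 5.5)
The implication (2)$\Rightarrow$(1) is the easy direction, so I would do it first. Assume \eqref{eq:upper_gauss}. Dropping the Gaussian factor gives the ultracontractive bound $\|\mathbf{h}_t\|_{L^1\to L^\infty}\le C t^{-\beta}e^{\delta t}$, hence $\|\mathbf{h}_t f\|_{L^2}^2=\int f\,\mathbf{h}_{2t}f\,\d\mm\le C(2t)^{-\beta}e^{2\delta t}\|f\|_{L^1}^2$. For $f\in\mathcal F$, the spectral calculus gives
\[
\|f\|_{L^2}^2-\|\mathbf{h}_t f\|_{L^2}^2=2\int_0^t\mathcal E(\mathbf{h}_sf,\mathbf{h}_sf)\,\d s\le 2t\,\mathcal E(f,f).
\]
Combining the two estimates, for $\|f\|_{L^1}\le 1$ and $t\le 1/\delta$ (any $t$ if $\delta=0$) I would get $\|f\|_{L^2}^2\le C't^{-\beta}+2t\,\mathcal E(f,f)$. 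Optimizing in $t$ yields \eqref{eq:Nash_type}. In the regime where the optimal $t$ exceeds $1/\delta$, I would take $t=1/\delta$; this produces a term controlled by $\delta\|f\|_{L^2}^2$ after adjusting constants.

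For (1)$\Rightarrow$(2), the first step is an on-diagonal bound via the Carlen--Kusuoka--Stroock argument. Take $0\le f$ with $\|f\|_{L^1}=1$. The Markov property gives $\|\mathbf{h}_tf\|_{L^1}\le 1$. Set $u(t)=\|\mathbf{h}_tf\|_{L^2}^2$; then $u'=-2\mathcal E(\mathbf{h}_tf,\mathbf{h}_tf)\le -\frac 2C u^{1+1/\beta}+2\delta u$. Applying this to $e^{-2\delta t}u$ and integrating the resulting differential inequality gives $u(t)\le C''t^{-\beta}e^{2\delta t}$, that is, an $L^1\to L^2$ bound. By duality and the semigroup property this becomes an $L^1\to L^\infty$ bound. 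The Dunford--Pettis theorem then produces a measurable, symmetric kernel satisfying \eqref{eq:semigroup_prop}, with $p_t(x,x)\lesssim t^{-\beta}e^{\delta t}$ off a null set.

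The off-diagonal bound would come from Davies' perturbation method. Fix $\psi$ bounded, continuous, in $\mathcal F_{loc}$, with $\Gamma(\psi,\psi)\le\mm$, and $\alpha\in\R$, and consider the twisted semigroup $f\mapsto e^{\alpha\psi}\mathbf{h}_t(e^{-\alpha\psi}f)$. Strong locality supplies the Leibniz and chain rules for $\Gamma$. From these, the twisted quadratic form satisfies $\mathcal E(e^{\alpha\psi}g,e^{-\alpha\psi}g)\ge\mathcal E(g,g)-\alpha^2\|g\|_{L^2}^2$. The Nash inequality \eqref{eq:Nash_type} then holds for the twisted form with $\delta$ replaced by $\delta+\alpha^2$. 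Rerunning the Carlen--Kusuoka--Stroock step for the twisted semigroup (on $L^1$ and $L^2$ norms separately) gives
\[
p_t(x,y)\le C t^{-\beta}\exp\big(\alpha(\psi(x)-\psi(y))+c(\alpha^2+\delta)t\big).
\]
Taking the supremum over admissible $\psi$ recovers $\sfd_{\mathcal E}(x,y)$ by Definition \ref{def:int-dist}, and optimizing in $\alpha$ gives a Gaussian bound with some constant in the exponent.

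The main obstacle is the sharp exponent $\frac{1}{4(1+\eps)}$. The crude Davies scheme loses a constant factor because of the nonlinear $L^1$--$L^2$ iteration. To recover it, I would first try the integrated maximum principle of Grigor'yan: show that $t\mapsto\int(\mathbf{h}_tf)^2e^{\sfd_{\mathcal E}(\cdot,A)^2/(2(1+\eps)t)}\,\d\mm$ is nonincreasing, using strong locality and $\Gamma(\sfd_{\mathcal E}(\cdot,A))\le\mm$. I would then combine this with the on-diagonal bound via the semigroup property $p_t(x,y)=\int p_{t/2}(x,z)p_{t/2}(z,y)\,\d\mm(z)$ and Cauchy--Schwarz, splitting the weight between $x$ and $y$. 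The $\eps$-loss is absorbed into $C_\eps$ through $t^{-\beta}$ polynomial factors.
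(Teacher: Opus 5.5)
The paper does not prove this statement; it quotes it from \cite[Theorem 1.2 and Corollary 1.3]{Chen_et_al.21} (see also \cite[Theorem 2.4]{Sturm95II}). Two parts of your sketch are correct and standard. Your (2)$\Rightarrow$(1) works: when the optimal $t$ exceeds $1/\delta$ one simply has $\|f\|_{L^2}^{2/\beta}\lesssim\delta$, so the term $\delta\|f\|_{L^2}^2$ absorbs everything. Your Carlen--Kusuoka--Stroock on-diagonal bound $\|\mathbf{h}_t\|_{L^1\to L^\infty}\le Ct^{-\beta}e^{\delta t}$ is also fine.

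The genuine gap is the off-diagonal part of (1)$\Rightarrow$(2), which is exactly the nontrivial content of the cited result. Your Cauchy--Schwarz splitting gives
\[
p_t(x,y)\le \sqrt{E(x,t/2)\,E(y,t/2)}\;e^{-\frac{\sfd_{\mathcal E}^2(x,y)}{4(1+\eps)t}},\qquad E(x,s):=\int p_s(x,z)^2\,e^{\frac{\sfd_{\mathcal E}^2(x,z)}{2(1+\eps)s}}\,\d\mm(z).
\]
So everything rests on a bound $E(x,s)\le C_\eps s^{-\beta}e^{\delta s}$, and nothing in your sketch provides it. The monotonicity you propose does not help here. Applied to $s\mapsto p_s(x,\cdot)$, it only bounds $E(x,s)$ by $E(x,s')$ for $s'<s$. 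But $E(x,s')\ge p_{2s'}(x,x)$ is uncontrolled as $s'\downarrow 0$, because the initial datum is a Dirac mass.

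What is missing is Grigor'yan's lemma, which runs as follows. One first proves Gaussian decay of the tails $\int_{\{\sfd_{\mathcal E}(x,\cdot)\ge R\}}p_s(x,\cdot)^2\,\d\mm$. This is done by iterating the Davies--Gaffney estimate (which is what your monotonicity does give, for data supported in a set $A$) along decreasing sequences of times and radii. At each step one splits $p_{s'}(x,\cdot)$ into its restriction to a ball and its tail, and feeds in $\|p_{s'}(x,\cdot)\|_{L^2}^2=p_{2s'}(x,x)\le C(2s')^{-\beta}e^{2\delta s'}$. The polynomial regularity of $s^{-\beta}$ makes the resulting series summable. One then sums over annuli, at the cost of a slightly larger constant in the exponent. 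This is where $\eps$ and $C_\eps$ actually come from. Without this lemma, or an alternative such as a Phragm\'en--Lindel\"of argument in complex time or the generalized Davies method of the cited work, the sharp bound is not established.

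Two further points.

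First, the intermediate twisted Carlen--Kusuoka--Stroock step is wrong as described. The Nash inequality needs $L^1$ control, but $g_t=e^{\alpha\psi}\mathbf{h}_t(e^{-\alpha\psi}f)$ satisfies
\[
\frac{\d}{\d t}\int g_t\,\d\mm=\alpha^2\int g_t\,\d\Gamma(\psi,\psi)-\alpha\int \d\Gamma(\psi,g_t).
\]
The last term is formally $\alpha\int g_t\,\Delta\psi\,\d\mm$, and it is not bounded by $\alpha^2\|g_t\|_{L^1}$. For example, on $\mathbb H^n$ (where the Nash inequality holds), the radial drift makes $\|e^{\alpha\psi}\mathbf{h}_t e^{-\alpha\psi}\|_{L^1\to L^1}$ grow like $e^{(n-1)\alpha t}$ rather than $e^{c\alpha^2 t}$, for $\psi$ a truncated distance. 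The standard fix is an iteration over $L^{2p}$ norms, or Davies' logarithmic Sobolev route. You discard this step anyway, but note that it would also only give $e^{c\delta t}$. To keep exactly $e^{\delta t}$, run the argument for $e^{-\delta t}\mathbf{h}_t$.

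Second, you use $\Gamma(\sfd_{\mathcal E}(\cdot,A),\sfd_{\mathcal E}(\cdot,A))\le\mm$, which is not available under the stated hypotheses. As remarked after Definition \ref{def:int-dist}, $\sfd_{\mathcal E}$ may be degenerate or infinite. This property is known only when $\sfd_{\mathcal E}$ is a genuine distance inducing the topology. The argument must instead be run with the admissible $\psi$ of Definition \ref{def:int-dist}, using weights built from $|\psi-\psi(x)|$; these still satisfy the triangle inequality needed in the splitting. Take the supremum over $\psi$ at the end.
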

\begin{corollary}\label{cor:upper_gauss}
In the setting of Theorem \ref{th:upper_gauss}, assume that $\mm(\X)<\infty$ and that the following Sobolev inequality holds for some $Q>2$ and all $f \in \mathcal{F}:$  
\begin{equation}\label{eq:Sobolev_in}
\|f-(f)_\X\|_{L^{2^*}(\X)}^2 \leq C\cdot \mathcal{E}(f,f),
\end{equation}
where $C>0$, $(f)_\X:=\frac{1}{\mm(\X)}\int f \d\mm$ and $2^*:=\frac{2Q}{Q-2}$. Then the upper Gaussian estimate \eqref{eq:upper_gauss} holds for all $t>0$ and { all} $x, y \in \X \setminus N$ (with $\beta=\frac Q2$ and $\delta=\frac 1C\mm(\X)^{-\frac 2Q} $), for some $\mm$-negligible set $N$.
\end{corollary}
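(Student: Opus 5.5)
The plan is to derive a Nash-type inequality \eqref{eq:Nash_type} from the Sobolev inequality \eqref{eq:Sobolev_in}, with $\beta=Q/2$ and the stated $\delta$. We then invoke the implication (1)$\Rightarrow$(2) of Theorem \ref{th:upper_gauss}, which gives \eqref{eq:upper_gauss} for all $t>0$ and all $x,y$ outside an $\mm$-negligible set.

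\textbf{Step 1: a Sobolev inequality without the mean.} Fix $f\in\mathcal F$ with $\|f\|_{L^1}\le 1$ and write $f=(f-(f)_\X)+(f)_\X$. The triangle inequality in $L^{2^*}$ gives
\[
\|f\|_{L^{2^*}}\le \|f-(f)_\X\|_{L^{2^*}}+|(f)_\X|\,\mm(\X)^{1/2^*}.
\]
Since $|(f)_\X|\le \mm(\X)^{-1}\|f\|_{L^1}$, the second term is at most $\mm(\X)^{1/2^*-1}\|f\|_{L^1}$. It is cleaner, however, to control the constant part in $L^2$. By Hölder, $\|(f)_\X\|_{L^2}^2\le \mm(\X)^{-1}\|f\|_{L^1}^2$, and in addition $\|f-(f)_\X\|_{L^2}\le\|f\|_{L^2}$.

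\textbf{Step 2: interpolation.} Interpolate $L^2$ between $L^1$ and $L^{2^*}$: with $\frac12=\frac\theta1+\frac{1-\theta}{2^*}$ we get $\|g\|_{L^2}\le\|g\|_{L^1}^{\theta}\|g\|_{L^{2^*}}^{1-\theta}$. We apply this to $g=f-(f)_\X$, whose $L^1$ norm is at most $2$. Using \eqref{eq:Sobolev_in} and raising to the appropriate power then yields
\[
\|f-(f)_\X\|_{L^2}^{2+4/Q}\le C'\,\mathcal E(f,f).
\]
The exponent bookkeeping is the standard one: $2+2/\beta$ with $\beta=Q/2$.

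\textbf{Step 3: reinsert the mean.} We combine the previous bound with $\|f\|_{L^2}^2\le 2\|f-(f)_\X\|_{L^2}^2+2\mm(\X)^{-1}$. For the constant part we need
\[
\mm(\X)^{-1-2/Q}\lesssim \delta\,\|f\|_{L^2}^{2}.
\]
This follows since $\|f\|_{L^2}^2\ge \mm(\X)^{-1}\|f\|_{L^1}^2$; in the regime where $\|f\|_{L^2}^{2+4/Q}$ is dominated by the mean, we have $\|f\|_{L^2}^{4/Q}\lesssim \mm(\X)^{-2/Q}$. Hence
\[
\|f\|_{L^2}^{2+4/Q}\lesssim \mm(\X)^{-2/Q}\|f\|_{L^2}^2.
\]
Concretely, we split into two cases: $\|f-(f)_\X\|_{L^2}\ge |(f)_\X|\mm(\X)^{1/2}$, or the opposite. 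In the first case the energy term controls everything. In the second case $\|f\|_{L^2}^2\le 4\mm(\X)^{-1}$, so $\|f\|_{L^2}^{4/Q}\le 4^{2/Q}\mm(\X)^{-2/Q}$, and the $\delta\|f\|_{L^2}^2$ term controls everything. This gives \eqref{eq:Nash_type} with $\delta$ proportional to $\mm(\X)^{-2/Q}$. Adjusting the overall constant $C$ (and possibly enlarging $\delta$ by a harmless factor, absorbed in the statement's normalization $\delta=\frac1C\mm(\X)^{-2/Q}$ after renaming $C$) finishes this step.

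\textbf{Step 4: conclusion and main obstacle.} Theorem \ref{th:upper_gauss} then provides the heat kernel and the bound \eqref{eq:upper_gauss}. The main obstacle is the constant tracking in Step 3, namely matching exactly $\delta=\frac1C\mm(\X)^{-2/Q}$ with the same $C$ as in \eqref{eq:Sobolev_in}. I would first attempt the case split above. If the constants do not match exactly, I would accept a constant multiple of $\delta$, since \eqref{eq:upper_gauss} only changes by $C_\eps$ and a rescaled exponential factor.
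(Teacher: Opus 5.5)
Your proof is correct, but it handles the mean differently from the paper.

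\textbf{The paper's route.} The paper interpolates $f$ itself: $\|f\|_{L^2}^{2+4/Q}\le\|f\|_{L^{2^*}}^2\|f\|_{L^1}^{4/Q}\le\|f\|_{L^{2^*}}^2$. It then treats the mean inside the $L^{2^*}$ norm, using Cauchy--Schwarz to get $\|(f)_\X\|_{L^{2^*}}=|(f)_\X|\,\mm(\X)^{1/2^*}\le\|f\|_{L^2}\mm(\X)^{-1/Q}$. Squaring the triangle inequality gives at once
\[
\|f\|_{L^2}^{2+4/Q}\le 2C\,\mathcal E(f,f)+2\mm(\X)^{-2/Q}\|f\|_{L^2}^2=2C\Bigl(\mathcal E(f,f)+\tfrac1C\mm(\X)^{-2/Q}\|f\|_{L^2}^2\Bigr).
\]
This is \eqref{eq:Nash_type} with Nash constant $2C$ and exactly the stated $\delta$, with no case distinction.

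\textbf{Your route.} You interpolate the mean-free part $f-(f)_\X$, using $\|f-(f)_\X\|_{L^1}\le 2$. You then reinsert the mean, controlled through $\|f\|_{L^1}\le 1$, and this is what forces the two-case argument of Step~3. Your Step~1 was one inequality away from the paper's proof: if you bound the constant term by $\|f\|_{L^2}$ rather than by $\|f\|_{L^1}$, it already has the form $\delta\|f\|_{L^2}^2$. The paper's version is shorter and gives a better constant. Yours is equally valid.

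\textbf{Step~3 heuristic.} The sentence claiming $\mm(\X)^{-1-2/Q}\lesssim\delta\|f\|_{L^2}^2$ ``since $\|f\|_{L^2}^2\ge\mm(\X)^{-1}\|f\|_{L^1}^2$'' is not valid. The condition $\|f\|_{L^1}\le 1$ is only an upper bound, so it gives no lower bound on $\|f\|_{L^2}$. You can drop it, because the concrete case split that follows is correct and does not use it.

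\textbf{The ``main obstacle.''} This is not actually an obstacle, because the Nash constant in \eqref{eq:Nash_type} is free. Your two cases give
\[
\|f\|_{L^2}^{2+4/Q}\le 2^{2+8/Q}C\,\mathcal E(f,f)
\quad\text{and}\quad
\|f\|_{L^2}^{2+4/Q}\le 4^{2/Q}C\cdot\tfrac1C\mm(\X)^{-2/Q}\|f\|_{L^2}^2
\]
respectively. Hence \eqref{eq:Nash_type} holds with $\delta=\frac1C\mm(\X)^{-2/Q}$ exactly and Nash constant $2^{2+8/Q}C$, and you do not need to ``rename'' the Sobolev constant that fixes $\delta$. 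More generally, $A\,\mathcal E(f,f)+B\|f\|_{L^2}^2\le\max\{A,B/\delta\}\bigl(\mathcal E(f,f)+\delta\|f\|_{L^2}^2\bigr)$ for every $\delta>0$, so the value of $\delta$ is never an issue at the Nash level.

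\textbf{Your fallback.} Your fallback justification is not right as stated. Replacing $e^{\delta t}$ by $e^{c\delta t}$ with $c>1$ in \eqref{eq:upper_gauss} cannot be absorbed by enlarging $C_\eps$, since $e^{(c-1)\delta t}$ is unbounded in $t$. Any such adjustment has to be made in the Nash inequality, before invoking Theorem~\ref{th:upper_gauss}.
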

\begin{proof}
We deduce the Nash-type inequality \eqref{eq:Nash_type} just by combining \eqref{eq:Sobolev_in} with the interpolation inequality $\|f\|_{L^2} \leq \|f\|_{L^{2^*}}^{\frac{Q}{Q+2}}\|f\|_{L^1}^{\frac{2}{Q+2}}$ and the inequality $\|f\|_{L^{2^*}} \le \|f-(f)_\X\|_{L^{2^*}}+\|f\|_{L^2}\mm(\X)^{-\frac 1Q}$. 
\end{proof}
We next directly invoke the results of \cite{Sturm96III} to show the following local estimate about the regularity of the heat kernel.
\begin{theorem}\label{th:cont-kernel}
 Let $(\X, \tau)$ be a locally compact and separable Hausdorff space, let $\mm$ be a positive Radon measure with full support on $\X$ and let $(\mathcal{E}, \mathcal{F})$ be a symmetric, strongly local and regular Dirichlet form on $L^2(\X, \mm)$ which admits a heat kernel $p$. Furthermore, equip $\X$ with the distance $\sfd_{\mathcal{E}}$. Let $U \subseteq\X$ be an open set such that for every $x \in U$ there exist an open neighbourhood $V_x \ni x$ and $r_x>0$ such that, if we set $B^{\mathcal{E}}(x):=B_{r_x}^{\mathcal{E}}(x)$, then $B^{\mathcal{E}}(x) \subset V_x$ and
\begin{enumerate}[(i)]
\item $\mathcal{E}$ is strongly regular on $V_x$; \label{kernel-(i)}
\item $\mm$ is doubling on $B^{\mathcal{E}}(x)$;\label{kernel-(ii)}
\item the following generalized weak $(2,2)$-Poincaré inequality holds on $B^{\mathcal{E}}(x)$:  \label{kernel-(iii)}
 \begin{equation}\label{eq:gen-Poincare}
        \int_{B^{\mathcal{E}}_r(x_0)}|f-(f)_{B^{\mathcal{E}}_r(x_0)}|^2 \d\mm
        \leq C_P^2 r^2 \int_{B^{\mathcal{E}}_{\sigma r}(x_0)} \d\Gamma(f,f), \qquad \forall f \in \mathcal{F},
    \end{equation}
    for any $x_0\in B^{\mathcal{E}}(x)$ and $r>0$ such that $\{ y \in \X \colon \sfd_{\mathcal E}(x_0,y)\le \sigma r\} \subset B^{\mathcal{E}}(x)$, where $C_P=C_P(x)>0$ and $\sigma=\sigma(x)>1$.
\end{enumerate}
Then the heat kernel $p$ admits a { pointwise} representative which is locally Hölder continuous on $(0,\infty) \times~U \times U$
{ with respect to $\sfd_{\mathcal{E}}$}.
\end{theorem}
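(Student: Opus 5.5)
The plan is to reduce to the local parabolic Harnack / Hölder regularity theory of \cite{Sturm96III}, which is stated for strongly local regular Dirichlet forms on balls where doubling and Poincaré hold and the intrinsic distance is a genuine distance inducing the topology. Hölder continuity is a local property on $(0,\infty)\times U\times U$, so it suffices to fix $(x,y)\in U\times U$ and show that $p$ has a Hölder continuous representative on a set $(t_1,t_2)\times B'(x)\times B'(y)$ with $B'(x)\subset B^{\mathcal{E}}(x)$ a smaller concentric ball. Representatives on overlapping sets agree $\mm$-a.e.\ and are continuous, and $\mm$ has full support, so they glue. The cover by countably many such sets follows from separability and local compactness.

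First, fix $y$ and $s>0$ and view $u(t,z):=p_{t+s}(z,y)$. By the semigroup property \eqref{eq:semigroup_prop}, $u(t,\cdot)=\mathbf{h}_t p_s(\cdot,y)$. For a.e.\ $y$ we have $p_s(\cdot,y)\in L^2$; this follows because $p_{s}(y,y)=\int p_{s/2}(y,z)^2\,\d\mm(z)$ is finite a.e., which can be checked by integrating against $\mathbf{1}_K$ for compact $K$ and using local finiteness. Hence $u$ is a weak solution of the heat equation $\partial_t u=\Delta_{\mathcal{E}}u$ on $(0,\infty)\times\X$, and in particular on the cylinder $(0,\infty)\times B^{\mathcal{E}}(x)$. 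On this cylinder, hypotheses (i)–(iii) supply exactly the local assumptions of \cite{Sturm96III}: strong regularity on $V_x\supset B^{\mathcal{E}}(x)$ guarantees that intrinsic balls are relatively compact and that cut-off functions of $\sfd_{\mathcal{E}}$ lie in $\mathcal{F}_{loc}$ with $\Gamma\le\mm$. Together with doubling and the weak Poincaré inequality \eqref{eq:gen-Poincare}, the Moser iteration then yields a local parabolic Harnack inequality on subcylinders. The standard consequence is Hölder continuity of local weak solutions in $(t,z)$ with a quantitative estimate
\[
|u(t,z)-u(t',z')|\le C\Big(\frac{\sqrt{|t-t'|}+\sfd_{\mathcal{E}}(z,z')}{R}\Big)^{\alpha}\sup_{Q}|u|,
\]
where $Q$ is a slightly larger cylinder. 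To control $\sup_Q|u|$, use the local $L^\infty$–$L^2$ mean value inequality, again from Moser iteration, which bounds it by an $L^2$ norm of $u$ on a larger cylinder. That norm is at most $\|p_s(\cdot,y)\|_{L^2}$ by $L^2$-contractivity.

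Second, I upgrade from separate to joint continuity in $(t,z,y)$. By symmetry, the same argument applies in the $y$ variable. To get joint regularity, I would write $p_{t}(z,y)=\int p_{t/2}(z,w)p_{t/2}(w,y)\,\d\mm(w)$ and use the map $y\mapsto p_{t/2}(\cdot,y)\in L^2$. Applying the Hölder estimate above to $w\mapsto p_{t/2}(w,\cdot)$ with $L^2$ data, this map is Hölder continuous into $L^2$ near $y$. Combined with uniform local sup bounds, this gives Hölder continuity in all three variables. The bounds are uniform for $t$ in compact subintervals of $(0,\infty)$ and points in the smaller balls.

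The main obstacle I expect is the localization: the results of \cite{Sturm96III} are formulated with global doubling and Poincaré hypotheses. I need to check that the Moser/Harnack machinery only uses these on the ball where the solution lives, which requires the factor $\sigma$ in \eqref{eq:gen-Poincare} and the condition $\{\sfd_{\mathcal{E}}(x_0,\cdot)\le\sigma r\}\subset B^{\mathcal{E}}(x)$ to be handled by shrinking radii. I also need to check that intrinsic balls with cut-offs behave well, which is where strong regularity on $V_x$ enters. I would cite the local versions of these results in \cite{Sturm96III}, which are formulated on balls, rather than reprove them.
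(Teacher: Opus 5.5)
Your proposal is correct, and its core matches the paper's argument. Both reduce to the local Hölder estimate of \cite[Proposition 3.1]{Sturm96III}, which comes from the local parabolic Harnack inequality \cite[Theorem 3.5]{Sturm96III}, applied to $u=\mathbf{h}_t f$ with $f=p_s(\cdot,y)\in L^2$. Both also treat the localization of doubling, Poincaré (with $\sigma r$ in place of $2r$) and strong regularity on $V_x$ as harmless after shrinking $r_x$; shrinking is also what makes $\overline{B}^{\mathcal E}(x)$ compact, as the paper notes.

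The difference is in the passage from separate to joint regularity. The paper applies the estimate to $p(\cdot,y,\cdot)$ and to $p(\cdot,\cdot,z)$ separately. It uses Harnack both to bound the supremum and to fix a canonical pointwise representative on $(0,\infty)\times U\times U$, and defers details to \cite{Lierl18,BMGK12}. You instead factor $p_t(z,y)=\langle k_z,k_y\rangle_{L^2}$ with $k_y:=p_{t/2}(y,\cdot)$. This builds the continuous representative directly and makes joint continuity in $(z,y)$ automatic. The price is that you need Hölder continuity of $y\mapsto k_y$ into $L^2$, which your sketch asserts rather than derives.

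The clean justification is by duality. For every $g\in L^2$, the continuous version of $\mathbf{h}_{t/2}g$ is Hölder on the smaller ball with constant $\lesssim\|g\|_{L^2}$, by the mean value inequality plus $L^2$-contractivity. Riesz representation of these point evaluations then gives $k_w\in L^2$ for every $w$, with
\[
\|k_w-k_{w'}\|_{L^2}\lesssim\sfd_{\mathcal E}(w,w')^\alpha,\qquad \|k_w\|_{L^2}\lesssim 1,
\]
and $k_w=p_{t/2}(w,\cdot)$ for a.e.\ $w$.

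This same duality step supplies the uniform local bound on $p_{2s}(y,y)=\|p_s(\cdot,y)\|_{L^2}^2$ that your uniform constants rely on. Your argument by integrating over a compact $K$ does not provide it, because local finiteness of $\mm$ alone does not bound $\int_K p_s(y,y)\,\d\mm$. Mere a.e.\ finiteness of $p_s(y,y)$ is already built into Definition~\ref{def:heat kernel}.
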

\begin{proof}
The key estimate is given by \cite[Proposition 3.1]{Sturm96III}, which (note that, up to decreasing the radius, we can assume that $\overline{B}^\mathcal{E}(x)$ is compact, recall that $\sfd_{\mathcal{E}}$ is lower semicontinuous) yields that every $u:(0,\infty) \times \X \to \R$ of the form $u(t, \cdot)=\mathbf{h}_tf$ for some $f \in L^2(\X)$ admits a representative which satisfies the following: for all $x \in U$ there exist $C_H(x)>0$ and $0<\alpha(x)<1$ s.t. for all $t_0>0$
\begin{equation}\label{eq:Holder}
|u(t_1,y_1)-u(t_2,y_2)| \le C_H(x) \, \sup_{(t_0-r_x^2, t_0+r_x^2) \times B^{\mathcal{E}}(x)} |u| \, (|t_1-t_2|^{1/2}+\sfd_{\mathcal{E}}(y_1,y_2))^{\alpha(x)},
\end{equation}
for all $\displaystyle (t_1,y_1), (t_2,y_2) \in \left(t_0-\frac{r_x^2}8, t_0+\frac{r_x^2}8\right)\times B^\mathcal{E}_{r_x/2}(x)$. The estimate above follows from the fact that the doubling and Poincaré assumptions allow to prove a local Harnack inequality for $u$, see \cite[Theorem 3.5]{Sturm96III}. The basic idea is now to apply \eqref{eq:Holder} with $u=p(\cdot, y, \cdot)$ and $u=p(\cdot, \cdot, z)$, with $y$, $z$ close to $x_1$, $x_2 \in U$ respectively, and to use the Harnack inequality to control the term $ \sup_{(t_0-r_x^2, t_0+r_x^2) \times B^{\mathcal{E}}(x)} |u|$, after we have chosen a canonical representative for $p$ in $(0,\infty) \times U \times U$ (which requires again the Harnack inequality). For the details, we refer to \cite[Proposition 6.3]{Lierl18} or to \cite[Section 4.3.3]{BMGK12}.
\end{proof}
\begin{remark}\rm
Note that the setting of Theorem \ref{th:cont-kernel} differs from that in \cite{Sturm96III} only for two details: the fact that the radius $2r$ is replaced by $\sigma r$ in \eqref{eq:gen-Poincare} (as it is quite standard in the theory of PI spaces, see e.g. \cite{HK00}), which only affects the value of the local constants, and the fact that $\mathcal{E}$ is required to be strongly regular on $V_x$ only, which makes no difference due to the local nature of the results \cite[Proposition 3.1 and Theorem 3.5]{Sturm96III}.\fr 
\end{remark}
Now, in the special case where $\mathcal{E}$ is induced by a quadratic Cheeger energy (under suitable doubling and Poincaré assumptions), we are essentially able to replace the intrinsic distance $\sfd_{\mathcal{E}}$ with the original distance $\sfd$ of the space in the statements of Theorem \ref{th:upper_gauss} and Theorem \ref{th:cont-kernel}.
\begin{lemma}[Local Lipschitz equivalence of $\sfd$ and $\sfd_{\mathcal E}$]\label{lem:d-dE}
Let $(\X, \sfd, \mm)$ be an infinitesimally Hilbertian and compact metric measure space s.t. $\supp(\mm)=\X$, and let $\mathcal{E}$ be the Dirichlet form associated to the Cheeger energy on $\X$. Then $ \sfd_{\mathcal{E}} \ge \sfd$ on $\X \times \X$. \\
Moreover, let $B \subseteq\X$ be an open ball s.t. $\mm$ is doubling on $B$ and the weak $(1,2)$-Poincaré inequality holds on $B$. Then there exists a constant $C>0$ s.t. $\sfd_{\mathcal{E}} \le C\sfd$ on $\frac19 B \times \frac19 B$, i.e. $\d$ and $\sfd_{\mathcal{E}}$ are Lipschitz-equivalent on $\frac19 B$.
\end{lemma}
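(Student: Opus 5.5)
For the first claim, $\sfd_{\mathcal E}\ge\sfd$, I will use the definition of the intrinsic distance with the test functions $f=\sfd(\cdot,y)$ for $y\in\X$. Since $\X$ is compact, $f$ is Lipschitz and bounded, so $f\in H^{1,2}(\X)\cap C(\X)\subseteq\mathcal F_{loc}\cap C(\X)$. Moreover $\Gamma(f,f)=|\nabla f|^2\le \lip(f)^2\le 1$ $\mm$-a.e., hence $\Gamma(f,f)\le\mm$ as measures. Plugging in gives
\[
\sfd_{\mathcal E}(x,y)\ge f(x)-f(y)=\sfd(x,y).
\]
This settles the first part.

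For the reverse bound on $\frac19B$, I would fix $x,y\in\frac19 B$ and an admissible $f\in\mathcal F_{loc}\cap C(\X)$ with $|\nabla f|^2=\Gamma(f,f)/\mm\le 1$ $\mm$-a.e. The goal is $|f(x)-f(y)|\le C\sfd(x,y)$ with $C$ independent of $f$. The tool is the standard telescoping argument for PI spaces: doubling on $B$ plus the weak $(1,2)$-Poincaré inequality give a pointwise estimate for Lebesgue points $z,w$ of $f$. Set $\rho=\sfd(z,w)$ and use the chain of balls $B_{2^{-k}\rho}(z)$, $k\ge0$, together with a ball $B_{2\rho}(z)\supseteq B_\rho(w)$ and the corresponding chain at $w$. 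Applying doubling and Poincaré on each ball of the chain yields
\[
|f(z)-f(w)|\le C\sum_{k\ge -1} 2^{-k}\rho\Big(\fint_{B_{\sigma 2^{-k}\rho}}|\nabla f|^2\,\d\mm\Big)^{1/2}\le C'\rho .
\]
Because $f$ is continuous, the estimate passes from Lebesgue points to all points. The factor $\frac19$ is chosen so that every ball used in the chain, enlarged by the Poincaré dilation factor $2$ and once more for doubling, stays inside $B$ whenever $z,w\in\frac19B$. Concretely, $\rho\le\frac29 r_B$ and radii up to about $4\rho$ around points at distance $\frac19 r_B$ from the center remain inside $B$. I will check this bookkeeping explicitly. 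Taking the supremum over admissible $f$ gives $\sfd_{\mathcal E}\le C'\sfd$.

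The main obstacle is a technical point. Poincaré is stated for Lipschitz $u$ with $g=\lip(u)$, while I need it for $f\in\mathcal F_{loc}$ with $g=|\nabla f|$. By the remark after the definition of Poincaré (energy density of Lipschitz functions on complete spaces, plus truncation), \eqref{eq:weak_P} extends to $H^{1,2}$ functions with $g=|\nabla u|$. Since $\X$ is compact, $\mathcal F_{loc}=\mathcal F=H^{1,2}(\X)$. I also need to identify $\Gamma(f,f)\le\mm$ with $|\nabla f|\le1$ a.e., which follows because $\Gamma(f,f)=|\nabla f|^2\mm$ for quadratic Cheeger energies, as recalled earlier. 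With these two identifications the chain argument applies verbatim.
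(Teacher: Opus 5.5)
Your proposal is correct and follows essentially the same route as the paper. The lower bound comes from testing the definition of $\sfd_{\mathcal E}$ with $\sfd(\cdot,y)$. The upper bound on $\frac19 B$ comes from the telescoping chain of balls $B_{2^{-i}\sfd(x,y)}(x)$ together with the comparison through $B_{2\sfd(x,y)}(x)$: this uses doubling and the Poincar\'e inequality extended to $|\nabla f|$, and then continuity of $f$ to pass to all points.

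The only cosmetic difference is the order of steps. The paper first proves the general Lusin--Lipschitz estimate with the maximal function $M_{4\sfd(x,y)}|\nabla f|^2$ and then specializes to $|\nabla f|\le 1$, whereas you insert the bound $|\nabla f|\le 1$ directly into the chain.
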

\begin{proof}
Since $\Gamma(f,f)=|\nabla f|^2$, the inequality $\sfd_{\mathcal{E}} \ge \d$ simply follows from testing Definition \ref{def:int-dist} with $f=\sfd(\cdot, y)$. The converse inequality (on $\frac19 B$) is instead a consequence of the following Lusin-Lipschitz estimate. There exists $C>0$ such that, for all $f \in W^{1,2}(B)$
\begin{equation}\label{eq:Lusin-Lip}
|f(x)-f(y)| \le \frac{C}{2}\sfd(x,y)\left( \sqrt{M_{4\sfd(x,y)} |\nabla f|^2(x)}+\sqrt{M_{4\sfd(x,y)} |\nabla f|^2(y)} \right),
\end{equation}
for all $x,y \in (\frac19 B) \setminus N$ (for some $\mm$-negligible set $N$), where $M_Rg(x):=\sup_{0<r<R} \fint_{B_r(x)} |g| \, \d\mm$. Indeed, if \eqref{eq:Lusin-Lip} holds, then for every $f \in W^{1,2}_{loc}(\X) \cap C(\X)$ s.t. $|\nabla f| \le 1$ $\mm$-a.e. we have
$$|f(x)-f(y)| \le C\sfd(x,y),$$
for $x,\,y \in (\frac19 B)\setminus N$, and then for all $x,y \in \frac19 B$ by continuity. The validity of \eqref{eq:Lusin-Lip} follows by the same lines of the proof of \cite[Theorem 3.2]{HK00}. Namely, one fixes $x, y$ which are Lebesgue points for $f$ (recall the doubling assumption) and then, by applying the Poincaré inequality on balls of the form $B_i(x)=B_{2^{-i}\sfd(x,y)}(x)$, $i \in \N\cup \{ 0\}$, one infers
$$
 |f(x)-f_{B_0(x)}| \le C_PC_D\sum_{i=0}^\infty 2^{-i}\sfd(x,y)\left(\fint_{2B_i(x)} |\nabla f|^2 \d\mm\right)^{1/2} \le 2C_PC_D \sfd(x,y)(M_{2\sfd(x,y)} |\nabla f|^2(x))^{1/2},
$$
and the analogous estimate for $|f(y)-f_{B_0(y)}|$, with $C_P>0$ as in \eqref{eq:weak_P} and $C_D>0$ as in \eqref{eq:locally_doub}. Moreover (notice that $4\overline{B}_0(x) \subset B$ for all $x,y \in \frac19 B$),
\begin{equation*}
\begin{split}
|f_{B_0(x)}-f_{B_0(y)}| &\le |f_{B_0(x)}-f_{2B_0(x)}|+|f_{B_0(y)}-f_{2B_0(x)}| \le 4C_PC_D\sfd(x,y)\left(\fint_{4B_0(x)} |\nabla f|^2 \d\mm\right)^{1/2} \\
&\le 4C_PC_D\sfd(x,y)(M_{4\sfd(x,y)} |\nabla f|^2(x))^{1/2},
\end{split}
\end{equation*}
which completes the proof of \eqref{eq:Lusin-Lip}.
\end{proof}
\begin{proposition}\label{prop:cont-kernel-Cheeger}
Let $(\X, \sfd, \mm)$ be an infinitesimally Hilbertian and compact metric measure space s.t. $\supp(\mm)=\X$. Let $\mathcal{E}$ be the Dirichlet form associated to the Cheeger energy on $\X$, let $\mathbf{h}_t$ be the associated $L^2$-heat flow, and assume that it admits a heat kernel $p$. Let $U \subseteq \X$ be an open set s.t. for every $x \in U$ there exists $r_x>0$ such that $\mm$ is doubling on $B_{r_x}(x)$ and the weak $(2,2)$-Poincaré inequality holds on $B_{r_x}(x)$. Then the heat kernel $p$ admits a representative which is locally Hölder continuous on $(0,\infty) \times U \times U$.    
\end{proposition}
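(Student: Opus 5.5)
The plan is to reduce the statement to Theorem \ref{th:cont-kernel}, with $\tau$ the topology induced by $\sfd$. Lemma \ref{lem:d-dE} lets us pass between $\sfd$ and $\sfd_{\mathcal E}$ locally. First we check the standing hypotheses. $\X$ is compact, so it is locally compact and separable, and $\mm$ is a finite Radon measure with full support. The form $\mathcal E$ coming from the quadratic Cheeger energy is symmetric and strongly local by locality of weak gradients (Theorem \ref{Calculus-weakg}). It is regular because Lipschitz functions are dense in energy on a compact space. The heat kernel exists by assumption.

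Now fix $x\in U$ and let $B=B_{r_x}(x)$. The weak $(2,2)$-Poincaré inequality implies the weak $(1,2)$ one on $B$ by Hölder's inequality. So Lemma \ref{lem:d-dE} gives a constant $C\ge1$ with $\sfd\le \sfd_{\mathcal E}\le C\sfd$ on $V_x\coloneqq\frac19 B\times\frac19 B$. This yields strong regularity on $V_x$. Indeed, $\sfd_{\mathcal E}$ is finite there and is a genuine distance since $\sfd_{\mathcal E}\ge\sfd$. It also induces the same topology, being bi-Lipschitz to $\sfd$. This is (\ref{kernel-(i)}).

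Next, choose $\rho>0$ small, say with $\rho\le r_x/(100\,C\sigma)$ for some $\sigma$ to be fixed, and set $B^{\mathcal E}(x)\coloneqq B^{\mathcal E}_\rho(x)$. By the comparison, $B_{\rho/C}(x)\subseteq B^{\mathcal E}_\rho(x)\subseteq B_\rho(x)\subset \frac19 B$.

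The main step is to transfer the doubling and Poincaré properties from $\sfd$-balls to $\sfd_{\mathcal E}$-balls, using that $\sfd$-balls and $\sfd_{\mathcal E}$-balls are nested up to the factor $C$. For doubling (\ref{kernel-(ii)}), take $x_0\in B^{\mathcal E}(x)$. Then
\[
\mm(B^{\mathcal E}_{2s}(x_0))\le \mm(B_{2s}(x_0))\le C_D^{k}\,\mm(B_{s/C}(x_0))\le C_D^k\,\mm(B^{\mathcal E}_s(x_0)),
\]
with $k\sim\log_2(2C)$. All the $\sfd$-balls involved stay inside $B$, so the doubling constant of $B$ applies.

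For Poincaré (\ref{kernel-(iii)}), we use that the mean minimizes the $L^2$ deviation over constants, so the average can be swapped for a larger ball. Doubling then compares measures, giving
\[
\int_{B^{\mathcal E}_s(x_0)}|f-(f)_{B^{\mathcal E}_s(x_0)}|^2\,\d\mm\le\int_{B_s(x_0)}|f-(f)_{B_s(x_0)}|^2\,\d\mm\le C_P^2 s^2\int_{B_{2s}(x_0)}|\nabla f|^2\,\d\mm .
\]
Since $B_{2s}(x_0)\subseteq B^{\mathcal E}_{2Cs}(x_0)$, this is \eqref{eq:gen-Poincare} with $\sigma=2C$ and $\Gamma(f,f)=|\nabla f|^2\mm$. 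We also need the Poincaré inequality for all $f\in\mathcal F=H^{1,2}(\X)$, not only Lipschitz $f$. This follows from the remark after the definition of Poincaré inequalities, via energy density of Lipschitz functions. We must also check that the condition $\{\sfd_{\mathcal E}(x_0,\cdot)\le\sigma s\}\subset B^{\mathcal E}(x)$ forces $\overline B_{4s}(x_0)\subseteq B$. This holds by the choice of $\rho$, since such sets lie in $B_{2\rho}(x)$.

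Having verified (\ref{kernel-(i)})--(\ref{kernel-(iii)}), Theorem \ref{th:cont-kernel} gives a representative of $p$ that is locally Hölder on $(0,\infty)\times U\times U$ with respect to $\sfd_{\mathcal E}$. Local bi-Lipschitz equivalence with $\sfd$ converts this into local Hölder continuity with respect to $\sfd$. The main obstacle I expect is the bookkeeping of radii, making sure every ball used in the doubling and Poincaré transfers stays inside the region where Lemma \ref{lem:d-dE} and the hypotheses hold. A secondary point is justifying regularity of $\mathcal E$ and extending Poincaré from Lipschitz to Sobolev functions.
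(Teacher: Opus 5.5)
Your proposal is correct and follows essentially the same route as the paper. Both verify the hypotheses of Theorem \ref{th:cont-kernel}: regularity comes from compactness, and the local comparison $\sfd\le\sfd_{\mathcal E}\le C\sfd$ of Lemma \ref{lem:d-dE} gives strong regularity near $x$ and transfers doubling and the Poincaré inequality from $\sfd$-balls to $\sfd_{\mathcal E}$-balls. Apart from a notational slip ($V_x$ should be the open set $\frac19 B$, not the product $\frac19 B\times\frac19 B$), your explicit radius bookkeeping is more detailed than the paper's ``follows easily''.
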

\begin{proof}
We check the assumptions of Theorem \ref{th:cont-kernel}. First, $\mathcal{E}$ is regular as $\X$ is compact. Thanks to Lemma \ref{lem:d-dE}, we have $\sfd_{\mathcal{E}} \ge \d$, and for every $x \in U$ we can find a radius $r_x>0$ such that, if we set $B(x):=B_{r_x}(x)$, then $\overline{B}(x)$ is compact, $\mm$ satisfies the doubling property and the weak $(2,2)$-Poincaré inequality on $B(x)$ and $\sfd_{\mathcal{E}} \le C(x)\sfd$ on $B(x) \times B(x)$ for some $C(x)>0$. By combining these properties the validity of \eqref{kernel-(ii)} and \eqref{kernel-(iii)} in the ball $B^\mathcal{E}_{r_x}(x) \subseteq B(x)$ follows easily. Finally, the equivalence of $\sfd$ and $\sfd_{\mathcal{E}}$ on $B(x)$ also proves \eqref{kernel-(i)} with $V_x=B(x)$.
\end{proof}
\subsection{Upper Gaussian estimates on John domains}
The goal is to apply Corollary \ref{cor:upper_gauss} and Proposition \ref{prop:cont-kernel-Cheeger} on balls arising from local curvature dimension conditions. To this aim, we shall restrict the metric measure structure $(\X, \sfd, \mm)$ to closed balls and check the validity of a global Sobolev inequality on the same balls. To this aim, a crucial property to point out is that, metric balls in geodesic spaces have sufficiently nice connectivity properties. Namely, they are \emph{John domains} according to the following notion.
\begin{definition}[John domain]
Let $(\X, \sfd)$ be a metric space and let $\varnothing\neq \Omega\subseteq \X$ be open.
 We say that $\Omega$ is a \emph{John domain} if there exist a point $x_0 \in \Omega$ and a constant $c>0$ s.t. for every $x \in \Omega$ there exists a curve $\gamma:[0,L] \to \Omega$ (parametrized by arclength) s.t. $\gamma_0=x$, $\gamma(L)=x_0$ and
\[\sfd(\gamma_t, \Omega^c) \geq ct \qquad \forall t \in [0,L],\]
where the length $L$ depends on $x$.
\end{definition}
\begin{remark}\label{rem:balls_john}\rm
If $(\X, \sfd)$ is geodesic, then any ball $\Omega=B_r(y_0) \subset \X$ is trivially a John domain with $x_0=y_0$. Indeed, if we pick a geodesic $\gamma$ from $x \in \Omega$ to the center $y_0$ (parametrized by arclength) then
\[\sfd(\gamma_t, \Omega^c) \ge r-\sfd(\gamma_t, y_0)=r-(\sfd(x, y_0)-t) > t.\]
Note that it is actually enough to have geodesics from $y_0$ to any point in $B_r(y_0)$.
Actually in the locally compact case it suffices that $(\X, \sfd)$ is a length space, see \cite[Corollary 9.5]{HK00} (then $c>0$ is a universal constant valid for all balls). \fr  
\end{remark}
We report from \cite[Theorem 9.7]{HK00} a useful result for our goals.
\begin{theorem}[Sobolev inequality on John domains]\label{th:Sob_on_john}
Let $(\X, \sfd)$ be a metric space and let $\varnothing \neq \Omega\subseteq \X$ be a bounded John domain. Let $\mm$ be a Borel measure on $\X$ which satisfies the doubling property
\[
    \mm(B_{2r}(x)) \le 2^Q \mm(B_r(x)),
\]
for some $Q>0$,  all $x \in \Omega$ and $r \le 5\diam(\Omega)$. Assume that the pair $(u,g)$ satisfies the weak $(1,p)$-Poincaré inequality on $\Omega$ (with constant $C_P$), as well as the pair $(v_a^b, g\chi_{\{a<v \leq b \}})$ for all $0<a<b<\infty$, $c \in \R$ and $\eps \in \{-1,1\}$, where $v:=\eps(u-c)$ and $v_a^b:=\max\{0 ,\min\{(v-a),(b-a)\}\}$. Then, if $1 \leq p<Q$
\begin{equation}\label{eq:Sob_u_g}
\|u-u_\Omega\|_{L^{p^*}(\Omega)} \leq C\, \diam(\Omega)\,\|g\|_{L^p(\Omega)},
\end{equation}
where $p^*=\frac{Q p}{Q-p}$ and $C=C(Q, C_P)$.
\end{theorem}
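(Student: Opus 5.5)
The plan is the classical chaining argument of Hajłasz--Koskela for John domains, in the spirit of the proof of Lemma \ref{lem:d-dE}. The first reduction is to weak-type estimates followed by truncation: prove a weak-type inequality
\[
\sup_{\lambda>0}\lambda\,\mm(\{|u-u_\Omega|>\lambda\})^{1/p^*}\le C\diam(\Omega)\|g\|_{L^p(\Omega)},
\]
and then upgrade it to the strong estimate \eqref{eq:Sob_u_g} via the hypothesis on the truncations $v_a^b$.

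For the upgrade, choose $c$ as a median of $u$ on $\Omega$ and handle $\eps=\pm1$ separately, so that it suffices to control $v=(u-c)^+$. For each $j\in\mathbb Z$, apply the weak estimate to $v_{2^j}^{2^{j+1}}$ with upper gradient $g\chi_{\{2^j<v\le2^{j+1}\}}$. The truncation vanishes on a set of measure at least $\mm(\Omega)/2$, so the mean can be removed at the cost of a constant. This gives
\[
2^{jp^*}\mm(\{v>2^{j+1}\})\lesssim \Big(\diam(\Omega)^p\int_{\{2^j<v\le 2^{j+1}\}}g^p\,\d\mm\Big)^{p^*/p}.
\]
Summing over $j$ and using $p^*/p\ge1$, so that $\sum a_j^{p^*/p}\le(\sum a_j)^{p^*/p}$, yields $\|v\|_{L^{p^*}}\lesssim \diam(\Omega)\|g\|_{L^p}$. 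Finally, replace $c$ by $u_\Omega$ using $\|u_\Omega-c\|\le$ the same bound.

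The core step is the weak estimate. Fix a Lebesgue point $x\in\Omega$ and the John curve $\gamma$ from $x$ to $x_0$. Along $\gamma$ choose a chain of balls $B_i=B_{r_i}(z_i)$ with $r_i\approx c\,\sfd(z_i,x)/C$. The John condition gives $2B_i\subset\Omega$. Consecutive balls overlap substantially, the radii decay geometrically toward $x$, and the chain ends at a fixed central ball $B_{x_0}$ of radius comparable to $\diam(\Omega)$; the tail near $x$ consists of concentric balls $B_{2^{-k}r}(x)$.

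Poincaré plus doubling give $|u_{B_i}-u_{B_{i+1}}|\lesssim r_i\big(\fint_{2B_i}g^p\big)^{1/p}$. Telescoping then yields
\[
|u(x)-u_{B_{x_0}}|\lesssim \sum_i r_i\Big(\fint_{2B_i}g^p\Big)^{1/p}.
\]
The John condition also implies $x\in CB_i$ for each $i$, and for each radius scale there are boundedly many balls. Hence the right-hand side is dominated by a fractional-type maximal quantity, i.e. a truncated Riesz potential $\sum_i r_i\,\mm(B_i)^{-1/p}\|g\|_{L^p(2B_i)}$. Using the doubling lower bound $\mm(B_r(y))\gtrsim (r/\diam\Omega)^Q\mm(\Omega)$, which is valid for $r\le 5\diam\Omega$ and centers in $\Omega$, and splitting the sum at a threshold radius $\rho$ chosen in terms of $\lambda$, the standard Hedberg argument together with a $5r$-covering lemma produces the weak $L^{p^*}$ bound. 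Averaging over $x$ absorbs $u_{B_{x_0}}$ into $u_\Omega$.

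The main obstacle I expect is the geometric bookkeeping of the chain. Each chain ball must have its doubled ball inside $\Omega$, so that the Poincaré inequality applies; the balls must contain $x$ after a fixed dilation; and boundedly many balls must occur per dyadic scale. All of this has to follow from the John constant $c$ alone, so that the final constant depends only on $Q$, $C_P$ and $c$. I would first try taking $z_i=\gamma(t_i)$ with $t_i=2^{-i}L$ and $r_i=c\,t_i/4$. Since $\sfd(\gamma(t_i),x)\le t_i$, the dilated ball $(8/c)B_i$ contains $x$, and consecutive balls intersect in a ball of comparable radius, which lets doubling compare their averages.
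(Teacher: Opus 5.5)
Your overall scheme is the proof of \cite[Theorem 9.7]{HK00}: chaining along John curves to get a pointwise estimate, a Hedberg/covering argument for the weak $L^{p^*}$ bound, then the truncation argument with the pairs $(v_a^b,g\chi_{\{a<v\le b\}})$. The paper quotes that theorem without proof, and your truncation step is correct. The gap is in the one concrete chain you commit to. Take $z_i=\gamma(t_i)$, $t_i=2^{-i}L$, $r_i=ct_i/4$. Consecutive centres can be at distance exactly $t_i-t_{i+1}=t_i/2$: this happens for radial segments in a Euclidean ball, where $c=1$, and in fact $c\le 1$ as soon as points of $\Omega$ come arbitrarily close to $\Omega^c$. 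Meanwhile $r_i+r_{i+1}=3ct_i/8<t_i/2$, so consecutive balls can be disjoint, and the claimed overlap in a ball of comparable radius fails. Moreover, $B_{i+1}$ is not contained in any ball centred at $z_i$ of radius below $ct_i/2$, which is the only range where the John condition places the closed doubled ball inside $\Omega$. The step $|u_{B_i}-u_{B_{i+1}}|\lesssim r_i(\fint_{2B_i}g^p\,\d\mm)^{1/p}$ is therefore unjustified. You need intermediate balls, with arclength steps proportional to $c$, e.g.\ $t_{i+1}=(1-c/8)t_i$ and $r_i=ct_i/4$. Then $\sfd(z_i,z_{i+1})\le ct_i/8$, and both $B_i$ and $B_{i+1}$ lie in $B_{3ct_i/8}(z_i)$, whose closed double is contained in $B_{ct_i}(z_i)\subseteq\Omega$. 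All three balls have comparable measure by doubling, and each dyadic range of $t$ contains at most $1+8\log 2/c$ balls. This is the bounded count per scale you anticipated, and it is where the dependence of the constant on $c$ (and on $p$) enters. For the same reason, the radii must be tied to the arclength $t$, not to $\sfd(z_i,x)$ as in your first description. A John curve may come back near $x$ after a long excursion, which would destroy both the geometric decay and the bounded count per scale.

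Second, keep track of $\mm(\Omega)$. Your splitting with $\mm(B_r(y))\gtrsim(r/\diam\Omega)^Q\mm(\Omega)$ actually yields $\mm(\{|u-u_\Omega|>\lambda\})\lesssim\big(\diam(\Omega)\,\mm(\Omega)^{-1/Q}\|g\|_{L^p(\Omega)}/\lambda\big)^{p^*}$, not the weak estimate you display, and the factor $\mm(\Omega)^{-1/Q}$ cannot be dropped. Doubling and the normalized Poincar\'e inequality $\fint_B|u-u_B|\,\d\mm\le C_P r(\fint_{2B}g^p\,\d\mm)^{1/p}$ (the form you actually use in the chaining) are invariant under $\mm\mapsto\lambda\mm$. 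Since $1/p^*-1/p=-1/Q$, the inequality without that factor is not. For example, on $\Omega=(0,1)\subset\R$ with $\mm=\lambda\mathcal L^1$, $Q=2$, $p=1$, it fails as $\lambda\to0$. What your argument proves, and what \cite{HK00} states, is the averaged form $(\fint_\Omega|u-u_\Omega|^{p^*}\d\mm)^{1/p^*}\le C\diam(\Omega)(\fint_\Omega g^p\,\d\mm)^{1/p}$. The statement as transcribed in the paper has the same slip, and its unnormalized Poincar\'e definition is not the right one when $q=1<p$. This is harmless for its only application, Theorem~\ref{th:heatonOmega}, where any finite Sobolev constant on the fixed domain suffices.
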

\begin{theorem}[Local regularity of the heat kernel on John domains]\label{th:heatonOmega}
Let $\Omega$ be a bounded John domain in a metric space $(\X, \sfd)$. Let $\mm$ be a Borel measure on $\X$ s.t. $\mm(\partial \Omega)=0$ and $\supp(\mm \res \Omega)=\overline{\Omega}$, which satisfies the doubling property
\[
    \mm(B_{2r}(x)) \le C_D\mm(B_r(x)),
\]
for some $C_D>0$ and all $x \in \Omega$ and $r \le 5\diam(\Omega)$. Assume also that the weak $(2,2)$-Poincaré inequality holds in $\Omega$. Assume further that $\overline{\Omega}$ is infinitesimally Hilbertian, let $\mathcal{E}$ be the Dirichlet form associated to the Cheeger energy on $(\overline{\Omega}, \sfd, \mm \res{\overline{\Omega}})$ and let $\mathbf{h}_t$ be the associated $L^2$-heat flow. Then, there exists a function $p: (0, \infty) \times \Omega \times \Omega \to \R_+$ s.t.:
\begin{enumerate}
    \item $p$ is locally Hölder continuous in $(0,\infty) \times \Omega \times \Omega$;
    \item $p$ is the integral kernel of $\mathbf{h}_t$ (as in Definition \ref{def:heat kernel});
    \item we have the Gaussian estimate
    $$p_t(x,y) \leq \frac{C}{t^\beta}\exp{\left(-c\frac{\d^2(x,y)}t+Ct\right)},$$
    for some $\beta>0$, $0 < c,C<\infty$ and all $(t, x,y) \in (0, \infty) \times \Omega \times \Omega$.
\end{enumerate}
\end{theorem}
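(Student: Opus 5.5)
The plan is to work on the compact metric measure space $\Y\coloneqq(\overline{\Omega},\sfd,\mm\res{\overline\Omega})$ and verify, one by one, the hypotheses of Corollary \ref{cor:upper_gauss} and Proposition \ref{prop:cont-kernel-Cheeger}. Then we convert intrinsic distances into $\sfd$ by means of Lemma \ref{lem:d-dE}.

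\textbf{Step 1: structural hypotheses.} First, $\overline\Omega$ is compact. It is bounded, and the doubling property at scales up to $5\diam(\Omega)$ gives total boundedness exactly as in Lemma \ref{lem:loc_compact}, using $\supp(\mm\res\Omega)=\overline\Omega$; completeness is inherited from the ambient space. Since $\mm(\partial\Omega)=0$ and $\supp(\mm\res\Omega)=\overline\Omega$, the measure $\mm\res{\overline\Omega}$ has full support. Infinitesimal Hilbertianity of $\overline\Omega$ makes $\mathcal E$ a symmetric Dirichlet form. It is strongly local by locality of weak gradients, and regular by compactness and the density in energy of Lipschitz functions. Finally, $\mathbf h_t1=1$ by Remark \ref{rem:ht1=1}.

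\textbf{Step 2: Sobolev inequality and Gaussian bound.} Set $Q\coloneqq\max\{\log_2 C_D,3\}>2$, so that the doubling hypothesis of Theorem \ref{th:Sob_on_john} holds. We apply that theorem with $p=2$ to $u\in H^{1,2}(\overline\Omega)$ and $g=|\nabla u|$.
\begin{itemize}
\item The weak $(2,2)$-Poincaré inequality on $\Omega$ implies the $(1,2)$ one by Hölder's inequality.
\item For the truncations $v_a^b$, the chain rule in Theorem \ref{Calculus-weakg} gives $|\nabla v_a^b|=|\nabla u|\chi_{\{a<v\le b\}}$ a.e. Hence the required Poincaré inequality for the pair $(v_a^b,g\chi_{\{a<v\le b\}})$ is again the assumed one, applied to $v_a^b$.
\end{itemize}
Here we use that the Poincaré inequality extends from Lipschitz functions to $H^{1,2}$ via density in energy, as remarked after \eqref{eq:weak_P}. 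There is one subtlety: weak gradients in $\overline\Omega$ versus in $\X$. Since Poincaré is only assumed for balls with $\overline B_{2r}\subseteq\Omega$ and the relaxation is local, these agree $\mm$-a.e. on $\Omega$, and $\mm(\partial\Omega)=0$ takes care of the boundary. Theorem \ref{th:Sob_on_john} then yields \eqref{eq:Sobolev_in} with $2^*=2Q/(Q-2)$. Corollary \ref{cor:upper_gauss} provides a heat kernel $p$ on $\overline\Omega$ with
\[
p_t(x,y)\le \frac{C_\eps}{t^{Q/2}}\exp\Big(-\frac{\sfd_{\mathcal E}^2(x,y)}{4(1+\eps)t}+\delta t\Big)
\]
off a null set. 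By Lemma \ref{lem:d-dE}, $\sfd_{\mathcal E}\ge\sfd$, which gives (3) with $c=1/8$ (taking $\eps=1$) and $\beta=Q/2$, off a null set.

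\textbf{Step 3: continuity.} For each $x\in\Omega$ choose $r_x$ with $\overline B_{4r_x}(x)\subset\Omega$. Then doubling and the weak $(2,2)$-Poincaré inequality hold on $B_{r_x}(x)$ with respect to $\Y$. Again the balls of $\Y$ and of $\X$ agree there, and the Cheeger energies localize. Proposition \ref{prop:cont-kernel-Cheeger}, applied with $U=\Omega$, gives a locally Hölder continuous representative on $(0,\infty)\times\Omega\times\Omega$, which proves (1) and (2). Since the Gaussian bound holds a.e. and its right-hand side is continuous, it extends to every point of $(0,\infty)\times\Omega\times\Omega$ by continuity, completing (3).

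\textbf{Main obstacle.} I expect the delicate step to be Step 2: checking rigorously that the Poincaré hypotheses of Theorem \ref{th:Sob_on_john}, formulated for the ambient space and for balls well inside $\Omega$, transfer to the Cheeger energy of the restricted space $\overline\Omega$, including the truncated pairs. Everything else is a direct citation.
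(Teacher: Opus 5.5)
Your proposal is correct and follows essentially the same route as the paper: the Sobolev inequality on the John domain from Theorem \ref{th:Sob_on_john}, with the truncated pairs handled by the chain rule and locality, then Corollary \ref{cor:upper_gauss} on the compact space $\overline{\Omega}$ combined with $\sfd_{\mathcal E}\ge\sfd$, and finally Proposition \ref{prop:cont-kernel-Cheeger} with $U=\Omega$ for the H\"older continuous representative. Your choice $Q=\max\{\log_2 C_D,3\}$ and your density argument extending the Gaussian bound from $(\Omega\setminus N)^2$ to all of $\Omega\times\Omega$ are welcome refinements of details the paper leaves implicit; in particular, its choice $\beta=\tfrac12\log_2 C_D$ tacitly requires $C_D>4$.
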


\begin{proof}
By assumption, for every function $u \in H^{1,2}(\overline{\Omega})$, the couple $(u, |\nabla u|)$ satisfies the weak $(1,2)$-Poincaré inequality on $\Omega$ (as well as all the truncations $(v_a^b, |\nabla u|\chi_{\{a<v \leq b \}})$ in the statement of Theorem \ref{th:Sob_on_john}, by locality of the weak gradient). Then, by Theorem \ref{th:Sob_on_john}, the couple $(u, |\nabla u|)$ satisfies the Sobolev inequality \eqref{eq:Sob_u_g}. Hence, we can apply Corollary \ref{cor:upper_gauss} with $\X=\overline{\Omega}$ ($\overline{\Omega}$ can be shown to be compact with the very same arguments of Lemma \ref{lem:loc_compact}) to deduce that $\mathbf{h}_t$ is representable by a kernel $p$ satisfying, for example (recall that $\sfd_\mathcal{E} \ge \d$),
 \[p_t(x,y) \le \frac{C}{t^\beta} \exp\left(-\frac{\d^2(x,y)}{5t}+C t\right),\]
 for all $t>0$ and $x,y \in \Omega \setminus N$, for some constant $C>0$ and some $\mm$-negligible set $N$, with $\beta=\frac{\log_2(C_D)}{2}>0$. To conclude, we apply Proposition \ref{prop:cont-kernel-Cheeger} (with { $\X=\overline{\Omega}$} and $U=\Omega$) to infer that $p$ admits a representative which is (locally Hölder) continuous on $\Omega$, exploiting that for any $x \in \Omega$ doubling and Poincaré hold on the neighbourhood $B_{r_x}(x)$ with $r_x=\sfd(x, \Omega^c)$.
\end{proof}

The regularity of the heat kernel guaranteed by Theorem \ref{th:heatonOmega} allows us (for any $\Omega$ as in the statement) to define a dual heat flow $\mathbf{h}_t^*$ on $\overline{\Omega}$ for measures concentrated on $\Omega$ (which include absolutely continuous measures). Indeed, the formula
\begin{equation}\label{eq:dualflow} \langle \mathbf{h}_t^* \mu, \phi \rangle:=\langle \mu, \mathbf{h}_t \phi\rangle, \qquad \phi \in C_b(\Omega),\end{equation}
defines an operator $\mathbf{h}_t^*: \PP(\Omega) \to \PP(\Omega)$ since $\mathbf{h}_t \phi=\int_\Omega p_t(\cdot,y)\phi(y) \d\mm(y)$ is still an element of $C_b(\Omega)$ (even for $\phi$ bounded Borel, i.e. $\mathbf{h}_t$ satisfies the \emph{strong Feller property} on $\Omega$). The fact that $\mathbf{h}_t^*\mu$ has unitary mass follows from $\mathbf{h}_t 1=1$ (which holds pointwise in $\Omega$, since $\mathbf{h}_t 1$ is continuous), recall Remark \ref{rem:ht1=1}. The following result provides the standard representation formula and additional properties for $\mathbf{h}_t^*\mu$.

\begin{corollary}[Representation of the dual heat flow on interior measures]\label{cor:heatflow_onmeas}
Let $\mu \in \PP(\Omega)$, with $\Omega\subset \X$ and $(\X,\sfd,\mm)$ as in Theorem \ref{th:heatonOmega}, and let $\mathbf{h}_t^*$ be the dual heat flow on $\PP(\Omega)$ defined by \eqref{eq:dualflow}. Then $\mathbf{h}_t^*\mu=f_t\mm$, for a function $f_t \in C_b(\Omega)$ given by
\begin{equation}\label{eq:heatmeas}
f_t(x)=\int p_t(x,y)  \d \mu(y).
\end{equation}
Furthermore, $f_t \mm$ is weakly convergent to $\mu$ as $t \to 0$.
\end{corollary}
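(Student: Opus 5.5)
The plan is to prove the three claims of the corollary in order: the density formula \eqref{eq:heatmeas}, the properties $f_t\in C_b(\Omega)$ and $f_t\mm\in\PP(\Omega)$, and the weak convergence as $t\to0$. Throughout I use the continuous kernel $p$ of Theorem \ref{th:heatonOmega} and Fubini.

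\textbf{Density formula.} Fix $t>0$ and $\phi\in C_b(\Omega)$. By \eqref{eq:dualflow} and the kernel representation,
\[
\langle \mathbf{h}_t^*\mu,\phi\rangle=\int_\Omega \int_\Omega p_t(y,x)\phi(x)\,\d\mm(x)\,\d\mu(y).
\]
Since $p\ge 0$ and $\phi$ is bounded, Tonelli/Fubini applies. Using the symmetry $p_t(x,y)=p_t(y,x)$, which holds everywhere on $\Omega\times\Omega$ by continuity, we can exchange the integrals and get $\int\phi f_t\,\d\mm$ with $f_t$ given by \eqref{eq:heatmeas}. Because $\mm(\partial\Omega)=0$, a measure on $\Omega$ is determined by its action on $C_b(\Omega)$, so $\mathbf{h}_t^*\mu=f_t\mm$.

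\textbf{Regularity and mass of $f_t$.} The Gaussian bound (3) gives $p_t\le Ct^{-\beta}e^{Ct}$, so $f_t$ is bounded. For continuity, if $x_k\to x$ in $\Omega$, then $p_t(x_k,y)\to p_t(x,y)$ for every $y$ by (1), with a uniform bound. Dominated convergence against the finite measure $\mu$ then gives $f_t(x_k)\to f_t(x)$. Finally, taking $\phi\equiv 1$ shows $\int f_t\,\d\mm=1$, since $\mathbf{h}_t1=1$ pointwise on $\Omega$.

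\textbf{Weak convergence: reduction.} I expect this to be the main point. Testing with $\phi\in C_b(\Omega)$,
\[
\int\phi f_t\,\d\mm=\int \mathbf{h}_t\phi\,\d\mu,
\]
so it suffices to show $\mathbf{h}_t\phi(y)\to\phi(y)$ for every $y\in\Omega$ and then apply dominated convergence, using $|\mathbf{h}_t\phi|\le\|\phi\|_\infty$ by the Markov property. Since the $f_t\mm$ are probability measures, it is in fact enough to take $\phi$ Lipschitz with bounded support: by Portmanteau, Lipschitz bounded test functions suffice for weak convergence of probability measures.

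\textbf{Weak convergence: local approximate identity.} Fix $y\in\Omega$ and $\delta<\sfd(y,\Omega^c)/2$. Using $\mathbf{h}_t1=1$, write
\[
|\mathbf{h}_t\phi(y)-\phi(y)|\le \Lip(\phi)\,\delta+2\|\phi\|_\infty\int_{\Omega\setminus B_\delta(y)}p_t(y,z)\,\d\mm(z).
\]
The first term is small once $\delta$ is small. The remaining step is to show that the off-diagonal mass $\int_{\Omega\setminus B_\delta(y)}p_t(y,z)\,\d\mm(z)$ tends to $0$ as $t\to0$.

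\textbf{Controlling the tail.} The Gaussian bound gives
\[
\int_{\Omega\setminus B_\delta(y)}p_t(y,z)\,\d\mm(z)\le \mm(\Omega)\,Ct^{-\beta}e^{-c\delta^2/t+Ct}\to0 \quad\text{as } t\to 0,
\]
since $\mm(\Omega)<\infty$ ($\Omega$ is bounded with compact closure). This holds pointwise for each $y\in\Omega$. Sending $t\to0$ and then $\delta\to0$ gives $\mathbf{h}_t\phi(y)\to\phi(y)$, and dominated convergence against $\mu$ concludes.
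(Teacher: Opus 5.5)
Your proposal is correct and follows essentially the same route as the paper. Fubini together with continuity and boundedness of $p_t$ gives the representation and $f_t\in C_b(\Omega)$, and weak convergence is reduced to pointwise convergence $\mathbf{h}_t\phi\to\phi$, obtained by splitting into a near-diagonal part and an off-diagonal tail controlled by the Gaussian bound. The one difference is that the paper skips your Portmanteau reduction to Lipschitz $\phi$: it bounds the near-diagonal part for any $\phi\in C_b(\Omega)$ by $\sup_{\sfd(x,y)\le\eps}|\phi(y)-\phi(x)|$, which vanishes as $\eps\to0$ by continuity of $\phi$ at $x$.
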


\begin{proof}
The formula (\ref{eq:heatmeas}) is well-posed and defines a continuous bounded function (on the open domain $\Omega$) by continuity and boundedness of $p_t$. Moreover, for every $\phi \in C_b(\Omega)$
$$\langle 
\mathbf{h}_t^* \mu, \phi \rangle=\int \phi(x)\left(\int p_t(x,y)\d\mu(y)\right)\d\mm(x).$$
Now, recalling \eqref{eq:dualflow}, in order to check the weak convergence of $f_t\mm$ to $\mu$ it suffices to show that $\mathbf{h}_t \phi \to \phi$ pointwise (i.e. the statement for $\mu=\delta_x$, $x \in \Omega$). This follows by the Gaussian estimates on $p_t$, as for any $x \in \Omega$ and $\eps>0$ one infers
$$\int_{\Omega} p_t(x,y)|\phi(y)-\phi(x)| \, \d\mm(y) \leq \frac C{t^\beta}e^{-c\frac{\eps^2}t} e^{Ct}2\|\phi\|_{L^\infty}+\sup_{\sfd(x,y) \leq \eps} |\phi(y)-\phi(x)|,$$
which gives the claimed convergence by sending $t \to 0$ and then $\eps \to 0$.
\end{proof}
\section{From $\rcd_{loc}$ to strong $\CD_{loc}$}
In this section, the setting will be an open subset $\Omega$ of a metric measure space $(\X, \sfd, \mm)$ which satisfies the ${\sf RCD}_{loc}(K(\cdot),\infty)$ property. We shall also assume, without loss of generality since restricting to the support of the ambient measure has no impact on curvature dimension conditions, that $\supp(\mm \res \Omega)=\overline{\Omega}$. We will denote by $r(x)$ and $K(x)$ the parameters given by Definition \ref{def:CDloc open set}. Our main goal will be to show that $\Omega$ also satisfies the strong ${\sf CD}_{loc}(K(\cdot),\infty)$ property. To achieve this result, we will eventually follow the argument of \cite[Theorem 6.1]{AGMR15}: thus, our strategy will consist in proving a local version of the ${\rm EVI}$ inequality along the heat flow. Before starting, we note  that some important adaptations will be required: we will actually obtain a version of ${\rm EVI}$ modified by an error term. To control this term, we will have to assume that the reference measure $\mm$ is locally doubling on $\Omega$ (so that we can exploit the results of Section \ref{sec:heat}). The main result of this section will be Theorem \ref{th:RCD_implies_strongCD}.

\subsection{EVI with error term}
The goal of this subsection is to prove Theorem \ref{th:EVI Intro}. We shall first prove the following technical result and later use results from Appendix \ref{appendix}, collected there as they are independent from curvature dimension conditions.
%
%
%
%
%
%
\begin{theorem}[Derivative of the entropy]\label{th:EVI_sporca}
Let $\Xdm$ be a complete metric measure space, let $\varnothing \neq \Omega\subseteq  \X$ be open, and let us assume that $\Omega$ satisfies ${\sf RCD}_{loc}(K(\cdot),\infty)$. Let $B_{r}(x) \subseteq\Omega$ be a ball such that $r < \frac{r(x)}6$ and $\frac{\d}{\d s} \mm(B_s(x))$ exists at $s =r$. Let $\mathcal{E}$ be the Dirichlet form associated to the Cheeger energy on $L^2(\overline{B}_r(x))$. Let $\eta=f\mm \in \PP(B_r(x))$, with $f \in L^2\cap L^\infty(B_r(x))$, $\mathcal{E}(f,f) < \infty$ and let also $\sigma \in \PP(\overline{B}_r(x))$ such that $\supp (\sigma) $ is compact. 
Then there exists a Lipschitz Kantorovich potential $\varphi$  for $(\eta,\sigma)$ on $\overline{B}_{2r}(x)$  such that
$$\Ent_\mm (\sigma)-\Ent_\mm(\eta)-\frac{K(x)}{2}W_2^2(\sigma,\eta)\geq -\mathcal{E}(f, \varphi)-C\lim_{s\uparrow r}\|f\|_{L^\infty(B_r(x) \setminus B_s(x))},$$
where $C=C(r)=2r\frac{\d}{\d r} \mm(B_r(x))$.
\end{theorem}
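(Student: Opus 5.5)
Following the proof of \cite[Theorem 6.3]{AGMR15}, the plan is to differentiate the entropy along a geodesic from $\eta$ to $\sigma$ and pass to the limit, keeping track of the boundary contribution of the ball $B_r(x)$.

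We first reduce to $\sigma=g\mm$ with $g\in L^\infty$ and $\Ent_\mm(\sigma)<\infty$. Otherwise the inequality is trivial, or it follows by truncating $g$ and using lower semicontinuity of $W_2$ and Lemma \ref{lemma:Kantorovichpotentialscompactness} to pass to the limit in the potentials. We then invoke Proposition \ref{prop:goodgeodesics}. This is allowed because both supports lie in $\overline{B}_r(x)\subseteq\overline{B}_{r(x)/6}(x)$. It gives $\pi\in\OptGeo(\eta,\sigma)$ with $\mu_t=\rho_t\mm$, satisfying \eqref{eq:convexity Ent} with constant $K(x)$ and $\sup_{t\le t_0}\|\rho_t\|_{L^\infty}<\infty$. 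The curves of $\pi$ stay in $\overline{B}_{2r}(x)$. Hence the relevant potential is the one of Proposition \ref{prop:goodKant} on $\overline{B}_{2r}(x)$, which is Lipschitz. From convexity,
\[
\Ent_\mm(\sigma)-\Ent_\mm(\eta)-\tfrac{K(x)}2W_2^2\ \ge\ \limsup_{t\downarrow0}\frac{\Ent_\mm(\mu_t)-\Ent_\mm(\eta)}{t}.
\]
Convexity of $z\log z$ gives $\Ent_\mm(\mu_t)-\Ent_\mm(\eta)\ge\int\log f\,(\rho_t-f)\,\d\mm$. This is made rigorous with $\log(f+\eps)$ and the uniform $L^\infty$ bounds, as in \cite{AGMR15}. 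By the change of variables $\mu_t=(\e_t)_\sharp\pi$, the right-hand side becomes $\int[\log f(\gamma_t)-\log f(\gamma_0)]\,\d\pi$.

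The key point, and the main obstacle, is computing $\lim_t\frac1t\int[h(\gamma_t)-h(\gamma_0)]\,\d\pi$ for $h=\log(f+\eps)$, or directly the first variation. The function $f$ is Sobolev only on $\overline{B}_r(x)$, with respect to $\mm\res\overline{B}_r(x)$, while the geodesics exit $B_r(x)$ into $\overline{B}_{2r}(x)$. Outside $\overline{B}_r(x)$ we have $f=0$. We therefore compare $f$ with $f\chi_s$, where $\chi_s$ is the Lipschitz cutoff equal to $1$ on $B_s(x)$, vanishing outside $B_r(x)$, with slope $1/(r-s)$. The function $f\chi_s$ belongs to $H^{1,2}(\X)$, with support well contained in $\Omega$, so infinitesimal Hilbertianity of $\Omega$ applies. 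For it we can use the first variation formula of \cite{AGMR15}: via Theorem \ref{Teo:Dirichletchain} and the derivative of $t\mapsto\int\phi\,\d\mu_t$ along $W_2$-geodesics (Proposition \ref{Prop:slopeKantoPotentials} combined with the Hilbertian first-order calculus, i.e.\ $-\nabla\varphi$ as initial velocity), the derivative equals $-\mathcal E(f\chi_s,\varphi)$ in the weighted form. The remainder $f(1-\chi_s)$ contributes $\int f|\nabla\chi_s||\nabla\varphi|\,\d\mm$. This is bounded by
\[
\|f\|_{L^\infty(B_r\setminus B_s)}\,\Lip(\varphi)\,\frac{\mm(B_r(x)\setminus B_s(x))}{r-s},
\]
together with $\mathcal E$-terms that converge to $\mathcal E(f,\varphi)$ as $s\uparrow r$ by locality and dominated convergence. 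Since $\Lip(\varphi)\le 2r$ by Proposition \ref{prop:goodKant}, and the quotient tends to $\frac{\d}{\d r}\mm(B_r(x))$, we obtain the constant $C=2r\frac{\d}{\d r}\mm(B_r(x))$.

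In carrying this out, the expected difficulty is justifying the interchange of limits between $t\downarrow0$ and $s\uparrow r$, given that the geodesics leave $B_r(x)$. We would first try to use the compression bound of Proposition \ref{prop:goodgeodesics} to dominate the difference quotients uniformly in $t\le t_0$. Then we take $t\to0$ at fixed $s$, and only afterwards $s\uparrow r$.
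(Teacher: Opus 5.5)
Your proposal has a genuine gap, located where the local setting departs from \cite{AGMR15}.

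\textbf{The log-density bound has no valid localized version.} The convexity bound $\Ent_\mm(\mu_t)-\Ent_\mm(\mu_0)\ge\int\log\rho_0\,(\rho_t-\rho_0)\,\d\mm$ holds only with the logarithm of the \emph{actual} initial density $\rho_0$. You cannot replace it by $\log(f+\eps)$ or $\log(f\chi_s+\eps)$ unless the initial measure itself has that density.

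In \cite{AGMR15} this is harmless, because the density is lifted by $\eps$ on a ball containing all transport curves. Here that is impossible:
\begin{enumerate}[(i)]
\item the curves of $\pi$ fill $\overline{B}_{2r}(x)$;
\item the initial measure must stay in $\overline{B}_{r}(x)$, since with only $r<r(x)/6$ a measure spread over $\overline{B}_{2r}(x)$ is outside the range of Proposition~\ref{prop:goodgeodesics}.
\end{enumerate}
So any admissible initial density vanishes on $\overline{B}_{2r}(x)\setminus\overline{B}_r(x)$. Its logarithm is $-\infty$ there, and Lemma~\ref{lem5.2AGMR} cannot be applied to it on $\overline{B}_{2r}(x)$. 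Your cutoff $f\chi_s$ removes the jump of $f$ across $\partial B_r(x)$, but it creates no positive lower bound outside the ball, so it does not solve this.

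\textbf{The cutoff cannot be inserted into the first variation at $\eta$.} For a geodesic starting at $\eta$, nothing justifies that the entropy derivative equals $-\mathcal{E}(f\chi_s,\varphi)$ up to a contribution of $f(1-\chi_s)$. The derivation passes through the logarithm of the actual initial density, not through $f\chi_s$. The cutoff must be applied to the initial measure itself. This changes both the geodesic and the Kantorovich potential, so you also need Lemma~\ref{lemma:Kantorovichpotentialscompactness} to pass $\varphi_s\to\varphi$. Only after that does your estimate of $|\mathcal{E}(f_s,\varphi_s)-\mathcal{E}(f,\varphi_s)|$ produce the constant $C$.

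\textbf{The missing device, used in the paper.}
\begin{enumerate}[(i)]
\item Cut off and normalize: $f_n=\chi_nf/\int\chi_nf\,\d\mm$.
\item Lift only inside the ball: $f_{n,\delta}=\mathbf 1_{B_r(x)}(f_n+\delta)/(1+\delta\mm(\overline{B}_r(x)))$, which is $\ge\delta'$ on $B_r(x)$.
\item Take $\pi$ from Proposition~\ref{prop:goodgeodesics} starting at $f_{n,\delta}\mm$, and compare $\rho_t$ with $\tilde\rho_t=\rho_t+\delta'\mathbf 1_{\overline{B}_{2r}(x)\setminus\overline{B}_r(x)}$.
\item Since $\mu_t$ is concentrated on $B_{r+2tr}(x)$ and $\rho_t\le M$ for $t\le t_0$, the two entropy difference quotients differ by at most $2\omega(\delta')\,\frac{\mm(B_{r+2tr}(x))-\mm(B_r(x))}{t}$. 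This tends to $4\omega(\delta')\,r\frac{\d}{\d s}\mm(B_s(x))|_{s=r}$. Here $\omega$ is the modulus of continuity of $z\log z$ on $[0,M]$, so the error vanishes as $\delta\to0$.
\item Since $\tilde\rho_0\ge\delta'$ on $\overline{B}_{2r}(x)$, the convexity bound is legitimate for $\tilde\rho$ and $\log\tilde\rho_0\in H^{1,2}(\overline{B}_{2r}(x))$. Lemma~\ref{lem5.2AGMR} and Theorem~\ref{Teo:Dirichletchain} then give $-\mathcal{E}(f_{n,\delta},\varphi_{n,\delta})$.
\item Send $\delta\to0$ (strong locality plus compactness of potentials), and then $n\to\infty$.
\end{enumerate}

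This is a second, essential use of the differentiability of $s\mapsto\mm(B_s(x))$ at $r$. It controls the mass leaving the ball in time $t$, not only the constant $C$. Dominating difference quotients via the compression bound and then swapping the limits in $t$ and $s$, as you suggest, does not capture this exterior contribution.
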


\begin{proof} By Remark \ref{rem:restr_infHilb}, and by the assumptions on $\overline \Omega$, we have that for any open ball $B \subseteq \Omega$ with $\mm(\partial B)=0$ it holds that $(\overline{B}, \sfd, \mm \res{\overline{B}})$ is an infinitesimally Hilbertian space. Consequently, we can consider the Dirichlet form $\mathcal E(f,f) = \rmCh^{\overline B}(f)$ associated to a quadratic Cheeger energy defined for every $f \in L^2(\overline B)$. 

For any $n\in\N$, let $\chi_n$ be a Lipschitz function such that $\supp(\chi_n) \subset B_r(x)$, $0 \le \chi_n \le 1$, $\chi_n \equiv 1$ on $B_{(1-\frac{1}{n})r}(x)$ and $|\nabla\chi_n| \le \frac{n+1}{r}$.
    Set, for any $n\in\N$ and $\delta>0$,
    \[
        f_n:=\frac{\chi_n f}{\int_{B_r(x)}\chi_n f\,\d\mm}\quad\textrm{and}\quad f_{n, \delta}:=\mathbf{1}_{B_{r}(x)}\frac{ f_n +\delta}{1+\delta\mm(\overline{B}_r(x))}.
    \]
    Note that both $f_n \mm$ and $f_{n,\delta}\mm$ are probability measures supported on $\overline{B}_{r}(x)$. To simplify the notation, from now on we set $\delta':= \frac{\delta}{1+\delta\mm(B_r(x))}$ (notice that $f_{n, \delta} \equiv \delta'$ on $B_r(x) \setminus \supp(\chi_n)$).\\
    Consider $\sigma \in\PP(\overline{B}_r(x))$, and let $\pi\in \OptGeo(f_{n, \delta} \mm,\sigma)$ be the geodesic given by Proposition \ref{prop:goodgeodesics}, which then satisfies the displacement convexity of the entropy \eqref{eq:convexity Ent} and the density bound \eqref{eq:uniformbound}
    for some $t_0>0$. Note that the curves in its support will be supported in $\overline{B}_{2r}(x)$.
    Hereafter, we shall assume without loss of generality that $\mm(\partial B_{2r}(x))=0$; if this is not the case,  the construction can be still carried for a.e.\ $R$ such that $R>2r$.
    Let $\varphi_{n,\delta}\in\Lip(\overline{B}_{2r}(x))$ be a Kantorovich potential associated with $((\e_0)_\sharp\pi, (\e_1)_\sharp\pi)$ as in 
    Proposition \ref{prop:goodKant}, so that $\Lip(\varphi_{n,\delta})\leq 3r$.
   
    Since the properties given by Proposition \ref{prop:goodKant} are invariant under additions of constants, we may assume that
    \begin{equation}\label{eq:stime_Kant}
    |\varphi_{n, \delta}(y)| \le 6r^2\qquad \forall y \in \overline{B}_{2r}(x).
    \end{equation}
 Let $\rho_t \mm:=\mu_t:=(\e_t)_\sharp\pi$ for all $t \in [0,1]$. We set
$$
    \tilde{\rho}_t=
    \rho_t +\delta'\mathbf{1}_{\overline{B}_{2r}(x) \setminus\overline{B}_r (x)}, \quad \tilde{\mu}_t:=\tilde{\rho}_t \mm.
$$
In particular, we denote
\[\tilde{f}_{n,\delta}:=\tilde{\rho}_0=f_{n, \delta}+\delta'\mathbf{1}_{\overline{B}_{2r}(x) \setminus\overline{B}_r (x)}.\]
   We have
    \[
        \Ent_\mm (\sigma)-\Ent_\mm(\mu_0)-\frac{K(x)}{2}W_2^2(\sigma,\mu_0)\geq
        \frac{\Ent_\mm(\mu_t)-\Ent_\mm(\mu_0)}{t}=\frac{\Ent_\mm(\mu_t)-\Ent_\mm(f_{n, \delta} \mm)}{t}.
    \]
    We will now show that
    \begin{equation}\label{Eq:erroredelta1}
        \liminf_{t\to 0} \frac{\Ent_\mm(\mu_t)-\Ent_\mm(f_{n, \delta} \mm)}{t} \ge
         \liminf_{t\to 0}\frac{\Ent_\mm(\tilde{\mu}_t)-\Ent_\mm(\tilde{f}_{n,\delta} \mm)}{t}
         +k(\delta),
    \end{equation}
    where $\lim_{\delta\to 0} k(\delta)=0$. By construction, since $\pi$ is concentrated on geodesics of length smaller than $2r$, $\mu_t$ is concentrated on $B_{r+2tr}(x)$, hence
   \begin{align*}
&\liminf_{t\to0} \frac{\Ent_\mm(\mu_t)-\Ent_\mm(f_{n, \delta} \mm)}{t}-\frac{\Ent_\mm(\tilde{\mu}_t)-\Ent_\mm(\tilde{f}_{n,\delta} \mm)}{t}\\
&=
\liminf_{t\to0} 
    \frac{1}{t}\left(
    \int_{B_{r+2tr}(x)\setminus B_{r}(x)} \rho_t \log(\rho_t)-(\rho_t +\delta') \log(\rho_t+\delta')+
   \delta'\log (\delta') \d\mm  \right)
\\
&\geq
-2\omega(\delta')\lim_{t\to0} \frac{\mm(B_{r+2tr}(x))-\mm(B_{r}(x))}{t}=-
    4\omega(\delta')r \frac{\d}{\d s} \mm(B_s(x))\mid_{s=r},
\end{align*}
where $\omega$ is the modulus of uniform continuity of the function $z \mapsto z \log z$ in the interval $[0, M]$, with $M:=\sup_{t \in [0,t_0]}\|\rho_t\|_{L^\infty}+1 < \infty$.
Now, by convexity
\begin{align*}
    \frac{\Ent_\mm(\tilde{\mu}_t)-\Ent_\mm(\tilde{f}_{n,\delta} \mm)}{t}\geq
    \int \log (\tilde{f}_{n,\delta})\frac{\tilde{\rho}_t-\tilde{f}_{n,\delta}}{t}\d\mm&=\int \log (\tilde{f}_{n,\delta})\frac{\rho_t-f_{n,\delta}}{t}\d\mm\\
   & =\int \frac{\log(\tilde{f}_{n,\delta}\circ \e_t)-\log(\tilde{f}_{n,\delta}\circ \e_0)}{t}\d\pi.   
\end{align*}
Define the functions $ F_t, G_t : C([0, 1],\X)\to \R$ as
$$F_t =\frac{\log \,(\tilde{f}_{n,\delta}\circ \e_0)-\log \,(\tilde{f}_{n,\delta}\circ \e_t)}{\sfd(\gamma_t,\gamma_0)}\quad \mbox{and} \quad G_t=\frac{\varphi_{n,\delta}\circ \e_0 - \varphi_{n,\delta} \circ \e_t}{\sfd(\gamma_t,\gamma_0)},$$
with $F_t$, $G_t$ set to $0$ when $\gamma_t=\gamma_0$.
By the second part of Lemma \ref{lem5.2AGMR} (which we apply on $\overline{B}_{2r}(x)$), we have that 
\begin{equation}  \label{Eq:G_tandvarphi}
\lim_{t \to 0} \,G_t =\lim_{t\to 0}\frac{\sfd(\gamma_t,\gamma_0)}{t}= |\nabla\varphi_{n,\delta}|(\gamma_0)\quad\mbox{in} \,\,L^{2}( C ([0, 1], \X),\pi).
\end{equation}

We now apply the first part of Lemma \ref{lem5.2AGMR} to $h= \log(\tilde{f}_{n,\delta})$; note that $|\nabla h| \in L^2(\overline{B}_{2r}(x))$ as $\inf \tilde{f}_{n,\delta}=\delta'>~0$ (here we used the Leibniz rule and the chain rule for weak gradients from Theorem \ref{Calculus-weakg}).
Then, for some $t_0 >0$, we have 
\begin{equation}\label{Eq:boundF_t^2}
    \sup_{t\leq t_0} \int |F_t|^2 \d\pi <\infty.
\end{equation}
Using \eqref{Eq:G_tandvarphi} and \eqref{Eq:boundF_t^2} jointly we find
\begin{align*}
  \liminf_{t\to 0}  \frac{\Ent_\mm(\tilde{\mu}_t)-\Ent_\mm(\tilde{f}_{n,\delta} \mm)}{t}\geq
  \liminf_{t\to 0}\int \,-F_t (\gamma) \frac{\sfd(\gamma_0 ,\gamma_t )}{t}\d\pi
  =-\limsup_{t\to0}\int F_t G_t \d\pi.
\end{align*}
Applying again Lemma \ref{lem5.2AGMR} with $h=\varphi_{n,\delta}+\varepsilon \log(\tilde{f}_{n, \delta})$ for $\varepsilon>0 $ we get 
\begin{equation}\label{Eq:incrementFtGt}
    \int |\nabla(\varphi_{n,\delta}+\eps\log(\tilde{f}_{n, \delta}))| ^2 (\gamma_0) \,\d\pi \geq 
 \limsup_{t\to 0} \int|G_t+\varepsilon F_t|^2 \,\d\pi.
 \end{equation}
 By subtracting \eqref{Eq:G_tandvarphi} to  \eqref{Eq:incrementFtGt}, after a few computations and sending $\varepsilon$ to 0, while accounting for \eqref{Eq:boundF_t^2}, we find
 \[
    \limsup_{t\to0}\int F_t G_t \d\pi
    \leq 
    \liminf_{\varepsilon\to 0}\int \frac{|\nabla(\varphi_{n,\delta}+\eps\log (\tilde{f}_{n, \delta}))| ^2 - |\nabla\varphi_{n,\delta}| ^2}{2\varepsilon}f_{n, \delta}\d\mm 
 \]
Notice furthermore that, by locality (since $\tilde{f}_{n, \delta}\mid_{\supp ( f_{n, \delta})}=f_{n, \delta}$), we get
\begin{align*}
  \int \frac{|\nabla(\varphi_{n,\delta}+\eps\log (\tilde{f}_{n, \delta}))| ^2 - |\nabla\varphi_{n,\delta}| ^2}{2\varepsilon}f_{n, \delta}\d\mm &=
  \int \frac{|\nabla(\varphi_{n,\delta}+\eps\log(f_{n, \delta}))| ^2 - |\nabla\varphi_{n,\delta}| ^2}{2\varepsilon}f_{n, \delta}\d\mm\\
  &=\mathcal{E}_{f_{n,\delta}\mm}(\log(f_{n,\delta}),\varphi_{n,\delta} ),
\end{align*}
having lastly used \eqref{eq:weak gradi pesati}. All in all, we have proved that
\[
      \Ent_\mm (\sigma)-\Ent_\mm(f_{n,\delta} \mm)-\frac{K(x)}{2}W_2^2(\sigma,f_{n,\delta} \mm)
      \geq
      -\mathcal{E}_{f_{n,\delta}\mm}(\log(f_{n,\delta}),\varphi_{n,\delta} )+k(\delta).
      \]
      Now, we can apply the chain rule given by Theorem \ref{Teo:Dirichletchain}, exploiting that (on 
      $\overline{B}_{r}(x)$) $\varphi_{n,\delta}$ is Lipschitz and $\rmCh(\sqrt{f_{n,\delta}}) < \infty$ (using $f_{n,\delta} \ge \delta'>0$). Therefore
      \[ \mathcal{E}_{f_{n,\delta}\mm}(\log(f_{n,\delta}),\varphi_{n,\delta} )=\mathcal{E}(f_{n, \delta}\,,\varphi_{n,\delta} ).\]
      We have then proved that 
         \begin{equation}\label{Eq:beforepassingtothelimitdeltaandn}
        \Ent_\mm (\sigma)-\Ent_\mm(f_{n,\delta}\mm)-\frac{K(x)}{2}W_2^2(\sigma,f_{n, \delta}\mm)
        \geq     -\mathcal{E}(f_{n, \delta},\varphi_{n,\delta})+k(\delta).
    \end{equation}
    
    \noindent\textsc{Sending $\delta \to 0$}. 
    We note that, by strong locality 
    $$\mathcal{E}(f_{n, \delta},\varphi_{n,\delta})=\frac{1}{1+\delta\mm(\overline{B}_r(x))}\mathcal{E}(f_{n},\varphi_{n,\delta}).$$
   By Lemma \ref{lemma:Kantorovichpotentialscompactness} we can find a sequence $\delta_m \downarrow 0$ and a $3r$-Lipschitz Kantorovich potential $\varphi_n$ relative to $f_n \mm$ and $\sigma$ in $\overline{B}_{2r}(x)$ (still of the form \eqref{eq:choice_kant}) such that $\varphi_{\delta_m , n}\to \varphi_n$ pointwise. Note that, since by \eqref{eq:stime_Kant} $(\varphi_{\delta_m,n})_m$ is bounded in $H^{1,2}(\overline{B}_{2r}(x))$, this implies by reflexivity that $\varphi_{\delta_m , n}\weakto \varphi_n$ in $H^{1,2}(\overline{B}_{2r}(x))$ (cf.\ Remark \ref{rem:restr_infHilb}). Thus, by  sending $m\to\infty$ in \eqref{Eq:beforepassingtothelimitdeltaandn} with $\delta=\delta_m$, we conclude that
   \begin{equation}\label{Eq:afterdeltato0}
        \Ent_\mm (\sigma)-\Ent_\mm(f_{n}\mm)-\frac{K(x)}{2}W_2^2(\sigma,f_{n}\mm)   
        \geq     -\mathcal{E}(f_{n},\varphi_{n}).
    \end{equation}
 
 \noindent\textsc{Sending ${n}\to\mathbf{\infty}$.} Using again Lemma \ref{lemma:Kantorovichpotentialscompactness} we can find a sequence $n_m \uparrow \infty$ and a $3r$-Lipschitz Kantorovich potential $\varphi$ relative to $f \mm$ and $\sigma$ such that $\varphi_{ n_m}\to \varphi$ pointwise, and hence weakly in $H^{1,2}(\overline{B}_{2r}(x))$. We obtain $$ \lim_{m\to\infty} \mathcal{E}(f,\varphi_{n_m})= \mathcal{E}(f, \varphi). $$
 We note that $|\nabla \varphi_n| \le 2r$ on $B_r(x)$ by \eqref{eq:choice_kant})
 \begin{equation*}
 \begin{split}
     &|\mathcal{E}(f_n,\varphi_{n})-\mathcal{E}(f,\varphi_{_n})|\leq
     \left|\int \Gamma(f_n-f, \varphi_n) \d\mm\right|\leq \int |\nabla(f_n-f)| |\nabla\varphi_n|\d\mm\\
     &\le 2r \left( \int|1-c_n\chi_n||\nabla f| \d\mm+ c_n\int f|\nabla\chi_n| \d\mm\right) \\
     &
   \le 2r \left( \int|1-c_n\chi_n||\nabla f| \d\mm+c_n \frac{n+1}{n}\|f\|_{L^\infty (B_r(x)\setminus B_{(1-\frac{1}{n})r}(x))} \frac{\mm(B_r(x))-\mm(B_{(1-\frac 1n)r}(x))}{r/n}\right)
 \end{split}
\end{equation*}
where $c_n =\frac{1}{\int \chi_n f \d\mm}\to 1$ as $n\to\infty$.
All in all, we obtain, passing to the limit, that
\begin{equation}\label{Eq:stimaerrorelimiteinn}
   \limsup_{n\to\infty}  |\mathcal{E}(f_n\,,\varphi_{n})-\mathcal{E}(f\,,\varphi_{n})|\leq C\lim_{s\uparrow r}\|f\|_{L^\infty(B_r(x) \setminus B_s(x))}.
\end{equation}
We are finally able to conclude the proof by sending $m\to \infty$ in \eqref{Eq:afterdeltato0} (with $n=n_m$) and obtaining
\begin{equation*}
    \begin{split}
        \Ent_\mm (\sigma)-\Ent_\mm(f\mm)-\frac{K(x)}{2}W_2^2(\sigma,f\mm)& =\lim_{m\to\infty} \Ent_\mm (\sigma)-\Ent_\mm(f_{n_m}\mm)-\frac{K(x)}{2}W_2^2(\sigma,f_{n_m}\mm)\\
        &\geq \lim_{m\to\infty} -\mathcal{E}(f, \varphi_{n_m})- |\mathcal{E}(f_{n_m},\varphi_{n_m})-\mathcal{E}(f,\varphi_{n_m})|\\
        & \geq -\mathcal{E}(f, \varphi)-C\lim_{s\uparrow r}\|f\|_{L^\infty(B_r(x) \setminus B_s(x))}.
    \end{split}
\end{equation*}
\end{proof}
\begin{proof}[Proof of Theorem \ref{th:EVI Intro}]
    By combining Theorem \ref{th:EVI_sporca} with Proposition \ref{prop:der_Wass} we obtain  that 
    \begin{equation}\label{Eq:1.2inproof}
         \Ent_\mm (\sigma)-\Ent_\mm(\mu_t)-\frac{K(x)}{2}W_2^2(\mu_t ,\sigma) \ge \frac 12 \frac\d{\d t}W_2^2(\mu_t, \sigma)-C\lim_{r'\uparrow r}\|f_t\|_{L^\infty(B_r(x) \setminus B_{r'}(x))}
    \end{equation}
   for a.e.\ $t>0$, and any $\sigma$ compactly supported in $\overline{B}_r(x)$.
   To drop the assumption on $\sigma$ we multiply both sides of \eqref{Eq:1.2inproof} by $e^{K(x)t}$ and integrate to deduce, after manipulations
    \begin{align}\label{Eq:1.2proofintegralform}
            \frac{e^{K(x)t}}{2}  W_2^2(\mu_t,\sigma)-\frac{e^{K(x)s}}{2}  W_2^2(\mu_s,\sigma)  &\le  \left( {\rm Ent}_\mm(\sigma)  - {\rm Ent}_\mm(\mu_t)\right)\int_s^t e^{K(x)p}\,\d p \\
            &\qquad +C \int_s^t e^{K(x)p}\lim_{r' \uparrow r} \|f_p\|_{L^\infty(B_r(x) \setminus B_{r'}(x))}\,\d p, \nonumber
    \end{align}
    having also used that the function $t\mapsto {\rm Ent}_\mm(\mu_t)$ is nonincreasing by Proposition \ref{prop:dis_flussi} . Since \eqref{Eq:1.2proofintegralform} holds for any $\sigma$ compactly supported, it is possible to infer by density that it also holds for any $\sigma \in \PP(\overline{B}_r(x))$. A differentiation provides the conclusion.
\end{proof} 
\subsection{Local ${\rm EVI}$ implies strong ${\sf CD}_{loc}$ under locally doubling}%
From now on, we additionally assume that $\mm$ is locally doubling on an open set $\Omega$, and we denote by $r_D(x)$ the radius given by Definition \ref{def:loc_doubling}.
\begin{lemma}\label{lem:balls_satisfy}
Let $\Xdm$ be a complete metric measure space, let $\varnothing \neq \Omega\subseteq  \X$ be open, let us assume that $\Omega$ satisfies ${\sf RCD}_{loc}(K(\cdot),\infty)$ and that $\mm$ is locally doubling on $\Omega$. Then, for every $x \in \Omega$, any ball $B=B_r(x)$ with $r < \min\{\frac{r_D(x)}{21}, \frac{r(x)}6\}$ and $\mm(\partial B)=0$ satisfies the hypotheses of Theorem \ref{th:heatonOmega}. In particular, Corollary \ref{cor:heatflow_onmeas} holds on the domain $B$. Moreover, $\overline{B}$ is compact.
 \end{lemma}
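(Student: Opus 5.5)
We have to check the hypotheses of Theorem \ref{th:heatonOmega} for $B=B_r(x)$: that $B$ is a bounded John domain, that $\mm(\partial B)=0$ (given by assumption) and $\supp(\mm\res B)=\overline B$, that $\mm$ is doubling at all centres $y\in B$ and radii $s\le 5\diam(B)$, that the weak $(2,2)$-Poincaré inequality holds in $B$, and that $\overline B$ is infinitesimally Hilbertian.

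\textbf{John property and compactness.} Since $r<r_D(x)/21\le r_D(x)/2$, Lemma \ref{lem:loc_compact} gives that $\overline B$ is compact (if $x\notin\supp(\mm)$, then $\supp(\mm\res\Omega)=\overline\Omega$ forces $x$ to lie in a region where the statement is vacuous or reduces to the same argument at a nearby point). For the John property we use Remark \ref{rem:balls_john}: it suffices to have geodesics from $x$ to every point of $B$. These are supplied by Proposition \ref{Prop:existgeodesicsCDdoubling}, since $r<\min\{r(x)/2,r_D(x)/5\}$ and the points lie in $\overline B\cap\supp(\mm)$. The support condition $\supp(\mm\res B)=\overline B$ follows from the standing assumption $\supp(\mm\res\Omega)=\overline\Omega$. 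Indeed, for $y\in B$ and small $s$ we have $B_s(y)\subseteq B\cap\Omega$ with positive measure, and boundary points of $B$ are limits of interior points along the geodesics just constructed.

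\textbf{Doubling.} For $y\in B$ and $s\le 5\diam(B)\le 10r$ we get $\overline B_{2s}(y)\subseteq B_{21r}(x)\subseteq B_{r_D(x)}(x)$, because $\sfd(x,y)+20r<21r$. Hence the doubling on $B_{r_D(x)}(x)$ from Definition \ref{def:loc_doubling} applies with constant $C_D(x)$. This is exactly where the factor $21$ comes from.

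\textbf{Poincaré inequality.} Take $y\in B$ and $s$ with $\overline B_{2s}(y)\subseteq B$. Then $\overline B_{2s}(y)\subseteq B_{r(x)/6}(x)$, so Proposition \ref{prop:poincare} with $p=2$ gives the weak $(2,2)$-Poincaré inequality for Lipschitz $u$, with constant $2^{5/2}e^{K^-(x)r^2/6}$, uniform over such balls. The remark after the definition of Poincaré inequalities then extends it to Sobolev functions.

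\textbf{Infinitesimal Hilbertianity.} Since $\overline B\Subset\Omega$ (as $r<r(x)/6$ and we may take $\overline B\subset\Omega$) and $\mm(\partial B)=0$, Remark \ref{rem:restr_infHilb} shows that $H^{1,2}(\overline B)$ is Hilbert.

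With all hypotheses verified, Theorem \ref{th:heatonOmega} applies, and Corollary \ref{cor:heatflow_onmeas} follows. The main obstacle is the John and support verification, which needs geodesics defined all the way to boundary points of $B$. The closed-ball version of Proposition \ref{Prop:existgeodesicsCDdoubling} handles this.
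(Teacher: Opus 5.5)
Your proposal is correct and follows the paper's proof. Both use the John property from Remark \ref{rem:balls_john} and Proposition \ref{Prop:existgeodesicsCDdoubling}, doubling from $\overline{B}_{2s}(y)\subseteq B_{21r}(x)\subseteq B_{r_D(x)}(x)$, compactness from Lemma \ref{lem:loc_compact}, and the weak $(2,2)$-Poincar\'e inequality from Proposition \ref{prop:poincare}. You also check $\supp(\mm\res B)=\overline B$ and infinitesimal Hilbertianity of $\overline B$ via Remark \ref{rem:restr_infHilb}, which the paper's proof leaves implicit.

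Some small cleanups:
\begin{itemize}
\item The case $x\notin\supp(\mm)$ never occurs, since $x\in\Omega\subseteq\overline\Omega=\supp(\mm\res\Omega)$.
\item No geodesics to boundary points are needed. The John condition only involves points of the open ball, and $\overline B$ is the topological closure of $B\subseteq\supp(\mm\res B)$.
\item $\overline B\subseteq\Omega$ does not follow from $r<r(x)/6$. It is an implicit convention that the local radius keeps the relevant balls inside $\Omega$. The paper relies on the same convention, e.g.\ through the hypothesis $B_r(x)\subseteq\Omega$ in Theorem \ref{th:EVI_sporca}.
\end{itemize}
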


 \begin{proof}
 First, recall that in our setting balls are John domains (see Remark \ref{rem:balls_john} and Proposition \ref{Prop:existgeodesicsCDdoubling}). The condition $r <  r_D(x)/21$ guarantees that $\overline{B}_{2\rho}(y) \subset B_{r_D(x)}(x)$ for every $y \in B$ and $\rho \le 5\diam(B)=10r$, so that the required doubling property holds. Notice that $2r < r_D(x)$ also implies the compactness of $\overline{B}$ by Lemma  \ref{lem:loc_compact}. Finally, since $r < \frac{r(x)}6$, Proposition \ref{prop:poincare} gives that the weak $(2,2)$-Poincaré inequality is also satisfied. 
 \end{proof}
Thanks to Lemma \ref{lem:balls_satisfy}, for sufficiently small balls $B$ around $x \in \Omega$ we can extend the heat flow $\mathbf{h}_t$ localized on $\overline{B}$ to every initial datum of the form $\mu_0 \in \mathscr{P}(B)$. The next lemma gives some properties of this flow.

\begin{lemma}\label{lem:der_Wass}
Let $\Xdm$ be a complete metric measure space, let $\varnothing \neq \Omega\subseteq  \X$ be open, and let us assume that $\Omega$ satisfies ${\sf RCD}_{loc}(K(\cdot),\infty)$ and that $\mm$ is locally doubling on $\Omega$. Fix $x \in \Omega$ and consider $B_r(x)$ with $r <  \min\{\frac{r_D(x)}{21}, \frac{r(x)}6\}$ and $\mm(\partial B_r(x))=0$. Let $\mathbf{h}_t$ be the heat semigroup on $(\overline{B}_r (x), \sfd, \mm \res{\overline{B}_{r}(x)})$, let $\mu_0 \in~\mathscr{P}(B_{r}(x))$, and let $\mu_t:=\mathbf{h}_t^*\mu_0=f_t \mm$. Then, for any $t > t_0 >0$ we have $(\mu_s)\in { AC}^2([t_0,t]; \mathscr{P}_2(\X))$ and
\begin{equation}\label{eq_dis_flussi2}
\Ent_\mm(\mu_{t_0}) \geq \Ent_\mm(\mu_t)+2\int_{t_0}^t \rmCh(\sqrt{f_s}) \,\d s+\frac 12\int_{t_0}^t |\dot{\mu}_s|^2 \, \d s.
\end{equation}
In particular, $t \mapsto \Ent_\mm(\mu_t)$ is non-increasing in $(0, \infty)$. Finally, if $\sigma \in \PP(\overline{B}_r (x))$, then for a.e.\ $t>0$ we have that any 
Lipschitz Kantorovich potential $\varphi_t:\overline{B}_{2r}(x) \to\R$ for $(\mu_t, \sigma)$ satisfies
$$\frac 12 \frac\d{\d t}W_2^2(\mu_t, \sigma)=-\mathcal{E}(f_t, \varphi_t),$$
where $\mathcal{E}$ is the Dirichlet form associated to the Cheeger energy on $L^2(\overline{B}_r(x))$.
\end{lemma}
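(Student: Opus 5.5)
The plan is to reduce everything to Proposition \ref{prop:dis_flussi} and Proposition \ref{prop:der_Wass}, applied on the compact metric measure space $\Y\!:=\overline{B}_r(x)$, written $(\overline{B}_r(x),\sfd,\mm\res{\overline{B}_r(x)})$. By Lemma \ref{lem:balls_satisfy}, $\overline{B}_r(x)$ is compact, so \eqref{eq:growth_cond} holds trivially. Since $\mm(\partial B_r(x))=0$, Remark \ref{rem:restr_infHilb} shows that this space is infinitesimally Hilbertian. Hence $\mathbf{h}_t$ is the linear $L^2$-gradient flow of $\rmCh^{\overline{B}_r(x)}$. The only issue is that $\mu_0$ is a general probability measure, not an $L^2\cap L^\infty$ density with finite entropy. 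This is handled by a time shift: for $t_0>0$, Corollary \ref{cor:heatflow_onmeas} gives $\mu_{t_0}=f_{t_0}\mm$ with $f_{t_0}(y)=\int p_{t_0}(y,z)\,\d\mu_0(z)$. The Gaussian bound in Theorem \ref{th:heatonOmega} yields $\|p_{t_0}\|_{L^\infty}\le C t_0^{-\beta}e^{Ct_0}$, so $f_{t_0}\in L^\infty(\overline{B}_r(x))$ (note $\mm(\partial B)=0$, so the values on $B$ suffice). Since $\mm(\overline{B}_r(x))<\infty$, it follows that $f_{t_0}\in L^2$, $\mu_{t_0}\in\PP_2$ (bounded support), and $\Ent_\mm(\mu_{t_0})<\infty$.

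The next step is the semigroup identification $\mu_{t_0+s}=\mathbf{h}_s(f_{t_0})\mm$. This follows from \eqref{eq:dualflow} together with the semigroup law \eqref{eq:semigroup_prop}, or directly from $\langle \mathbf{h}^*_{t_0+s}\mu_0,\phi\rangle=\langle\mu_0,\mathbf{h}_{t_0}\mathbf{h}_s\phi\rangle=\int \mathbf{h}_s\phi\, f_{t_0}\,\d\mm$, using symmetry of $\mathbf{h}_s$ on $L^2$ and continuity of $\mathbf{h}_{t_0}(\mathbf{h}_s\phi)$ on $B$. With this, Proposition \ref{prop:dis_flussi} applied to the initial datum $f_{t_0}$ gives $(\mu_s)\in AC^2([t_0,t];\PP_2)$ and \eqref{eq_dis_flussi2} after a translation in time. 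Monotonicity on $(0,\infty)$ follows because $t_0$ is arbitrary. A further point is that $W_2$ and absolute continuity computed in $\overline{B}_r(x)$ versus $\X$ need to be compared. Along the curve, $W_2$ in $\X$ is at most $W_2$ in the subspace, so metric speeds only decrease, and the inequality survives.

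For the derivative formula, fix $t_0$ in a countable dense set and apply Proposition \ref{prop:der_Wass} on $\overline{B}_r(x)$ with datum $f_{t_0}$. This gives the identity for a.e. $t>t_0$, for every Kantorovich potential in $H^{1,2}(\overline{B}_r(x))$ relative to $(\mu_t,\sigma)$ in that space. Taking the union over $t_0\downarrow0$ yields a.e. $t>0$. It then remains to pass to Lipschitz potentials $\varphi_t$ defined on $\overline{B}_{2r}(x)$ and the $W_2$ of $\X$.

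This last step is the main obstacle, because geodesics between measures in $\overline{B}_r(x)$ may leave the ball, so $W_2$ computed in $\X$ differs a priori from that of the subspace. I would avoid the issue by reproving Proposition \ref{prop:der_Wass} directly. The proof there only uses Kantorovich duality for $W_2$ in $\X$, namely $\tfrac12 W_2^2(\mu_{t+h},\sigma)\ge\int\varphi_t\,\d\mu_{t+h}+\int\varphi_t^c\,\d\sigma$, which holds since $\varphi_t$ is $c$-concave on $\overline{B}_{2r}(x)\supseteq$ both supports (see the Remark after Proposition \ref{prop:goodKant}). It also uses \eqref{eq:limiteinL2} with $g=\varphi_t|_{\overline{B}_r(x)}$, which is Lipschitz and hence belongs to $H^{1,2}(\overline{B}_r(x))$. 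Finally, it uses the a.e. differentiability of $t\mapsto W_2^2(\mu_t,\sigma)$ in $\X$, which holds because $\mu_t$ is $AC^2$ in $\PP_2(\X)$ by the previous step. Combining the one-sided inequality for $h>0$ and $h<0$ at a point of differentiability gives equality with $-\mathcal{E}(f_t,\varphi_t)$.
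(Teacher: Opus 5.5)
Your proposal is correct and follows the paper's route. The paper's proof is exactly the time shift $f_t=\mathbf{h}_{t-t_0}f_{t_0}$ with $f_{t_0}$ bounded (Corollary \ref{cor:heatflow_onmeas}), followed by Proposition \ref{prop:dis_flussi} and Proposition \ref{prop:der_Wass}; your rerun of the duality argument of Proposition \ref{prop:der_Wass} for Lipschitz potentials on $\overline{B}_{2r}(x)$ just makes explicit a step the paper leaves implicit.

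One small correction: $W_2$ between measures concentrated on $\overline{B}_r(x)$ is the same whether computed in $\X$ or in the subspace, since every admissible plan is concentrated on $\overline{B}_r(x)\times\overline{B}_r(x)$. So only geodesics and $c$-transforms differ, not the distance or the metric speed, and your inequality argument there is really an equality.
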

\begin{proof}
 Apply Proposition \ref{prop:dis_flussi} and Proposition \ref{prop:der_Wass} using the semigroup property $f_t=~\mathbf{h}_{t-t_0}f_{t_0}$ for all $t_0>0$ with $f_{t_0} \in C_b(B)$ (see Corollary \ref{cor:heatflow_onmeas}).
\end{proof} 
Now, combining Lemma \ref{lem:der_Wass} with Theorem \ref{th:EVI_sporca} in the case $f=f_t=\mathbf{h}_t^*\mu_0$ we finally deduce the following ${\rm EVI}$ inequality with an error term:
$$\Ent_\mm (\sigma)-\Ent_\mm(\mu_t)-\frac{K(x)}{2}W_2^2(\mu_t,\sigma) \ge \frac 12 \frac\d{\d t}W_2^2(\mu_t, \sigma)-C\lim_{s\uparrow r}\|f_t\|_{L^\infty(B_r(x) \setminus B_s(x))}.$$
We are left with the estimate of the error $\|f_t\|_{L^\infty(B_r(x) \setminus B_s(x))}$ for $s<r$. Our aim is to show that, if we pick an initial datum $\mu_0$ whose support is well-contained in $B_r(x)$, then the error term is infinitesimal as $t \to 0$. The pointwise estimates on the heat kernel will be fundamental.
\begin{proposition}\label{prop:flux anulus vanish}
Let $\Xdm$ be a complete metric measure space, let $\varnothing \neq \Omega\subseteq \X$ be open, and let us assume that $\Omega$ satisfies ${\sf RCD}_{loc}(K(\cdot),\infty)$ and that $\mm$ is locally doubling on $\Omega$. Let $ B_{r}(x)$ be such that $r < \min \{\frac{r_D(x)}{21}, \frac{r(x)}6 \}$ and $\mm(\partial B_r(x))=0$. Then, for any
$\mu_0\in\PP(\overline{B}_{r'}(x))$ for some $r' < r$, for $f_t:=\mathbf{h}_t^*\mu_0$ one has
\begin{equation}
 \lim_{t \to 0}\|f_t\|_{L^\infty(B_r(x) \setminus B_s(x))}=0,\qquad \forall s \in (r',r).
\end{equation}
\end{proposition}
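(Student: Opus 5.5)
We plan to use the kernel representation of Corollary \ref{cor:heatflow_onmeas} together with the Gaussian upper bound of Theorem \ref{th:heatonOmega}. By Lemma \ref{lem:balls_satisfy}, the ball $B=B_r(x)$ satisfies the hypotheses of Theorem \ref{th:heatonOmega}. Hence there is a locally Hölder continuous kernel $p$ on $(0,\infty)\times B\times B$ with
\[
p_t(y,z)\le \frac{C}{t^\beta}\exp\Big(-c\frac{\sfd^2(y,z)}{t}+Ct\Big)\qquad\forall (t,y,z)\in(0,\infty)\times B\times B.
\]
Corollary \ref{cor:heatflow_onmeas} moreover gives $f_t(y)=\int p_t(y,z)\,\d\mu_0(z)$ pointwise for $y\in B$. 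One caveat: the Corollary is stated for $\mu_0\in\PP(B)$, and here $\mu_0\in\PP(\overline{B}_{r'}(x))\subseteq\PP(B)$ since $r'<r$, so it applies.

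Fix $s\in(r',r)$ and set $\delta\coloneqq s-r'>0$. For $y\in B\setminus B_s(x)$ and $z\in\supp(\mu_0)\subseteq\overline{B}_{r'}(x)$, the triangle inequality gives $\sfd(y,z)\ge \sfd(y,x)-\sfd(z,x)\ge s-r'=\delta$. Plugging this into the representation formula and using that $\mu_0$ is a probability measure, we get, for every $y\in B\setminus B_s(x)$,
\[
0\le f_t(y)\le \int p_t(y,z)\,\d\mu_0(z)\le \frac{C}{t^\beta}e^{-c\delta^2/t}e^{Ct}.
\]
The right-hand side does not depend on $y$ and tends to $0$ as $t\downarrow 0$, because the exponential decay $e^{-c\delta^2/t}$ dominates the polynomial blow-up $t^{-\beta}$. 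Taking the supremum over $y$ therefore yields $\|f_t\|_{L^\infty(B_r(x)\setminus B_s(x))}\to 0$.

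The only delicate point is checking that the Gaussian bound holds for \emph{every} pair $(y,z)\in B\times B$, not just off an $\mm$-negligible set $N$. This matters because $\mu_0$ may be singular with respect to $\mm$; for instance it could be a Dirac mass, in which case an a.e. bound in $z$ says nothing about the integral against $\mu_0$. Theorem \ref{th:heatonOmega}(3) is stated for all points, and we will justify this as follows. The bound of Corollary \ref{cor:upper_gauss} holds off $N$. The continuous representative of $p_t$ from Theorem \ref{th:heatonOmega}(1) agrees with it a.e., and $\supp(\mm\res B)=\overline B$, so $B\setminus N$ is dense in $B$. Since both sides of the inequality are continuous in $(y,z)$ on $B\times B$, the bound extends to all pairs. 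Since the $L^\infty$ norm is an essential supremum, only the $y$ variable needs to be controlled a.e., but the bound is needed for every $z\in\supp(\mu_0)$, and that is exactly what this continuity argument provides.
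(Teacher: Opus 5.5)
Your proof is correct and follows the paper's argument: you represent $f_t(y)=\int p_t(y,z)\,\d\mu_0(z)$ via Corollary \ref{cor:heatflow_onmeas}, use $\sfd(y,z)\ge s-r'$ for $y\notin B_s(x)$ and $z\in\overline{B}_{r'}(x)$, and apply the Gaussian upper bound of Theorem \ref{th:heatonOmega}, which is available by Lemma \ref{lem:balls_satisfy}. You also note that the bound is needed for every $z$, not just off an $\mm$-negligible set, since $\mu_0$ may be singular; the paper leaves this implicit in the pointwise statement of Theorem \ref{th:heatonOmega}(3), and your continuity-plus-full-support argument is a valid way to justify it.
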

\begin{proof}
As $f_t(y)=\int p_t(y,z) \d\mu_0(z)$, a simple application of Theorem \ref{th:heatonOmega} (recall Lemma \ref{lem:balls_satisfy}) gives
\[
    f_t(y) \le \frac{C}{t^\beta}\exp \left(-c\frac{(s -r')^2}{t}+Ct \right),
\]
for all $y \in B_r(x) \setminus B_s(x)$, thus concluding the proof.
\end{proof}
We will now show how the local Riemannian curvature dimension condition implies the {strong} convexity of its defining entropy inequality. The proof follows the same strategy employed in \cite{DaneriSavare08} (see Proposition 3.1 and Theorem 3.2 therein).
\begin{theorem}\label{th:RCD_implies_strongCD}
    Let $\Xdm$ be a complete metric measure space, let $\varnothing \neq \Omega\subseteq \X$ be open, and let us assume that $\Omega$ satisfies ${\sf RCD}_{loc}(K(\cdot),\infty)$ and that $\mm$ is locally doubling on $\Omega$. Fix $x \in \Omega$, and let $\tilde{r}(x):=~\min \{\frac{r_D(x)}{21}, \frac{r(x)}6 \}$. Then equation \eqref{eq:convexity Ent} holds for all $\mu_0,\mu_1 \in \PP_2(\X)$  with $\supp(\mu_i)\subseteq B_{\frac{\tilde{r}(x)}{2}} (x)$ for $i=0,1$ and all \(\pi\in{\rm OptGeo}(\mu_0,\mu_1)\). In particular, $\Omega$ satisfies \emph{strong} ${\sf CD}_{loc}(K(\cdot),\infty)$ with local radius $\tilde{r}(\cdot)$.
\end{theorem}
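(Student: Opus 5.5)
We follow the Daneri--Savaré strategy (Proposition 3.1 and Theorem 3.2 in \cite{DaneriSavare08}): the EVI with error term plus regularization by the dual heat flow give convexity along \emph{every} geodesic. Fix $x$, a radius $r<\tilde r(x)$ with $\mm(\partial B_r(x))=0$ and $\frac{\d}{\d s}\mm(B_s(x))$ existing at $s=r$ (a.e.\ $r$ works), chosen so that $\supp(\mu_i)\subseteq B_{r'}(x)$ for some $r'<r$. Since $r<\tilde r(x)/2$ is not required, we first arrange the radii: the supports lie in $B_{\tilde r(x)/2}(x)$, so pick $r\in(\tilde r(x)/2,\tilde r(x))$ and $r'<r$ with $\supp\mu_i\Subset B_{r'}(x)$. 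Every geodesic between them stays in $\overline B_{r'}(x)$ up to an enlargement controlled by $\tilde r(x)$, and we want $(\e_t)_\sharp\pi\in\PP(\overline B_{r''}(x))$ with $r''<r$. If this fails, shrink: it suffices to prove the claim for supports in a smaller ball and use that all intermediate measures have supports in $B_{\tilde r(x)}(x)$. Fix $\pi\in\OptGeo(\mu_0,\mu_1)$, $t\in(0,1)$, $\nu:=(\e_t)_\sharp\pi$. We may assume $\Ent_\mm(\mu_0),\Ent_\mm(\mu_1)<\infty$.

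\textbf{Step 1: integrated EVI.} Run the dual flow $\nu_s:=\mathbf h_s^*\nu=g_s\mm$ on $\overline B_r(x)$ (Lemma \ref{lem:balls_satisfy}, Corollary \ref{cor:heatflow_onmeas}). By Lemma \ref{lem:der_Wass} and Theorem \ref{th:EVI_sporca} applied to $f=g_s$ for $s>0$ (note $g_s\in C_b\cap H^{1,2}$), for every $\sigma\in\PP(\overline B_r(x))$ and a.e.\ $s>0$,
\[
\tfrac12\tfrac{\d}{\d s}W_2^2(\nu_s,\sigma)+\tfrac{K(x)}2W_2^2(\nu_s,\sigma)\le \Ent_\mm(\sigma)-\Ent_\mm(\nu_s)+C\,e(s),\quad e(s):=\lim_{s'\uparrow r}\|g_s\|_{L^\infty(B_r\setminus B_{s'})}.
\]
By Proposition \ref{prop:flux anulus vanish}, $e(s)\to0$ as $s\downarrow0$, hence $\int_0^\tau e\to0$ faster than needed. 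Integrate on $[0,\tau]$ with weight $e^{K(x)s}$, use monotonicity of $s\mapsto\Ent_\mm(\nu_s)$, and get the integrated form \eqref{Eq:1.2proofintegralform}. The initial point is $\nu_0=\nu$, and weak continuity from Corollary \ref{cor:heatflow_onmeas} together with bounded supports gives $W_2(\nu_s,\nu)\to0$.

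\textbf{Step 2: Daneri--Savaré argument.} Apply this with $\sigma=\mu_0$ and $\sigma=\mu_1$, and combine with weights $(1-t)$ and $t$. Use $W_2(\nu,\mu_0)=tW$ and $W_2(\nu,\mu_1)=(1-t)W$, together with the elementary inequality $(1-t)W_2^2(\nu_\tau,\mu_0)+tW_2^2(\nu_\tau,\mu_1)\ge t(1-t)W^2$ (triangle inequality plus convexity). This gives
\[
\Ent_\mm(\nu_\tau)\Big(\textstyle\int_0^\tau e^{K(x)p}\d p\Big)\le \Big(\textstyle\int_0^\tau e^{K(x)p}\d p\Big)\big[(1-t)\Ent_\mm(\mu_0)+t\Ent_\mm(\mu_1)\big]-\tfrac{K(x)}2t(1-t)W^2\,\tau+o(\tau)+C\textstyle\int_0^\tau e.
\]
The $K$-term uses the exact bookkeeping of \cite[Theorem 3.2]{DaneriSavare08}, where the quantity $\frac{e^{K\tau}}2(\ldots)-\frac12(\ldots)$ is expanded. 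Dividing by $\tau$ and letting $\tau\to0$ kills the error, since $\frac1\tau\int_0^\tau e\to0$. Lower semicontinuity of entropy under $W_2$ convergence $\nu_\tau\to\nu$ then yields \eqref{eq:convexity Ent} at time $t$ for the given $\pi$.

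\textbf{Main obstacle.} The delicate point is the bookkeeping in Step 2. Daneri--Savaré's convexity proof requires the EVI at all (small) times with the reference measure $\nu$, and the $K$-dependent quadratic terms must be matched exactly to first order in $\tau$. One must check that the remainder $C\int_0^\tau e$ is genuinely $o(\tau)$, and it is even exponentially small by the Gaussian bound. One must also check that Theorem \ref{th:EVI_sporca}'s hypotheses ($g_s\in L^\infty$, $\mathcal E(g_s,g_s)<\infty$, support in $B_r$, $\sigma$ on $\overline B_r$) hold for $s>0$; the flow may leak to the boundary, but this is exactly what $e(s)$ accounts for. A secondary issue is ensuring all geodesics from $\mu_0$ to $\mu_1$ stay inside a ball $\overline B_{r'}$ with $r'<r<\tilde r(x)$. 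Since $\mu_i$ have supports in $B_{\tilde r/2}$, geodesic midpoints lie in $B_{\tilde r}$ but near its boundary, so we may need to first prove the statement for supports in a smaller concentric ball and then use Lemma \ref{lem:restr under strong convexity}-type restriction, or simply pick $r$ close to $\tilde r$ and use compactness of supports. I would try the latter first.
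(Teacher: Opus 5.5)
Your proposal is correct and is essentially the paper's proof. You use the integrated EVI with error along the localized dual heat flow started at $\nu=(\e_t)_\sharp\pi$, make the error $o(\tau)$ via the Gaussian bound of Proposition~\ref{prop:flux anulus vanish}, combine $\sigma=\mu_0,\mu_1$ through $(1-t)a^2+tb^2\ge t(1-t)(a+b)^2$, divide by $\int_0^\tau e^{K(x)p}\,\d p$, and conclude by lower semicontinuity of the entropy. The radius bookkeeping you hesitate over is settled exactly as in the paper, with no restriction-lemma detour: by compactness take $r'<\tilde r(x)/2$ with $\supp(\mu_i)\subseteq\overline B_{r'}(x)$, so every interpolant $(\e_t)_\sharp\pi$ lies in $\overline B_{2r'}(x)$, then choose $r\in(2r',\tilde r(x))$ with $\mm(\partial B_r(x))=0$ and $\frac{\d}{\d s}\mm(B_s(x))$ existing at $s=r$.
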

\begin{proof}
    Clearly, it is enough to show the statement for boundedy supported probabilities $\mu_0,\mu_1\ll \mm$. Given such $\mu_0$, $\mu_1$, fix $r' < \frac{\tilde{r}(x)}{2}$ s.t. $\supp(\mu_i) \subset \overline{B}_{r'}(x)$ for $i=0,1$, and fix $r$ s.t. $2r' < r <  \tilde{r}(x)$ and $\frac{\d}{\d s} \mm(B_s(x))$ exists at $s=r$. Let then $r_0:=\frac{2r'+r}{2}$.

    Set $K:=K(x)$ for brevity and let $\sigma \in \PP(\overline{B}_{r}(x))$ be of the form $\sigma=g\mm$, $g \in L^\infty(\X)$, and combine Theorem \ref{th:EVI_sporca} with Proposition \ref{prop:der_Wass} to obtain
    \[
        \Ent_\mm (\sigma)-\Ent_\mm({\bf h}_t^*(\nu))-\frac{K}{2}W_2^2({\bf h}_t^*(\nu),\sigma) \ge \frac 12 \frac\d{\d t}W_2^2({\bf h}_t^*(\nu), \sigma)-C\|f_t\|_{L^\infty(B_{r}(x) \setminus B_{r_0}(x))},
    \]
    where $\nu \in \PP(\overline{B}_{2r'}(x))$ is arbitrary, $t>0$ and ${\bf h}_t^*(\nu) = f_t\mm$ denotes the heat flow of $\nu$ in $\overline{B}_r(x)$. \\
    We now multiply both sides by $e^{Kt}$ and integrate to deduce, after manipulations, the following
    \[
        \frac{e^{Kt}}{2}W_2^2({\bf h}_t^*(\nu),\sigma) - \frac12W_2^2(\nu,\sigma) \le  {\sf I}_K(t) \left( {\rm Ent}_\mm(\sigma) - {\rm Ent}_\mm({\bf h}_t^*(\nu)) \right) + C \int_0^t e^{Ks}\|f_s\|_{L^\infty(B_r(x) \setminus B_{r_0}(x))}\,\d s,
    \]
    where ${\sf I}_K(t):=\int_0^t e^{Ks} \d s$, having also used that $t\mapsto {\rm Ent}_\mm(\mu_t)$ is nonincreasing (cf.\ \eqref{eq_dis_flussi2}). Notice that the above  holds in fact for any $\sigma$ supported in $B_{r'}(x)$, by approximation, as every term is weakly continuous in $\sigma$. \\
    By using the upper Gaussian estimate (cf.\ Proposition \ref{prop:flux anulus vanish}), we deduce 
    \[
        \int_0^t e^{Ks}\|f_s\|_{L^\infty(B_r(x) \setminus B_{r_0}(x))}\,\d s \le {\sf I}_K(t)\omega(t),
    \]
    for some $\omega(t) \to 0$ as $t\downarrow 0$. 

    We can thus apply the above estimate twice, once with $\nu = ({\sf e}_s)_\sharp \pi,\sigma=\mu_0$, and then with $\nu = ({\sf e}_s)_\sharp \pi,\sigma=\mu_1$ to deduce
    \begin{align*}
        &{\sf I}_K(t)\frac{K}{2} s(1-s)W_2^2(\mu_0,\mu_1)=\frac{e^{Kt}-1}{2} s(1-s)W_2^2(\mu_0,\mu_1) \\ 
        &\le\frac{e^{Kt}}{2}s(1-s)\left( W_2({\bf h}_t^*(\nu),\mu_0) + W_2({\bf h}_t^*(\nu),\mu_1) \right)^2  - \frac{1}{2}\left(   s(1-s)^2 W_2^2(\mu_0,\mu_1)+s^2 (1-s) W_2^2(\mu_0,\mu_1) \right) \\ 
        &\le\frac{e^{Kt}}{2}\left(s W_2^2({\bf h}_t^*(\nu),\mu_1) + (1-s) W_2^2({\bf h}_t^*(\nu),\mu_0) \right) - \left( \frac{s}2 W_2^2(\nu,\mu_1) +  \frac{1-s}2 W_2^2(\nu,\mu_0)\right)  \\ 
        &\le {\sf I}_K(t) \left(s {\rm Ent}_\mm(\mu_1) +(1-s) {\rm Ent}_\mm(\mu_0) - {\rm Ent}_\mm({\bf h}_t^*(\nu)) \right) +  {\sf I}_K(t)\omega(t).
    \end{align*}
    Finally, dividing by ${\sf I}_K(t)$ and sending $t\downarrow 0$, we achieve
    \[
       \frac{K}{2} s(1-s)W_2^2(\mu_0,\mu_1) \le s {\rm Ent}_\mm(\mu_1) +(1-s) {\rm Ent}_\mm(\mu_0) -\liminf_{t\downarrow 0}{\rm Ent}_\mm({\bf h}_t^*(\nu)),
    \]
    which, after exploiting the weak lower semicontinuity of the relative entropy along with Corollary \ref{cor:heatflow_onmeas} (which guarantees that $\mathbf{h}_t^*(\nu) \weakto \nu$), gives the desired conclusion after rearranging the inequality.
\end{proof}
\subsection{Local essential non-branching}
We obtain the following improvement that will be crucial for the scope of our work.
\begin{corollary}\label{final_cor}
Under the hypotheses of Theorem \ref{th:RCD_implies_strongCD}, for every $\mu_0$, $\mu_1 \in \PP_2(\Omega)$ which are absolutely continuous with respect to $\mm$ and satisfy $\supp(\mu_i) \subseteq B_{\frac{\tilde{r}(x)}{4}}(x)$ for $i=0,1$ and some $x \in \Omega$, there is a unique $\pi \in \OptGeo(\mu_0, \mu_1)$, which is concentrated on a set of non-branching geodesics and is induced by a map.
\end{corollary}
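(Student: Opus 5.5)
The plan is to combine Theorem \ref{th:RCD_implies_strongCD} with Theorem \ref{Prop_NB}. The one subtlety is the change of local radius. Theorem \ref{th:RCD_implies_strongCD} says that $\Omega$ satisfies strong ${\sf CD}_{loc}(K(\cdot),\infty)$ with local radius $\tilde r(\cdot)$. Theorem \ref{Prop_NB} applied with this local radius then says that the ball $B_{\tilde r(x)/4}(x)$ is an essentially non-branching locus. Since $\supp(\mu_i)\subseteq B_{\tilde r(x)/4}(x)$, the conclusion follows directly from Theorem \ref{Prop_NB}: every $\pi\in\OptGeo(\mu_0,\mu_1)$ is concentrated on non-branching geodesics, and the optimal dynamical plan is unique and induced by a map.

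To be safe, I will check the hypothesis that the proof of Theorem \ref{Prop_NB} actually uses, namely Lemma \ref{lem:restr under strong convexity} in its strong form. We need $\Ent_\mm$ to be $K(x)$-convex along $({\sf rest}_s^t)_\sharp(F\pi)$ for every $\pi\in\OptGeo(\mu_0,\mu_1)$, every $0\le s<t\le1$ and every probability density $F$ on ${\rm Geo}(\X)$ with respect to $\pi$.

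This is verified in three steps.
\begin{enumerate}
\item Take marginals with support in $B_{\tilde r(x)/4}(x)$. The geodesics in the support of $\pi$ have length less than $\tilde r(x)/2$, so they stay inside $B_{\tilde r(x)/2}(x)$. The same holds for all restrictions and for the reweighted plans $F\pi$.
\item The plan $({\sf rest}_s^t)_\sharp(F\pi)$ is again an optimal geodesic plan between its endpoint marginals, because restrictions and reweightings of optimal dynamical plans remain optimal. Its marginals are supported in $B_{\tilde r(x)/2}(x)$, with some margin to be checked against the compact-inclusion requirement $\Subset$.
\item Theorem \ref{th:RCD_implies_strongCD} states that \eqref{eq:convexity Ent} holds for all optimal geodesic plans whose marginals are supported in $B_{\tilde r(x)/2}(x)$. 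This gives exactly the required convexity.
\end{enumerate}
Once this is in place, \cite[Remark 3.2 and Corollary 1.4]{RajalaSturm12} give essential non-branching, uniqueness, and the fact that the plan is induced by a map.

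The main obstacle is bookkeeping at the boundary, in two places. First, Theorem \ref{th:RCD_implies_strongCD} is stated for supports in $B_{\tilde r(x)/2}(x)$, while the Rajala--Sturm argument needs convexity for intermediate and reweighted measures. Second, absolute continuity of the reweighted marginals $(\e_s)_\sharp(F\pi)$ is not automatic. I would handle this as \cite{RajalaSturm12} does, using only plans between absolutely continuous marginals; strong convexity then yields absolute continuity of the interpolants, with bounded entropy. The factor $1/4$ in the radius leaves enough room to apply the strong convexity inequality to every sub-plan that arises, so I expect no real difficulty beyond this.
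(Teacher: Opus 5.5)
Your proposal is correct and follows the paper's argument: Theorem \ref{th:RCD_implies_strongCD} gives strong ${\sf CD}_{loc}(K(\cdot),\infty)$ with local radius $\tilde r(\cdot)$. Theorem \ref{Prop_NB}, applied with this radius, then makes $B_{\tilde r(x)/4}(x)$ an essentially non-branching locus, and your three-step check simply reproduces the paper's proof of Theorem \ref{Prop_NB} via Lemma \ref{lem:restr under strong convexity}. Your concern about absolute continuity of the reweighted marginals is harmless but unnecessary, since Theorem \ref{th:RCD_implies_strongCD} already yields \eqref{eq:convexity Ent} for all marginals supported in $B_{\tilde r(x)/2}(x)$, absolutely continuous or not.
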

\begin{proof}
Combine Theorem \ref{th:RCD_implies_strongCD} and Theorem \ref{Prop_NB}.    
\end{proof}
\section{Stability of infinitesimal Hilbertianity: the general case}
In this part, we prove our main results Theorem \ref{Main intro} and Theorem \ref{thm:mosco local intro}, as a byproduct of a more general analysis for infinite dimensional converging spaces.

\begin{theorem}\label{MainT}
Let $(\X_n, \sfd_n, \mm_n, x_n)$ be a sequence of pointed complete metric measure spaces that are pmG-converging to a pointed complete limit space $(\X_\infty, \sfd_\infty, \mm_\infty, x_\infty)$, and let $(\Z,\sfd)$ be a realization of the convergence. Let $\Omega_n\subseteq \X_n,\Omega_\infty \subseteq \X_\infty$ be non-empty open sets. Assume that $\Omega_n$ is infinitesimally Hilbertian and satisfies strong ${\sf CD}_{loc}(K_n(\cdot),\infty)$ for all $1 \leq n < \infty$. Furthermore, denoting by $r_n(\cdot)$ the local radius of $\Omega_n$, suppose the following: for all $x \in \Omega_\infty$ there are $y_n \in \Omega_n$ with $\sfd(y_n,x)\to 0$ satisfying $\liminf_n r_n(y_n) >0$ and $\liminf_n K_n(y_n) > -\infty$. Then $\Omega_\infty$ is infinitesimally Hilbertian.
\end{theorem}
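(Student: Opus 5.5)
The plan mirrors the proof of Proposition \ref{MainT K>=0}, in two steps. First, a local $\Gamma$-liminf inequality: for $x\in\Omega_\infty\cap\supp(\mm_\infty)$, $r$ small depending on $\liminf_n r_n(y_n)$, every $f_\infty$ supported in $B_r(x)$, and every $f_n\to f_\infty$ $L^2$-weakly, we want ${\rm Ch}(f_\infty)\le\liminf_n{\rm Ch}(f_n)$. Second, combine this with Proposition \ref{Gammalimsup} exactly as in Step 2 of Proposition \ref{MainT K>=0}. This gives the parallelogram identity for functions supported in small balls. Theorem \ref{thm:equivalent hilbert}, together with locality and separability, then globalizes it to all of $\Omega_\infty$. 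Step 2 is verbatim, so all the work is in the liminf inequality once negative curvature is allowed.

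For the liminf, we use Theorem \ref{thm:Sobolevintegrated}. It suffices to test \eqref{eq:lagr_Sob K>=0} with $C=\lim_n{\rm Ch}^{1/2}(f_n)$ against test plans $\pi$ concentrated on curves in $\overline B_{r+\eps}(x)$. The constant $e^{K^-D^2/6}$ in \eqref{Eq:rajalacompressionestimate} no longer tends to $1$, so a single transport geodesic cannot be used. Instead we follow the polygonal strategy of \cite{NobiliRenziVitillaro25}, in the spirit of \cite{Lisini07}:
\begin{enumerate}[(a)]
\item Fix a partition $0=t_0<\dots<t_k=1$ of mesh $1/k$. By a preliminary regularization of $\pi$ (approximation/convolution in the plan), reduce to the case where the marginals $\mu_{t_i}=(\e_{t_i})_\sharp\pi$ have bounded densities with $\|\rho_{t_i}\|_{L^\infty}\le\Comp(\pi)$. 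Also $\sum_i W_2^2(\mu_{t_i},\mu_{t_{i+1}})/(t_{i+1}-t_i)\le\rmKe(\pi)$.
\item Approximate each $\mu_{t_i}$ by $\rho_{i,n}\mm_n$ via \cite[Lemma 4.1]{NobiliRenziVitillaro25}, with supports in $\overline B_{r+3\eps}(y_n)$ and $\limsup_n\|\rho_{i,n}\|_\infty\le\Comp(\pi)$.
\item Join consecutive approximants by the plans of Corollary \ref{cor:improved_rajala}. Here the strong ${\sf CD}_{loc}$ assumption (via Theorem \ref{Prop_NB}) enters: it gives uniqueness, so the compression estimate holds with $\Lip(\pi_{i,n})\le D_{i,n}$ and $D_{i,n}$ of order the distance scale of the $i$-th step, hence $O(1/k)$ after rescaling. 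The compression factor is then $e^{K^-O(k^{-2})}\to1$.
\item Concatenate the rescaled pieces into a plan $\pi_n^k$ on $\X_n$. Its compression is at most $e^{cK^-/k^2}\max_i\|\rho_{i,n}\|_\infty$, and its kinetic energy is $\sum_i kW_2^2(\rho_{i,n}\mm_n,\rho_{i+1,n}\mm_n)\to\sum_i kW_2^2(\mu_{t_i},\mu_{t_{i+1}})\le\rmKe(\pi)$.
\end{enumerate}
The endpoints are $\rho_{0,n},\rho_{k,n}$, so \eqref{eq:coupling} gives $\int f_n(\gamma_1)-f_n(\gamma_0)\,\d\pi^k_n\to\int f_\infty(\gamma_1)-f_\infty(\gamma_0)\,\d\pi$. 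Applying Theorem \ref{thm:Sobolevintegrated} on $\X_n$, passing $n\to\infty$ and then $k\to\infty$ yields \eqref{eq:lagr_Sob K>=0}.

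The main obstacle is making the Lipschitz constants of the small-step geodesics genuinely $O(1/k)$, uniformly in $n$. Uniqueness via Theorem \ref{Prop_NB} gives $\Lip$ equal to the actual transport length, but $L^\infty$ bounds on transport distances between nearby but diffuse marginals are not automatic. The first attempt would be to use that the liminf only needs the compression of the concatenated plan to be controlled, and that $\Lip(\pi_{i,n})$ is bounded by the diameter $\le 2r$ uniformly. Then choose $r$ small relative to $K^-$, so that $e^{K^-r^2/6}\le 1+\eta$; this works in a fixed ball, with $\eta\to0$ as $r\to0$. The loss $(1+\eta)$ can be absorbed by localizing: cover ${\rm supp}\,f_\infty$ by tiny balls, apply the estimate for $f_\infty\chi_j$ with a partition of unity, and use that ${\rm Ch}$ is given by the integral of $|\nabla f|^2$, with locality of weak gradients letting $\eta\to0$. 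All support requirements of Corollary \ref{cor:improved_rajala} are met for $r<\frac1{6}\liminf_n r_n(y_n)$ and $\eps$ small.
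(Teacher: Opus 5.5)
Your overall architecture matches the paper's: Theorem \ref{MainT} follows from Proposition \ref{Gammalimsup}, a local liminf inequality (Theorem \ref{PolyProp}), and Step 2 of Proposition \ref{MainT K>=0}. The gap is in the liminf inequality, exactly where you suspect it. In (c) you need $\Lip(\pi_{i,n})\to 0$ as the mesh shrinks, uniformly in $n$, and nothing in (a)--(d) provides this. The bound $W_2(\rho_{i,n}\mm_n,\rho_{i+1,n}\mm_n)=O(1/k)$ controls displacements only in $L^2$. Nothing prevents the optimal plan between two independently constructed, diffuse approximants from moving a little mass over distances of order $r$, and $D$ sees exactly that.

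Your fallback does not repair this. The diameter bound does give ${\rm Ch}(g)\le(1+\eta(\rho))\liminf_n{\rm Ch}(g_n)$ for $g$ supported in $B_\rho$, with $\eta(\rho)\simeq K^-\rho^2$. But a partition of unity at scale $\rho$ costs terms $|f|\,|\nabla\chi_j|\sim|f|/\rho$, and gain times cost does not vanish. Take the most favourable case: both sides Hilbertian, $L^2$-strong convergence, and $\sum_j\chi_j^2=1$, so that $\sum_j|\nabla(\chi_j f)|^2=|\nabla f|^2+f^2\sum_j|\nabla\chi_j|^2$. You still only get ${\rm Ch}(f_\infty)\le(1+\eta)\liminf_n{\rm Ch}(f_n)+\eta\int f_\infty^2\sum_j|\nabla\chi_j|^2\,\d\mm_\infty$, and the last term is of order $K^-\|f_\infty\|_{L^2}^2$ for every $\rho$. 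The actual setting is worse. The limit is not known to be Hilbertian, and $L^2$-weak convergence does not pass $\int f_n^2|\nabla\chi_j|^2\,\d\mm_n$ to the limit. Locality of weak gradients only helps where $\chi_j\equiv 1$. Localizing through energy measures instead of cut-offs would require the approximating plans to stay near the small set, which again means short rays.

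The missing ingredient is the removal of long rays via \cite[Proposition 4.4]{NobiliRenziVitillaro25}. With $W_{i,n}:=W_2(\tilde\rho_{i,n}\mm_n,\tilde\rho_{i+1,n}\mm_n)$, set $\Gamma_{i,n}:=\{\gamma:\sfd^2(\gamma_0,\gamma_1)\le W_{i,n}^{1/2}\}$.
\begin{enumerate}[(i)]
\item Chebyshev gives $\tilde\eta_{i,n}(\Gamma_{i,n}^c)\le W_{i,n}^{3/2}$. Since $\lim_n\sum_i W_{i,n}^2\le\rmKe(\pi)/M$, Hölder makes the total discarded mass $O(M^{-1/2})$, which is where the hypothesis $\sum_i\tilde\eta_{i,n}(\Gamma_{i,n}^c)\le\frac12$ comes from.
\item Discarding this mass produces densities $\rho_{i,n}$ with $\|\rho_{i,n}-\tilde\rho_{i,n}\|_{L^1}\le\delta_{n,M}$, $\|\rho_{i,n}\|_{L^\infty}\le(1+\delta_{n,M})\|\tilde\rho_{i,n}\|_{L^\infty}$ and $\limsup_M\limsup_n\delta_{n,M}=0$. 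It also produces optimal geodesic plans $\eta_{i,n}$ concentrated on $\Gamma_{i,n}$, so $\Lip(\eta_{i,n})\le W_{i,n}^{1/4}\to0$.
\item This is where strong ${\sf CD}_{loc}$ is essential. Theorem \ref{thm:rajalaCDqKN} only yields \emph{some} plan with the density bound, with $D$ the supremum over \emph{all} optimal plans. Uniqueness (Theorem \ref{Prop_NB}, Corollary \ref{cor:improved_rajala}) forces $\eta_{i,n}$ to be that plan with $D=\Lip(\eta_{i,n})$. Hence $\Comp(\eta_{i,n})\le e^{\frac{K^-}{6}W_{i,n}^{1/2}}\max_j\|\rho_{j,n}\|_{L^\infty}$, and the exponential factor tends to $1$.
\end{enumerate}
Gluing and a diagonal argument in $(n,M)$ then conclude as in your (d), with one change. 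The endpoint densities are now the modified $\rho_{0,n},\rho_{M,n}$. Their $L^1$-closeness to $\tilde\rho_{0,n},\tilde\rho_{M,n}$, together with the $L^\infty$ bounds, is what gives the $L^2$-strong convergence of the endpoints needed in \eqref{eq:coupling}. Separately, the regularization in your (a) is unnecessary: the time marginals of a test plan already have densities bounded by $\Comp(\pi)$.
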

Notice that the above theorem implies our first main result Theorem \ref{Main intro}.
\begin{proof}[Proof of Theorem \ref{Main intro}]
Since $\Omega_n$ satisfies ${\sf RCD}_{loc}(K(\cdot),N(\cdot))$, with $N(\cdot)$ finite, then $\mm$ is locally doubling on $\Omega_n$ (see Corollary \ref{cor:dim_CDloc}). Therefore, by Theorem \ref{th:RCD_implies_strongCD}, it holds that $\Omega_n$ satisfies strong ${\sf CD}_{loc}(K(\cdot),\infty)$, which implies that $\Omega_\infty$ is also infinitesimally Hilbertian by Theorem \ref{MainT}. Finally, the stability of the dimensional ${\sf CD}_{loc}$ condition follows by Theorem \ref{thm:stability dimensional CDloc}.
\end{proof} 
The proof of Theorem \ref{MainT} is postponed to the end of this section. We now carry out the main lower semicontinuity analysis for the Cheeger energy.
\begin{theorem}\label{PolyProp}
Let $(\X_n, \sfd_n, \mm_n, x_n)$ be a sequence of pointed complete metric measure spaces that are pmG-converging to a pointed complete limit space $(\X_\infty, \sfd_\infty, \mm_\infty, x_\infty)$, and let $(\Z,\sfd)$ be a realization of the convergence. Let $\Omega_n\subseteq \X_n,\Omega_\infty \subseteq \X_\infty$ be non-empty open sets. Assume that $\Omega_n$ satisfies strong ${\sf CD}_{loc}(K_n(\cdot),\infty)$ for all $1 \leq n < \infty$. Furthermore, denoting by $r_n(\cdot)$ the local radius of $\Omega_n$, suppose there is $x \in \Omega_\infty$ and $y_n \in \Omega_n$ with $\sfd(y_n,x)\to 0$ satisfying $\liminf_n r_n(y_n) >0$ and $\liminf_n K_n(y_n) > -\infty$. Let $f_\infty \in L^2(\X_\infty)$ with $\supp(f_\infty)\Subset B_r(x)$ with $r<r_0(x):=\frac 16\cdot  \liminf_n r_n(y_n)$. Then, for every sequence $f_n \in L^2(\mm_n)$ converging $L^2$-weak to $f_\infty \in L^2(\mm_\infty)$, we have
$${\rm Ch}(f_\infty) \leq \liminf_{n\to\infty} {\rm Ch}(f_n).$$
Finally, if $r_0(x)=+\infty$, the same conclusion holds for all $f_\infty \in L^2(\X_\infty)$ with no restriction on the support.
\end{theorem}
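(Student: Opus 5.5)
Assume $C:=\liminf_n \rmCh^{1/2}(f_n)<\infty$ and pass to a subsequence realizing it as a limit. By Theorem \ref{thm:Sobolevintegrated} it suffices to show, for every test plan $\pi$ on $\X_\infty$, that
\[
\int f_\infty(\gamma_1)-f_\infty(\gamma_0)\,\d\pi\le \Comp(\pi)^{1/2}\rmKe^{1/2}(\pi)\,C .
\]
As in Step 1 of Proposition \ref{MainT K>=0}, the support condition reduces this to test plans concentrated on curves in $\overline B_{r+\eps}(x)$, with $r+\eps<r_0(x)$. The difference from the case $K\ge0$ is that the compression estimate \eqref{Eq:rajalacompressionestimate} carries the factor $e^{K^-D^2/6}$, with $D$ of order $r$. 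So single optimal geodesics between the endpoint marginals no longer transfer the constant sharply, and the endpoint marginals do not encode $\rmKe(\pi)$ anyway. Following \cite{NobiliRenziVitillaro25} and \cite{Lisini07}, I will use a Wasserstein polygonal approximation, in which the curvature loss becomes negligible.

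\textbf{Step 1: reduction on $\X_\infty$.} Fix a partition $0=t_0<\dots<t_k=1$ of mesh $1/k$ and set $\mu_i:=(\e_{t_i})_\sharp\pi=\rho_i\mm_\infty$, with $\|\rho_i\|_{L^\infty}\le\Comp(\pi)$. Telescoping gives
\[
\int f_\infty(\gamma_1)-f_\infty(\gamma_0)\,\d\pi=\sum_i\Big(\int f_\infty\,\d\mu_{i+1}-\int f_\infty\,\d\mu_i\Big),
\]
and $\sum_i k\,W_2^2(\mu_i,\mu_{i+1})\le \rmKe(\pi)$.

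\textbf{Step 2: approximation on $\X_n$.} By \cite[Lemma 4.1]{NobiliRenziVitillaro25}, choose $\rho_{i,n}\mm_n$ converging $L^2$-strongly to $\rho_i$, supported in $\overline B_{r+2\eps}(x)\cap\X_n$, with $\limsup_n\|\rho_{i,n}\|_{L^\infty}\le\Comp(\pi)$ and $W_2(\rho_{i,n}\mm_n,\mu_i)\to0$. For large $n$ these supports lie in $\overline B_{r_n(y_n)/6}(y_n)$. Corollary \ref{cor:improved_rajala} (which uses strong ${\sf CD}_{loc}$) then gives the unique $\pi^i_n\in\OptGeo(\rho_{i,n}\mm_n,\rho_{i+1,n}\mm_n)$. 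Its compression is at most
\[
e^{K^-_n(y_n)\Lip(\pi^i_n)^2/6}\max\{\|\rho_{i,n}\|_{L^\infty},\|\rho_{i+1,n}\|_{L^\infty}\}.
\]
Concatenate the $\pi^i_n$ into one plan $\Pi_n$ on $[0,1]$ by gluing along the shared marginals and rescaling time. Then $\rmKe(\Pi_n)=k\sum_i W_2^2(\rho_{i,n}\mm_n,\rho_{i+1,n}\mm_n)$, which tends to $k\sum_iW_2^2(\mu_i,\mu_{i+1})\le\rmKe(\pi)$. By \eqref{eq:coupling}, $\int f_n\,\d(\rho_{i,n}\mm_n)\to\int f_\infty\,\d\mu_i$, so
\[
\int f_\infty(\gamma_1)-f_\infty(\gamma_0)\,\d\pi=\lim_n\int f_n(\gamma_1)-f_n(\gamma_0)\,\d\Pi_n .
\]
Theorem \ref{thm:Sobolevintegrated} bounds the right-hand side by $\Comp(\Pi_n)^{1/2}\rmKe^{1/2}(\Pi_n)\rmCh^{1/2}(f_n)$.

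\textbf{Main obstacle: controlling $\Comp(\Pi_n)$.} The compression exponent involves $\Lip(\pi^i_n)$, the maximal length of transport rays. This is not controlled by $W_2(\mu_i,\mu_{i+1})\sim 1/\sqrt k$: only an $L^2$ average of the lengths is small, not the supremum. My first attempt is to make $\pi$ Lipschitz, i.e.\ to assume $\Lip(\pi)<\infty$. This is a standard reduction, because the integrated characterization can be tested on plans with bounded speed: restrict $\pi$ to $\{\gamma:\Lip(\gamma)\le L\}$, renormalize, and let $L\to\infty$, which costs an arbitrarily small error in the three quantities. Then the pairing $(\e_{t_i},\e_{t_{i+1}})_\sharp\pi$ has displacement at most $L/k$, hence $W_\infty(\mu_i,\mu_{i+1})\le L/k$. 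The delicate part is whether the optimal geodesics $\pi^i_n$ between the approximants have $\Lip(\pi^i_n)\lesssim L/k+o_n(1)$. I would address this by making the approximation in \cite[Lemma 4.1]{NobiliRenziVitillaro25} $W_\infty$-close, pushing mass only within small cells, and then arguing via the $c$-cyclical monotonicity of the optimal plan that $W_\infty$-closeness to some admissible coupling bounds the optimal displacements. If this fails, I would instead accept the maximal length $D\le r$ on a set of small mass and use the $W_2$-optimality of the unique map to show that long rays carry mass $O(1/k)$, treating them as an error term.

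Granting the bound, $\limsup_n\Comp(\Pi_n)\le e^{K^-L^2/(6k^2)}\Comp(\pi)$ with $K^-:=\sup_n K_n^-(y_n)<\infty$. Letting $n\to\infty$ and then $k\to\infty$ yields the claim. The case $r_0(x)=+\infty$ is identical, since every bounded test plan then satisfies the support constraints.
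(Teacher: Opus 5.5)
Your architecture matches the paper's: approximate the time marginals $\mu_i$ via \cite[Lemma 4.1]{NobiliRenziVitillaro25}, join consecutive approximants by the unique plans of Corollary \ref{cor:improved_rajala}, glue, and conclude with Theorem \ref{thm:Sobolevintegrated} and \eqref{eq:coupling} at the endpoints. You also locate the crux correctly, in the factor $e^{K^-\Lip(\pi^i_n)^2/6}$. But you then \emph{grant} the bound on $\Lip(\pi^i_n)$, and neither of your routes supplies it. This is a genuine gap.

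\textbf{Why the first route fails.} $c$-cyclical monotonicity only compares sums of squared displacements. It can therefore favour one long ray over many short ones, and it gives no $L^\infty$ control on the rays of a $W_2$-optimal plan in terms of $W_\infty$ of its marginals. The known $L^\infty$ displacement bounds in $\R^d$ rely on Euclidean monotonicity of the optimal map, lower density bounds and finite dimension; none of this is available under strong ${\sf CD}_{loc}(K,\infty)$. Separately, reducing to $\Lip(\pi)\le L$ by restriction is not exhaustive, since $AC^2$ curves need not be Lipschitz.

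\textbf{Why the second route is too vague.} Compression is an $L^\infty$ quantity, so long rays cannot be absorbed as an additive error. Even a small-mass family of them is only controlled by $e^{K^-D^2/6}\max_j\|\rho_{j,n}\|_{L^\infty}$, a fixed factor larger than one. They must be \emph{removed}, and removing them from a single piece $\pi^i_n$ breaks the matching of marginals needed for gluing.

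\textbf{The missing step.} The paper imports it from \cite[Proposition 4.4]{NobiliRenziVitillaro25}. The densities $\tilde\rho_{i,n}$ are replaced by $\rho_{i,n}$ with $\|\rho_{i,n}-\tilde\rho_{i,n}\|_{L^1}\le\delta_{n,M}$ and $\|\rho_{i,n}\|_{L^\infty}\le(1+\delta_{n,M})\|\tilde\rho_{i,n}\|_{L^\infty}$. This is arranged so that consecutive optimal plans are concentrated on $\{\sfd^2(\gamma_0,\gamma_1)\le W_2(\tilde\rho_{i,n}\mm_n,\tilde\rho_{i+1,n}\mm_n)^{1/2}\}$. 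In your notation you can realize this as follows.
\begin{enumerate}[(i)]
\item Glue first. Then restrict $\Pi_n$ to the set $G$ of curves all of whose pieces have length at most $\ell_k$, and set $\Pi_n':=\Pi_n(G)^{-1}\,\Pi_n\res G$.
\item Each piece $({\sf rest}_{t_i}^{t_{i+1}})_\sharp(\Pi_n\res G)\le\pi^i_n$ is optimal between its own bounded marginals. Normalized, it is therefore the unique plan of Corollary \ref{cor:improved_rajala} for them, with rays of length at most $\ell_k$. This is where strong ${\sf CD}_{loc}$ genuinely enters. It yields $\Comp(\Pi_n')\le e^{K^-\ell_k^2/6}\max_j\|\rho_{j,n}\|_{L^\infty}/\Pi_n(G)$.
\item The threshold must tend to zero while the \emph{total} removed mass vanishes. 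Chebyshev gives $\limsup_n\Pi_n(G^c)\le\ell_k^{-2}\sum_iW_2^2(\mu_i,\mu_{i+1})\le\rmKe(\pi)/(k\ell_k^2)$, so $\ell_k=k^{-1/4}$ works. A bound of $O(1/k)$ per step, as you suggest, would only sum to $O(1)$.
\item The endpoint densities of $\Pi_n'$ differ from $\rho_{0,n},\rho_{k,n}$ by $O(\Pi_n(G^c))$ in $L^1$, with bounded $L^\infty$ norm. Hence they differ by a vanishing amount in $L^2$. So $\int f_n\,\d(\e_1)_\sharp\Pi_n'-\int f_n\,\d(\e_0)_\sharp\Pi_n'$ still converges to $\int f_\infty(\gamma_1)-f_\infty(\gamma_0)\,\d\pi$, after a diagonal choice $k\to\infty$, $n=n(k)$.
\end{enumerate}
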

\begin{proof}
We follow the steps of the proof of \cite[Theorem 5.1]{NobiliRenziVitillaro25}. First, fixed $\eps>0$, notice that ${\rm Ch}(f_\infty)$ is the same as ${\rm Ch}^H(f_\infty)$ with $H\coloneqq \overline{B}_{r+\eps}(x)$, being $f_\infty$ supported in $B_r(x)$. Recall also by Theorem \ref{thm:Sobolevintegrated} that we can characterize ${\rm Ch^{1/2}}(f_\infty)$ as the minimal constant $C\ge 0$ such that
\begin{equation}\label{eq:lagr_Sob}
\int f_\infty(\gamma_1) - f_\infty(\gamma_0)\,\d \pi \le {\rm Comp}(\pi)^{1/2} \rmKe^{1/2}(\pi) C,
\end{equation}
holds for all 2-test plans $\pi$. Again, it is enough to check the above for all test plans $\pi$ with $\gamma_t \in \overline{B}_{r+\eps}(x)$ for all $t\in[0,1]$ and $\pi$-a.e.\ $\gamma$, being $f_\infty$ supported in $B_r(x)$. We thus consider fixed any such test plan $\pi$ and, in order to conclude, it suffices to show that (\ref{eq:lagr_Sob}) holds for the choice $C=\liminf_n {\rm Ch^{1/2}}(f_n)$. Without loss of generality, we shall also suppose that $C=\lim_n {\rm Ch^{1/2}}(f_n)$ along a suitable subsequence.

Fixed $M \in \N$, we build a sequence of approximating test plans $\pi_n^M \in  \PP\big(C([0,1],\X_n)$ as follows. Denoted $t_i:=i/M$ and $\rho_i:=(\e_{t_i})_\sharp\pi \in L^\infty(\mm_\infty)$ for $i=0, \ldots, M$, we construct sequences of densities $\tilde{\rho}_{i,n}$ s.t. $\tilde{\rho}_{i,n} \mm_n \in \PP(\X_n)$ and
\[\tilde{\rho}_{i,n} \to \rho_i \mbox{ in } L^2\mbox{-strong,} \qquad\supp(\tilde{\rho}_{i,n}) \subseteq \overline{B}_{r+2\eps}(x) \cap \X_n, \qquad \limsup_n \|\tilde{\rho}_{i,n}\|_{L^\infty(\mm_n)} \leq \|\rho_i\|_{L^\infty(\mm_\infty)} \]
for some fixed $\eps>0$ (see \cite[Lemma 4.1]{NobiliRenziVitillaro25}). Assuming with no loss of generality that $x \in \supp(\mm_\infty)$ (otherwise the statement is trivial), we can find a sequence $y_n \in \Omega_n$, $y_n \to x$ as in the hypotheses of Theorem \ref{MainT}. In particular, it holds that $K_n(y_n) \ge K$ for some $K>-\infty$. Hence, for $n$ large enough we have
\[\supp(\tilde{\rho}_{i,n}) \subseteq \overline{B}_{r+3\eps}(y_n) \cap \X_n.\]
Let $\tilde{\eta}_{i,n} \in \OptGeo(\tilde{\rho}_{i,n}\mm_n,\tilde{\rho}_{i+1,n}\mm_n)$. We would like to construct $\pi_n^M$ as a polygonal geodesic by gluing the plans $\tilde{\eta}_{i,n}$, using Theorem \ref{thm:rajalaCDqKN} to control ${\rm Comp}(\pi_n^M)$. However, in order to get a good estimate, some technical refinements are needed. Namely, denoted
\[\Gamma_{i,n}\coloneqq \Big\{ \gamma \in C([0,1],\X_n) \colon  \sfd^2(\gamma_0,\gamma_1) \le W_2(\tilde \rho_{i,n}\mm_n,\tilde\rho_{i+1,n}\mm_n)^{\frac 12} \Big\},\]
we apply \cite[Proposition 4.4]{NobiliRenziVitillaro25} in order to replace $\tilde{\rho}_{i,n}, \tilde{\eta}_{i,n}$ with $\rho_{i,n}, \eta_{i,n}$ as in the statement of that result. In particular, $\supp(\rho_{i,n}) \subset \supp(\tilde{\rho}_{i,n})$,
\begin{equation}\label{Gamma=0}
\eta_{i,n}(\Gamma_{i,n}^c)=0 \qquad \forall i=0, \ldots, M-1, 
\end{equation}
and there exists $\delta_{n,M}$ with $\limsup_M \limsup_n \delta_{n,M}=0$ s.t. $\|\rho_{i,n}- \tilde{\rho}_{i,n}\|_{L^1(\mm_n)} \leq \delta_{n,M}$ and $\|\rho_{i,n}\|_{L^\infty} \le (1+\delta_{n,M}) \| \tilde{\rho}_{i,n}\|_{L^\infty}$. The technical hypothesis $\sum_{i=0}^{M-1} \tilde{\eta}_{i,n}(\Gamma_{i,n}^c) \leq \frac 12$ is satisfied for $M \geq M_0$ and $n \geq n_0(M)$, see \cite[Eq. (4.12)]{NobiliRenziVitillaro25}. \\
Now, if $r < r_0(x)=\frac 16 \liminf_n r_n(y_n)$ then (choosing $\eps>0$ small enough) the supports of the measures $\rho_{i,n}$ satisfy the hypotheses in Theorem \ref{thm:rajalaCDqKN} for all $n$ large enough. By Corollary \ref{cor:improved_rajala} we know that the plan $\eta_{i,n}$ is the unique element of $\OptGeo(\rho_{i,n}\mm_n,\rho_{i+1,n}\mm_n)$, and so (\ref{Eq:rajalacompressionestimate}) holds for $\eta_{i,n}$, with the constant $D$ being exactly $\Lip(\eta_{i,n})$ (recall \eqref{Eq:rajalacompressionestimate stronc}). Hence
$${\rm Comp}(\eta_{i,n}) \leq e^{\frac{K^-}6 \Lip(\eta_{i,n})^2} \max\left\{\| \rho_{i,n}\|_{L^\infty(\mm_n)}, \|\rho_{i+1,n}\|_{L^\infty(\mm_n)})\right\},$$
which, using (\ref{Gamma=0}), yields
\[{\rm Comp}(\eta_{i,n}) \le e^{\frac{K^-}{6} W_2(\tilde{\rho}_{i,n}\mm_n,\tilde{\rho}_{i+1,n}\mm_n )^{\frac 12}}\max_{j=0,\dots,M}\|\rho_{j,n}\|_{L^\infty(\mm_n)}.\]
Then, with a gluing argument (see, e.g., \cite[Lemma 2.1.1]{G11}), we find $\pi_n^M \in \PP(C[0,1],\X_n))$ with the property
$$
   \eta_{i,n}=\left({\sf rest}_{t_{i}}^{t_{i+1}} \right)_\sharp \pi^M_{n},\qquad \forall i=0,1,...,M-1.
$$
Arguing as in \cite[Proposition 4.5]{NobiliRenziVitillaro25}, we can show that
$$\limsup_{M\to\infty}\limsup_{n \to\infty} \Comp(\pi^M_{n}) \le \Comp(\pi),\qquad \limsup_{M\to\infty}\limsup_{n \to \infty}\rmKe(\pi_{n}^M) \le \rmKe(\pi).$$
Furthermore, setting $\xi_n = \tilde{\rho}_{0,n},\zeta_n= \tilde{\rho}_{M,n}$ (notice that the latter does not depend on $M$), it holds
\begin{align*}
    &\xi_n \to \frac{\d (\e_0)_\sharp \pi }{\d \mm_{\infty}}\quad \text{in $L^2$-strong},& & \zeta_n \to \frac{\d (\e_1)_\sharp \pi }{\d \mm_{\infty}} \quad \text{in $L^2$-strong},\\
    &\limsup_{M\to\infty}\limsup_{n\to\infty}\left\| \xi_n - \frac{\d (\e_0)_\sharp \pi^{M}_{n}}{\d \mm_n}\right\|_{L^2(\mm_n)} =0,&
    &\limsup_{M\to\infty}\limsup_{n\to\infty}\left\| \zeta_n - \frac{\d (\e_1)_\sharp \pi^{M}_{n}}{\d \mm_n} \right\|_{L^2(\mm_n)}=0.
\end{align*}
Then, as in the proof of Theorem 5.1 in \cite{NobiliRenziVitillaro25}, we find sequences $M_k\to\infty$, and $n_k\ge n_0(M_k)\to\infty$ so that
\begin{itemize}
    \item[{\rm (a)}] $\rmKe(\pi_{n_k}^{M_k}) \le \rmKe(\pi) +\frac{2}{k}$ for all $k\in\N$; 
    \item[{\rm (b)}] $\Comp(\pi_{n_k}^{M_k}) \le \Comp(\pi) +\frac{2}{k}$ for all $k\in\N$;
    \item[{\rm (c)}]it holds as $k\to\infty $ that
    \[
       \frac{\d (\e_0)_\sharp \pi_{n_k}^{M_k} }{\d \mm_{n_k}} \to \frac{\d (\e_0)_\sharp \pi }{\d \mm_{\infty}},\qquad \frac{\d (\e_
       1)_\sharp \pi_{n_k}^{M_k} }{\d \mm_{n_k}}\to \frac{\d (\e_1)_\sharp \pi }{\d \mm_{\infty}}, \qquad \text{in $L^2$-strong}.
    \]
\end{itemize}
We are now ready to conclude the proof. For all $k\in\N$, using again Theorem \ref{thm:Sobolevintegrated}, we have
\[
\int f_{n_k}(\gamma_1)-f_{n_k}(\gamma_0)\,\d\pi_{n_k}^{M_k} \le \Comp\left(\pi_{n_k}^{M_k} \right)^{1/2}\rmKe^{1/2}\left(\pi_{n_k}^{M_k}\right)\rmCh^{1/2}(f_{n_k}).
\]
By property (c), and using \eqref{eq:coupling}, we have 
\begin{equation}
\int f_{n_k}(\gamma_1)-f_{n_k}(\gamma_0)\,\d\pi_{n_k}^{M_k} = 
\int f_{n_k}\,\d (\e_1)_\sharp \pi^{M_k}_{n_k} - \int f_{n_k}\,\d (\e_0)_\sharp \pi_{n_k}^{M_k} \to \int f_\infty(\gamma_1)-f_\infty(\gamma_0)\,\d\pi.
\label{eq:endpoint coupling}
\end{equation}
All in all, using the properties (a),(b), we have
\[
\begin{aligned}
\int f_\infty(\gamma_1)-f_\infty(\gamma_0)\,\d\pi  &=\lim_{k\to\infty} \int f_{n_k}(\gamma_1)-f_{n_k}(\gamma_0)\,\d\pi_{n_k}^{M_k} \\
&\le \limsup_{k\to\infty}
\Comp\left(\pi_{n_k}^{M_k} \right)^{1/2}\rmKe^{1/2}\left(\pi_{n_k}^{M_k}\right)\rmCh^{1/2}(f_{n_k})\\
&\le \Comp(\pi)^{1/2}\rmKe^{1/2}_2(\pi) \limsup_{k\to\infty} \rmCh^{1/2}(f_{n_k}).
\end{aligned}
\]
Recalling that $\limsup_k \rmCh^{1/2}(f_{n_k}) = \lim_n \rmCh^{1/2}(f_n)$, the  first conclusion is proven. 

Finally, if $r(x)=+\infty$, we argue by approximation as follows. Let $(\eta_k) \subset \Lip_{bs}(\Z)$ with $\supp(\eta_k)\subset B_{2k}(x), \eta_k\equiv 1$ on $B_k(x)$, $0\le \eta_k\le 1$ and $\Lip(\eta_k)\le 1/k$. If $f_n$ converges $L^2$-weak to $f_\infty$, then for each $k\in \N$, we have that $\eta_k f_n$ converges $L^2$-weak to $\eta_k f_\infty$. Then
\[
    \rmCh^{1/2}(f_\infty)\le \liminf_{k\to \infty}\liminf_{n\to\infty}\rmCh^{1/2}(\eta_k f_n)  \le \lim_{n\to\infty}\rmCh^{1/2}(f_n) + \liminf_{k\to \infty}\liminf_{n\to\infty}\frac{\|f_n\|_{L^2(\X_n)}}{k} = \liminf_{n\to\infty}\rmCh^{1/2}(f_n),
\]
having used the lower semicontinuity of the Cheeger energy, then a Leibniz rule argument followed by the triangular inequality, and lastly that $\liminf_{n\to\infty}\|f_n\|_{L^2(\X_n)}<\infty$. This concludes the proof.
\end{proof}
\begin{proof}[Proof of Theorem \ref{MainT}]
The proof is analogous to that of Step 2 in Proposition \ref{MainT K>=0}, here relying on the combination of Proposition \ref{Gammalimsup} (requiring no curvature assumptions) and Theorem \ref{PolyProp}.
\end{proof}
We next show our second main result of this work.
\begin{proof}[Proof of Theorem \ref{thm:mosco local intro}]
    The proof is directly implied by the combination of Proposition \ref{Gammalimsup} and Theorem \ref{PolyProp} for a suitable numerical constant, taking also into account Theorem \ref{th:RCD_implies_strongCD} and Corollary \ref{cor:dim_CDloc}.
\end{proof}
\begin{remark}\label{rmk:locally complete results}
    \rm
    Theorem \ref{Main intro} follows from the stability of the parallelogram identity and of the local curvature dimension condition on \emph{complete} spaces, the former relying on the local Mosco convergence property established in Theorem \ref{thm:mosco local intro}. However, the proofs of these results remain valid if $\Xdm$ is only assumed \emph{locally complete}, taking into account the following considerations. Indeed, the assumption $\liminf_n r_n(y_n)>0$ implies $\liminf_n r_{n,C}(y_n)>0$, where $r_{n,C}(y_n)$ denotes the local radius of completeness of $\Omega_n$. Thus, choosing
    \[
        r_\infty(x) \coloneqq \min\{  c\cdot \liminf_n r_n(y_n), r_C(x)\},
    \]
    all arguments can be localized to sufficiently small complete balls along the pmG-converging sequence. More precisely:
    \begin{itemize}
        \item The optimal transport arguments proving the stability of the curvature-dimension conditions (Theorems \ref{thm:stability CDloc} and Theorem \ref{thm:stability dimensional CDloc}) are local. 
         
        \item Theorem \ref{thm:mosco local intro} remains valid for sufficiently small balls with complete closure. Completeness is only used in the proof of the liminf inequality via Theorem \ref{thm:Sobolevintegrated} in conjunction with local optimal transport based arguments exploiting Corollary \ref{cor:improved_rajala} and Corollary \ref{final_cor} (possible by Theorem \ref{th:EVI Intro}). If the supports of $f_n$ and $f$ are contained in such a small ball, the proof carries over taking into account the following comment.
        
        \item Theorem \ref{th:EVI Intro} is also formulated for complete metric measure spaces, but again this is guaranteed for $\overline{B}$ small enough.

        \item The stability of infinitesimal Hilbertianity is also local and relies crucially on Theorem \ref{thm:mosco local intro} and Theorem \ref{thm:equivalent hilbert}. The former has been discussed, while the latter is only invoked with $H^{1,2}(\overline{B})$ Hilbert for $(\overline{B},\sfd,\mm\res{\overline B})$ complete metric measure space, which is the case for sufficiently small $B$.
       
       \fr
    \end{itemize}
\end{remark}

\appendix
\section{Technical calculus results}\label{appendix}
In this appendix we show two known technical results, with a proof tailored for bounded spaces. 

First, we show a version of the Metric Brenier's Theorem, whose proof is simpler thanks to the boundedness assumption (cf.\ \cite{AmbrosioGigliSavare11-2}).
\begin{theorem}\label{thm:metric brenier}
Let $(\X,\sfd,\mm)$ be a bounded complete metric measure space, let $\mu  \in \PP(\X)$ with $\mu=\rho \mm$ for some $\rho\in L^\infty(\X,\mm)$, and let $\nu\in \PP(\X)$. Let $\pi\in \OptGeo(\mu,\nu)$ and let $\varphi\in\Lip(\X)$ be a Kantorovich potential for $((\e_0)_\sharp\pi ,(\e_1)_\sharp\pi)$.
Assume that for some $0< s_0\leq 1$ we have that
\begin{equation}
    \sup_{s\in[0,s_0]}\|\rho_s\|_{L^\infty}<\infty,\qquad \text{where }(\e_s)_\sharp \pi = \rho_s\mm.
\end{equation}
Then, it holds
    \begin{equation}
        \sfd(\gamma_0,\gamma_1)=|\nabla\varphi|(\gamma_0)=|D^+\varphi|(\gamma_0) \qquad\pi\text{-a.e.\ } \gamma.
    \end{equation}
\end{theorem}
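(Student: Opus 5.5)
We follow the scheme of the metric Brenier theorem in \cite{AmbrosioGigliSavare11-2}, using the $L^\infty$ bound on $\rho_s$ for $s\in[0,s_0]$ to justify the passages to the limit. There are two inequality chains to establish:
\[
|\nabla\varphi|(\gamma_0)\le |D^+\varphi|(\gamma_0)\le \sfd(\gamma_0,\gamma_1)
\quad\text{and}\quad
\sfd(\gamma_0,\gamma_1)\le|\nabla\varphi|(\gamma_0),
\qquad \pi\text{-a.e.\ } \gamma .
\]

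The upper bound is immediate from earlier results. Since $\varphi$ is Lipschitz on a bounded space, $\varphi\in H^{1,2}(\X)$. Moreover $|\nabla\varphi|\le \lip(\varphi)$ $\mm$-a.e., and by $c$-concavity the upper slope controls the full local Lipschitz constant, i.e.\ $\lip(\varphi)=|D^+\varphi|$; we expect this to be standard for $c$-concave potentials. Combined with Proposition \ref{Prop:slopeKantoPotentials}, this gives $|\nabla\varphi|(\gamma_0)\le|D^+\varphi|(\gamma_0)\le\sfd(\gamma_0,\gamma_1)$ for $\pi$-a.e.\ $\gamma$. These are equalities $\mu$-a.e.\ because $\mu\ll\mm$.

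The lower bound is the core of the argument. Restricting $\pi$ to times $[0,s]$ with $s\le s_0$ yields a test plan, by the density bound: its compression constant is $\sup_{[0,s_0]}\|\rho_s\|_\infty$ and its kinetic energy is finite since $\X$ is bounded. Using $\varphi(\gamma_0)+\varphi^c(\gamma_1)=\sfd^2(\gamma_0,\gamma_1)/2$, the inequality $\varphi(\gamma_s)+\varphi^c(\gamma_1)\le \sfd^2(\gamma_s,\gamma_1)/2$, and $\sfd(\gamma_s,\gamma_1)=(1-s)\sfd(\gamma_0,\gamma_1)$, we get
\[
\varphi(\gamma_0)-\varphi(\gamma_s)\ge \tfrac{1-(1-s)^2}{2}\,\sfd^2(\gamma_0,\gamma_1)=\big(s-\tfrac{s^2}{2}\big)\sfd^2(\gamma_0,\gamma_1).
\]
On the other hand, \eqref{eq:weak gradient} applied along the test plan gives
\[
\int\big(\varphi(\gamma_0)-\varphi(\gamma_s)\big)\d\pi\le\iint_0^s|\nabla\varphi|(\gamma_t)\,\sfd(\gamma_0,\gamma_1)\,\d t\,\d\pi .
\]
Dividing by $s$ and applying Young's inequality to the right-hand side yields
\[
\int \sfd^2\,\d\pi\le \limsup_{s\to0}\frac12\,\frac1s\iint_0^s|\nabla\varphi|^2(\gamma_t)\,\d t\,\d\pi+\frac12\int\sfd^2\,\d\pi .
\]

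The main obstacle is to show that $\frac1s\int_0^s\int|\nabla\varphi|^2\rho_t\,\d\mm\,\d t\to\int|\nabla\varphi|^2\rho_0\,\d\mm$. Here the hypothesis is used: $|\nabla\varphi|^2\in L^1(\mm)$, and $\rho_t\weakto\rho_0$ weakly in $L^\infty$-weak$^*$ as $t\to0$. The weak$^*$ convergence follows from the uniform bound together with narrow convergence $(\e_t)_\sharp\pi\to\mu$. This gives the limit, hence $\int\sfd^2\d\pi\le\int|\nabla\varphi|^2(\gamma_0)\,\d\pi$. Combined with the pointwise upper bound from the first step, integral equality forces the pointwise equalities $\pi$-a.e.

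One subtlety remains. To conclude pointwise and not just after integration, we either run the same argument with $\pi$ replaced by $\pi\res\{\gamma_0\in A\}$ normalized, which preserves the density bound and optimality, or we note that an a.e.\ inequality in one direction plus equality of integrals already suffices. We use the latter.
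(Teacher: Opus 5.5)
Your proof follows the paper's route: the lower bound $\varphi(\gamma_0)-\varphi(\gamma_s)\ge(s-\tfrac{s^2}{2})\sfd^2(\gamma_0,\gamma_1)$ from $c$-duality, and the upper bound via \eqref{eq:weak gradient} along the restricted plan (a test plan thanks to the density bound). It also uses the convergence $\rho_t\weakto\rho$ tested against $|\nabla\varphi|^2\in L^1(\mm)$; the paper gets this by $L^1$-density of Lipschitz functions, which amounts to the same thing. The final sandwich uses Proposition~\ref{Prop:slopeKantoPotentials} and equality of integrals. Applying Young's inequality after integrating, instead of the paper's pointwise Jensen followed by division by $\sfd^2(\gamma_0,\gamma_t)$, is a harmless variant; it even avoids dividing by zero on constant geodesics.

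The one step that is wrong as written is your justification of $|\nabla\varphi|\le|D^+\varphi|$. The identity $\lip(\varphi)=|D^+\varphi|$ is false for $c$-concave functions. On $\R$, take $\psi=\tfrac12$ on $\{\pm1\}$ and $\psi=-\infty$ elsewhere. Then $\varphi=\psi^c=\tfrac{x^2}{2}-|x|$, with $|D^+\varphi|(0)=0$ but $\lip(\varphi)(0)=1$: at a concave kink the descending slope is the larger one. If $\mm$ has an atom at $0$, the identity fails even $\mm$-a.e.

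The inequality you need is still true, for a different reason. For any Lipschitz $f$ and any absolutely continuous curve $\gamma$, one has $|(f\circ\gamma)'(t)|\le|D^+f|(\gamma_t)|\dot\gamma_t|$ for a.e.\ $t$. To see this, use forward increments where $(f\circ\gamma)'(t)>0$ and backward increments where it is negative. So $|D^+f|$ is a bounded Borel upper gradient, and minimality of the weak gradient gives $|\nabla f|\le|D^+f|$ $\mm$-a.e. This transfers to $\pi$-a.e.\ $\gamma_0$ because $\mu\ll\mm$. That is what your sentence ``These are equalities $\mu$-a.e.'' should say: at that stage you only have inequalities. With this replacement the proof is complete.
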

\begin{proof}
It will be enough to show that 
\begin{equation*}
    \int\rho|\nabla\varphi|^2  \d\mm=\int|\nabla\varphi|^2 (\gamma_0) \,\d\pi(\gamma)\geq \int \sfd^2(\gamma_0,\gamma_1) \,\d\pi(\gamma),
\end{equation*}
since, by Proposition \ref{Prop:slopeKantoPotentials}, we know that
\begin{equation}\label{Eq:inequalitykantpotdistslopeweakgrad}
    |\nabla\varphi|(\gamma_0)\leq|D^+ \varphi|(\gamma_0)\leq \sfd (\gamma_0,\gamma_1)\qquad \pi\text{-a.e.\ }\gamma.
\end{equation}
Using that $\varphi=(\varphi^c)^c$, then for $\pi$-a.e.\ $\gamma\in\Geo(\X)$, and for all $t \in (0,1]$ we get 
 \begin{align*}
        &\varphi(\gamma_0)-\varphi(\gamma_t)\geq \left( \frac{\sfd^2(\gamma_0, \gamma_1)}{2}  -\varphi^c(\gamma_1) \right) -\left( \frac{\sfd^2(\gamma_t, \gamma_1)}{2}  -\varphi^c(\gamma_1) \right)= \frac{2t - t^2}{2} \sfd^2(\gamma_0, \gamma_1) .       
    \end{align*}
Since $|\dot{\gamma}_t|=\sfd(\gamma_0,\gamma_1)$ for each  $\gamma \in \Geo(\X)$, then for $t\leq s_0$,  we infer by Jensen's inequality that
\begin{align*}
    |\varphi(\gamma_0) - \varphi(\gamma_t)|^2 \leq \left( \int_0^t |\nabla\varphi|(\gamma_s) \sfd(\gamma_1, \gamma_0) \, \d s \right)^2 \le t \sfd^2(\gamma_1, \gamma_0) \int_0^t |\nabla\varphi|^2(\gamma_s) \, \d s.
\end{align*}
All in all, dividing by $\sfd^2(\gamma_t, \gamma_0)=t^2 \sfd^2(\gamma_1, \gamma_0)$ and integrating with respect to $\pi$, we find 
\begin{align*}
    &\fint_0 ^t \int|\nabla\varphi|^2(\gamma_s)\,\d \pi\d s=\frac{1}{t}\int\int_0 ^t |\nabla\varphi|^2(\gamma_s)\,\d s\d\pi \geq\int\frac{|\varphi(\gamma_0) - \varphi(\gamma_t)|^2}{\sfd^2(\gamma_t, \gamma_0)}\,\d\pi\geq \\ 
    &\geq \frac{(2-t)^2}{4} \int \sfd^2(\gamma_1, \gamma_0) \,\d\pi.
\end{align*}
In order to conclude, it is sufficient to show that
\begin{align}\label{Eq:continuityDKmetricbrenier}
    \lim_{s\to0} \int|\nabla\varphi|^2\,\d((\e_s)_\sharp\pi)= \lim_{s\to0}\int\rho_s|\nabla\varphi|^2\,\d\mm=\int\rho|\nabla\varphi|^2\,\d\mm.
\end{align}
Indeed, for any $f\in L^1 (\X,\mm)$, $t<s_0$   and $\varepsilon>0$, we can choose $h\in\Lip(\X)$ such that $\|f-h\|_{L^1(\X)}\leq\varepsilon$, then
\begin{align*}
   & \left|\int\rho_t f-\rho f \,\d\mm\right|\leq 2\varepsilon\left(\sup_{s\in[0,s_0]}\|\rho_s\|_{L^\infty}\right)+\left|\int\rho_t h-\rho h \,\d\mm\right|\\ &\leq
   2\varepsilon\left(\sup_{s\in[0,s_0]}\|\rho_s\|_{L^\infty}\right)+\left|\int h(\gamma_0)-h(\gamma_t)\,\d\pi\right|
   \leq 2\varepsilon\left(\sup_{s\in[0,s_0]}\|\rho_s\|_{L^\infty}\right)+t\Lip(h)W_2(\mu, \nu).
\end{align*}
Taking the limit as $t \to 0$, since $\varepsilon$ is arbitrary, yields
\begin{equation*}
    \lim_{t\to 0}\left|\int\rho_t f-\rho f \,\d\mm\right|=0,\qquad \forall\,f\in L^1(\X).
\end{equation*}
Equation \eqref{Eq:continuityDKmetricbrenier} follows immediately by choosing $f=|\nabla\varphi|^2$.
\end{proof}
The next result revisits \cite[Lemma 5.2]{AGMR15} also for bounded metric spaces. 
\begin{lemma}\label{lem5.2AGMR}
Let $(\X, \sfd, \mm)$ be a bounded complete metric measure space. Let $\mu=f\mm,\,\sigma=g\mm\in~\PP(\X)$ with $f,g\in~L^\infty(\X)$. Assume that there exists $\pi\in\OptGeo(\mu,\sigma)$ such that  $\mu_t =  \rho_t\mm=(\e_t)_\sharp \pi$ satisfies the convexity inequality \eqref{eq:convexity Ent} and there exists $t_0>0$ such that
 $$C_*:=\sup_{t \in [0,t_0]}\|\rho_t\|_{L^\infty} < \infty.$$
 Then, we have:
\begin{itemize}
\item For $h \in H^{1,2}(\X)$, the following holds
\begin{equation}\label{Eq:ineqincremental}
\limsup_{t \downarrow 0} \int\left| \frac{h(\gamma_t)-h(\gamma_0)}{\sfd(\gamma_t, \gamma_0)}\right|^2 \d \pi(\gamma) \leq \int |\nabla h|^2(\gamma_0) \,
\d\pi(\gamma).
\end{equation} 
\item  For all Kantorovich potentials $\varphi$ relative to $(\mu,\sigma)$ with $|\nabla \varphi|$ bounded
one has
\begin{equation}\label{derKant} 
\lim_{t\downarrow 0} \frac{\varphi(\gamma_0)-\varphi(\gamma_t)}{\sfd(\gamma_0, \gamma_t)}= \lim_{t\downarrow 0} \frac{\sfd(\gamma_0,\gamma_t)}{t}=|\nabla\varphi|(\gamma_0) \quad \text{in } L^2(C([0,1];\X), \pi).
\end{equation}
\end{itemize} 
\end{lemma}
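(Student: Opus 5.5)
The plan is to treat the two bullets in order, since the second uses the first. For \eqref{Eq:ineqincremental} I would first note that the restricted plan $\left({\sf rest}_0^{t_0}\right)_\sharp\pi$ is a test plan. Its curves are geodesics of speed $\sfd(\gamma_0,\gamma_1)$, which is bounded because $\X$ is bounded. Its marginals satisfy $\rho_s\le C_*$ for $s\in[0,t_0]$. Hence \eqref{eq:weak gradient} applies to every $h\in H^{1,2}(\X)$ along $\pi$-a.e.\ $\gamma$ and gives
\[
|h(\gamma_t)-h(\gamma_0)|\le \sfd(\gamma_0,\gamma_1)\int_0^t|\nabla h|(\gamma_s)\,\d s,\qquad t\le t_0 .
\]
I will divide by $\sfd(\gamma_t,\gamma_0)=t\,\sfd(\gamma_0,\gamma_1)$, which is harmless where $\gamma$ is constant since the quotient is set to $0$ there. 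Jensen's inequality then gives
\[
\left|\frac{h(\gamma_t)-h(\gamma_0)}{\sfd(\gamma_t,\gamma_0)}\right|^2\le \fint_0^t|\nabla h|^2(\gamma_s)\,\d s .
\]
Integrating in $\pi$, the right-hand side becomes $\fint_0^t\int |\nabla h|^2\rho_s\,\d\mm\,\d s$.

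To pass to the limit I will reuse the argument from the proof of Theorem \ref{thm:metric brenier}. Approximate any $F\in L^1(\mm)$ by Lipschitz functions and use the uniform bound $C_*$. Together with $W_2(\mu_s,\mu)\le s W_2(\mu,\sigma)$, this shows $\int F\rho_s\,\d\mm\to\int F\rho\,\d\mm$ as $s\downarrow0$. Taking $F=|\nabla h|^2\in L^1$ yields \eqref{Eq:ineqincremental}.

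For \eqref{derKant}, the first step is to note that $\varphi$ is Lipschitz on the bounded space and hence belongs to $H^{1,2}(\X)$. Theorem \ref{thm:metric brenier}, whose hypotheses are exactly the bound on $\rho_s$ near $0$, gives $\sfd(\gamma_0,\gamma_1)=|\nabla\varphi|(\gamma_0)$ for $\pi$-a.e.\ $\gamma$. Since $\sfd(\gamma_0,\gamma_t)/t=\sfd(\gamma_0,\gamma_1)$ for geodesics, the second equality in \eqref{derKant} is immediate, with no limit actually needed. For the first equality, the $c$-concavity computation in that same proof gives, for $\pi$-a.e.\ $\gamma$,
\[
G_t:=\frac{\varphi(\gamma_0)-\varphi(\gamma_t)}{\sfd(\gamma_0,\gamma_t)}\ \ge\ \frac{2-t}{2}\,\sfd(\gamma_0,\gamma_1)=:a_t\ \ge0 ,
\]
and $a_t\to|\nabla\varphi|(\gamma_0)=:G$ pointwise and in $L^2(\pi)$. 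Applying the first bullet with $h=\varphi$ gives $\limsup_t\|G_t\|_{L^2(\pi)}\le\|G\|_{L^2(\pi)}$.

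The concluding step is an elementary Hilbert-space argument. The family $(G_t)$ is bounded in $L^2(\pi)$, so take any weak limit point $\bar G$. Since $G_t\ge a_t\to G$ in $L^2$, testing against nonnegative functions shows $\bar G\ge G$. Lower semicontinuity of the norm gives $\|\bar G\|\le\limsup_t\|G_t\|\le\|G\|$, which combined with $\bar G\ge G\ge0$ forces $\bar G=G$. Then weak convergence together with convergence of norms upgrades to strong convergence in $L^2(\pi)$. Alternatively, one can write $\|G_t-G\|^2=\|G_t\|^2-2\langle G_t,G\rangle+\|G\|^2$ and use $\langle G_t,G\rangle\ge\langle a_t,G\rangle\to\|G\|^2$.

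I expect the main obstacle to be the first bullet, specifically justifying that \eqref{eq:weak gradient} applies along the geodesic plan. This is where the $L^\infty$ bound on $\rho_s$ for small $s$ is essential, because it makes the restriction a genuine test plan. The same bound is what allows testing against $|\nabla h|^2$, which is only in $L^1$. The convexity inequality \eqref{eq:convexity Ent} plays no direct role in this plan beyond guaranteeing the existence of such a $\pi$.
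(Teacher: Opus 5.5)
Your proof is correct and follows essentially the paper's argument: the first bullet via the weak-gradient bound along $\pi$, Jensen, and the $L^\infty$-bound plus Lipschitz-density argument giving $\int F\rho_s\,\d\mm\to\int F\rho\,\d\mm$; the second via the metric Brenier theorem, the $c$-concavity lower bound, and the norm bound from the first bullet. The only difference is how you identify the limit: the paper first proves $\pi$-a.e.\ convergence of the difference quotients (using also the ascending-slope upper bound $|D^+\varphi|(\gamma_0)=\sfd(\gamma_0,\gamma_1)$) and then invokes Mazur's lemma, whereas your one-sided bound $G_t\ge a_t\to G\ge 0$ combined with $\limsup_t\|G_t\|\le\|G\|$ gives strong convergence directly, which is a slightly more economical finish.
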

\begin{proof} We follow the proof of \cite[Lemma 5.2]{AGMR15} with some simplifications due to the fact that $\X$ is bounded.

By \eqref{eq:weak gradient} and applying Cauchy-Schwarz inequality, we find that, for any $t \in (0,t_0)$  and $\pi$-a.e. geodesic $\gamma$, it holds
\begin{equation}
\left| \frac{h(\gamma_t)-h(\gamma_0)}{\sfd(\gamma_t,\gamma_0)} \right|^2 \leq 
\frac{\left(\int_0^t |\nabla h|(\gamma_s)|\dot{\gamma}_s| \d s \right)^2 }
{\sfd ^2(\gamma_t,\gamma_0)} \leq \frac 1 t \int_0^t |\nabla h|^2(\gamma_s) \d s\,.
\end{equation}
 Therefore, applying twice Fubini's theorem, we get
\begin{equation*}
\int \left| \frac{h(\gamma_t)-h(\gamma_0)}{\sfd(\gamma_t,\gamma_0)} \right|^2 \d \pi(\gamma) \leq
 \int \left( \frac 1 t \int_0^t |\nabla h|^2(\gamma_s) \d s \right) \d \pi(\gamma) 
 = \int_\X \left(\frac{1}{t} \int_0^t \rho_s \d s \right) |\nabla h|^2 \,\d \mm.
\end{equation*}
We also note that 
\begin{equation*}
\lim_{t\downarrow 0} \int_\X \left(\frac{1}{t} \int_0^t \rho_s \d s \right) |\nabla h|^2\, \d \mm= \int_\X |\nabla h|^2 f\,\d \mm,
\end{equation*}
 since
\begin{equation}\label{eq:sigmat}
\frac{1}{t} \int_0^t \rho_s \d s \to f, \quad \text{in duality with } L^1(\X).
\end{equation}
This follows from the weak convergence $\rho_t \mm \to f\mm$ and  the uniform $L^\infty$ bound 
$$\frac{1}{t} \int_0^t \rho_s \d s \leq C_*,\qquad \forall t\le t_0.$$
This concludes the proof of the first statement.

Let us now turn to the second statement. By the metric Brenier Theorem  \ref{thm:metric brenier}, we infer the existence of
a Borel function $L$ satisfying
$L(\gamma_0):=\sfd(\gamma_0,\gamma_1)$ for $\pi$-a.e. $\gamma \in \Geo(\X)$ and, in addition,
\begin{equation}\label{eq:metrbren}
|\nabla\varphi|(x)=|D^+ \varphi| (x)= L(x) \quad \text{$\mm$-a.e.\ $x$.}
\end{equation}
It follows that for $\pi$-a.e. $\gamma \in\Geo(\X)$
$$|\nabla\varphi|(\gamma_0)=\sfd(\gamma_0,\gamma_1)=\frac{\sfd(\gamma_0,\gamma_t)}{t} \quad
\text{$\forall t\in (0,1)$.}$$ 
It remains to show \eqref{derKant}.
By optimality,  for $\pi$-a.e. $\gamma$,
$$\varphi(\gamma_0)+\varphi^c(\gamma_1)=\frac{\sfd^2(\gamma_0,\gamma_1)}{2}, \qquad \varphi(\gamma_t)
+\varphi^c(\gamma_1)\leq \frac{\sfd^2(\gamma_t,\gamma_1)}{2}  ,$$
which gives
$$\varphi(\gamma_0)-\varphi(\gamma_t)\geq \frac{1-(1-t)^2 }{2}\sfd^2(\gamma_0,\gamma_1)
=\frac{2t-t^2}{2} \sfd^2(\gamma_0,\gamma_1).$$
Dividing both sides by $\sfd(\gamma_t,\gamma_0)=t\sfd(\gamma_1,\gamma_0)$, we find that for $\pi$-a.e. $\gamma$ it holds
\begin{equation}\label{liminfKant}
\liminf_{t \downarrow 0} \frac{\varphi(\gamma_0)-\varphi(\gamma_t)}{\sfd(\gamma_0,\gamma_t)}\geq
 \sfd(\gamma_0,\gamma_1)=|\nabla\varphi| (\gamma_0).
\end{equation}
On the other hand, by definition of ascending slope 
\begin{equation}\label{limsupKant}
\limsup_{t\downarrow 0} \frac{\varphi(\gamma_0)-\varphi(\gamma_t)}{\sfd(\gamma_0,\gamma_t)}\leq 
|D^+ \varphi|(\gamma_0).
\end{equation}
So, combining \eqref{eq:metrbren} and \eqref{liminfKant} with \eqref{limsupKant} we get 
\begin{equation}\label{eq:ConvKanta.e.}
\lim_{t \downarrow 0} \frac{\varphi(\gamma_0)-\varphi(\gamma_t)}{\sfd(\gamma_0,\gamma_t)} = |\nabla\varphi|(\gamma_0) \quad 
\text{$\pi$-a.e. $\gamma$.}
\end{equation}
Next we will show that
\begin{equation}\label{eq:weakConvKant}
\frac{\varphi(\gamma_0)-\varphi(\gamma_t)}{\sfd(\gamma_0,\gamma_t)} \rightharpoonup |\nabla\varphi|\circ\e_0
\qquad \text{weakly in } L^2(C([0,1];\X), \pi).
\end{equation}
Since by assumption $|\nabla \varphi|$ is bounded, by  \eqref{Eq:ineqincremental}  we have
\begin{equation}\label{eq:part1present}
\limsup_{t\downarrow 0}\int \left |\frac{\varphi(\gamma_0)-\varphi(\gamma_t)}{\sfd(\gamma_0,\gamma_t)}\right |^2\,\d\pi\leq
\int |\nabla\varphi|^2(\gamma_0)\,\d\pi. 
\end{equation}
If $\Psi$ is a limit point in the weak $L^2$ topology of the left-hand side of \eqref{eq:part1present} as $t\downarrow 0$, by Mazur's lemma there exists a sequence of convex combinations of 
these difference quotients strongly converging in $L^2(C([0,1];\X)$ to $\Psi$. Since a further subsequence converges $\pi$-a.e., from 
\eqref{eq:ConvKanta.e.} we obtain that $\Psi=|\nabla\varphi|(\gamma_0)$. By weak compactness, since the only possible weak limit is $|\nabla\varphi|(\gamma_0)$, we obtain \eqref{eq:weakConvKant}.
Finally, we conclude by observing that \eqref{eq:part1present} ensures the convergence of the $L^2(C([0,1];\X), \pi)$ norms. Combined with \eqref{eq:weakConvKant}, this yields the desired strong convergence.
\end{proof}
\medskip 
\noindent\textbf{Acknowledgements}. F.N., F.R. and F.V. are members of INDAM-GNAMPA. F.N acknowledges partial support by the European Union (ERC ConFine, 101078057), the INdAM-GNAMPA Project ``Analisi e Gamma-convergenza per alcuni funzionali non locali'' CUP E53C25002010001\#, and the MIUR Excellence Department Project awarded to the Department of Mathematics, University of Pisa, CUP I57G22000700001.
L.A., F.R. and F.V. acknowledge support by the Balzan Prize 2019. Part of this work was carried out when F.V. was visitor at TUM. F.V. wishes to thank G. Friesecke and the Department of Mathematics of TUM for the warm hospitality and the stimulating atmosphere, and the MIUR-PRIN 202244A7YL project ``Gradient Flows and Non-Smooth Geometric Structures with Applications to Optimization and Machine Learning" for financially supporting his stay in Munich.

\end{document}